\def\fnum{equation}
\def\fnum{theorem}
\newtheorem{theorem}{Theorem}[section]
\newtheorem{lemma}[theorem]{Lemma}
\newtheorem{propos}[theorem]{Proposition}
\newtheorem{corol}[theorem]{Corollary}
\newtheorem{assumption}[theorem]{Assumption}
\newtheorem{definition}[theorem]{Definition}
\newtheorem{remark}[theorem]{Remark}
\numberwithin{equation}{section}
\newcommand{\M}{{\mathcal{M}}}
\newcommand{\diam}{{\text {diam}}}
\newcommand{\dist}{{\text {dist}}}
\newcommand{\expo}{\text{exp}}
\newcommand{\supp}{{\rm spt}}
\newcommand\ress{\mathop{\hbox{\vrule height 14pt width 1pt depth 0pt
\vrule height 1pt width 12pt depth 0pt}}\nolimits}
\newcommand\res{\mathop{\hbox{\vrule height 7pt width .5pt depth 0pt
\vrule height .5pt width 6pt depth 0pt}}\nolimits}
\newcommand\spt{{\rm spt}\,}  
\newcommand\weaks{{\rightharpoonup^*}\,}
\newcommand\haus{{\mathcal{H}}}
\newcommand\va{{\mathcal{V}}}
\newcommand\sing{{\rm sing}\,}
\newcommand\Tan{{\rm Tan}\,}
\newcommand{\mass}{\mathbf{M}}
\newcommand{\fl}{\mathbbm{F}}
\def\RR{{\mathbb{R}}}
\def\ZZ{{\bf  Z}}
\def\NN{{\mathbb{N}}}
\renewcommand\P{{\mathcal{P}}}
\begin{document}

\title[Minimal surfaces with boundary]
{Min-max theory for minimal hypersurfaces with boundary}

\author{Camillo De Lellis}%
\author{Jusuf Ramic}%
\address{Zurich university}

\email{camillo.delellis@math.uzh.ch and 
jusuf.ramic@math.uzh.ch}

\begin{abstract}
In this note we propose a min-max theory for embedded hypersurfaces with a fixed boundary and apply it to prove several theorems about the existence of embedded minimal hypersurfaces with a given boundary.  A simpler variant of these theorems holds also for the case of the free boundary minimal surfaces. 
\end{abstract}

\maketitle
 
\section{Introduction}

The primary interest of this note is the following question, which has been posed to us by White:
\begin{itemize}
\item[(Q)] consider two minimal strictly stable embedded hypersurfaces $\Sigma_0$ and $\Sigma_1$ in $\mathbb R^{n+1}$ which have the same boundary $\gamma$; is there a third smooth minimal hypersurface $\Gamma_2$ with the same boundary?
\end{itemize}
In this note we anser positively, under some suitable assumptions, cf. Corollary \ref{c:main2}: the main one is that $\gamma$ lies in the boundary of a smooth strictly convex subset of $\mathbb R^{n+1}$, but there are also two other assumptions of technical nature, which we discuss at the end of the next section. 

If we regard $\Sigma_0$ and $\Sigma_1$ as two local minima of the Plateau's problem, we expect that a ``mountain pass lemma'' type argument yields a third minimal surface $\Gamma_2$ which is a saddle point. This intuition has a long history, which for closed geodesics goes back to the pioneering works of Birkhoff and Ljusternik and Fet (cf. \cite{Birkhoff, LF}). In the case $n=2$ the first results regarding the Plateau's problem are due to Shiffman \cite{Sh} and Morse and Tompkins \cite{MT}, using the parametric approach of Douglas-Rado, which therefore answers positively to question (Q) if we drop the requirements of embeddedness of the surfaces. 

Using a degree theory approach to the Plateau problem, Tromba \cite{Tromba1,Tromba2}
was able to derive a limited Morse theory for disk-type surfaces, generalizing
the Morse-Shiffman-Tompkins result. M. Struwe \cite{Struwe1,Struwe2,Struwe3} then developed a
general Morse theory for minimal surfaces of disk and annulus type, based on
the $H^{1,2}$ topology (as opposed to the $C^0$
topology used in Morse-Shiffman-Tompkins
approach). These (and other related works) were expanded by Jost and Struwe in
\cite{JS}, where they consider minimal surfaces of arbitrary topological type. Among
other things, they succeed in applying saddle-point methods to prove the existence
of unstable minimal surfaces of prescribed genus.

\medskip

It is well-known that the parametric approach breaks down when $n\geq 3$. 
For this reason in their celebrated pioneering works \cite{Alm,Alm1,pitts} Almgren and subsequently Pitts developed a variational calculus in the large using geometric measure theory. The latter enabled Pitts to prove in \cite{pitts} the existence of a nontrivial closed embedded minimal hypersurface in any closed Riemannian manifold of dimension at most 6. The higher dimensional case of Pitts' theorem was then settled by Schoen and Simon in \cite{SS}. A variant of the Almgren-Pitts theory was later introduced by Simon and Smith (cf. Smith's PhD thesis \cite{smith}) in the $2$-dimensional case, with the aim of producing a min-max construction which allows to control the topology of the final minimal surface. Further investigations in that direction were then set forth by Pitts and Rubinstein in \cite{PR1,PR2}, who also proposed several striking potential applications to the topology of $3$-manifolds. Part of this program was carried 
out later in the papers \cite{c-dl}, \cite{dl-p}, \cite{ketover} and \cite{cgk}, whereas several other questions raised in \cite{PR1,PR2} constitute an active area of research right now (see for instance \cite{MNindex}).  Currently, min-max constructions have gained a renewed interest thanks to the celebrated recent work of Marques and Neves \cite{MN} which uses the Almgren-Pitts machinery to prove a long-standing conjecture
of Willmore in differential geometry (cf. also the paper \cite{AMN}, where the authors use similar ideas to prove a conjecture of Freedman, He and Wang in knot theory).

All the literature mentioned above regards closed hypersurfaces, namely without boundary. The aim of this paper is to provide a similar framework in order to attack analogous existence problems in the case of prescribed boundaries. The real issue is only in the boundary regularity of the final min-max surface, as the interior regularity and much of the existence setting can be extended without problems. When the boundary $\gamma$ is lying in the boundary of a strictly convex set, our theorems give full regularity of the min-max hypersurface in any dimension. The most relevant difficulty in the proof can be overcome thanks to a beautiful idea of Brian White, whom we thank very warmly for sharing it with us, cf. \cite{white-personal}: the elegant argument of White to get curvature estimates at the boundary is reported in Section \ref{ss:white_est}. In passing, since many of the techniques are essentially the same, we also handle the case of free boundaries, which for $n=2$ has already been considered by Gr\"uter and Jost in \cite{gru-jost} and by Li in \cite{Li} (modifying the Simon-Smith approach and reaching stronger conclusions). Slightly after completing this work we learned that a more general approach to
the existence of minimal hypersurfaces with free boundary has been taken at the same time by Li and Zhou in \cite{LiZhou1,LiZhou2}. Note in particular that in \cite{LiZhou2} the authors are able to prove their existence even without the assumption of convexity of the boundary. 

Our min-max constructions use the simpler and less technical framework proposed by the first author and Tasnady in \cite{dl-t}, which is essentially a variant of that developed by Pitts in his groundbreaking monograph \cite{pitts}. It allows us to avoid a lot of technical details and yet be sufficiently self-contained, but on the other hand there are certain limitations which Pitts' theory does not have. 
However, several of the tools developed in this paper can be applied to a suitable ``boundary version'' of Pitts' theory as well and we believe that the same statements can be proved in that framework. Similarly, we do not expect any problems in using the same ideas to extend the approach of Simon and Smith when the boundary is prescribed. 

\section{Main statements}
 
Consider a smooth, compact, oriented Riemannian manifold $(\M,g)$ of dimension $n+1$
with boundary $\partial \M$. We will assume that $\partial \M$ is strictly uniformly convex, namely:
\begin{assumption}\label{a:(C)}
The principal curvatures of $\partial \M$ with respect to the unit normal $\nu$ pointing inside $\M$ have a uniform, positive lower bound.
\end{assumption}
Sometimes we write the condition above as $A_{\partial \M} \succeq \xi g$, where $\xi >0$, $A_{\partial \M}$ denotes the second fundamental form of $\partial \M$ (with the choice of inward pointing normal) and $g$ the induced metric as submanifold of $\M$. 
We also note that we do not really need $\M$ to be $C^\infty$ since a limited amount of regularity (for instance $C^{2,\alpha}$ for some positive $\alpha$) suffices for all our considerations, although we will not pay any attention to this detail. 

We start by recalling the continuous families of hypersurfaces used in \cite{dl-t}. 

\begin{definition}
We fix a smooth compact $k$-dimensional manifold $\P$ with boundary $\partial \P$ (possibly empty) and we will call it the 
{\em space of parameters}.

A smooth family of hypersurfaces in $\M$ parametrized by $\P$ is given by a map $t\mapsto \Gamma_t$ which assigns to each $t \in \P$ a closed subset $\Gamma_t$ of $\M$ and satisfies  the following properties:
\begin{itemize}
\item[(s1)] For each $t$ there is a finite $S_t\subset \M$ such that 
$\Gamma_t$ is a smooth 
{\em oriented} hypersurface in $\M\setminus S_t$ with boundary $\partial \Gamma_t \subset \partial \M \setminus S_t$;
\item[(s2)] $\haus^n(\Gamma_t)$ is continuous in $t$ and
$t\mapsto \Gamma_t$ is continuous in the following sense: for all $t$ and all open $U\supset \Gamma_t$ there is $\varepsilon>0$ such that 
\[
\Gamma_s\subset U\quad \mbox{for all $s$ with $|t-s|< \varepsilon$;}
\]
\item[(s3)] on any $U\subset\subset \M\setminus S_{t_0}$,  
$\Gamma_t \stackrel{t\rightarrow t_0}{\longrightarrow} \Gamma_{t_0}$ 
smoothly in $U$. 
\end{itemize}
\end{definition}

\begin{remark}\label{r:differente_1}
There is indeed an important yet subtle difference between the above definition and the corresponding one used in \cite{dl-t}. In the latter reference the families of hypersurfaces $\{\Gamma_t\}_t$ are also assumed to have underlying families of open sets $\{\Omega_t\}_t$ which vary continuously (more precisely $t\mapsto {\mathbf 1}_{\Omega_t}$ is continuous in $L^1$) and such that $\partial \Omega_t = \Sigma_t$. For this reason, in a lot of considerations where \cite{dl-t} invokes the theory of Caccioppoli sets, we will need to consider more general integral currents, allowing for multiplicities higher than $1$; cf. Remark \ref{r:differente_2} and Section \ref{s:differente_3}. 
\end{remark}

From now on we will simply refer to such objects as {\em families parametrized by $\P$} and we will omit to mention the space of parameters when this is obvious from the context. 
Additionally we will distinguish between two classes of smooth families according to their behaviour at the boundary $\partial \M$.

\begin{definition}\label{homotopies}
Consider a smooth, closed submanifold $\gamma\subset \partial \M$ of dimension $n-1$. A smooth family of
hypersurfaces parametrized by $\P$ is constrained by $\gamma$ if
$\partial \Gamma_t= \gamma \setminus S_t$ for every $t\in \P$. Otherwise we talk about ``uncostrained families''.

Two unconstrained families $\{\Gamma_t\}$ and $\{\Sigma_t\}$ parametrized by $\P$ are homotopic if there is a family $\{\Lambda_{t,s}\}$ parametrized by $\P\times [0,1]$ such that 
\begin{itemize}
\item $\Lambda_{t,0}= \Sigma_t$ $\forall t\in \P$,
\item $\Lambda_{t,1} = \Gamma_t$ $\forall t\in \P$,
\item and $\Lambda_{t,s} = \Lambda_{t,0}$ $\forall t\in \partial \P$ and for all $s\in [0,1]$. 
\end{itemize}
When the two families are constrained by $\gamma$ we then additionally require that the family $\{\Lambda_{t,s}\}$ is also constrained by $\gamma$. 

Finally a set $X$ of constrained (resp. unconstrained) families parametrized by the same $\P$ is called homotopically closed if $X$ includes the homotopy class of each of its elements.
\end{definition}

\begin{definition}
Let $X$ be a homotopically closed set of constrained (resp. unconstrained) families parametrized by the same $\P$. The min-max value of $X$, denoted by $m_0 (X)$ is the number
\begin{equation}\label{min energy}
m_0 (X) = \inf \left\{ \max_{t\in\P} \haus^n (\Sigma_t): \{\Sigma_t\}\in X \right\}\, .
\end{equation}
The boundary-max value of $X$ is instead
\begin{equation}
bM_0 (X) = \max_{t\in\partial \P} \left\{ \haus^n (\Sigma_t): \{\Sigma_t\}\in X \right\}\, .
\end{equation}
A minimizing sequence is given by a sequence of elements $\{\{\Sigma_t\}^\ell\} \subset X$ such that
\[
 \lim_{\ell\uparrow\infty} \max_{t\in\P} \haus^n (\Sigma^\ell_t) = m_0 (X).
\]
A min-max sequence is then obtained from a minimizing sequence by taking the slices $\{\Sigma^\ell_{t_\ell}\}$, for a choice of parameters $t_\ell\in \P$ such that $\haus^n (\Sigma^\ell_{t_\ell}) \to m_0 (X)$. 
\end{definition}

As it is well known, even the solutions of the codimension one Plateau problem can exhibit singularities if the dimension $n+1$ of the ambient manifold is strictly larger than $7$. If we say that an embedded minimal hypersurface $\Gamma$ is {\em smooth} then we understand that it has no singularities. Otherwise we denote by ${\rm Sing}\, (\Gamma)$ its closed singular set, i.e. the set of points 
where $\Gamma$ cannot be described locally as the graph of a smooth function. Such singular set will always have Hausdorff dimension at most $n-7$ and thus with a slight abuse of terminology we will anyway say that $\Gamma$ is embedded, although in a neighborhood of the singularities the surface might not be a continuous embedded submanifold. When we write ${\rm dim}\, ({\rm Sing}\, (\Gamma)) \leq n-7$ we then understand that the singular set is empty for $n\leq 6$. 

Our main theorem is the following.

\begin{theorem}\label{t:main}
Let $\M$ be a smooth Riemannian manifold that satisfies Assumption \ref{a:(C)} and
$X$ be a homotopically closed set of constrained (resp. unconstrained) families parametrized by $\P$ such that
\begin{equation}\label{mountain pass family}
m_0 (X) > bM_0 (X)\, .
\end{equation}
Then there is a min-max sequence $\{\Sigma^\ell_{t_\ell}\}$, finitely many disjoint  embedded and connected compact minimal hypersurfaces $\{\Gamma_1, \ldots, \Gamma_N\}$ with boundaries $\partial \Gamma_i\subset \partial \M$ (possibly empty) and finitely many positive integers $c_i$ such that
\[
\Sigma^\ell_{t_\ell}\qquad \weaks\qquad \sum_i c_i \Gamma_i
\]
in the sense of varifolds and ${\rm dim}\, ({\rm Sing}\, (\Gamma_i))\leq n-7$ for each $i$. In addition:
\begin{itemize}
\item[(a)] If $X$ consists of unconstrained families, then each $\Gamma_i$ meets $\partial \M$ orthogonally;
\item[(b)] If $X$ consists of families constrained by $\gamma$, then: $\sum \partial \Gamma_i = \gamma$,
${\rm Sing}\, (\Gamma_i) \cap \partial \M= \emptyset$ for each $i$ and $c_i=1$ whenever $\partial \Gamma_i \neq \emptyset$.  
\end{itemize}
\end{theorem}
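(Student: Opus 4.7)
The plan is to follow the by-now classical Almgren--Pitts / De Lellis--Tasnady min-max scheme, implemented via the ``pull-tight'' procedure and the combinatorial extraction of an almost-minimizing min-max sequence, and then to graft onto it a boundary regularity analysis that uses White's curvature estimate together with the convexity Assumption \ref{a:(C)}. Throughout, both the constrained and unconstrained cases are handled in parallel, since the only essential difference lies in the admissible test deformations: in the unconstrained case we allow arbitrary ambient isotopies, whereas in the constrained case we restrict to those fixing $\gamma$ pointwise.

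\medskip

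\textbf{Pull-tight and stationarity.} Starting from a minimizing sequence $\{\{\Sigma_t\}^\ell\}\subset X$, I would first apply a pull-tight deformation, parametrized by $t\in\P$ and obtained by flowing along smooth vector fields (tangent to $\partial \M$ in the unconstrained case, and vanishing on a neighborhood of $\gamma$ in the constrained case), to force every varifold subsequential limit of slices $\Sigma^\ell_{t_\ell}$ with $\haus^n(\Sigma^\ell_{t_\ell})\to m_0(X)$ to be stationary for the corresponding class of variations. The mountain-pass condition $m_0(X)>bM_0(X)$ is used here precisely to ensure that for $\ell$ large the parameters $t_\ell$ lie in a compact subset of the interior of $\P$, so that boundary slices (which are held fixed by the homotopy) do not interfere.

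\medskip

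\textbf{Almost-minimizing slices.} Next I would run the standard Pitts combinatorial argument: if no min-max sequence were almost-minimizing in sufficiently small annuli, one could locally shrink each slice and splice the local deformations together, using the finite covering dimension of $\P$ and the gap $m_0(X)-bM_0(X)>0$ to contradict the definition of $m_0(X)$. This produces a min-max sequence whose limit varifold $V=\sum_i c_i [\Gamma_i]$ is both stationary and almost-minimizing on small annuli centered at every point, including points on $\partial\M$.

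\medskip

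\textbf{Interior regularity.} On $\M\setminus\partial\M$, I would invoke the Schoen--Simon / Pitts interior regularity theorem for stationary, almost-minimizing, integer-rectifiable varifolds to conclude that $V$ is, away from $\partial\M$, supported on smooth embedded minimal hypersurfaces $\Gamma_i$ with $\dim\mathrm{Sing}(\Gamma_i)\le n-7$, counted with integer multiplicities $c_i$. Note that Remark \ref{r:differente_1} forces us to work with genuine integral currents/varifolds rather than reduced boundaries, hence multiplicities $c_i>1$ must be allowed at this stage.

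\medskip

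\textbf{Boundary regularity.} This is the main obstacle and where White's estimate enters. In case (a), a doubling/reflection across $\partial\M$ (legitimate because the free-boundary stationarity matches orthogonal meeting) reduces boundary regularity to interior regularity on the doubled manifold, yielding smoothness up to $\partial\M$ with orthogonal contact. In case (b) one proceeds differently: near any $p\in\gamma$ White's curvature estimate, which exploits Assumption \ref{a:(C)} and the almost-minimizing property on small half-annuli straddling $\gamma$, gives a uniform $C^{1,\alpha}$ bound on the $\Gamma_i$ up to $\gamma$, and a standard elliptic bootstrap then upgrades this to smoothness. Moreover, by strict convexity of $\partial\M$ and the maximum principle / barrier argument, no $\Gamma_i$ can touch $\partial\M$ away from $\gamma$; hence $\mathrm{Sing}(\Gamma_i)\cap\partial\M=\emptyset$ and the boundary of the limit, as an integral current, equals $\gamma$ (with the given orientation).

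\medskip

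\textbf{Multiplicity one at $\gamma$.} Finally, the identity $\sum c_i \partial\Gamma_i=\gamma$ in the sense of currents, combined with the local $C^{1,\alpha}$ regularity up to $\gamma$, forces any $\Gamma_i$ meeting $\gamma$ to do so with multiplicity $c_i=1$: two sheets of the same orientation would give an excess boundary, while two sheets of opposite orientation would cancel and, together with smoothness up to the boundary, would allow an area-decreasing perturbation contradicting the almost-minimizing property (this is again where convexity provides the crucial one-sided barrier). The components with $\partial\Gamma_i=\emptyset$ are unconstrained by this rigidity and may carry higher multiplicities, exactly as in the interior closed theory. The delicate point throughout is the boundary step via White's estimate; everything else is an adaptation of \cite{dl-t} to the parametrized, boundary setting.
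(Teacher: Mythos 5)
Your outline correctly identifies the three standard phases (pull-tight, Almgren--Pitts combinatorics yielding an almost-minimizing min-max sequence, interior regularity via Schoen--Simon/Pitts), and your high-level intuition for the role of White's curvature estimate and of the convexity assumption is in the right direction. However, there is a genuine structural gap in the boundary regularity step, which is the heart of the theorem.

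You write that near $p\in\gamma$ ``White's curvature estimate $\ldots$ gives a uniform $C^{1,\alpha}$ bound on the $\Gamma_i$ up to $\gamma$'', invoking the almost-minimizing property directly. But Theorem \ref{t:boundary_est} is an \emph{a priori} estimate for an object that is already known to be a stable, smooth (away from a small singular set), minimal hypersurface lying in a wedge; it says nothing about a general almost-minimizing varifold, whose structure near $\gamma$ is exactly what one is trying to determine. The crucial missing bridge is the \emph{replacement} construction of Section \ref{s:replacements}: one uses the almost-minimizing property in annuli to replace the varifold locally by the solution of a homotopic Plateau problem (Lemma \ref{replacements first step lemma} and Corollary \ref{c:regularity}), whose minimizers are \emph{honest} area minimizers in small balls, hence stable and regular up to the boundary by Allard's theory; only then can White's curvature estimate enter, packaged inside the Schoen--Simon-type compactness Theorem \ref{t:SS-bdry}, to control the limit of the replacements. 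The wedge property (Lemma \ref{wedge property}) is also an essential prerequisite to apply that compactness theorem, and you do not address how the varifold is confined to a uniform wedge near $\gamma$. Skipping replacements entirely leaves no way to obtain stability, and without stability the curvature estimate does not apply.

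A similar issue affects your multiplicity-one argument at $\gamma$. The cancellation/excess-boundary heuristic is plausible, but the actual mechanism in the paper is that the replacement surfaces carry multiplicity one at $\partial\M$ (Corollary \ref{c:regularity}, via Allard's boundary regularity with $\partial Z^j = \gamma$), this is preserved under the constrained compactness Theorem \ref{t:SS-bdry}, and then Allard's boundary regularity theorem applies to the original varifold once the tangent cones are shown to be multiplicity-one half-hyperplanes (Lemma \ref{tangent cones}). Your sketch never establishes rectifiability of $V$ at $\partial\M$ or the structure of its tangent cones; the paper needs Lemma \ref{c:gamma_meas_0} and the whole tangent cone analysis of Section \ref{s:vartan} for that. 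In the unconstrained case, your proposed global doubling is also not quite right: the doubled manifold is only $C^{1,1}$ unless $\partial\M$ is totally geodesic, so reflection is a legitimate tool only at the infinitesimal (tangent cone) level, which is indeed how the paper, following Gr\"uter--Jost, uses it.
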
\

Our main concern is in fact the case (b), because the regularity at the boundary requires much more effort. The regularity at the boundary for the case (a) is instead much more similar to the usual interior regularity for minimal surfaces and for this reason we will not spend much time on it but rather sketch the needed changes in the arguments. As an application of the main theorem we give the following two interesting corollaries. 

\begin{corol}\label{c:main1}
Under the assumptions above there is always a nontrivial embedded minimal hypersurface $\Gamma$ in $\M$, meeting the boundary
$\partial \M$ orthogonally, with ${\rm dim}\, ({\rm Sing}\, (\Gamma)) \leq n-7$.
\end{corol}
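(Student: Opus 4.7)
The plan is to deduce the corollary from Theorem \ref{t:main}(a) by constructing an unconstrained one-parameter sweepout of $\M$ whose homotopy class satisfies the mountain-pass condition \eqref{mountain pass family}. Fix a Morse function $f\colon \M\to [0,1]$ with a unique interior minimum at $p_0$ (at level $0$) and a unique interior maximum at $p_1$ (at level $1$), chosen so that $f|_{\partial\M}$ is also Morse. Set $\P=[0,1]$ and $\Gamma_t:=f^{-1}(t)$: away from a finite singular set $S_t$ (the critical points of $f$ and of $f|_{\partial\M}$ at level $t$), each $\Gamma_t$ is a smooth oriented hypersurface with $\partial \Gamma_t\subset \partial\M$, and properties (s1)-(s3) follow routinely from the implicit function theorem. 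This defines an unconstrained family; since $\Gamma_0=\{p_0\}$ and $\Gamma_1=\{p_1\}$ are points, the homotopy class $X$ of this sweepout satisfies $bM_0(X)=0$.

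The heart of the proof is the width bound $m_0(X)>0$. For the base family the natural invariant is the continuous map $t\mapsto |\{f<t\}|$, which runs from $0$ to $\Vol(\M)$. Given any $\{\Sigma_t\}\in X$ obtained through a homotopy $\{\Lambda_{t,s}\}$, I would propagate this open-set side-data along the homotopy to obtain a companion Caccioppoli family $\{\Omega_t\}$ with $\partial^{*}\Omega_t\subset \Sigma_t\cup \partial\M$, $\Omega_0=\emptyset$, $\Omega_1=\M$, and $t\mapsto|\Omega_t|$ continuous. The intermediate value theorem then gives $t^{*}$ with $|\Omega_{t^{*}}|=\Vol(\M)/2$, and the relative isoperimetric inequality on $\M$ yields $\haus^n(\Sigma_{t^{*}})\geq W(\M)>0$ for a constant depending only on $(\M,g)$ (for which convexity of $\partial\M$ via Assumption \ref{a:(C)} is useful). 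This gives $m_0(X)\geq W(\M)>0=bM_0(X)$ and verifies \eqref{mountain pass family}.

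Theorem \ref{t:main}(a) then produces a min-max sequence varifold-converging to $V=\sum_i c_i\Gamma_i$ with each $\Gamma_i$ a compact embedded minimal hypersurface meeting $\partial\M$ orthogonally and $\dim\Sing(\Gamma_i)\leq n-7$. Since $\|V\|(\M)=m_0(X)>0$, at least one $\Gamma_i$ is nontrivial, providing the desired hypersurface. The principal obstacle is the width step: as stressed in Remark \ref{r:differente_1}, an arbitrary family in the homotopy class does not carry its open-set side-data for free, and reconstructing a continuous Caccioppoli companion along an arbitrary homotopy — while conceptually clear — requires some care with orientations and with the passage to the integral currents with higher multiplicities hinted at in that remark. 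The rest is essentially bookkeeping.
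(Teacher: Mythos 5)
Your strategy is the same as the paper's: sweep out $\M$ by level sets of a Morse function whose restriction to $\partial \M$ is also Morse, take the homotopy class $X$ as the min-max datum, note $bM_0(X)=0$, establish $m_0(X)>0$ via an isoperimetric lower bound on a slice whose enclosed $(n+1)$-volume equals $\tfrac12\Vol(\M)$, and invoke Theorem~\ref{t:main}(a). So the overall route is correct.

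The gap is exactly where you flag it, and it is a real one rather than mere bookkeeping. You propose to "propagate open-set side-data along the homotopy to obtain a companion Caccioppoli family $\{\Omega_t\}$ with $\partial^*\Omega_t\subset \Sigma_t\cup\partial\M$." But Remark~\ref{r:differente_1} is precisely about the fact that the generalized families in this paper are \emph{not} assumed to carry underlying Caccioppoli sets, and an arbitrary homotopy within $X$ will not produce them: the swept region can fold over itself and acquire multiplicity, so there is in general no Caccioppoli set $\Omega_t$ whose essential boundary lies in $\Sigma_t\cup\partial\M$ while also interpolating continuously from $\emptyset$ to $\M$. The paper's fix is to drop the Caccioppoli constraint entirely: since each family is locally the image of a smooth map $\Phi\colon U\times[a,b]\to\M$ away from the finite singular sets, one integrates the sweepout up to time $t$ to obtain a canonical integral $(n+1)$-current $\Omega'_t$ (possibly with multiplicity) satisfying $\partial\Omega'_t=\Sigma'_t$, observes that the homotopy makes $s\mapsto\Omega_{1,s}$ and $s\mapsto\Omega_{0,s}$ continuous curves in the discrete set $\{k\llbracket\M\rrbracket:k\in\mathbb Z\}$ (by the constancy theorem on a connected $\M$), hence constant, so $\Omega'_1=\llbracket\M\rrbracket$ and $\Omega'_0=0$; the intermediate-value and isoperimetric arguments then run for currents rather than for sets. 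You gesture at this ("passage to the integral currents with higher multiplicities") but do not carry it out, and the argument as literally written with Caccioppoli sets would break at that step. Everything else — the construction of the base sweepout, $bM_0(X)=0$, and the application of Theorem~\ref{t:main}(a) — matches the paper.
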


Note that the corollary above does not necessarily imply that $\Gamma$ has nonempty boundary: we do not exclude that $\Gamma$ might be a closed minimal surface. On the other hand, if $\Gamma$ has nonempty boundary, then it is contained in $\partial \M$ and any connected component of $\Gamma$ is thus a nontrivial solution of the free boundary problem. Therefore the existence of such nontrivial solution is guaranteed by the following

\begin{assumption}\label{a:stronger}
$\M$ does not contain any nontrivial minimal closed hypersurface $\Sigma$ embedded and smooth except for a singular set ${\rm Sing}\, (\Sigma)$ with ${\rm dim}\, ({\rm Sing}\, (\Sigma))\leq n-7$. 
\end{assumption} 

Note that the property above holds if $\M$ satisfies some stronger convexity condition than Assumption \ref{a:(C)}: for instance if there is a point $p$ such that $\M\setminus \{p\}$ can be foliated with convex hypersurfaces, then it follows from the maximum principle. In particular both the Assumptions \ref{a:(C)} and \ref{a:stronger} are satisfied by any bounded convex subset of the Euclidean space, or by any ball of a closed Riemannian manifold with radius smaller than the convexity radius. 

Likewise, under the very same assumptions we can conclude the following Morse-theoretical result for the Plateau's problem. 

\begin{corol}\label{c:main2}
Let $\M$ be a smooth Riemannian manifold satisfying Assumptions \ref{a:(C)} and \ref{a:stronger} and let $\gamma\subset \partial \M$ be a smooth, oriented, closed $(n-1)$-dimensional submanifold. Assume further that:
\begin{itemize}
\item[(i)] there are two distinct  smooth, oriented, minimal embedded hypersurfaces $\Sigma_0$ and $\Sigma_1$ with $\partial \Sigma_0 = \partial \Sigma_1 = \gamma$ which are strictly stable, meet only at the boundary and bound some open domain $A$ (in particular $\Sigma_0$ and $\Sigma_1$ are homologous).
\end{itemize}
Then there exists a third distinct embedded minimal hypersurface $\Gamma_2$ with $\partial \Gamma_2 = \gamma$ such that ${\rm dim}\, ({\rm Sing}\, (\Gamma_2))\leq n-7$ and ${\rm Sing}\, (\Gamma_2) \cap \partial \M =\emptyset$. 
\end{corol}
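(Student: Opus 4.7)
The plan is to apply Theorem \ref{t:main}(b) to a suitable mountain-pass family joining $\Sigma_0$ and $\Sigma_1$. First, using that $\Sigma_0\cup\Sigma_1$ bounds $A$ and the two pieces meet only along $\gamma$, I would produce a smooth isotopy $\phi_t\colon\Sigma_0\to\overline A$, $t\in[0,1]$, with $\phi_0=\mathrm{id}$, $\phi_1(\Sigma_0)=\Sigma_1$, $\phi_t|_\gamma=\mathrm{id}$ for every $t$, and $\phi_t(\Sigma_0\setminus\gamma)\subset A$ for $0<t<1$; set $\Sigma_t:=\phi_t(\Sigma_0)$. This is a smooth family constrained by $\gamma$ in the sense of Definition \ref{homotopies}. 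Let $X$ be its homotopy class, so $\P=[0,1]$. Since homotopies in the sense of Definition \ref{homotopies} fix the boundary slices, $bM_0(X)=\max(\haus^n(\Sigma_0),\haus^n(\Sigma_1))$.

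The analytic core is the mountain-pass gap $m_0(X)>bM_0(X)$. Strict stability of $\Sigma_i$ means the Jacobi operator with zero Dirichlet condition on $\gamma$ has strictly positive first eigenvalue; this gives the following quantitative version, which I would prove via the standard second-variation expansion together with a compactness argument: there exist open neighborhoods $\mathcal U_0,\mathcal U_1$ of $\Sigma_0,\Sigma_1$ (in Hausdorff/varifold distance) with $\overline{\mathcal U_0}\cap\overline{\mathcal U_1}=\emptyset$, and a constant $\delta>0$, such that every smooth embedded hypersurface $\Sigma'$ with $\partial\Sigma'=\gamma$ and $\Sigma'\in\partial\mathcal U_i$ satisfies $\haus^n(\Sigma')\ge\haus^n(\Sigma_i)+\delta$. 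For any $\{\Sigma'_t\}\in X$, continuity in $t$ together with $\Sigma'_0\in\mathcal U_0$ and $\Sigma'_1\in\mathcal U_1$ forces the family to cross both $\partial\mathcal U_0$ and $\partial\mathcal U_1$, yielding $\max_t\haus^n(\Sigma'_t)\ge\max(\haus^n(\Sigma_0),\haus^n(\Sigma_1))+\delta=bM_0(X)+\delta$, and infimizing gives $m_0(X)\ge bM_0(X)+\delta$.

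Applying Theorem \ref{t:main}(b) then produces a varifold limit $\sum_i c_i\Gamma_i$ with $\Sing(\Gamma_i)\cap\partial\M=\emptyset$, $\dim\Sing(\Gamma_i)\le n-7$, $\sum_i\partial\Gamma_i=\gamma$, and $c_i=1$ whenever $\partial\Gamma_i\ne\emptyset$. Any component with empty boundary would be a nontrivial closed embedded minimal hypersurface (smooth outside a singular set of dimension $\le n-7$) in $\M$, contradicting Assumption \ref{a:stronger}; hence every $c_i=1$, and $\Gamma_2:=\bigsqcup_i\Gamma_i$ is an embedded minimal hypersurface with $\partial\Gamma_2=\gamma$, $\dim\Sing(\Gamma_2)\le n-7$, and $\Sing(\Gamma_2)\cap\partial\M=\emptyset$. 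Since total varifold mass is preserved under weak$^*$ convergence on the compact manifold $\M$ and $\haus^n(\Sigma^\ell_{t_\ell})\to m_0(X)$, we obtain $\haus^n(\Gamma_2)=m_0(X)>\max(\haus^n(\Sigma_0),\haus^n(\Sigma_1))$, so $\Gamma_2$ is distinct from both $\Sigma_0$ and $\Sigma_1$. The main difficulty I anticipate is the quantitative strict-stability step: the neighborhoods $\mathcal U_i$ must be fine enough that the area lower bound on $\partial\mathcal U_i$ holds for arbitrary smooth competitors (not only those close in a strong topology) and simultaneously coarse enough that the continuity conditions (s2)--(s3) of competing families force them across $\partial\mathcal U_i$; reconciling these requires translating (s2)--(s3) into a convergence regime where the second-variation expansion at $\Sigma_i$ remains valid.
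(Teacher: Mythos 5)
Your overall architecture is right — construct a constrained $1$-parameter sweepout joining $\Sigma_0$ to $\Sigma_1$, verify the mountain-pass gap \eqref{mountain pass family}, invoke Theorem \ref{t:main}(b), and use Assumption \ref{a:stronger} to force $c_i=1$ and $\partial\Gamma_i\neq\emptyset$ — but there are three genuine gaps relative to what the paper actually has to do.

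First, your sweepout construction via an ambient isotopy $\phi_t$ with $\phi_1(\Sigma_0)=\Sigma_1$ requires $\Sigma_0$ and $\Sigma_1$ to be diffeomorphic, which is not among the hypotheses. In general the topology of the slices must change along the sweepout. This is exactly why Definition (s1) allows a finite singular set $S_t$, and why the paper (Lemma \ref{sweepout}) builds the family as the level sets of a carefully constructed Morse function on $\overline A$, rather than by pushing $\Sigma_0$ forward; the Morse critical points give the isolated slice singularities where the topology changes.

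Second, your quantitative strict-stability step — ``there exist neighborhoods $\mathcal U_i$ and $\delta>0$ such that any smooth competitor on $\partial\mathcal U_i$ has area $\geq\haus^n(\Sigma_i)+\delta$, proved via second variation plus compactness'' — is where the real mathematical difficulty lies, and a direct second-variation expansion does not close it. The second variation controls $C^2$-small graphical perturbations of $\Sigma_i$, but a competing family in the sense of (s2)--(s3) only has to stay close to $\Sigma_i$ in a weak topology near $t=0$ (and a slice close in varifold/Hausdorff distance need not be a small normal graph at all). The paper instead invokes White's strong minimax theorem \cite{white-strong}, which gives $\Sigma_i$ as the \emph{strict} unique mass minimizer in its homology class among currents supported in a sufficiently small neighborhood $U$ of $\Sigma_i$, and then proves a flat-norm version (Lemma \ref{increase}): any integral current $\Gamma$ with $\partial\Gamma=\gamma$ and $\fl(\Gamma-\llbracket\Sigma_i\rrbracket)=\epsilon$ has $\mass(\Gamma)\geq\mass(\llbracket\Sigma_i\rrbracket)+f(\epsilon)$. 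Passing from White's ``supported in $U$'' hypothesis to the flat-norm hypothesis requires a cut-and-paste argument with slices and an isoperimetric inequality; this is the nontrivial content you flagged as a worry but did not supply. With Lemma \ref{increase} in hand the gap \eqref{mountain pass family} follows from continuity of $t\mapsto\fl(\Gamma_t-\llbracket\Sigma_0\rrbracket)$.

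Third, your distinctness argument only shows $\haus^n(\Gamma_2)>\max(\haus^n(\Sigma_0),\haus^n(\Sigma_1))$. If $\Sigma_0$ and $\Sigma_1$ are disconnected, the min-max output could a priori be a \emph{recombination} of some components of $\Sigma_0$ together with some of $\Sigma_1$, which would still bound $\gamma$, have all multiplicities $1$, and have larger area — yet introduce no new minimal hypersurface. The paper handles this by a maximality trick: among all minimal hypersurfaces bounding $\gamma$ that are unions of components of $\Sigma_0$ and $\Sigma_1$, pick one $\Gamma_0$ of maximal volume, let $\Gamma_1$ be the complementary union, and rerun the min-max with the pair $(\Gamma_0,\Gamma_1)$; since the new output has volume strictly larger than that of $\Gamma_0$, it cannot be entirely built from components of $\Sigma_0\cup\Sigma_1$. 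Your proposal omits this step, and without it the conclusion ``third distinct minimal hypersurface'' is not established in the disconnected case.
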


The corollary above asks for two technical assumptions which are not really natural:
\begin{itemize}
\item $\Sigma_0$ and $\Sigma_1$ intersect only at the boundary;
\item they are regular {\em everywhere}.
\end{itemize}
We use both to give an elementary construction of a $1$-dimensional sweepout which ``connects'' $\Sigma_0$ and $\Sigma_1$ (i.e. a one-parameter family 
$\{\Sigma_t\}_{t\in [0,1]}$), but by taking advantage of more avdanced techniques in geometric measure theory and algebraic topology - as for instance Pitts' approach via discretized faimly of currents - it should suffice to assume that $\Sigma_0$ and $\Sigma_1$ are homologous and that the dimensions of their singular sets do not exceed $n-7$.

The smoothness  enters however more crucially in showing that any sweepout connecting 
$\Sigma_0$ and $\Sigma_1$ must have a ``slice'' with $n$-dimensional volume larger than $\max \{\haus^n (\Sigma_0), \haus^n (\Sigma_1)\}$.
It is needed to take advantage of an argument of White \cite{white-strong}, where regularity is a key ingredient. The following local minimality property could replace strict stability and smoothness:
\begin{itemize}
\item For each $i\in \{0,1\}$ there is a $\varepsilon >0$ such that any current $\Gamma$ with boundary $\gamma$ which is distinct from $\Sigma_i$ and at flat distance smaller than $\varepsilon$ from $\Sigma_i$ has mass strictly larger than that of $\Sigma_i$. 
\end{itemize}

\subsection{Acknowledgements} Both authors acknowledge the support of the Swiss National Foundation. Moreover, they wish to express their gratitude to the anonymous referee who has pointed out several mistakes in an earlier version of the paper. 

\section{Notation and outline of the paper} 

\subsection{Outline of the paper}  The Corollaries \ref{c:main1} and \ref{c:main2} will be shown in the very last section of the paper (cf. Section \ref{s:final}). The remaining part will instead be entirely devoted to prove Theorem \ref{t:main}.

First of all in the Section \ref{s:pull-tight} we will introduce two adapted classes of stationary varifolds
for the constrained and unconstrained case, which are a simple variants of the usual notion of stationary varifold introduced by
Almgren. Then in Proposition \ref{p:pull-tight} we prove the existence of a suitable sequence of families $\{\{\Sigma_t\}^\ell\}$ in $X$ with the property that each min-max sequence generated by it converges to a stationary varifold: the argument is a straightforward adaptation of Almgren's pull-tight procedure used in Pitts' book and in several other later references (indeed we follow the presentation in \cite{c-dl})).

In the sections \ref{s:AM} and \ref{s:comb} we adapt the notion of almost minimizing surfaces used in \cite{dl-t} to the case at the boundary and we ultimately prove the existence of a min-max sequence which is almost minimizing in any sufficiently small annulus centered at any given point, cf. Proposition \ref{a.m. in annuli}. The arguments follow closely those used by Pitts in \cite{pitts} and a trick introduced in \cite{dl-t} to avoid Pitts' discretized families. 
The min-max sequence generated in Proposition \ref{a.m. in annuli} is the one for which we will conclude the properties claimed in Theorem \ref{t:main}. Indeed the interior regularity follows from the arguments of Pitts (with a suitable adaptation by Schoen and Simon to the case $n\geq 6$) and we refer to \cite{dl-t} for the details. The remaining sections are thus devoted to the boundary regularity. 

First of all in Section \ref{s:boundary} we collect several tools about the boundary behavior of stationary varifolds (such as the monotonicity formulae in both the constrained and uncostrained case and a useful maximum principle in the constrained one), but more importantly, we will use a very recent argument of White to conclude suitable curvature estimates at the boundary in the constrained case, under the assumption that the minimal surface meets $\partial \M$ transversally in a suitable (quantified) sense, cf. Theorem \ref{t:boundary_est}. 

In Section \ref{s:stability} we recall the celebrated Schoen-Simon compactness theorem for stable minimal hypersurfaces in the interior and 
its variant by Gr\"uter and Jost in the free boundary case. Moreover, we combine the Schoen-Simon theorem with Theorem \ref{t:boundary_est} to conclude a version of the Schoen-Simon compactness theorem for stable hypersurfaces up to the boundary, when the latter is a fixed given smooth $\gamma$ and the surfaces meet $\partial \M$ transversally. 
In Section \ref{s:replacements} we modify the proof in \cite{dl-t} to construct replacements for almost minimizing varifolds. The main difficulty and contribution here is to preserve the boundary conditions for the surfaces in the constrained case, throughout the various steps of the construction. Following the arguments in \cite{dl-t}, we analogously define the $(2^{m+2}j)^{-1}$ - homotopic Platau problem for $j \in \NN$, and we conclude that in sufficiently small balls, the corresponding minimizers are actually minimizing for the (usual) Plateau problem. Hence their regularity (with no singular points!) at the boundary will follow from Allard's boundary regularity in \cite{allard-bdry}. Finally the tools of Section \ref{s:stability} and Section \ref{s:replacements} are used in Section \ref{s:bdry_reg} to conclude the boundary regularity of the minmax surface and hence complete the proof of Theorem \ref{t:main}.

\subsection{Notation} Since we are always dealing with manifolds $\M$ which have a nonempty boundary, as it is customary an open subset $U$ of $\M$ can contain a portion of $\partial \M$. For instance, if $\M$ is the closed unit ball in $\mathbb R^{n+2}$, $P$ the north pole $(0,0,\ldots, 1)$ and $\tilde{U}$ a neighborhood of $P$ in $\mathbb R^{n+2}$, then $\tilde U\cap \M$ is, in the relative topology, 
an open subset of $\M$.  Hence, although we will denote by ${\rm Int}\, (\M)$ the set $\M\setminus \partial \M$, the latter \emph{is not} the topological interior of $\M$ and our notation is slightly abusive.
In the following table we present notations, definitions and conventions used
consistently throughout the paper:\\
\begin{center}
\begin{tabular}{ l@{\hskip 0.5in} l }
 $B_\rho(x),\,\overline{B}_\rho(x)$ & open and closed geodesic balll of radius $\rho$ and center $x$ in $\M$;  \\ 
$\partial B_\rho(x)$ & geodesic sphere of radius $\rho$ and center $x$ in $\M$\\
 ${\rm Int}\, (U)$ & ``interior'' of the open set $U$, namely $U\setminus \partial \M$;\\
 Inj($\M$) & injectivity radius of $\M$;\\
 $\text{An}(x,\tau,t)$ & open annulus $B_t(x)\setminus \overline{B}_{\tau}(x)$;\\
 $\mathcal{AN}_{r}(x)$ & the set $\{\text{An}(x,\tau,t) \text{ with } 0<\tau<t<r\}$;\\
 diam($G$) & diameter of a subset $G \subset \M$;
\end{tabular}
\end{center}
\begin{center}
\begin{tabular}{ l@{\hskip 0.5in} l }
 $\haus^k$ & $k$-dim Hausdorff measure in $\M$;\\
 $\omega_k$ & volume of the unit ball in $\RR^k$;\\
$\nu$ & unit normal to $\partial\M$, pointing {\em inwards}\\
$\supp$ & support (of a function, vector field, varifold, current, etc.);\\
$\mathfrak{X}_c (U)$ & smooth vector fields $\chi$ with $\supp (\chi)\subset U$\\
& (note that such $\chi$ do not necessarily vanish on $\partial \M$!);\\
 $\mathfrak{X}_c^0(U)$ & $\chi\in \mathfrak{X}_c (U)$ which vanish on $\partial \M$;\\
 $\mathfrak{X}^t_c(U)$ & $\chi\in \mathfrak{X}_c (U)$ tangent to $\partial \M$, i.e. $\chi\cdot \nu =0$;\\ 
$\mathfrak{X}^-_c (U)$ & $\chi\in\mathfrak{X}_c (U)$ pointing inwards at $\partial \M$ (i.e. $\chi \cdot \nu \geq 0$);\\
 $\va^k(U), \va(U)$ & vector space of $k$-varifolds in $U$;\\
 $G^k(U),G(U)$ & Grassmanian bundle of unoriented $k$-planes on $U$;\\
$\llbracket S \rrbracket$ & (rectifiable) current induced by the $k$-dimensional \\& submanifold $S$ (taken with multiplicity 1);\\ 
& the same notation is used for the corresponding rectifiable varifold\\
$\mass(S)$ & mass norm of a current $S$;\\
$\fl(S)$ & flat norm of a current $S$;\\
$\mathrm{v}(R,\theta)$ & varifold induced by the $k$-rectifiable set $R$, with multiplicity $\theta$;\\
 $W_0$ & the set $\{(x_1, \ldots , x_{n+1})\in \mathbb R^{n+1} : |x_{n+1}| \leq x_1 \tan \theta \}$ with $\theta\in ]0, \frac{\pi}{2}[$\\ 
& which we will refer to as the {\em canonical wedge with opening angle $\theta$}.
\end{tabular}
\end{center}

Note that all of the different spaces of vector fields introduced above (namely $\mathfrak{X}_c (U)$, $\mathfrak{X}_c^0(U)$, $\mathfrak{X}^t_c(U)$ and $\mathfrak{X}^-_c (U)$) coincide when $U\cap \partial \M = \emptyset$. Otherwise  we have the inclusions
\[
\mathfrak{X}_c^0(U)\subset \mathfrak{X}^t_c(U)\subset \mathfrak{X}^-_c (U)\subset \mathfrak{X}_c (U)\, ,
\]
which are all proper.  Additional clarifications on the differences are provided in the next section. For the notation and terminology about currents and rectifiable varifolds we will follow \cite{simon-gmt}. However we warn the reader that, unless we specify that a given varifold is integer rectifiable, in general it will be not and will be understood as a suitable measure on the space of Grassmanians, according to \cite[Chapter 8]{simon-gmt}. 

\section{Existence of stationary varifolds}\label{s:pull-tight}

The first step in the min-max construction consists of finding a {\it nice} minimizing sequence having the property that {\it any} min-max sequence belonging to it converges to a stationary varifold. From now on we will denote the subset of stationary varifolds by $\va_s (\M)$ (or simply $\va_s$): the latter is the space of varifolds $V$ such that $\delta V (\chi) =0$ for any vector field $\mathfrak{X}_c^0 (\M)$, where $\delta V$ denotes, as usual, the first variation of $V$ (we refer to \cite{simon-gmt} for the relevant definition). We will however consider two slightly smaller subclasses of $\va_s$, depending on whether we are dealing with the constrained or unconstrained problem. To get an intuition consider the one-parameter families of smooth maps $\Phi_\tau$ generated by vector fields in $\mathfrak{X}_c$ following their flows and observe first that such family of smooth maps are keep fix any point $x\not \in U$. Concerning the points in $U$ (more precisely those in $U\cap \partial\M$), we have the following different behaviors:
\begin{itemize}
\item[(C)] If $\chi\in \mathfrak{X}_c^0 (U)$ then, for every $\tau$, $\Phi_\tau$ is a diffeomorphism of $\M$ onto itself which is the identity on $\partial \M$;
\item[(T)] If $\chi\in \mathfrak{X}_c^t (U)$ then, for every $\tau$, $\Phi_\tau$  is a diffeomorphism of $\M$ onto itself which maps $\partial \M \cap U$ {\em onto} itself;
\item[(I)] If $\chi\in \mathfrak{X}^-_c (U)$ then $\Phi_\tau$ is a well-defined map for $\tau\geq 0$, but not necessarily for $\tau<0$; moreover, for each $\tau\geq 0$, $\Phi_\tau$ is a diffeomorphism of $\M$ with $\Phi_\tau (\M)\subset \M$, but in general $\Phi_\tau (\M)$ will be a proper subset of $\M$, i.e. $\Phi_\tau$ rather than mapping $\partial \M$ into itself might ``push it inwards''.
\end{itemize}
It is thus clear that $\mathfrak{X}_c^t$ is a natural class of variations for the uncostrained problem, whereas a vector field in $\mathfrak{X}^-_c$ gives a natural (one-sided) variation for the constrained problem if we impose that it vanishes on the fixed boundary $\gamma$. This motivates the following

\begin{definition}\label{d:vairations}
In the ``constrained'' min-max problem, where the boundary constraint is $\gamma$, we introduce the set $\va^c_s (\M, \gamma)$ (or
shortly $\va^c_s (\gamma)$) which consists of those varifolds satisfying the condition
\begin{equation}\label{e:add_constr}
\delta V (\chi) \geq 0 \qquad \mbox{{ for all $\chi \in \mathfrak{X}^-_c (\M)$ which vanish on $\gamma$.}}
\end{equation}
In the ``unconstrained'' min-max problem we introduce the set $\va_s^u$ which consists of those varifolds which are stationary for all variations in $\chi\in \mathfrak{X}_c^t (\M)$:
\begin{equation}\label{e:add_unconstr}
\delta V (\chi) = 0 \qquad \mbox{for all $\chi \in \mathfrak{X}^t_c (\M)$.}
\end{equation}
\end{definition}

Clearly, since $\mathfrak{X}^0_c (\M)\subset \mathfrak{X}^t_c (\M)$, $\va_s^u$ is a subset of the stationary varifolds $\va_s$. Note moreover that, if $\chi \in \mathfrak{X}^0_c (\M)$, { then both $\chi$ and $-\chi$ belong to $\mathfrak{X}^-_c (\M)$ and vanish on $\gamma$}: therefore we again conclude $\va_s^c (\gamma) \subset \va_s$. 

For the purpose of this section, we will consider the subset $\va(\M, 4m_0)$ of varifolds with mass bounded by $4m_0 = 4 m_0 (X)$ (the latter being the minmax value of Theorem \ref{t:main}). Recall that the weak* topology on this set is metrizable, and we choose a metric $\mathcal{D}$ which induces it.  We are now ready to state the main technical proposition of this section which, as already mentioned, will be proved using the classical pull-tight procedure of Almgren. 

\begin{propos}\label{p:pull-tight}
Let $X$ be a homotopically closed set of smooth families parametrized \mbox{by $\P$}, such that \eqref{mountain pass family} is satisfied. Then:
\begin{itemize}
\item[(C)] In the problem constrained by $\gamma$ there exists a minimizing sequence $\{\{\Gamma_t\}^\ell\} \subset X$ such that, if $\{\Gamma^{\ell}_{t_{\ell}}\}$ is a min-max sequence, then $\mathcal{D}(\Gamma^{\ell}_{t_{\ell}}, \va^c_s (\gamma)) \to 0$;
\item[(U)] In the unconstrained problem there exists a minimizing sequence $\{\{\Gamma_t\}^\ell\} \subset X$ such that, if $\{\Gamma^{\ell}_{t_{\ell}}\}$ is a min-max sequence, then $\mathcal{D}(\Gamma^{\ell}_{t_{\ell}}, \va^u_s) \to 0$.
\end{itemize}
\end{propos}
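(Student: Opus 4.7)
The plan is to follow the classical Almgren pull-tight argument as presented in \cite{c-dl}, modified to accommodate the two slightly different classes of admissible variations defined in Definition \ref{d:vairations}. I spell out the constrained case (C); the unconstrained case (U) requires only notational changes (replacing $\mathfrak{X}^-_c$ vanishing on $\gamma$ by $\mathfrak{X}^t_c$, and noting that the latter is a vector space rather than merely a convex cone).

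The first step is to build a continuous assignment $V \mapsto \chi_V$ from $\va(\M, 4m_0)$ into admissible vector fields, with $\|\chi_V\|_{C^1} \leq 1$, such that $\chi_V = 0$ whenever $V \in \va^c_s(\gamma)$ and $\delta V(\chi_V) < 0$ otherwise. For each $V \notin \va^c_s(\gamma)$, by definition of $\va^c_s(\gamma)$ there exists some $\chi \in \mathfrak{X}^-_c(\M)$ vanishing on $\gamma$ with $\delta V(\chi) < 0$; weak* continuity of $W \mapsto \delta W(\chi)$ then yields a neighborhood of $V$ on which this inequality persists. Covering the (open) complement of $\va^c_s(\gamma)$ in the compact metric space $\va(\M, 4m_0)$ by countably many such neighborhoods $U_i$ with corresponding vector fields $\chi_i$, and choosing a continuous partition of unity $\{\phi_i\}$ subordinate to them, I set $\chi_V = \sum_i \phi_i(V) \chi_i$. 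The admissible class is closed under convex combinations with non-negative coefficients, so $\chi_V \in \mathfrak{X}^-_c(\M)$ and still vanishes on $\gamma$, while $\delta V(\chi_V) = \sum_i \phi_i(V)\, \delta V(\chi_i) < 0$ off $\va^c_s(\gamma)$.

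Next, let $\Phi^V_\tau$ denote the flow of $\chi_V$, which by property (I) is well-defined for $\tau \geq 0$ and maps $\M$ into itself. I would choose continuously in $V$ a time $T(V) \geq 0$, vanishing exactly on $\va^c_s(\gamma)$, such that $\mass((\Phi^V_\tau)_\# V) \leq \mass(V) - c(V)\tau$ for $\tau \in [0, T(V)]$, with $c(V) > 0$ outside $\va^c_s(\gamma)$. Given any minimizing sequence $\{\{\Gamma_t\}^\ell\} \subset X$, I then fix a continuous cutoff $\eta : \RR \to [0,1]$ vanishing on $(-\infty, \tfrac{1}{2}(m_0+bM_0)]$ and equal to $1$ on $[m_0-\delta, +\infty)$ for small $\delta > 0$, and deform each slice by
\[
\tilde\Gamma_t^\ell := \Phi^{V_t^\ell}_{\eta(\haus^n(\Gamma_t^\ell))\, T(V_t^\ell)}(\Gamma_t^\ell),
\]
where $V_t^\ell$ is the varifold induced by $\Gamma_t^\ell$. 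Assumption \eqref{mountain pass family} guarantees that all slices with $t\in\partial\P$, and indeed all slices of mass $\leq \tfrac{1}{2}(m_0+bM_0)$, are untouched: hence the deformed family is genuinely homotopic (through the obvious interpolation in the cutoff parameter) to the original one and stays in $X$. In case (C) the boundary condition $\partial\Gamma_t=\gamma\setminus S_t$ is preserved precisely because $\chi_V|_\gamma\equiv 0$; in case (U) the tangency $\chi_V\in\mathfrak{X}^t_c$ preserves $\partial\Gamma_t\subset\partial\M$.

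The main obstacle will be checking that the deformed slices still form a smooth family in the sense of (s1)--(s3): the flow $\Phi^V_\tau$ must depend smoothly enough on $V$ to preserve, on compact subsets of $\M\setminus S_t$, the smoothness of $\Gamma_t$ in $t$, and weak* continuity of $t\mapsto V_t^\ell$ must translate into continuity in the sense of (s2) for $t\mapsto \tilde\Gamma_t^\ell$; this is where the care in constructing $\chi_V$ via the partition of unity pays off. Once this is in place, the conclusion follows by contradiction: if some min-max subsequence of the deformed families converged weakly* to $V^* \notin \va^c_s(\gamma)$, then $T(V^*) > 0$ and the mass would have been strictly decreased by a definite amount on a whole neighborhood of $t_\ell$, contradicting $\max_t \haus^n(\tilde\Gamma_t^\ell) \to m_0$. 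A standard diagonal procedure over a countable exhaustion of $\va(\M,4m_0)\setminus \va^c_s(\gamma)$ by compact sets produces a single minimizing sequence with the property $\mathcal{D}(\Gamma^\ell_{t_\ell}, \va^c_s(\gamma)) \to 0$ along every min-max subsequence.
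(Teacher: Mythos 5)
Your proposal follows the same pull-tight route as the paper, and the key observation that the admissible classes $\mathfrak{X}^-_c(\M)$ vanishing on $\gamma$, resp.\ $\mathfrak{X}^t_c(\M)$, are convex (hence closed under partition-of-unity averages) is exactly what the paper exploits when it carries over the construction of $H_V$ from \cite{c-dl}. However, two places in your sketch need genuine repair before they constitute a proof.

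First, the smoothness obstacle you flag is not resolved by the partition of unity. That construction gives continuity of $V\mapsto\chi_V$ in the weak* topology, but $t\mapsto V^\ell_t$ is only continuous, so $t\mapsto\chi_{V^\ell_t}$ is merely continuous in $t$; the pushed-forward family need not satisfy (s1)--(s3). The paper addresses this by an explicit smoothing of the $t$-dependent vector field $h^\ell_t$, keeping it zero on $\partial\P$, and checking that the mass-drop estimate survives with a harmless factor $\tfrac12$ when the smoothing is close enough. Without this step the deformed $\{\tilde\Gamma^\ell_t\}$ may fall outside $X$.

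Second, the concluding contradiction has a slip: in the definition of $\tilde\Gamma^\ell_t$ the time $T$ is evaluated at the \emph{original} slice $V^\ell_t$, so the positivity of $T(V^*)$ at the weak* limit $V^*$ of the \emph{deformed} slices is not the relevant quantity. The correct route is to extract the weak* limit $W^*$ of the original slices $\Gamma^\ell_{t_\ell}$ (which has mass $m_0$, since $\|\tilde\Gamma^\ell_{t_\ell}\|\le\|\Gamma^\ell_{t_\ell}\|\le m_0(X)+o(1)$ and $\|\tilde\Gamma^\ell_{t_\ell}\|\to m_0$), and then distinguish: if $W^*\in\va^c_s(\gamma)$ then $T(W^*)=0$, hence $V^*=W^*$, contradicting $V^*\notin\va^c_s(\gamma)$; otherwise the flow strictly decreases mass, giving $\|V^*\|<m_0$. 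The paper avoids passing to limits and instead proves a quantitative claim, combining the mass-drop estimate, the uniform lower bound on $b(V)$ for $\|V\|$ near $m_0$ (which uses $m_0(X)>bM_0(X)$), and a continuity estimate $\mathcal{D}(\Sigma^\ell_t,\va^\square_s)\ge\lambda(\mathcal{D}(\Gamma^\ell_t,\va^\square_s))$ relating the original and deformed slices. Your mass-based cutoff $\eta(\haus^n(\cdot))$ is a workable substitute for the paper's $b(V)=\min\{\mathcal{D}(V,\va_\partial),1\}$, and the diagonal procedure at the end of your sketch is unnecessary once either version of the argument is carried out in full.
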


\begin{proof} In what follows we will use $\va_s^c$ in place of $\va_s^c (\gamma)$ for the constrained case. 
{ In order to simplify our discussion, we introduce the notation $\mathfrak{X}^- (U, \gamma)$ for the class of vector fields which belong to $\mathfrak{X}^-_c (U)$ and vanish on $\gamma$}.
We can repeat the first two steps in the proof of \cite[Proposition 4.1]{c-dl} verbatim, the only exception being that we consider vector fields in $\mathfrak{X}^t_c(\M)$ { or $\mathfrak{X}^-(\M, \gamma)$} and thus we replace $\va_s$ with $\va_s^u$ and $\va_s^c$ in the respective cases. Since both these sets of vector fields are convex subsets of $\mathfrak{X} (\M)$, the vector field $H_V$ produced in \cite[Proof of Proposition 4.1, Step 1]{c-dl} will also belong to the same class. This way we obtain a map $\Psi$ as in \cite[Proof of Proposition 4.1, Step 3]{c-dl}, which sends each varifold $V \in \va(\M, 4m_0)$ to a 1-parameter family of maps $\Psi_V:[0,\infty)\times\M \to \M$, and a continuous, strictly increasing function $L:\RR \to \RR$ s.t. $L(0)=0$, with the property that
\begin{equation*}
\text{if } \eta = \mathcal{D}(V,\va^\square_s)>0, \text{ then } ||\Psi_V(1,\cdot)_{\sharp}V||(\M) \leq ||V||(\M) - L(\eta)\, ,
\end{equation*}
where $\square$ is either $u$ or $c$, depending on the case considered. The map $\Psi_V (s, \cdot)$ is given by following the one-parameter family of diffeomorphisms generated by $T (V) H_V$ where the ``speed'' $T$ is a nonnegative continuous function as in \cite[Proof of Proposition 4.1, Step 2] {c-dl} with $\va^\square_s$ replacing the space of all stationary varifolds. In particular $T$ is zero on $\va^\square_s$ and thus $\Psi_V (s, \cdot)_\sharp V = V$ for any varifold in $\va^\square_s$. 
Note moreover that in the unconstrained case $\Psi_V (s, \cdot)$ is a diffeomorphism from $\M$ to itself, whereas in the constrained case it is a diffeomorphism of $\M$ with $\Psi_V (s, \M)$ which however keeps $\gamma$ fixed. 

At this point we diverge slightly from \cite{c-dl} and introduce the set 
\[
\va_{\partial}:=\{ \Xi_t \, | \, t \in \partial \P, \{\Xi_t\} \in X \}\, ,
\] 
which is a closed subset of $\va(\M)$. Note that, according to our notion of homotopy in Definition \ref{homotopies}, this definition is independent of the family $\{\Xi_t\}\in X$ we choose. We define $b(V):=\min\{\mathcal{D}(V,\va_{\partial}),1\}$ for $V \in \va(\M)$, and remark that $b: \va(\M) \to \RR$ is a continuous function. A quick computation as in \cite[Proof of Proposition 4.1, Step 2]{c-dl} yields
\begin{equation}\label{tightening}
||\Psi_V(b(V),\cdot)_{\sharp}V||(\M) \leq ||V||(\M) - b(V)L(\mathcal{D}\big(V,\va^{\square}_{s})\big).
\end{equation}
We now renormalize the diffeomorphisms $\Psi_V$ by setting
\begin{equation*}
\Omega_V(s,\cdot) = \Psi_V (b(V)s,\cdot), \quad s\in [0,1]\, .
\end{equation*}
We proceed exactly the same as in the rest of \cite[Proof of Proposition 4.1, Step 3]{c-dl}, only with $\Omega$ instead of $\Psi$, and a different parameter space $\P$. Hence, if we start with a sequence of families $\{\{\Sigma_t\}^{\ell}\} \subset X$ such that $\max_{t\in\P} \haus^n (\Sigma^\ell_t) \leq m_0(X) + \frac{1}{\ell}$, we consider for each $\ell$ the map
\begin{equation*}
h^\ell: \P \to { \mathfrak{X}^-(\M , \gamma)} \quad (\text{or } \mathfrak{X}^t_c(\M)),
\end{equation*} 
given by 
$h^\ell_t=b(\Sigma_t^{\ell})T\big(\mathcal{D}(\Sigma_t^{\ell},\va^\square_{s})\big)H_{\Sigma_t^{\ell}}$: such smooth vector field  generates $\Omega_{\Sigma_t^{\ell}}$. Note that $h^\ell_t=0$ for $t \in \partial \P$. Moreover the map $h^\ell$ is continuous if 
$\mathfrak{X}_c(\M)$ is endowed with the topology of $C^k$-seminorms. We next smooth the mapp $t\mapsto h^\ell_t$ by keeping it $0$ on $\partial \P$. Consider now the $1$-parameter family of maps generated by such smoothing, which (by a slight abuse of notation) we still denote by $\Omega^\ell_t (s, \cdot)$. We are ready to define a new family $\Gamma^{\ell}_t = \Omega_t^\ell (1, \Sigma_t^\ell)$
Since $\Omega^\ell_t (s, \cdot)$ is the identity for $t\in \partial \P$,  the new family $\{\Gamma^\ell_t\}_t$ is homotopic to $\{\Sigma^l_t\}_t$. By the rest of the construction and \eqref{tightening}, assuming that the smoothing of $h^\ell_t$ is sufficiently close to it, we then have
\begin{equation}\label{tightening 2}
\haus^n(\Gamma_t^{\ell}) \leq \haus^n(\Sigma_t^{\ell}) - \frac{b(\Sigma_t^{\ell})L\big(\mathcal{D}(\Sigma_t^{\ell},\va_{s})\big)}{2}.
\end{equation}
Moreover, there will be an increasing continuous map $\lambda: \RR^+ \to \RR^+$ with $\lambda(0)=0$ and
\begin{equation}\label{tightening 3}
\mathcal{D}(\Sigma_t^{\ell}, \va^\square_{s}) \geq \lambda \big(\mathcal{D}(\Gamma_t^{\ell},\va^\square_{s})\big). 
\end{equation}
Finally, we claim that for every $\epsilon$, there exist $\delta >0$ and $N \in \NN$ such that
\begin{equation}\label{tightening 4}
\text{if } \left\{ \begin{array}{c}
k>N\\
\text{and } \haus^n(\Gamma^{\ell}_{t_{\ell}}) > m_0 - \delta
\end{array} \right\}, \quad \text{then } \mathcal{D}(\Gamma^{\ell}_{t_{\ell}},\va^\square_{s})<\epsilon.
\end{equation}
Let us therefore fix $\epsilon>0$. Considering that $b(W)=0 \>\>\forall W \in \va_{\partial}$, the continuity of mass of varifolds clearly implies that, if we set $\xi:=\frac{m_0(X)-bM_0(X)}{2}$, then for all $V \in \va(\M)$ with $\haus^n(V) \geq m_0-\xi=bM_0(X) + \xi$ we have $b(V) \geq c(\xi)>0$. We will choose $0<\delta<\xi$ and $N \in \NN$ satisfying
\begin{equation*}
\frac{c(\xi)L(\lambda(\epsilon))}{2} - \delta > \frac{1}{N}.
\end{equation*}
Assume now, contrary to \eqref{tightening 4}, there are $k>N$ and $t \in \P$ such that
\begin{equation*}
\haus^n(\Gamma_{t_{\ell}}^{\ell})>m_0-\delta \quad \text{ and } \quad \mathcal{D}(\Gamma_{t_{\ell}}^{\ell},\va^\square_{s}) > \epsilon.
\end{equation*}
Then, by \eqref{tightening 2}, \eqref{tightening 3} and the fact that $\haus^n(\Sigma_{t_{\ell}}^{\ell})\geq \haus^n(\Gamma_{t_{\ell}}^{\ell})>m_0 - \delta > m_0 - \xi$, we get
\begin{align*}
\haus^n(\Sigma_{t_{\ell}}^{\ell}) &\geq \haus^n(\Gamma_{t_{\ell}}^{\ell}) + \delta + \frac{c(\xi)L(\lambda(\epsilon))}{2} - \delta\\
&\geq m_0 + \frac{1}{N}.
\end{align*}
This contradicts $\max_{t\in\P} \haus^n (\Sigma^\ell_t) \leq m_0(X) + \frac{1}{\ell}$, and thus completes the proof of claim \eqref{tightening 4}, which in turn implies the proposition.
\end{proof}

\section{Almost minimizing property}\label{s:AM}

Following its introduction by Pitts \cite{pitts}, an important concept to achieve regularity for stationary varifolds produced by min-max theory is that of \emph{almost minimizing surfaces}. Roughly speaking, a surface is almost minimizing if any area-decreasing deformation must eventually pass through some surface with sufficiently large area. The precise definition we require here is the following:
\begin{definition}\label{amm property}
Let $\epsilon >0$, $U \in \M$ be an open subset, and fix $m \in \NN$. A surface $\Sigma$ is called \emph{$\epsilon$-almost minimizing} in $U$ if there is \underline{no} family of surfaces $\{\Sigma_t\}_{t \in [0,1]}$ satisfying the properties:
\begin{align}
\label{amm prop1}&\emph{(s1), (s2)} \text{ and } \emph{(s3)} \text{ of Definition 0.1 hold};\\ 
\label{amm prop2}&\Sigma_0 = \Sigma \text{ and } \Sigma_t \setminus U = \Sigma \setminus U \text{ for every } t \in [0,1];\\\label{amm prop3}
&\haus^n(\Sigma_t) \leq \haus^n(\Sigma) + \frac{\epsilon}{2^{m+2}} \text{ for all } t \in [0,1]; \\\label{amm prop4}
&\haus^n(\Sigma_1) \leq \haus^n(\Sigma) - \epsilon
\end{align}
A sequence $\{\Omega^i\}$ of surfaces is called almost minimizing (or a.m.) in $U$ if each $\Omega^i$ is $\epsilon_i$-almost minimizing in $U$ for some sequence $\epsilon_i \to 0$ (with the same $m$).
\end{definition}
\begin{remark}\label{fixing m}
The definition above is practically the same as the one given in \cite{dl-t} when $m=1$. The generalization is due to the more general parameter space $\P$. To be precise, we will henceforth fix $m \in \NN$ such that $\P$ can be smoothly embedded into $\RR^m$. 
\end{remark}
The main goal of this section is to prove an existence result regarding almost minimizing property in annuli:
\begin{propos}\label{a.m. in annuli}
Le $X$ be a homotopically closed set of (constrained or unconstrained) families in $\M^{n+1}$, parameterized by a smooth, compact $k$-dimensional manifold $\P$ (with or without boundary), and satisfying the condition \eqref{mountain pass family}. Then there is a function $r: \M \to \RR_{+}$ and a min-max sequence $\{\Gamma^k\} = \{\Gamma^k_{t_k}\}$ such that
\begin{itemize}
\item $\{\Gamma^k\}$ is a.m. in every  An $\in \mathcal{AN}_{r(x)}(x)$ with $x \in \M$ \\
\item $\{\Gamma^k\}$ converges to a stationary varifold $V$ as $k \to \infty$; the varifold $V$ belongs to $\mathcal{V}^c_s (\gamma)$ in the constrained case, whereas it belongs to $\mathcal{V}^u_s$ in the unconstrained case.
\end{itemize}
\end{propos}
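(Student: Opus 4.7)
Proof proposal. I would start from the pull-tight minimizing sequence $\{\{\Sigma_t\}^\ell\}$ provided by Proposition \ref{p:pull-tight}, which already guarantees that any min-max sequence extracted from it converges to a varifold in the appropriate class $\va_s^c(\gamma)$ or $\va_s^u$. What remains is to upgrade a min-max sequence to one that is additionally almost minimizing in all sufficiently small annuli. I would argue by contradiction: assume that for every positive function $r: \M \to \RR_+$ and every $\eps>0$, eventually no slice $\Sigma^\ell_{t_\ell}$ with $\haus^n(\Sigma^\ell_{t_\ell})$ close to $m_0(X)$ is $\eps_\ell$-almost minimizing in every annulus of $\mathcal{AN}_{r(x)}(x)$, for every $x\in\M$.

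The heart of the argument is a combinatorial lemma due to Pitts (and adapted in \cite{dl-t}): for any fixed center $x$ and any collection of $J=J(m)$ pairwise disjoint concentric annuli at $x$, if a surface fails to be $\eps$-almost minimizing in each of them, then the corresponding competitor homotopies (guaranteed by the negation of Definition \ref{amm property}) can be concatenated via a pigeonhole/chaining argument into a single homotopy supported in one annulus that reduces area by at least $\eps$ while inflating intermediate mass by at most $\eps/2^{m+2}$. The exponent $m$ accounts for a smooth embedding $\P \hookrightarrow \RR^m$: all of this must be performed continuously in the parameter $t\in\P$, which I would achieve by a measurable selection of the competitor homotopies followed by a partition-of-unity argument on $\P$ and a final smoothing, keeping the modification trivial for $t\in\partial\P$. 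Covering $\M$ by finitely many balls $B_{r(x_i)/2}(x_i)$ with pairwise disjoint concentric annular supports, I would then patch all local deformations into a global one-parameter family $\{\tilde\Sigma^\ell_{t,s}\}$ with
\[
\max_{t,s}\haus^n(\tilde\Sigma^\ell_{t,s}) < \max_t \haus^n(\Sigma^\ell_t) - \eta
\]
for some $\eta>0$ independent of $\ell$, exploiting the gap $m_0(X) > bM_0(X)$ to ensure that the function $b$ from Proposition \ref{p:pull-tight} stays bounded away from $0$ on the relevant slices. Since $X$ is homotopically closed, the modified family lies in $X$, contradicting the minimizing property of $\{\{\Sigma_t\}^\ell\}$.

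The principal obstacle lies in respecting the boundary conditions while executing the concatenation. In the constrained case, each competitor homotopy must satisfy $\partial \Sigma_s = \gamma \setminus S_s$ throughout, so I would select $r(x)$ small enough that annuli centered at $x$ are either disjoint from $\gamma$ (when $x\notin\gamma$) or respect the wedge structure near $\gamma$ (when $x\in\gamma$), leveraging the integral-currents viewpoint of Remark \ref{r:differente_1} to allow nontrivial multiplicities in the competitors; in the unconstrained case the homotopies must preserve the free boundary along $\partial\M$, which is enforced by using diffeomorphisms generated by vector fields in $\mathfrak{X}_c^t$. A second delicate point is that the combinatorial construction of \cite{dl-t} was designed for $\P=[0,1]$: extending it to a general $k$-dimensional $\P$ is precisely why the definition of $\eps$-a.m.\ in Definition \ref{amm property} carries the factor $2^{m+2}$, and the continuity of the selected competitors over $\P$ must be arranged via its smooth embedding into $\RR^m$. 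Once these technicalities are in place, the contradiction closes and the proposition follows, with the varifold limit automatically lying in $\va_s^{\square}$ by Proposition \ref{p:pull-tight}.
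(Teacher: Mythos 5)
Your proposal identifies the right cast of characters (pull-tight, a Pitts-style combinatorial lemma, freezing, patching, contradiction) but misplaces the crucial combinatorial structure and skips a separate diagonal argument.

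The combinatorial lemma is not about "$J(m)$ pairwise disjoint concentric annuli at a fixed center $x$" in $\M$, nor about concatenating the several competitor homotopies attached to those annuli into one. If you actually had a single slice $\Sigma^\ell_{t_0}$ failing to be a.m.\ in each of several \emph{pairwise disjoint} annuli, no concatenation would be needed at all: the deformations have disjoint supports, so you could run them simultaneously. The genuine difficulty is the dependence on $t\in\P$: different parameters $t$ fail to be a.m.\ in different, typically \emph{overlapping} regions of $\M$. The paper handles this by covering (a cube containing) $\P$ with a grid of slightly overlapping $m$-dimensional cubes, assigning to each cube an $\omega_m$-tuple of mutually far-apart open sets in $\M$, and then invoking the purely set-theoretic Lemma \ref{combinatorial} to select, for each cube of a refined covering, a \emph{single} open set so that cubes which overlap in $\P$ receive open sets which are disjoint in $\M$. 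It is exactly this disjointness that lets the per-cube deformations (turned from static into dynamic competitors via the freezing Lemma \ref{freezing lemma}, not via a measurable selection followed by a partition of unity — the latter does not enforce the needed disjointness and does not control the intermediate area increase) be glued into a genuine global competitor family in $X$ whose max area drops by a definite $\eta>0$. This is Proposition \ref{almgren-pitts}, which produces a min-max sequence that is $\frac1N$-a.m.\ in every tuple of $\mathcal{CO}_{\omega_m}$.

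Moreover, Proposition \ref{a.m. in annuli} does not follow from this by "covering $\M$ with balls with pairwise disjoint annular supports". One still has to produce a single radius function $r$ such that the same min-max sequence is a.m.\ in \emph{every} small annulus around \emph{every} point. This is done in the paper by a direct (not contradiction) iterative dichotomy: for a fixed large outer radius, either the sequence is eventually a.m.\ in the smaller members of each tuple, in which case one subdivides $\M$ and recurses on the scale, or else one can pass to a subsequence a.m.\ in the complement of a shrinking ball around a single accumulating point $x$, and only $x$ gets a special treatment. Your proposal omits this diagonal/compactness step, and without it the function $r$ is not produced. The boundary comments (constrained versus unconstrained) and the role of $m=\dim$ of an embedding of $\P$ are correct as far as they go.
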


An important corollary of the above proposition is the interior regularity, for which we refer to \cite{dl-t}. We record the consequence
here

\begin{propos}\label{interior}
The varifold $V$ of Proposition \ref{a.m. in annuli} is a regular embedded minimal surface in ${\rm Int}\, (\M)$, except for a
set of Hausdorff dimension at most $n-7$. 
\end{propos}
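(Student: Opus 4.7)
The plan is to observe that in the interior of $\M$ the almost minimizing condition from Definition \ref{amm property} is exactly the one used in \cite{dl-t} for the closed case, and therefore the interior regularity theory developed there applies without modification. The only care needed is that the radius function $r$ produced in Proposition \ref{a.m. in annuli} may a priori depend on $x$ and must be localized away from $\partial \M$.

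First I fix a point $x \in {\rm Int}\,(\M)$ and choose $\bar r(x) > 0$ smaller than both the injectivity radius of $\M$ at $x$ and than $r(x)$, and also small enough that $\overline{B}_{\bar r(x)}(x) \subset {\rm Int}\,(\M)$. Inside this ball every annulus $\mathrm{An}(x,\tau,t) \in \mathcal{AN}_{\bar r(x)}(x)$ is compactly supported away from $\partial \M$, so the $\epsilon$-almost minimizing property granted by Proposition \ref{a.m. in annuli} concerns only surfaces and deformations contained in ${\rm Int}\,(\M)$. In particular neither the constraint on $\gamma$ nor the free boundary condition plays any role, and the situation is identical to the one in the closed Riemannian setting.

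Next I invoke the replacement construction: in each such annulus the almost minimizing property allows one to substitute $V$ by a varifold $V'$ which coincides with $V$ outside a slightly smaller annulus and which is itself a stable minimal hypersurface in the open annulus (in the sense that it minimizes area among competitors in a suitable homotopy class). Coupling two replacements at different radii and letting the parameters degenerate, one produces tangent objects that are stationary stable integral varifolds in the whole ball $B_{\bar r(x)}(x)$, to which the Schoen--Simon compactness and regularity theorem applies: this yields that $V$ restricted to a smaller ball $B_{\rho}(x)$ is, outside a set of Hausdorff dimension at most $n-7$, a smooth embedded minimal hypersurface, with integer constant multiplicities on each connected component. Covering ${\rm Int}\,(\M)$ by countably many such balls and patching the conclusions gives that ${\rm Sing}(V) \cap {\rm Int}(\M)$ has Hausdorff dimension at most $n-7$ and that $V$ is a smooth embedded minimal hypersurface elsewhere in ${\rm Int}\,(\M)$.

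The main (and only) obstacle is to verify that the replacement argument of \cite{dl-t} goes through verbatim in our slightly modified setting; the subtle point is Remark \ref{r:differente_1}, which warns that the families here are not accompanied by Caccioppoli sets as in \cite{dl-t}, so one must use integer rectifiable currents with possibly higher multiplicity. However, since the annuli considered here lie entirely in ${\rm Int}\,(\M)$, the difference is immaterial at the interior stage: existence of minimizers in the homotopy class, their stability, and the Schoen--Simon regularity are all statements about currents/varifolds in the Euclidean/Riemannian interior, and they are insensitive to whether the ambient sweepout was encoded through Caccioppoli sets or through currents of multiplicity higher than one. With this remark made, the whole argument of \cite{dl-t} can be quoted verbatim, which is why we content ourselves with the sketch above.
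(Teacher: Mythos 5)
You correctly identify the overall strategy (reduce to the interior theory of \cite{dl-t} by localizing away from $\partial\M$) and you correctly spot that the delicate point is the absence of underlying Caccioppoli sets noted in Remark~\ref{r:differente_1}. But your last paragraph then dismisses that point by asserting the difference is ``immaterial at the interior stage'' and that ``the whole argument of \cite{dl-t} can be quoted verbatim.'' This is exactly the opposite of what Remark~\ref{r:differente_2} says, and the assertion does not survive scrutiny. The issue has nothing to do with the annuli being compactly contained in ${\rm Int}(\M)$: it arises at the level of the replacement construction, specifically when one must show that the weak limit $Z^j$ of a minimizing sequence for the $(2^{m+2}j)^{-1}$-homotopic Plateau problem is genuinely area-minimizing in a small ball. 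That step requires approximating an \emph{arbitrary} integer rectifiable competitor $S$ by \emph{smooth} hypersurfaces that can then be fed into Lemma~\ref{replacements first step lemma} to put them into the class $\haus(\Gamma^j, An)$. In \cite{dl-t} the competitors are boundaries of Caccioppoli sets, so one smooths a single BV indicator function; that approximation scheme simply does not apply to a current carrying multiplicities $\geq 2$, which is the generic situation in the present framework.

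The paper addresses this in Step~1 of the proof of Corollary~\ref{c:regularity} (Section~\ref{s:differente_3}), and it is a genuinely new ingredient: one invokes Federer's decomposition theorem (4.5.17 of \cite{federer}) to write the restricted competitor as a (possibly infinite) sum $\sum_i \partial\llbracket U_i\rrbracket$ over a nested family of finite-perimeter sets, encodes it as the BV function $f = \sum_{i\geq 1}\chi_{U_i} - \sum_{i\leq 0}\chi_{B\setminus U_i}$, mollifies, extracts smooth level sets via Sard and the coarea formula, and patches the result with $\Gamma^{j,k}$ in an annulus using the freezing construction. Without this the chain ``homotopic Plateau minimizer $\Rightarrow$ area-minimizing current $\Rightarrow$ stable regular replacement $\Rightarrow$ Schoen--Simon'' breaks at its first link. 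So your sketch is structurally sound but mislocates where the work is: the hard part is precisely the step you waved away, and it must be carried out (as the paper does) rather than asserted to be verbatim from \cite{dl-t}.
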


\begin{remark}\label{r:differente_2} As already noticed in Remark \ref{r:differente_1}, there is indeed a difference between the families considered here and the ones of \cite{dl-t}. For this reason, one cannot literally apply the statements in \cite{dl-t} to conclude Proposition \ref{interior} from Proposition \ref{a.m. in annuli}. However, this difference only requires a small technical adjustment, which is illustrated in Section \ref{s:differente_3}.
\end{remark}

In order to prove Proposition \ref{a.m. in annuli} we will be following the strategy laid out in Section 5 of \cite{c-dl} (see also Section 3 of \cite{dl-t}), which contains a similar statement. In fact, the main difference is the significant generalization of the parameter space $\P$. The case of higher dimensional cubes was covered in the master thesis of Fuchs \cite{fuchs}, and in this paper, some necessary modifications were made. The key ingredient of the proof is a combinatorial covering argument, a variant of the original one by Almgren and Pitts (see \cite{pitts}), and which we therefore refer to as the Almgren-Pitts combinatorial lemma. We will use it to prove Proposition \ref{a.m. in annuli} at the end of this section, and its proof will be provided in the next one. 
\begin{definition}
Let $d \in \NN$ and $U^1, \ldots U^d$ be open sets in $\M$. A surface $\Sigma$ is said to be \emph{$\epsilon$-almost minimizing in $(U^1, \ldots, U^d)$} if it is $\epsilon$-a.m. in at least one of the open sets $U^1, \ldots U^d$.\\
Furthermore, we define
\begin{equation*}
\emph{dist}(U,V):= \inf_{u \in U, v \in V} d_g(u,v)
\end{equation*}
as the distance between the two sets $U$ and $V$ ($d_g$ being the Riemannian distance).\\
Finally, for any $d \in \NN$ we denote by $\mathcal{CO}_d$ the set of $d$-tuples $(U^1, \ldots, U^d)$, where $U^1, \ldots, U^d$ are open sets with the property that
\begin{equation*}
\emph{dist}(U^i, U^j) \geq 4\cdot \min\{\emph{diam}(U^i),\emph{diam}(U^j)\}
\end{equation*}
for all $i,j \in \{1,\ldots,d\}$ with $i \neq j$.
\end{definition}
We require also the following lemma as preparation:
\begin{lemma}\label{combinatorial}
Let $p \in \NN$. Then there exists $\omega_p \in \NN$ with the following property:
\begin{itemize}
\item[(CA)] Assume $\mathcal{F}_1=(U_1^1,\ldots,U_1^{\omega_p}),\ldots,\mathcal{F}_{2^p}=(U_{2^p}^1,\ldots,U_{2^p}^{\omega_p})$ are $2^p$ families of open sets with the property that
\begin{equation}\label{dist assumption}
\emph{dist}(U_i^j, U_i^{j'}) \geq 2\cdot \min\{\emph{diam}(U_i^j),\emph{diam}(U_i^{j'})\}
\end{equation} 
for all $i \in \{1,\ldots,2^p\}$ and for all $j,j' \in \{1,\ldots,\omega_p\}$ with $j \neq j'$.\\
Then we can extract $2^p$ subfamilies $\mathcal{F}_1^{sub} \subset \mathcal{F}_1,\ldots,\mathcal{F}_{2^p}^{sub} \subset \mathcal{F}_{2^p}$ such that
  \begin{itemize}\renewcommand{\labelitemii}{$\bullet$}
  \item $\emph{dist}(U, V)>0$ for all $U \in \mathcal{F}_i^{sub}, V \in \mathcal{F}_j^{sub}$ with $i,j \in \{1,\ldots,2^p\}$ and $i \neq j$;\\
  \item $\mathcal{F}_i^{sub}$ contains at least $2^p$ open sets for every $i \in \{1,\ldots,2^p\}$.
  \end{itemize}
\end{itemize}
\end{lemma}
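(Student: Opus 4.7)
The plan is a single greedy pass on all sets, ordered by \emph{increasing} diameter, with $\omega_p := 2^{2p}$. The key observation, which I will call $(\star)$, is the following: for any open set $V$ and any family $\mathcal{F}_j$ not containing $V$, at most one element $W\in\mathcal{F}_j$ with $\diam(W)\geq\diam(V)$ can satisfy $\dist(V,W)=0$. Indeed, two such $W_1,W_2$ would have points $x_k\in\overline{W_k}\cap\overline V$ (since the closures meet), giving
\[
\dist(W_1,W_2)\leq d_g(x_1,x_2)\leq\diam(V)\leq\min\{\diam(W_1),\diam(W_2)\},
\]
which contradicts the lower bound $2\min\{\diam(W_1),\diam(W_2)\}$ imposed by \eqref{dist assumption}.

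Given the $2^p$ families of size $\omega_p$, I list all their sets in order of increasing diameter and process them one by one. When a set $W\in\mathcal{F}_i$ comes up, I add it to the subfamily $\mathcal{F}_i^{sub}$ provided both $|\mathcal{F}_i^{sub}|<2^p$ and $W$ is at positive distance from every element already selected in the other subfamilies; otherwise $W$ is discarded. By construction, the resulting subfamilies are pairwise at positive distance across distinct indices. To show $|\mathcal{F}_i^{sub}|=2^p$ for every $i$, I count the discards in family $\mathcal{F}_i$: a set $W\in\mathcal{F}_i$ is discarded either because the cap $2^p$ has already been reached (which only happens once the goal is met) or because some already-selected $V\in\bigcup_{j\neq i}\mathcal{F}_j^{sub}$ satisfies $\dist(V,W)=0$. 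In the second case the processing order forces $\diam(W)\geq\diam(V)$, so by $(\star)$ applied to $V$ and $\mathcal{F}_i$ at most one $W\in\mathcal{F}_i$ can be discarded because of this particular $V$. Hence the ``bad'' discards in $\mathcal{F}_i$ total at most $\sum_{j\neq i}|\mathcal{F}_j^{sub}|\leq(2^p-1)\cdot 2^p=2^{2p}-2^p$, so of the $\omega_p=2^{2p}$ members of $\mathcal{F}_i$ at least $2^p$ survive, the cap is reached, and $|\mathcal{F}_i^{sub}|=2^p$.

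The only subtle design choice, and hence the key point to be careful about, is the processing order. With a \emph{decreasing}-diameter order a single large accepted set could be touched by unboundedly many smaller sets from other families --- the internal separation \eqref{dist assumption} scales with $\min\{\diam\}$ and permits exactly such clustering --- and the count above would fail. Sorting smallest-first ensures that every conflict between an already-picked $V$ and a later-processed $W$ satisfies $\diam(W)\geq\diam(V)$, which is precisely the hypothesis needed to invoke $(\star)$ and obtain the uniform ``one-per-other-family'' bound on discards.
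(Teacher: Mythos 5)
Your proof is correct and follows essentially the same route as the paper's. The paper likewise takes $\omega_p = 4^p = 2^{2p}$, orders all the sets by increasing diameter, relies on exactly your observation $(\star)$ (stated there at the start of the proof: a set $U$ can be at distance zero from at most one larger-diameter set in each other family, proved by the same two-point contradiction with \eqref{dist assumption}), and then performs a single left-to-right greedy sweep whose discards are bounded by the same $(2^p-1)\cdot 2^p$ count. The only cosmetic difference is that the paper phrases the sweep as \emph{removing} touching sets to the right and ``clearing'' a family once $2^p$ of its members survive to the left of the pointer, whereas you phrase it as \emph{adding} sets to capped subfamilies; these are dual descriptions of the same algorithm and the same budget.
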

\begin{proof}
Let $\mathcal{F}_1,\ldots,\mathcal{F}_{2^p}$ be as in the assumption (CA), with $\omega_p$ some (natural) number, to be fixed later. First note that, if $U \in \mathcal{F}_i$ and $V^1, \ldots V^l \in \mathcal{F}_s$ with $i \neq s$ and $\diam(U) \leq \diam(V^j), \> j \in \{1,\ldots,l\}$, then there is at most one $j \in \{1,\ldots,l\}$ with dist$(U,V^j) = 0$. Otherwise, assuming there are two such sets $V^{j_1}, V^{j_2}$ with dist$(V^{j_1},U)=0$, dist$(V^{j_2},U)=0$ and w.l.o.g. $\diam(V^{j_1}) \leq \diam(V^{j_2})$, we would get
\begin{equation*}
\dist(V^{j_1},V^{j_2}) \leq \diam(U) \leq \diam (V^{j_1}),
\end{equation*} 
which contradicts the assumption \eqref{dist assumption}.
Now, in order to produce the subfamilies, one can employ the following algorithm:
\begin{itemize}
\item[--] take all the sets in all the families and arrange them in an ascending order with respect to their diameters, left to right (from smallest to largest). In the first step, fix the leftmost set;
\item[--] at each step of the process, remove all the sets to the right of the fixed set which are at distance zero with respect to it. Furthermore, if to the left of the currently fixed set there are $2^p -1$ remaining sets from the same family $\mathcal{F}_i$, remove all the sets to the right which belong to the same family (the latter operation will be called, for convenience, ``clearing of the family $\mathcal{F}_i$'');
\item[--] move on to the first (remaining) set to the right of the previously fixed set, fix it, and repeat the step above.
\end{itemize}
We claim that the remaining sets build the desired subfamilies.
Firstly, it is obvious from the construction that for any two remaining sets $U,V$ we have $\dist(U,V)>0$. Secondly, we see from the consideration at the beginning of the proof that at each step we remove at most one set from each family to which the fixed set does not belong to (and none from the same family, due to \eqref{dist assumption}). Finally, since any family that reaches $2^p$ remaining elements is removed from the process, it can account for no more than $2^p$ removed elements from any other family. Hence, if for some $\mathcal{F}_i$ we do not reach the stage at each we ``clear'' $\mathcal{F}_i$, we have removed at most $2^p(2^p -1)$ elements from $\mathcal{F}_i$ and retained at most $2^p -1$. Hence, if we choose any $\omega_p \geq 4^p$ we can ensure that the clearing process happens for every family and thus that we have selected at least $2^p$ elements from each. 
\end{proof}

\begin{propos}\label{almgren-pitts}
\emph{(Almgren-Pitts combinatorial lemma)} Let $X$ be a homotopically closed set of families as in Proposition \ref{a.m. in annuli}. Assume $\P$ is smoothly embedded into $\RR^m$, and let $\omega_m$ be as in Lemma \ref{combinatorial}. Then there exists a min-max sequence $\{\Gamma^N\}=\big\{\Gamma^N_{t_N}\big\}$ such that
\begin{itemize}
\item $\{\Gamma^N\}$ converges to a stationary varifold $V$, which belongs to $\mathcal{V}^c_s (\gamma)$ in the constrained case and to $\mathcal{V}^u_s$ in the unconstrained case;\\
\item for any $(U^1,\ldots,U^{\omega_m}) \in \mathcal{CO}_{\omega_m}, \> \Gamma^N$ is $\frac{1}{N}$-a.m. in $(U^1,\ldots,U^{\omega_m})$, for $N$ large enough.\\
\end{itemize}
\end{propos}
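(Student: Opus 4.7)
The plan is to argue by contradiction, starting from the pull-tight minimizing sequence $\{\{\Sigma_t\}^\ell\}\subset X$ of Proposition \ref{p:pull-tight}: any subsequence of it automatically produces min-max sequences converging to elements of $\va_s^c(\gamma)$ (resp.\ $\va_s^u$), so the convergence clause is free and only the a.m.\ clause requires work. Suppose, toward contradiction, that no refinement of this sequence satisfies the conclusion. Then for each $\ell$ one obtains a ``bad'' subset $K_\ell \subset \P$ of parameters $t$ with $\haus^n(\Sigma_t^\ell)$ close to $m_0(X)$ at every point of which \emph{some} tuple $(U_t^1,\dots,U_t^{\omega_m})\in\mathcal{CO}_{\omega_m}$ witnesses the failure of the $\frac{1}{\ell}$-a.m.\ property in \emph{all} $\omega_m$ open sets. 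This gives, at each such $t$, $\omega_m$ competing paths of surfaces, one supported in each $U_t^j$, each dropping area by at least $\frac{1}{\ell}$. In the constrained case, one takes the $U_t^j$ disjoint from $\gamma$ (by a harmless shrinking), so the competing paths still have boundary $\gamma$; no such care is needed in the unconstrained case.

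\medskip

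The key step is to combine these per-parameter modifications into a single family $\{\Gamma_t^\ell\}\in X$ whose $\max_t\haus^n$ is strictly less than $m_0(X)$, contradicting the definition of $m_0(X)$. Using the smooth embedding $\P\hookrightarrow\RR^m$ (cf.\ Remark \ref{fixing m}), I pull back a fine cubical mesh and triangulate $\P$ so that on each top-dimensional cell the slices $\Sigma_t^\ell$ are essentially constant. Each cell has at most $2^m$ vertices; at each vertex $t_i\in K_\ell$ I attach the family $\mathcal{F}_i$ of $\omega_m$ a.m.-failing open sets. The factor $4$ in the defining inequality of $\mathcal{CO}_{\omega_m}$ upgrades into the factor $2$ required by \eqref{dist assumption} in Lemma \ref{combinatorial}, with slack for a mild thickening needed in the sequel. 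Lemma \ref{combinatorial} applied with $p=m$ then produces, at each vertex, a sub-collection of at least $2^m$ open sets pairwise at positive distance \emph{across} vertices, so the associated deformations have mutually disjoint supports across vertices and can be composed in any order without interaction.

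\medskip

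It remains to interpolate: on each cell a partition-of-unity blending in the spirit of \cite[Section 5]{c-dl} and \cite[Section 3]{dl-t} converts the $2^m$ vertex deformations into a single continuous path through the cell, and these paths are glued across adjacent cells. Disjointness of supports ensures continuity in the sense of Definition \ref{homotopies} and preserves the boundary condition (fixed $\gamma$ in the constrained case, orthogonality in the unconstrained one). Outside $K_\ell$ the family $\Sigma_t^\ell$ is left unchanged; inside we secure a definite area drop, which for the threshold defining $K_\ell$ chosen sufficiently small compared to $\frac{1}{\ell}$ forces $\max_t\haus^n(\Gamma_t^\ell)<m_0(X)$, the desired contradiction. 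The principal obstacle throughout is precisely this last combinatorial-geometric step: making the many simultaneous deformations at neighboring parameters fit together without area inflation and while respecting the boundary constraints --- exactly what the factor-$4$ in $\mathcal{CO}_{\omega_m}$ and Lemma \ref{combinatorial} are designed to handle.
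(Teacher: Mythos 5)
Your contradiction framework, the role of the bad set $K_\ell$, the appeal to the factor-$4$ in $\mathcal{CO}_{\omega_m}$ upgrading to the factor-$2$ needed for Lemma \ref{combinatorial}, and the recognition that the ``interpolation'' step is the hard part all match the paper's structure. However, the proposal has genuine gaps precisely at that hard part, and a few details point the argument in the wrong direction.

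First and most seriously, you describe the interpolation as a ``partition-of-unity blending'' of $2^m$ vertex deformations. But one cannot take convex combinations of surfaces: the a.m.-failure at a parameter $t_0$ provides a \emph{static} one-parameter family $\{\Sigma_{i,t_0,\tau}\}_\tau$ of competitors based at the single slice $\Gamma_{t_0}$, and the task is to turn it into a \emph{dynamic} family over a cube of parameters $t$ near $t_0$ without inflating area and while gluing back continuously to $\Gamma_t$. That is the content of the ``freezing'' Lemma \ref{freezing lemma}: one first replaces $\Gamma_t$ for $t$ near $t_0$ by a surface that coincides with $\Gamma_{t_0}$ inside a small region (via a graphical interpolation in a transition annulus, not a partition of unity of deformations), and only then runs the competitor path in the inner region. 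The paper spends Lemma \ref{freezing lemma} and all of Step~1 of the proof establishing exactly this, and it is not equivalent to any ``blending of vertex deformations.''

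Second, the bookkeeping is off. You propose a triangulation in which ``each cell has at most $2^m$ vertices'' and attach the $\omega_m$ open sets at vertices. The paper instead covers $K_N$ by \emph{overlapping} cubes, refines each cube into $2^m$ overlapping subcubes, attaches the $\omega_m$-tuple to each cube (not vertex), and uses Lemma \ref{combinatorial} so that any two overlapping subcubes receive open sets at positive distance; the freezing lemma is then applied once per subcube (a parameter region), not once per vertex (a single parameter). The overlap of the cubical cover is essential for the construction of the final family $\Gamma'_t$ by patching, and the count $\omega_p\geq 4^p$ in Lemma \ref{combinatorial} is calibrated to the fact that a point can lie in up to $2^m$ subcubes --- none of this shows up in the vertex-based scheme you sketch.

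Finally, the remark that in the constrained case you can ``harmlessly shrink'' the $U_t^j$ away from $\gamma$ is a red herring. Definition \ref{amm property} already requires the competitor families to be of the same type (constrained or unconstrained), so the boundary condition is automatically preserved by the a.m.-failure data; and if the failure genuinely occurs in a set meeting $\gamma$, shrinking $U_t^j$ off $\gamma$ could destroy the needed area drop. The paper never shrinks the open sets away from $\gamma$.
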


We can now prove the main proposition as a corollary of the above.\\

\noindent {\bf\emph{Proof of Proposition \ref{a.m. in annuli}.}} We will show that a subsequence of $\{\Gamma^j\}$ in Proposition \ref{almgren-pitts} satisfies the requirements. For each positive $r_1< \text{Inj} (\M)$ and for each choice of $r_2, \ldots, r_{\omega}$ with the property that $r_i < \frac{1}{9} r_{i-1}$  consider the tuple $(U^1_{r_1}(x),\ldots,U^{\omega_m}_{r_{\omega_m}}(x))$ given by
\begin{align}
&U^1_{r_1}(x):=\M \setminus \overline{B}_{r_1}(x); \\
&U^l_{r_l}(x):=B_{\tilde{r}_{l}}(x)\setminus \overline{B}_{r_l}(x) \text{ where } \tilde{r}_l:= \frac{1}{9}r_{l-1}\, ;\\
&U^{\omega_m}_{r_{\omega_m}}:=B_{r_{\omega_m}}(x) \text{ where } r_{\omega_m} \leq \frac{1}{9}r_{\omega_m-1}.
\end{align}
Then, by definition, $(U^1_{r_1}(x),\ldots,U^{\omega_m}_{r_{\omega_m}}(x)) \in \mathcal{CO}_{\omega_m}$ and $\Gamma^j$ is therefore (for $j$ large enough) $\frac{1}{j}$-a.m. in at least one $U^l_{r_l}(x), \> 1 \leq l \leq \omega_m$. Having fixed $r_1>0$, one of the following options holds:\\
\begin{itemize}
\item[(a)] either $\{\Gamma^j\}$ is (for $j$ large) $\frac{1}{j}$-a.m. in $(U^2_{r_2}(y),\ldots,U^{\omega_m}_{r_{\omega_m}}(y))$ for every $y \in \M$ and every choice of $r_2, \ldots, r_{\omega}$ compatible with the requirement $r_i < \frac{1}{9} r_{i-1}$;\\
\item[(b)] or, for each $K \in \NN$, there exists some $s_K \geq K$ and a point $x^{s_K}_{r_1} \subset \M$ such that $\Gamma^{s_K}$ is $\frac{1}{s_K}$-a.m. in $\M \setminus \overline{B}_{r_1}(x_{r_1}^{s_K})$.\\
\end{itemize}

Assume there is no $r_1>0$ such that (a) holds. Thus, choosing option (b) with $r_1=\frac{1}{j}$ and $K=j$ for each $j \in \NN$, we obtain a subsequence $\{\Gamma^{s_j}\}_{j \in \NN}$, and a sequence of points $\{x_j^{s_j}\}_{j \in \NN} \subset \M$ such that $\Gamma^{s_j}$ is $\frac{1}{s_j}$-a.m. in $\M \setminus \overline{B}_{\frac{1}{j}}(x_{j}^{s_j})$. Since $\M$ is compact, there exists some $x \in \M$ such that $x_j^{s_j} \to x$. We conclude that, for any $N \in \NN$, $\Gamma^{s_j}$ is $\frac{1}{s_j}$-a.m. in $\M \setminus \overline{B}_{\frac{1}{N}}(x)$ for $j$ large enough. Consequently, if $y \in \M \setminus \{x\}$, we can choose $r(y)$ such that $B_{r(y)}(y) \subset \subset \M \setminus \{x\}$, whereas $r(x)$ can be chosen arbitrarily: with such choice $\{\Gamma^{s_j}\}$ is a.m. in any annullus of $\mathcal{AN}_{r(z)} (z)$ for any $z\in \M$.\\\
Assume now that there is some fixed $r_1 >0$ such that (a) holds. Note that, in this case, for any $x$ there is a $J$ (possibly depending on $x$) such that $\Gamma^j$ is \underline{not} $\frac{1}{j}$-a.m. in $U^1_{r_1}$ for all $j\geq J$. Due to compactness, we can divide the manifold $\M$ into finitely many, nonempty, closed subsets $\M_1,\ldots,\M_N \subset \M$ such that
\begin{itemize}
\item $0 < \diam(\M_i) < \tilde{r}_2 =\frac{1}{9}r_1$ for every $i \in \{1,\ldots,N\}$;
\item $\M = \cup \M_i$.
\end{itemize}
Similar to the reasoning above, for each $\M_i$, starting with $\M_1$, we consider two mutually exclusive cases:\\
\begin{itemize}
\item[(a)] either there exists some fixed $r_{2,i}>0$ such that $\{\Gamma^j\}$ must be (for $j$ large) $\frac{1}{j}$-a.m. in $(U^3_{r_3}(y),\ldots,U^{\omega_m}_{r_{\omega_m}}(y))$ for every $y \in \M_i$ and every choice of radii $r_3, \ldots , r_{\omega_m}$ with $r_3 < \frac{1}{9} r_{2,i}$ and $r_j < \frac{1}{9} r_{j-1}$;\\
\item[(b)] or we can extract a subsequence $\{\Gamma^j\}$, not relabeled, and a sequence of points $\{x_{i,j}\} \subset \M_i$ such that $\Gamma^j$ is $\frac{1}{j}$-a.m. in $B_{\tilde{r}_2}(x_{i,j}) \setminus \overline{B}_{\frac{1}{j}}(x_{i,j})$
\end{itemize}
Again, if (b) holds, we know $x_{i,j}\to x_i \in \M_i$, and we can choose $r(x_i) \in (\diam(\M_i),\tilde{r}_2)$. Accordingly, for any other $y \in \M_i$, we can choose $r(y)$ such that $B_{r(y)}(y) \subset \subset B_{r(x_i)}(x_i) \setminus \{x_i\}$. We proceed onto $\M_{i+1}$, where either (a) gets chosen, or we possibly extract a futher subsequence, and define further values of the function $r$. For the subsets $\M_{i_1},\ldots,\M_{i_l}$ where option (a) holds, we define $r_2:=\min\{r_{2,i_1},\ldots,r_{2,i_l}\}$, and then continue iteratively, by first subdividing the sets and then considering the relevant cases. Finally, note that if in the last instance of the iteration we choose option (a) for certain subsets, it means that, in those sets, $\Gamma^j$ must be (for $j$ large) $\frac{1}{j}$-a.m. in $B_{r_{\omega_m}}(y)$ for some $r_{\omega_m}>0$ and all $y$, hence we can choose $r(y)=r_{\omega_m}$, and we are done.
\qed

\section{Almgren-Pitts combinatorial lemma}\label{s:comb}

In this section, we turn to proving Proposition \ref{almgren-pitts}, which will be done by contradiction. Assuming no min-max sequence (extracted from an appropriate minimizing sequence) with the required property exists, we are able to construct a competitor minimizing sequence $\{\Sigma_t\}^N$ with energy (i.e. $\max_{t \in \P}\{\Sigma_t\}^N$), lowered by a fixed amount, thus reaching a contradiction to the minimality of the original sequence. This will be done using two main ingredients. The first is a technical lemma which enables us to use the "static" variational principle in Definition \ref{amm property} for a single, fixed time slice to construct a "dynamic" competitor family of surfaces. This is achieved by using a tool called "freezing", introduced in \cite{dl-t} (see Lemma 3.1). The statement and proof we present here are slightly different. In the rest of this section we will use the notation
$Q (t_0, r)$ for the $p$-dimensional cube centered at $t_0$ with sidelength $2r$, namely
\begin{equation*}
Q(t_0,r):=\big\{t=(t^1,\ldots,t^p) \in [0,1]^p \>\> | \> t_0^i -r < t^i < t_0^i + r\> \forall i \in \{1,\ldots,p\} \big\}\, .
\end{equation*}

\begin{lemma}\label{freezing lemma}
Let $U \subset \subset U' \subset \M$ be two open sets, and $\{\Xi_t\}_{t \in [0,1]^p}$ be a smooth family parameterized by $[0,1]^p$, with $p \in \NN$ fixed. Given an $\epsilon >0$ and $t_0 \in (0,1)^p$, suppose $\{\Sigma_s\}_{s \in [0,1]}$ is a $1$-parameter family of surfaces satisfying properties (\ref{amm prop1})-(\ref{amm prop4}), with $\Sigma_0 = \Xi_{t_0}$ and $m=p$. Then there is an $\eta>0$ such that the following holds for every $a',a$ with $0<a'<a<\eta$:\\ There is a competitor (smooth) family $\{\Xi'_t\}_{t \in [0,1]^p}$ such that
\begin{align}
& \Xi_t=\Xi'_t \>\> for \>\> t \in [0,1]^p \setminus Q(t_0,a), \>\> and \>\> \Xi_t \setminus U' = \Xi'_t \setminus U' \>\> for \>\> t \in Q(t_0,a);\label{freeze1} \\
& \haus^n(\Xi'_t) \leq \haus^n(\Xi_t) + \frac{\epsilon}{2^{p+1}} \>\> for \> every \>\> t \in [0,1]^p;\label{freeze2} \\
& \haus^n(\Xi'_t) \leq \haus^n(\Xi_t) - \frac{\epsilon}{2} \>\> for \> every \>\> t \in Q(t_0,a')\, . \label{freeze3}
\end{align}
Moreover, $\{\Xi'_t\}$ is homotopic to $\{\Xi_t\}$.
\end{lemma}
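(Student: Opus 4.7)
My plan is to build $\{\Xi'_t\}$ via an ambient cut-and-paste, localized in the parameter $t$ to the cube $Q(t_0,a)$ through a smooth bump function. The idea is to first realize the area-decreasing deformation $\{\Sigma_s\}_{s\in[0,1]}$ as a smooth one-parameter family of ambient diffeomorphisms $\{G_s\}_{s\in[0,1]}$ of $\M$ compactly supported in $U$, and then, for $t$ in a small neighborhood of $t_0$, apply $G_{\phi(t)}$ (with $\phi$ the bump) to $\Xi_t$ rather than to $\Xi_{t_0}$. Since $\Xi_t$ is smoothly close to $\Xi_{t_0}$ in $U\setminus S_{t_0}$ for $t$ near $t_0$ (property (s3)), the image $G_{\phi(t)}(\Xi_t)$ has area close to that of $G_{\phi(t)}(\Xi_{t_0})=\Sigma_{\phi(t)}$; outside $U$ the map $G_{\phi(t)}$ is the identity so nothing changes.

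Concretely, I first fix a smooth bump $\phi\colon[0,1]^p\to[0,1]$ with $\phi\equiv 1$ on $\overline{Q(t_0,a')}$ and $\supp\phi\subset Q(t_0,a)$, and a smooth family $\{G_s\}$ of diffeomorphisms of $\M$ with $G_0=\mathrm{id}$, compactly supported in $U$, and satisfying $G_s(\Xi_{t_0})=\Sigma_s$; existence follows from (s1)--(s3) applied to $\{\Sigma_s\}$ by integrating a compactly supported smooth $s$-dependent vector field in a parametric tubular neighborhood of $\Sigma_0$. I then set
\[
\Xi'_t := G_{\phi(t)}(\Xi_t)\qquad\text{for every }t\in[0,1]^p.
\]
For $t\notin Q(t_0,a)$ one has $\phi(t)=0$, hence $G_{\phi(t)}=\mathrm{id}$ and $\Xi'_t=\Xi_t$; in particular the two definitions agree smoothly across $\partial Q(t_0,a)$. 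Since $G_s$ is the identity on $\M\setminus U$ and $U\subset U'$, also $\Xi'_t\setminus U'=\Xi_t\setminus U'$, which proves (\ref{freeze1}).

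To verify the area bounds I decompose $\haus^n(\Xi'_t)=\haus^n(G_{\phi(t)}(\Xi_t)\cap U)+\haus^n(\Xi_t\setminus U)$ and exploit the smooth convergence $\Xi_t\to\Xi_{t_0}$ on compact subsets of $U\setminus S_{t_0}$: uniformly in $s\in[0,1]$, $\haus^n(G_s(\Xi_t)\cap U)=\haus^n(\Sigma_s\cap U)+o(1)$ as $|t-t_0|\to 0$. Combined with the hypothesis $\haus^n(\Sigma_s)\le\haus^n(\Xi_{t_0})+\epsilon/2^{p+2}$ and the continuity $\haus^n(\Xi_{t_0})=\haus^n(\Xi_t)+o(1)$, choosing $\eta$ small enough to fit the $o(1)$ errors into the gap $\epsilon/2^{p+1}-\epsilon/2^{p+2}=\epsilon/2^{p+2}$ gives (\ref{freeze2}). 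For (\ref{freeze3}), on $\overline{Q(t_0,a')}$ the bump $\phi$ equals $1$, so the $U$-portion of $\Xi'_t$ is $G_1(\Xi_t)\cap U$, of mass at most $\haus^n(\Sigma_1\cap U)+o(1)\le\haus^n(\Xi_{t_0}\cap U)-\epsilon+o(1)$; reassembling with the unchanged part outside $U$ and absorbing errors yields the required $\epsilon/2$ decrease. The homotopy $\Lambda_{t,r}:=G_{(1-r)\phi(t)}(\Xi_t)$ connects $\{\Xi'_t\}$ (at $r=0$) to $\{\Xi_t\}$ (at $r=1$) and is stationary on $\partial[0,1]^p$ where $\phi$ vanishes.

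The main obstacle is the production of the ambient isotopy $\{G_s\}$ from the (possibly topology-changing) family $\{\Sigma_s\}$: since Definition~0.1 only requires $\{\Sigma_s\}$ to be a smooth family with moving singular sets $S_s$, one cannot in general realize it as a single ambient isotopy. The way around this is to subdivide $[0,1]$ into finitely many sub-intervals on which the singular sets $S_s$ stay in a small neighborhood of a fixed finite set in $U$, realize each piece as an ambient isotopy by the tubular-neighborhood theorem, and concatenate; this replaces $\{\Sigma_s\}$ by a nearby competitor family still satisfying (\ref{amm prop1})--(\ref{amm prop4}) with slightly worse constants, and the resulting error is absorbed into the $o(1)$ terms above. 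This is the one delicate point; the rest is bookkeeping.
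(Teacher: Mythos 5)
Your plan hinges on realizing the area-decreasing deformation $\{\Sigma_s\}_{s\in[0,1]}$ as a smooth ambient isotopy $\{G_s\}$ of $\M$ supported in $U$, and you correctly single this out as the delicate point --- but it is in fact a genuine gap, not a detail. A family $\{\Sigma_s\}$ satisfying (s1)--(s3) and (\ref{amm prop1})--(\ref{amm prop4}) is allowed to change topology as $s$ varies (components may shrink and disappear, necks may pinch through the finite singular sets $S_s$), and indeed the competitor families one must handle in the min-max construction \emph{do} exploit such topology changes to decrease area. An ambient isotopy $G_s$ with $G_0=\mathrm{id}$ necessarily carries $\Xi_{t_0}$ diffeomorphically to $G_s(\Xi_{t_0})$ for every $s$, so it can never reproduce a family whose topological type changes. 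Your proposed remedy --- subdivide $[0,1]$, realize pieces by tubular-neighborhood isotopies, and approximate $\{\Sigma_s\}$ by a nearby isotopic family --- is not substantiated: near a topology change there is no isotopic family staying pointwise close in the relevant sense, and even if one arranges for the area error to be small (e.g., keeping a vestigial tiny sphere), you would have to verify that the modified family still satisfies (\ref{amm prop1})--(\ref{amm prop4}) with controlled constants, which is exactly the sort of careful cut-and-paste the lemma is meant to provide in the first place. As written, the reduction to an ambient isotopy is circular in spirit and fails in the cases that matter.

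The paper's proof sidesteps this entirely by never transporting $\{\Sigma_s\}$ with a diffeomorphism. Instead it first \emph{freezes} the family $\{\Xi_t\}$ near $t_0$ inside $U$: choosing nested sets $A\subset\subset B$ between $U$ and $U'$ so that $\Xi_{t_0}\cap (B\setminus\bar A)$ is smooth and $\Xi_t\cap(B\setminus\bar A)$ is a small graph $g_t$ over it for $t$ near $t_0$, one interpolates the height functions $g_t\mapsto \varphi_B g_t + \varphi_A g_{\vartheta(t)}$ with a reparametrization $\vartheta$ collapsing the inner cube $Q(t_0,a'')$ to $t_0$. This makes the intermediate family $\Delta_t$ identically equal to $\Xi_{t_0}$ inside $A$ for $t\in Q(t_0,a'')$. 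Once that is achieved, one can simply paste in $\Sigma_{\chi(t)}\cap A$ by hand --- the pieces match across $\partial A$ because $\Sigma_s$ agrees with $\Sigma_0=\Xi_{t_0}$ outside $U\subset A$ by (\ref{amm prop2}). This direct cut-and-paste requires only that the slices $\Sigma_s$ be \emph{sets} with the stated properties, not that they be carried by an isotopy, and is therefore indifferent to topology changes in $s$. If you want to salvage your strategy, you would essentially need to reinsert this freezing step before pasting, at which point the ambient isotopy has become unnecessary.
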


\begin{proof}
{\bf Step 1: Freezing}. First we will choose open sets $A_1, A_2$ and $B_1,B_2$ satisfying
\begin{equation*}
U \subset \subset A_1 \subset \subset A_2 \subset \subset B_1 \subset \subset B_2 \subset \subset U',
\end{equation*}
and such that $\Xi_{t_0} \cap \tilde{C}$ is a smooth surface, where $\tilde{C} := B_2\setminus \bar{A}_1$, which is possible since $\Xi_{t_0}$ contains only finitely many singularities. In a tubular $\delta$-neighborhood (w.r.t the normal bundle) of $\Xi_{t_0} \cap \tilde{C}$ we fix normal coordinates $(z,\sigma) \in (\Xi_{t_0} \cap \tilde{C}) \times (-\delta,\delta)$ (from now on we use the notation $\Omega \times (\alpha, \beta)$ to identify those points of the tubular neighborhood which lie on top of $\Omega\subset \Xi_{t_0}$ and have signed distance bounded between $\alpha$ and $\beta$).  By choosing $\delta$ small enough and$/$or redefining $A_i$-s and $B_i$-s, we can ensure that $(\Xi_{t_0} \cap A_2) \times (-\delta,\delta) \subset \subset B_1$ and $(\Xi_{t_0} \cap B_1) \times (-\delta,\delta) \subset \subset B_2$. Now, after defining the open sets $A:= A_1 \cup \big( (\Xi_{t_0} \cap A_2) \times (-\delta,\delta)\big) $, and $B:= \big((B_1 \cap \Xi_{t_0}) \times (-\delta,\delta)\big) \cup \big( B_2 \setminus ((\bar{B}_2 \cap\Xi_{t_0})\times[-\delta,\delta]) \big)$, we set $C:=B \setminus \bar{A}$ and deduce the following properties:
\vspace{1mm}

\begin{itemize}
\item [(a)]$U \subset \subset A \subset \subset B \subset \subset U'$;\\
\item [(b)]$\Xi_{t_0} \cap C$ is a smooth surface;\\
\item [(c)]we can fix $\eta>0$ such that $\Xi_t \cap C$ is the graph of a function $g_t$ over $\Xi_{t_0} \cap C$ for $t \in Q(t_0,\eta)$. 
\end{itemize}  
Note that the slightly complicated definitions above are only to ensure the property (c), or in other words, that the set $C$ is "cylindrical" near $\Xi_{t_0}$ so that $\Xi_t \cap C$ can in fact be entirely represented as a graph over $\Xi_{t_0} \cap C$, i.e. $\Xi_t \cap C= \{(z,\sigma)| \sigma = g_t(z), \> z \in \Xi_{t_0} \cap C\} \,\, \forall t \in Q(t_0,\eta).$ \\
Next, we fix two smooth functions $\varphi_A$ and $\varphi_B$ such that
\begin{itemize}
\item $\varphi_A + \varphi_B = 1$;\\
\item $\varphi_A \in C_c^{\infty}(B), \> \varphi_B \in C_c^{\infty}(\M \setminus \bar{A})$
\end{itemize}
We then introduce the functions
\begin{equation*}
g_{t,\tau}:=\varphi_B g_t + \varphi_A g_{\tau}, \quad t,\tau \in Q(t_0,\eta), s \in [0,1]
\end{equation*}
Since $g_t$ converges smoothly to $g_{t_0}(=0 )$ as $t \to t_0$, we can make $\sup_{\tau}\|g_{t,\tau} - g_t\|_{C^1}$ arbitrarily small by choosing $\eta$ small. Moreover, if we express the area of the graph of a function $g$ over $\Xi_{t_0} \cap C$ as an integral functional of $g$, we know that it only depends on $g$ and its first derivatives. Thus, if $\Gamma_{t,\tau}$ is the graph of $g_{t,\tau}$, we can find $\eta$ small enough such that
\begin{equation}\label{graph area}
 \haus^n(\Gamma_{t,\tau}) \leq \haus^n(\Xi_t \cap C) + \frac{\epsilon}{2^{p+3}}.
\end{equation}
Now, given $0<a'<a<\eta$, we choose $a'' \in (a',a)$ and fix a 
smooth function $\vartheta:Q(t_0,a) \to Q(t_0,\eta)$ which is equal to the identity in a neighborhood of $\partial Q(t_0,a)$ and equal to $t_0$ in $Q(t_0,a'')$.
We now define a new family $\{\Delta_t\}$ as follows:
\begin{itemize}
\item $\Delta_t=\Xi_t$ for $t \notin Q(t_0,a)$;\\
\item $\Delta_t \setminus \bar{B} =\Xi_t \setminus \bar{B}$ for all $t$;\\
\item $\Delta_t \cap A = \Xi_{\gamma(t)} \cap A$ for $t \in Q(t_0,a)$;\\
\item $\Delta_t \cap C = \{(z,\sigma)\,|\,\sigma = g_{t,\vartheta (t)},\, z \in \Xi_{t_0}\cap C\}$ for $t \in Q(t_0,a)$.
\end{itemize}
Note that $\{ \Delta_t\}$ is a smooth family homotopic to $\{\Xi_t\}$, they both coincide outside of $B$ (and hence outside of $U'$) for \emph{every} $t$, and that in $A$ (and hence in $U$) we have $\Delta_t = \Xi_{\vartheta (t)}$ for $t \in Q(t_0,a)$. Since $\vartheta (t)$ is equal to $t_0$ for $t \in Q(t,a'')$, it follows that $\Delta_t \cap U = \Xi_{t_0} \cap U$ for $t \in Q(t_0,a'')$, or in other words, $\Delta \cap U$ is \emph{frozen} in $Q(t_0,a'')$. Furthermore, because of \eqref{graph area}, 
\begin{equation}\label{area est.1}
\haus^n(\Delta_t \cap C) \leq \haus^n(\Xi_t \cap C) + \frac{\epsilon}{2^{p+3}} \quad \text{for } t \in Q(t_0,a).
\end{equation}
{\bf Step 2: Dynamic competitor.} We fix a smooth function $\chi:Q(t_0,a'') \to [0,1]$ which is identically $0$ in a neighborhood of $\partial Q(t_0,a'')$, and identically $1$ on $Q(t_0,a')$. We then define a competitor family $\{\Xi_t'\}$ in the following way:
\begin{itemize}
\item $\Xi_t' = \Delta_t$ for $t \notin Q(t_0,a'')$;\\
\item $\Xi_t' \setminus A = \Delta_t \setminus A$ for $t \in Q(t_0,a'')$;\\
\item $\Xi_t' \cap A = \Sigma_{\chi(t)} \cap A$ for $t \in Q(t_0,a'')$.
\end{itemize}
The new family $\{\Xi_t'\}$ is also a smooth family, which is obviously homotopic in the sense of Definition \ref{homotopies} to $\{\Delta_t\}$ and hence to $\{\Xi_t\}$, so long as we ensure $a$ is small enough that $Q(t_0,a) \subseteq (0,1)^p$. We can now start estimating $\haus^n(\Xi_t')$.\\
For $t \notin Q(t_0,a)$, we have $\Xi_t' = \Delta_t = \Xi_t$, so
\begin{equation}\label{area est2}
\haus^n(\Xi_t') = \haus^n(\Xi_t) \quad \text{for } t \notin Q(t_0,a).
\end{equation}
For $t \in Q(t_0,a)$, we have $\Xi_t \setminus \bar{B} = \Xi_t' \setminus \bar{B}$ and hence $\Xi_t' \setminus U' = \Xi_t \setminus U'$. This shows property \eqref{freeze1} of the lemma.\\
In the set $C$ we have $\Xi_t' = \Delta_t$ for $t \in Q(t_0,a)$, thus owing to \eqref{area est.1},
\begin{align}\label{area est3}
\haus^n(\Xi_t') - \haus^n(\Xi_t) &= [\haus^n(\Delta_t \cap C) - \haus^n(\Xi_t \cap C)] +[\haus^n(\Xi_t' \cap A) - \haus^n(\Xi_t \cap A)]\nonumber\\
&\overset{\eqref{area est.1}}{\leq} \frac{\epsilon}{2^{p+3}} + [\haus^n(\Xi_t' \cap A) - \haus^n(\Xi_t \cap A)].
\end{align}
Next, we want to estimate the area in $A$ for $t \in Q(t_0,a)$. To do so, we consider several cases separately:
\begin{itemize}
\item[(i)] Let $t \in Q(t_0,a) \setminus Q(t_0,a'')$. Then $\Xi_t' \cap A = \Delta_t \cap A = \Xi_{\gamma(t)} \cap A$. However, $t, \gamma(t) \in Q(t_0,\eta)$ and, having chosen $\eta$ small enough, we can assume that
\begin{equation}\label{area est4}
| \haus^n(\Xi_s \cap A) - \haus^n(\Xi_{\sigma} \cap A)| \leq \frac{\epsilon}{2^{p+3}} \quad \text{for every } \sigma,s \in Q(t_0,\eta).
\end{equation}
Hence, we deduce with \eqref{area est3} that
\begin{equation}\label{area est5}
\haus^n(\Xi_t') \leq \haus^n(\Xi_t) + \frac{\epsilon}{2^{p+2}}.
\end{equation}
\item[(ii)] Let $t \in Q(t_0,a'') \setminus Q(t_0,a')$. Then $\Xi_t' \cap A = \Sigma_{\chi(t)} \cap A$. Therefore, with \eqref{area est3} it follows
\begin{align}\label{area est6}
\haus^n(\Xi_t') - \haus^n(\Xi_t) &\leq \frac{\epsilon}{2^{p+3}} + [ \haus^n(\Xi_{t_0} \cap A) - \haus^n(\Xi_t \cap A)]\nonumber\\ 
&+ [ \haus^n(\Sigma_{\chi(t)} \cap A) - \haus^n(\Xi_{t_0} \cap A)] \nonumber \\
&\overset{\eqref{amm prop3},\eqref{area est4}}{\leq} \frac{\epsilon}{2^{p+3}} + \frac{\epsilon}{2^{p+3}} + \frac{\epsilon}{2^{p+2}} = \frac{\epsilon}{2^{p+1}}.
\end{align}
\item[(iii)] Let $t \in Q(t_0,a')$. Then we have $\Xi_t' \cap A = \Sigma_1 \cap A$. Using \eqref{area est3} again, we have
\begin{align}\label{area est7}
\haus^n(\Xi_t') - \haus^n(\Xi_t) &\leq \frac{\epsilon}{2^{p+3}} + [ \haus^n(\Sigma_1 \cap A) - \haus^n(\Xi_{t_0} \cap A)]\nonumber\\
&+ [ \haus^n(\Xi_{t_0} \cap A) - \haus^n(\Xi_t \cap A)]\nonumber\\
&\overset{\eqref{amm prop4},\eqref{area est4}}{\leq} \frac{\epsilon}{2^{p+3}} - \epsilon + \frac{\epsilon}{2^{p+3}} < -\frac{\epsilon}{2}.
\end{align}
\end{itemize}
Gathering the estimates \eqref{area est2}, \eqref{area est5}, \eqref{area est6} and \eqref{area est7}, we finally obtain the properties \eqref{freeze2} and \eqref{freeze3} of the lemma, which concludes the proof.
\end{proof}
By retracing the steps of the previous proof, we can see that it allows for (at least) two generalizations, which will be useful.
\begin{remark}\label{freezing remark}
\emph{(i)} Note that the choice to have the cubes $Q(t_0,a')$ and $Q(t_0,a)$ centered at $t_0$ is unnecessary and only for the sake of notational simplicity. Indeed, with the appropriate choice of cut-off functions $\psi, \gamma$ and $\chi$, the proof is almost identical if we replace them with cubes $Q(t_1,a')$ and $Q(t_2,a)$ (or even more general sets) that are nested inside each other, i.e. $Q(t_1,a') \subset \subset Q(t_2,a) \subset \subset Q(t_o,\eta)$.\\
\emph{(ii)} The lemma also works with minimal modifications if the family $\{\Xi_t\}_{t \in [0,1]^p}$ is parameterized by a $k$-dimensional smooth submanifold $\P$ of $[0,1]^p$, with $\partial\P \cap (0,1)^p = \emptyset$ in case it has a boundary. One can simply take restrictions of the relevant subsets of $[0,1]^p$ to their intersection with $\P$, both in the statement and the proof.
\end{remark}

In order to use the previous lemma to construct the aforementioned competitor minimizing sequence and prove the Almgren-Pitts lemma, we will require a combinatorial covering argument, which is the second main ingredient. The idea is to decrease areas of certain "large-area" slices by a definite amount, while simultaneously keeping the potential area increase for other slices under control.\\

\noindent {\bf \emph{Proof of Proposition \ref{almgren-pitts}.}}
Let $\{\{\Gamma^\ell_t\}\}^\ell \subset X$ be a minimizing sequence which satisfies Proposition \ref{p:pull-tight}, and such that $\mathcal{F}(\{\Gamma^\ell_t\}):=\underset{t \in \P}{\max}\,\haus^n(\Gamma^\ell_t) < m_0(X) + \frac{1}{2^{m+2}\ell}$. The following claim  clearly implies the proposition:\\

{\bf Claim}: {\it For every $N$ large enough there exists $t_N \in \P$ such that $\Gamma^N:=\Gamma^N_{t_{N}}$ is $\frac{1}{N}$-a.m. in every $(U^1,\ldots,U^{\omega_m}) \in \mathcal{CO}_{\omega_m}$ and $\haus^n(\Gamma^N) \geq m_0(X) - \frac{1}{N}$.}

\vspace{2mm}
We define
\begin{equation}
K_N:= \Big\{ t \in \P \> | \> \haus^n(\Gamma^N_t) \geq m_0(X) - \frac{1}{N} \Big\}
\end{equation}
and suppose, contrary to the claim, that there is some subsequence $\{N_j\}_j$ such that for every $t \in K_{N_j}$ there exists an $\omega_m$-tuple $(U^1,\ldots,U^{\omega_m})$ such that $\Gamma^{N_j}_t$ is not $\frac{1}{N_j}$-a.m. in it. After a translation and$/$or dilation, we can assume, without loss of generality, that $\P \subset [0,1]^m$ (in the embedding). Note that, if we assume $N$ to be large enough that $m_0(X) - 1/N>bM_0(X)$, the set $K_N$ will surely lie in the interior of $\P$. In fact, in everything that follows, it is tacitly assumed that the subsets of $\P$ we choose stay away from $\partial \P$, in order to comply with our definition of homotopic families.\\
By a slight abuse of notation, from now on we do not rename the subsequence, and also drop the super- and subscript $N$ from $\Gamma_t^N$ and $K_N$. Thus for every $t \in K$ there is a $\omega_m$-tuple of open sets $(U_{1,t},\ldots,U_{\omega_m,t}) \in \mathcal{CO}_{\omega_m}$ and $\omega_m$ families $\{\Sigma_{i,t,\tau}\}_{\tau \in [0,1]}$ such that the following properties hold for every $i \in \{1,\ldots,\omega_m\}$:
\begin{itemize}
\item $\Sigma_{i,t,0} = \Gamma_t$;\\
\item $\Sigma_{i,t,\tau}\setminus U_{i,t}=\Gamma_t \setminus U_{i,t}$;\\
\item $\haus^n(\Sigma_{i,t,\tau}) \leq \haus^n(\Gamma_t) +  \frac{1}{2^{m+2}N}$;\\
\item $\haus^n(\Sigma_{i,t,1}) \leq \haus^n(\Gamma_t) - \frac{1}{N}$.
\end{itemize}
By recalling the definition of $\mathcal{CO}_{\omega_m}$, for every $t \in K$ and every $i \in \{1,\ldots,\omega_m\}$ we  can choose an open set $U'_{i,t}$ such that $U_{i,t} \subset \subset U'_{i,t}$ and
\begin{equation}
\dist(U'_{i,t},U'_{j,t}) \geq 2 \cdot \min \{\diam(U'_{i,t}),\diam(U'_{j,t})\}
\end{equation}
for all $i,j \in \{1,\ldots,\omega_m\}$ with $i \neq j$. Next, we apply Lemma \ref{freezing lemma} with $\Xi_t = \Gamma_t$, $U=U_{i,t}$, $U'=U'_{i,t}$ and $\Sigma_{\tau} = \Sigma_{i,t,\tau}$. Hence, for every $t \in K$ and $i \in \{1,\ldots,\omega_m\}$ we get a corresponding constant $\eta_{i,t}$ given by the statement of the lemma. 

\vspace{2mm}
\noindent {\bf Step 1: Initial covering.}
We first assign to each $t \in K$ exactly one constant $\eta_t$, by setting $\eta_t:=\underset{i \in \{1,\ldots,\omega_m\}}{\min}\eta_{i,t}$. We would like to initially decompose the cube $[0,1]^m$ into a grid of small, slightly overlapping cubes, such that we might be able to apply the constructions in Lemma \ref{freezing lemma} to each of those (after discarding the ones which have empty intersection with $K$). For this, we would like their size to be smaller than the size of the cube given by the lemma for any point lying in the center of one of these cubes. Therefore, we choose a covering of $K$:
\begin{equation*}
\left\{ Q\big(((2r_1+1)\tilde{\eta},\ldots,(2r_m+1)\tilde{\eta}),\eta\big) \left| \begin{array}{l} r_1,\ldots,r_m \in \{1,\ldots,\xi\} \\ Q\big(((2r_1+1)\tilde{\eta},\ldots,(2r_m+1)\tilde{\eta}),\eta\big) \cap K \neq \emptyset  \end{array}\right. \right\},
\end{equation*}
where $\tilde{\eta}= \frac{9}{10}\eta, \> \xi = \min\{n \in \NN_0 \,|\, (2n+1)\tilde{\eta}>1-\eta\}$, and $\eta$ is yet to be determined. \\
Ideally, we would like $\eta$ to be smaller than any $\eta_t$. The problem, however, is that for each $t \in K$, the constant $\eta_t$ (which is determined by the proof of Lemma \ref{freezing lemma}) depends also on the sets $U_{i,t}$, so one might not in general expect to prove lower boundedness. Nevertheless, using \mbox{Remark \ref{freezing remark}(i)}, we deduce that if $t_0 \in K$, then for any $t \in Q(t_0,\frac{\eta_{i,t_0}}{2})$, the conclusions of the lemma hold with $\eta = \frac{\eta_{i,t_0}}{2}$ ($t$ being the center of the cubes now), and $U=U_{i,t_0}$. Therefore, for $t\,(\in K)$ close enough to $t_0$, we can replace $(U_{1,t},\ldots,U_{\omega_m,t})$ by $(U_{1,t_0},\ldots,U_{\omega_m,t_0})$ if necessary. Now, we can start by covering $K$ with $Q(t,\frac{\eta_t}{2}),\, t \in K$. Since $K$ is compact, it suffices to pick finitely many $t_0,\ldots,t_l$ with $K \subset \bigcup Q(t_i,\frac{\eta_{t_i}}{2})$. We then set:
\begin{equation}
\eta':= \min_{j \in \{0,\ldots,l\}}\frac{\eta_{t_j}}{2}
\end{equation}
Also note that for $N$ large enough, because of condition \eqref{mountain pass family}, the set $K$ lies in the interior of $\P$ (in case it has a boundary). That means there exists some $\eta'' >0$ such that for any cube $Q(t,\eta'')$ intersecting $K$ we have $\partial \P \cap Q(t,\eta'') = \emptyset$.\\
We define $\eta:= \min\{\frac{\eta'}{4},\eta''\}$, which determines the size of the cubes in the covering. Furthermore, we set
\begin{align*}
&\mathfrak{r}:=(r_1,\ldots,r_m); \\
&t_{\mathfrak{r}}:=((2r_1+1)\tilde{\eta},\ldots,(2r_m+1)\tilde{\eta})\\
&\mathfrak{Q_r}:=Q\big(((2r_1+1)\tilde{\eta},\ldots,(2r_m+1)\tilde{\eta}),\eta\big)
\end{align*}
To each $\mathfrak{Q_r}$ with $t_{\mathfrak{r}} \in K$ we can assign a corresponding $\omega_m$-tuple $(U_{1,t_{\mathfrak{r}}},\ldots,U_{\omega_m,t_{\mathfrak{r}}}) \in \mathcal{CO}_{\omega_m}$ by assumption. On the other hand, to any cube $\mathfrak{Q_r}$ in the covering (i.e. $\mathfrak{Q_r} \cap K \neq \emptyset$) where the center $t_{\mathfrak{r}} \notin K$, owing to \mbox{Remark \ref{freezing remark}(i)} and the choice of $\eta$ above, we can also assign $(U_{1,\tilde{t}},\ldots,U_{\omega_m,\tilde{t}})$ belonging to some $\tilde{t} \in K$,  where we are able to apply Lemma \ref{freezing lemma}. With a slight abuse of notation, we will denote this tuple by $(U_{1,t_{\mathfrak{r}}},\ldots,U_{\omega_m,t_{\mathfrak{r}}})$.

\vspace{2mm}
\noindent {\bf Step 2: Refinement of the covering.} Our aim is to find a refinement $\{\mathfrak{Q_r(a)}\}, \> \mathfrak{a} \in \{-\frac{2}{5},\frac{2}{5}\}^m$ of the initial covering, such that
\begin{itemize}
\item[(i)] $\mathfrak{Q_r(a)} \subset \mathfrak{Q_r}$ for any $\mathfrak{a}$;\\
\item[(ii)] for every $\mathfrak{r}$ and every $\mathfrak{a}$ there is a choice of $U_{\mathfrak{a},t_{\mathfrak{r}}}$ such that
\begin{itemize}
\item $U'_{\mathfrak{a},t_{\mathfrak{r}}} \in \{U'_{1,t_{\mathfrak{r}}},\ldots, U'_{\omega_m,t_{\mathfrak{r}}}\}$,\\
\item $\dist(U'_{\mathfrak{a},t_{\mathfrak{r}}},U'_{\mathfrak{a'},t'_{\mathfrak{r}}}) >0$ if $\mathfrak{Q_r(a)} \cap \mathfrak{Q_{r'}(a')} \neq \emptyset$;\\
\end{itemize}
\item[(iii)] every point $t \in [0,1]^m$ is contained in at most $2^m$ cubes $\mathfrak{Q_r(a)}$.
\end{itemize}
To do this, we cover each cube $\mathfrak{Q_r}$ with $2^m$ smaller cubes in the following way:
\begin{equation}\label{refined covering}
\left\{Q\Big(\big((2r_1+1)\tilde{\eta} + a_1\eta,\ldots,(2r_m+1)\tilde{\eta}+a_m\eta \big),\frac{3}{5}\eta\Big) \>|\> a_1,\ldots,a_m \in \left\{-\frac{2}{5},\frac{2}{5}\right\} \right\}
\end{equation}
We simplify the notation by setting
\begin{align*}
&\mathfrak{a}:=(a_1,\ldots,a_m) \in \left\{-\frac{2}{5},\frac{2}{5}\right\};\\
&\mathfrak{Q_r(a)}:=Q\Big(\big((2r_1+1)\tilde{\eta} + a_1\eta,\ldots,(2r_m+1)\tilde{\eta}+a_m\eta \big),\frac{3}{5}\eta\Big).
\end{align*}
Note that this choice of the refinement, as well as that of the initial covering, immediately guarantees properties (i) and (iii).\\

\noindent After assigning a family of open sets to each cube of the initial covering in the  previous step, we now want to assign a subfamily to every cube of the refined covering. Consider a cube $\mathfrak{Q_{r_1}(a)} \subset \mathfrak{Q}_{\mathfrak{r}_1}$ of the refinement. Assume that $\mathfrak{Q_{r_1}(a)}$ intersects $1 \leq j \leq 2^m-1$ different cubes of the initial covering, say $\mathfrak{Q}_{\mathfrak{r}_2},\ldots,\mathfrak{Q}_{\mathfrak{r}_j}$, and let
\begin{equation*}
\mathcal{F}_{\mathfrak{r}_1}:=(U'_{1,t_{\mathfrak{r}_1}},\ldots,U'_{\omega_m,t_{\mathfrak{r}_1}}),\ldots, \mathcal{F}_{\mathfrak{r}_j}:=(U'_{1,t_{\mathfrak{r}_j}}\ldots,U'_{\omega_m,t_{\mathfrak{r}_j}})
\end{equation*}
be the corresponding tuples of open sets. Applying Lemma \ref{combinatorial}, we extract subfamilies $\mathcal{F}_{\mathfrak{r}_i}^{sub} \subset \mathcal{F}_{\mathfrak{r}_i}$ for every $i \in \{1,\ldots,j\}$, each containing at least $2^m$ open sets such that
\begin{equation}\label{subfamily}
\dist(U,V)>0 \quad \forall \> U \in \mathcal{F}_{\mathfrak{r}_a}^{sub},\> V \in \mathcal{F}_{\mathfrak{r}_b}^{sub}.
\end{equation}

We then assign to $\mathfrak{Q_{r_1}(a)}$ the subfamily $\mathcal{F}_{\mathfrak{r}_1}^{sub}$, which we now denote by $\mathcal{F}_{\mathfrak{r}}(\mathfrak{a})$. We can do this for every cube in the refinement. By construction, the property \eqref{subfamily} surely holds for each two subfamilies $\mathcal{F}_{\mathfrak{r}_i}(\mathfrak{a}), \mathcal{F}_{\mathfrak{r}_j}(\mathfrak{a}')$ assigned to cubes $\mathfrak{Q}_{\mathfrak{r}_i}(a), \mathfrak{Q}_{\mathfrak{r}_j}(a')$, such that \mbox{$\mathfrak{Q}_{\mathfrak{r}_i}(a) \cap \mathfrak{Q}_{\mathfrak{r}_j}(a') \neq \emptyset$} and $\mathfrak{Q}_{\mathfrak{r}_i} \neq \mathfrak{Q}_{\mathfrak{r}_j}$. On the other hand, the subfamilies assigned to two cubes belonging to the same cube of the initial covering (i.e. $\mathfrak{Q}_{\mathfrak{r}_i} = \mathfrak{Q}_{\mathfrak{r}_j}$), are not necessarily different. Note however, that each subfamily contains at least $2^m$ open sets, and every cube $\mathfrak{Q}_{\mathfrak{r}}$ of the initial covering is covered by exactly $2^m$ cubes of the refinement. Hence we can assign to each of those a distinct open set $U'_{\mathfrak{a},t_{\mathfrak{r}}} \in \mathcal{F}_{\mathfrak{r}}(\mathfrak{a})$.\\
Thus we have a refinement of the covering $\{\mathfrak{Q}_{\mathfrak{r}}(\mathfrak{a})\}$ and corresponding open sets $U'_{\mathfrak{a},t_{\mathfrak{r}}}$ which have all the three properties listed in the beginning of Step 2. Moreover, since $K$ is compact, and $\mathfrak{Q}_{\mathfrak{r}}(\mathfrak{a})=Q(t_{\mathfrak{r}} + \mathfrak{a}\eta,\frac{3}{5}\eta)$ according to \eqref{refined covering}, we can choose a $\delta >0$ such that every $t \in K$ is contained in at least one of the cubes $Q(t_{\mathfrak{r}} + \mathfrak{a}\eta,\frac{3}{5}\eta - \delta)$.\\

For the sake of simplicity, let us now rename the refinement $\{\mathfrak{Q}_{\mathfrak{r}}(\mathfrak{a})\}$ and call it $\{P_\alpha\}$, the corresponding smaller cubes $Q(t_{\mathfrak{r}} + \mathfrak{a}\eta,\frac{3}{5}\eta - \delta)$ we call $P_\alpha^{\delta}$, and the associated open sets we now denote by $U_\alpha$ and $U'_\alpha$. In particular $\alpha$ stands for the pair $(\mathfrak{a},t_{\mathfrak{r}})$. 

\vspace{2mm}
\noindent {\bf Step 3: Conclusion.}
In order to deduce the existence of a family $\{\Gamma_{\alpha,t}\}$ with the properties
\begin{itemize}
\item $\Gamma_{\alpha,t}=\Gamma_t$ if $t \notin P_\alpha$ and $\Gamma_{\alpha,t} \setminus U'_\alpha = \Gamma_t \setminus U'_\alpha$  if  $t \in P_\alpha$;\\
\item $\haus^n(\Gamma_{\alpha,t}) \leq \haus^n(\Gamma_t) + \frac{1}{2^{m+1}N}$ for every $t$;\\
\item $\haus^n(\Gamma_{\alpha,t}) \leq \haus^n(\Gamma_t) - \frac{1}{2N}$ if $t \in P_\alpha^{\delta}$,
 \end{itemize}
 we apply Lemma \ref{freezing lemma} for $\Xi_t = \Gamma_t, U=U_\alpha, U'=U'_\alpha$ and $\Sigma_{\tau} = \Sigma_{i,s,\tau}$, where $(i,s) = \alpha$.\\
 Recall that from the construction of the refined covering $\{P_\alpha\}$ and the choice of $U'_\alpha$ it follows that, if $P_\alpha \cap P_\beta \neq \emptyset$ for $\alpha \neq \beta$, then $\dist(U'_\alpha,U'_\beta) >0$. We can therefore define a new family $\{\Gamma'_t\}_{t \in \P}$ with
 \begin{itemize}
 \item $\Gamma'_t = \Gamma_t$ if $t \notin \cup P_\alpha$;\\
 \item $\Gamma'_t = \Gamma_{\alpha,t}$ if $t$ is contained in in a single $P_\alpha$;\\
 \item $\Gamma'_t = \big[\Gamma_t \setminus (U'_{\alpha_1} \cup \ldots \cup U'_{\alpha_s})\big] \cup \big[\Gamma_{\alpha_1,t} \cap U'_{\alpha_1}\big] \cup \ldots \cup \big[\Gamma_{\alpha_s,t}\cap U'_{\alpha_s} \big]$ if $t \in P_{\alpha_1} \cap \ldots \cap P_{\alpha_s}, \> s \geq 2$.
 \end{itemize}
 This family is clearly homotopic to $\{\Gamma_t\}$ and hence belongs to $X$.\\
 We now want to estimate $\mathcal{F}(\{\Gamma'_t\})$. If $t \notin K$, then $t$ is contained in at most $2^m \> P_{\alpha}$'s and $\Gamma'_t$ can therefore increase at most $2^m \cdot \frac{1}{2^{m+1}N}$ in area:
 \begin{equation}
 t \notin K \implies \haus^n(\Gamma'_t) \leq \haus^n(\Gamma_t) + 2^m \cdot \frac{1}{2^{m+1}N} \leq m_0(X) - \frac{1}{2N}.
 \end{equation}
Note that the last inequality is due to the definition of $K$. If $t \in K$, then $t$ is contained in at least one cube $P_\alpha^{\delta}$ and at most $2^m - 1$ other cubes $P_{\alpha_1},\ldots,P_{\alpha_{2^m-1}}$. Hence the area of $\Gamma'_t$ looses at least $\frac{1}{2N}$ in the first cube and increases at most $\frac{1}{2^{m+1}N}$ in the remaining ones, which are no more than $2^m-1$. Thus,
\begin{equation}
t \in K \implies \haus^n(\Gamma'_t) \leq \haus^n(\Gamma_t) + (2^m-1)\cdot\frac{1}{2^{m+1}N} - \frac{1}{2N} \leq m_0(X) - \frac{1}{2^{m+2}N},
\end{equation}
where the last inequality holds since $\haus^n(\Gamma_t) \leq \mathcal{F}(\{\Gamma_s^N\}_{s \in \P}) \leq m_0(X) + \frac{1}{2^{m+2}N}$ by assumption.\\
From the preceding inequalities we conclude 
\begin{equation*}
\mathcal{F}(\{\Gamma'_t\}) \leq m_0(X) - \frac{1}{2^{m+1}N},
\end{equation*}
which is a contradiction to $m_0(X) = \underset{X}{\inf}\mathcal{F}$. This finishes the proof.
\qed

\section{Boundary behavior of stationary varifolds}\label{s:boundary}

\subsection{Maximum principle} The first important tool which we recall is the following classical maximum principle for the constrained case.

\begin{propos}[Maximum principle]\label{p:White-max}
Let $\M$ be a smooth $(n+1)$-dimensional submanifold satisfying Assumption \ref{a:(C)} and $U\subset \M$ an open set. If $V\in \va_s^c (U, \gamma)$ for some $C^{2,\alpha}$ $(n-1)$-dimensional submanifold $\gamma$ of $\partial \mathcal{M}$ (namely $\delta V (\chi) \geq 0$ for every $\chi \in \mathfrak{X}_c^- (U)$ which vanishes on $\gamma$), then $\supp (V) \cap \partial \M \subset \gamma$. 
\end{propos}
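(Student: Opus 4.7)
My plan would be to argue by contradiction. Suppose there is a point $p \in \supp(V) \cap \partial \M$ with $p \notin \gamma$; I would exhibit a vector field $\chi \in \mathfrak{X}_c^-(U)$ vanishing on $\gamma$ with $\delta V(\chi) < 0$, contradicting the defining inequality for $V \in \va_s^c(U,\gamma)$. Since $\gamma$ is a closed submanifold of $\partial\M$ and $p\notin\gamma$, I would work in a small open neighborhood $W \ni p$ with $\overline W \subset U$ and $\overline W \cap \gamma = \emptyset$, taken small enough to lie in a tubular neighborhood of $\partial\M$ where Fermi coordinates $(x,y)$ with $y = \dist(\cdot,\partial\M)$ are available, and I would extend the inward unit normal as $\nu=\nabla y$. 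The natural candidate test field is $\chi = \phi\,\nu$ for a smooth nonnegative cutoff $\phi$ compactly supported in $W$ with $\phi(p)>0$: such a $\chi$ manifestly belongs to $\mathfrak{X}_c^-(U)$ and vanishes on $\gamma$.

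The heart of the computation is an immediate consequence of Assumption \ref{a:(C)}. Expanding
\[
\dv_S(\phi\,\nu) = \phi\,\dv_S\nu + \langle \nabla\phi, P_S\nu\rangle,
\]
one computes, for any $n$-plane $S\subset T_x\M$ and any $x\in W$, by decomposing an orthonormal basis $\{v_\alpha\}$ of $S$ into its tangential parts $v_\alpha^T$ along the level hypersurface $\{d=y\}$ and its $\nu$-components $s_\alpha$,
\[
\dv_S\nu \;=\; -\sum_\alpha A_y\bigl(v_\alpha^T,v_\alpha^T\bigr) \;\leq\; -\tfrac{(n-1)\xi}{2},
\]
where $A_y$ is the second fundamental form of $\{d=y\}$. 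Here the inequality uses $A_y \succeq \tfrac{\xi}{2}g$ on $W$ (valid on small $W$ by continuity of the shape operator on parallel hypersurfaces) and $\sum_\alpha |v_\alpha^T|^2 = n - |P_S\nu|^2 \geq n-1$. This is the precise way strict convexity of $\partial \M$ enters, forcing the bulk term $\phi\,\dv_S\nu$ to be strictly negative wherever $\phi>0$.

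The remaining issue -- and I expect it to be the main obstacle -- is to absorb the term $\langle\nabla\phi, P_S\nu\rangle$ so that the negative bulk contribution dominates. I would take $\phi=\eta(x)\psi(y)$ with $\psi$ nonincreasing in $y$, so that the $\partial_y$-component of $\nabla \phi$ contributes nonpositively to $\langle\nabla\phi,P_S\nu\rangle$, leaving only a tangential-gradient piece bounded by $\psi\,|\nabla_x\eta|$. One way to close the argument is to use that $V$ is in fact stationary in the usual sense in $W\cap\mathrm{Int}(\M)$ (because $\pm\chi\in\mathfrak{X}_c^-$ for any $\chi$ vanishing near $\partial\M$) and exploit the interior monotonicity formula together with $p\in\supp(V)$ to obtain a density lower bound that lets the bulk term outweigh the transitional term. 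A cleaner alternative is to invoke the Solomon--White maximum principle for stationary varifolds at strictly mean-convex barriers: since $\partial\M$ has mean curvature $H=\mathrm{tr}\,A_{\partial\M}\geq n\xi>0$ with respect to $\nu$, the principle forces $\supp(V)$ to contain a positive multiple of a neighborhood of $p$ in $\partial\M$; testing $\chi=\phi\,\nu$ on this piece then yields $\delta V(\chi)\leq -\phi\,n\xi\cdot\|V\!\restriction\!\partial\M\|<0$ at once, completing the contradiction.
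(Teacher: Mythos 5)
Your divergence computation $\dv_S(\phi\nu)=\phi\,\dv_S\nu+\langle\nabla\phi,P_S\nu\rangle$ with $\dv_S\nu=-\sum_\alpha A_y(v_\alpha^T,v_\alpha^T)\le-\frac{(n-1)\xi}{2}$ is correct, and you are right that the whole difficulty is in absorbing the transition term $\langle\nabla\phi,P_S\nu\rangle$. But as written neither of your two suggested fixes closes the argument. The monotonicity route needs a mass-ratio bound for $V$ at the \emph{boundary} point $p$ strong enough to control $\int\psi|\nabla_x\eta|\,d\|V\|$ by the bulk term $\int\eta\psi\,d\|V\|$ for a sequence of cutoffs $\eta$; the interior monotonicity formula degenerates as the center approaches $\partial\M$, and the boundary monotonicity formula used in the paper (Proposition~\ref{p:All-monot}) is proved \emph{later}, using Corollary~\ref{c:allard_possible}, which in turn relies on this very maximum principle — so invoking it here would be circular. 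A log-cutoff device of the sort used in White's argument does close this gap, but it is not what you wrote, and for $n=1$ even the bound $\dv_S\nu\le-\tfrac{(n-1)\xi}{2}$ is vacuous.

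The second route is closer to the point, but it mis-states what the barrier maximum principle gives. For a \emph{strictly} mean-convex barrier such as $\partial\M$ under Assumption~\ref{a:(C)}, the Solomon--White conclusion is not that $\supp(V)$ must contain a neighborhood of $p$ in $\partial\M$; it is that $\supp(V)\cap\partial\M=\emptyset$ near $p$, which is already the desired contradiction, with nothing left to test. (The alternative ``$N\subset\supp(V)$'' only arises when the barrier is minimal, i.e.\ $H\equiv0$.) Moreover, the final step you propose — testing $\chi=\phi\nu$ ``on the piece of $V$ sitting on $\partial\M$'' to get $\delta V(\chi)\le-\phi\,n\xi\cdot\|V\restriction\partial\M\|$ — is not a legitimate computation: the first variation always integrates over \emph{all} of $V$, so the contribution of the interior part (precisely the transition term you are trying to get rid of) cannot be discarded. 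The paper's own argument is simply to cite White's maximum principle \cite{white-max} and observe that White's proof only uses the one-sided inequality $\delta V(\chi)\ge0$ for inward-pointing $\chi$ supported away from $\gamma$, which is the hypothesis $V\in\va_s^c(U,\gamma)$; your second route, once the statement of the principle is corrected, is exactly this citation, and then none of the cutoff analysis is needed.
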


The above proposition is classical if we were to consider $\M$ as a subset of a larger manifold $\tilde{\M}$ without boundary and we had a varifold $V$ which were stationary in $\tilde \M\setminus \gamma$. For a proof we refer the reader to White's paper \cite{white-max}. However it is straightforward to check that the proof in \cite{white-max} works in our setting, since the condition $\delta V (\chi) \geq 0$ for the class of vector fields $\mathfrak{X}^-_c (U\setminus \gamma)$ pointing ``inwards'' is what White really uses in his proof. 

\begin{remark}\label{extr vs. intr}
While one can in principle work with objects defined intrinsically on $\M$, it is often more convenient to embed $\M$ (smoothly) isometrically into some Euclidean space $\RR^{N}$. In fact, by possibly choosing a larger $N$, one can do this so that $\M$ is a compact subset of a closed $(n+1)$-dimensional manifold $\tilde{\M}$.
\end{remark}

As a corollary to Proposition \ref{p:White-max} we obtain the following

\begin{corol}\label{c:allard_possible}
Let $\M$ be a smooth $(n+1)$-dimensional Riemannian manifold isometrically embedded in a Euclidean space $\mathbb R^N$ and satisfying Assumption \ref{a:(C)}. If $U'$ is an open subset of $\mathbb R^N$ and $V$ a varifold in $\va_s^c (U'\cap \M, \gamma)$ for 
some $n-1$-dimensional $C^{2,\alpha}$ submanifold $\gamma$ of $\partial \M$, then { the restricion of $V$ to $U' \setminus \gamma$} has, as a varifold in $U'\setminus \gamma$, bounded generalized mean curvature in the sense of Allard: in particular all the conclusions of Allard's boundary regularity theory in \cite{allard-bdry} are applicable. 
\end{corol}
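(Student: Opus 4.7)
The plan is to verify directly the Allard-type bound $|\delta V(X)| \le C \int |X|\,d\|V\|$ for every $X \in C^1_c(U' \setminus \gamma; \RR^N)$, with $C$ depending only on the extrinsic geometry of $\M \subset \RR^N$; the conclusion then follows directly from \cite{allard-bdry}. My starting point would be Proposition \ref{p:White-max}: since $\supp V \cap \partial \M \subset \gamma$, the compact set $K := \supp V \cap \supp X$ is disjoint from $\gamma \cup \partial \M$, so I would fix a cutoff $\psi \in C^\infty_c(\RR^N;[0,1])$ equal to $1$ on a neighborhood of $K$ and with $\supp \psi$ compactly contained in $U' \setminus (\gamma \cup \partial \M)$. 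Because $(1-\psi)X$ and all its derivatives vanish on $\supp V$, one has $\delta V(X) = \delta V(\psi X)$, reducing matters to estimating $\delta V(\psi X)$.

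Next, on a tubular neighborhood $\mathcal N \supset \supp \psi$ of $\M$ in $\RR^N$, I would decompose via the nearest-point projection $\Pi \colon \mathcal N \to \M$ as
\[
\psi X = \psi \tilde X^T + \psi \tilde X^\perp,
\]
with $\tilde X^T(y) \in T_y\M$ and $\tilde X^\perp(y) \perp T_y\M$ at each $y \in \M$, and then handle the two summands separately, exploiting that all tangent planes of $V$ lie in $T\M$.

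For the normal part, differentiating the pointwise identity $\tilde X^\perp \cdot e \equiv 0$ (valid on $\M$ for every $e \in T\M$) along tangential directions yields, at every $x \in \M$ and every $n$-plane $S \subset T_x\M$,
\[
\dv_S(\psi \tilde X^\perp)(x) = -\langle \psi(x)\, \tilde X^\perp(x),\, H_\M(S)\rangle,
\]
where $H_\M(S) = \sum_i A_\M(e_i,e_i)$ is the trace of the second fundamental form of $\M \subset \RR^N$ along an orthonormal basis of $S$; smoothness of $\M$ gives $|H_\M(S)| \le n\|A_\M\|_\infty$, so $|\delta V(\psi \tilde X^\perp)| \le n\|A_\M\|_\infty \int |X|\,d\|V\|$. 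For the tangential part, the choice of $\psi$ guarantees that $\psi \tilde X^T|_\M$ lies in $\mathfrak{X}^0_c(U' \cap \M)$ and vanishes on $\gamma$; hence both $\pm \psi \tilde X^T|_\M$ are admissible test fields in the defining inequality \eqref{e:add_constr} of $\va^c_s(\gamma)$, which forces $\delta V(\psi \tilde X^T) = 0$ (for fields tangent to $\M$ the ambient and intrinsic tangential divergences coincide on $\supp V$, so the $\RR^N$- and $\M$-versions of $\delta V$ agree). Summing the two estimates produces $|\delta V(X)| \le n\|A_\M\|_\infty \int |X|\,d\|V\|$, which is exactly the Allard bound.

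The only real obstacle will be translating the one-sided, inward-pointing variational inequality defining $\va^c_s(\gamma)$ into a genuine two-sided $\RR^N$-bound on $\delta V$. The maximum principle together with the cutoff $\psi$ does exactly this: it pushes $V$ away from $\partial\M$ near $\supp X$, so the tangential part of the variation is effectively interior and two-sided (hence vanishes), while the normal part is absorbed by the extrinsic second fundamental form of $\M \subset \RR^N$. Once this Allard-type bound is in place, the boundary regularity theory of \cite{allard-bdry} applies verbatim to $V|_{U'\setminus\gamma}$.
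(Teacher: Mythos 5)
Your proof is correct and follows essentially the same route as the paper's: the paper also invokes Proposition \ref{p:White-max} to get $\supp V \cap \partial\M \subset \gamma$ (equivalently, to localize away from $\partial\M$ on test vector fields supported off $\gamma$), and then reduces to the classical tangential/normal splitting of the first variation — citing \cite[Remark 16.6(2)]{simon-gmt} — which is exactly the decomposition $\psi X = \psi\tilde X^T + \psi\tilde X^\perp$ you carry out explicitly. The only cosmetic difference is that the paper first embeds $\M$ in a closed ambient $\tilde\M$ so that the localized varifold is honestly stationary in $\tilde\M\setminus\gamma$ and the cited remark applies verbatim, whereas you perform the cutoff and the decomposition directly in $\RR^N$; both are correct and equivalent.
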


The proof is straightforward: after viewing $\M$ as a subset of a closed submanifold $\tilde\M$, Proposition \ref{p:White-max} implies the stationarity of $V$ in $\tilde\M\setminus \gamma$ and reduces the statement to a classical computation (see for instance \cite[Remark 16.6(2)]{simon-gmt}).

\subsection{Monotonicity formulae} An important tool in regularity theory for stationary varifolds is the monotonicity formula. For $x \in$ Int($\M$) it says that there exists a constant $\Lambda$ (depending on the ambient Riemannian manifold $\M$, and which is $0$ if the metric is flat, see \cite{simon-gmt}) such that the function
\begin{equation}\label{monotonicity}
f(\rho):=e^{\Lambda \rho}\dfrac{||V||(B_{\rho}(x))}{\omega_n\rho^{n}}
\end{equation}
is non-decreasing for every $x \in \M$ and every $\rho < \text{min}\{\text{Inj}(\M),\text{dist}(x, \partial \M)\}$. A similar conclusion assuming the existence of a "boundary" was reached by Allard \cite{allard-bdry}. { However, in order to apply Allard's conclusion to our case, we need to first show that in our case $\|V\| (\gamma) =0$. This is achieved in the following Lemma.

\begin{lemma}\label{c:gamma_meas_0}
Let $V\in \va^c_s (U, \gamma)$. Then $\|V\| (\gamma) =0$. In particular, a varifold $V$ which is a.m. in annuli for the constrained problem as in Proposition \ref{a.m. in annuli} is integer rectifiable in the whole $\M$.
\end{lemma}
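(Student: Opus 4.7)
The plan is to combine the maximum principle with Allard's boundary monotonicity, and to reduce the conclusion to a standard density–measure comparison. First I would invoke Proposition \ref{p:White-max}, which yields $\supp (V) \cap \partial \M \subset \gamma$; in particular $\|V\|(\partial \M \setminus \gamma) = 0$, so it suffices to prove $\|V\|(\gamma) = 0$.

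The second step is to use Corollary \ref{c:allard_possible}, which makes Allard's boundary regularity theory in \cite{allard-bdry} applicable to $V$, with $\gamma$ playing the role of the prescribed boundary. The boundary monotonicity formula from that theory then gives, for every compact $K \subset \gamma$, constants $C(K) > 0$ and $\rho_0(K) > 0$ such that
$$
\|V\|(B_\rho(x)) \leq C(K)\, \rho^n \qquad \text{for every } x \in K,\; 0 < \rho < \rho_0(K).
$$
In particular, the upper $n$-density $\Theta^{*n}(\|V\|, x)$ is finite and uniformly bounded on compact subsets of $\gamma$.

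Since $\gamma$ is an $(n-1)$-dimensional smooth submanifold of $\partial \M$, $\haus^n(\gamma) = 0$. A standard consequence of the Besicovitch covering theorem (see, e.g.\ \cite[\S 3]{simon-gmt}) states that if $\mu$ is a Radon measure and $\Theta^{*n}(\mu, \cdot) \leq C$ on a Borel set $A$, then $\mu(A) \leq 2^n C\, \haus^n(A)$. Applying this to each compact $K \subset \gamma$ and taking an exhaustion gives $\|V\|(\gamma) = 0$, as required.

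For the second conclusion, the interior regularity of Proposition \ref{interior} guarantees that a varifold which is a.m.\ in annuli is, inside $\mathrm{Int}(\M)$, a smooth embedded minimal hypersurface off a singular set of Hausdorff dimension at most $n-7$; hence integer rectifiable on $\mathrm{Int}(\M)$. Combining with $\|V\|(\partial \M)=0$, which follows from the first part and the maximum principle, one promotes integer rectifiability from $\mathrm{Int}(\M)$ to all of $\M$. The only real obstacle in the scheme is the boundary density bound, and that is already packaged in Corollary \ref{c:allard_possible} together with Allard's work; everything else is measure-theoretic bookkeeping.
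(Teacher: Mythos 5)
There is a genuine circularity in your step 2. You want to extract a boundary density bound $\|V\|(B_\rho(x)) \le C \rho^n$ at points $x\in\gamma$ from Allard's boundary monotonicity formula, via Corollary~\ref{c:allard_possible}. But Corollary~\ref{c:allard_possible} only asserts that the restriction of $V$ to $U'\setminus\gamma$ has bounded generalized mean curvature as a varifold in the open set $U'\setminus\gamma$, i.e.\ it gives Allard-type information \emph{away} from $\gamma$. To run Allard's \emph{boundary} monotonicity at a point of $\gamma$ one needs $V$ to be a rectifiable $n$-varifold there (and, in particular, to have no mass concentrated on $\gamma$) — but at this stage of the argument rectifiability near $\partial\M$ is exactly what is not yet known: the a.m.\ theory gives integer rectifiability only in ${\rm Int}(\M)$, and $\|V\|(\gamma)=0$ is precisely what is needed to upgrade that to all of $\M$. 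Indeed, the paper makes this explicit immediately before the lemma (``in order to apply Allard's conclusion to our case, we need to first show that $\|V\|(\gamma)=0$''), and Proposition~\ref{p:All-monot}, which is the boundary monotonicity you are invoking, is derived \emph{from} Lemma~\ref{c:gamma_meas_0}, not the other way around. Conceptually, if $\|V\|(\gamma)>0$ then $\Theta^{*n}(\|V\|,\cdot)$ would be infinite at $\|V\|$-a.e.\ point of $\gamma$, so no such monotonicity can hold; the rectifiability hypothesis in Allard's theorem quietly rules this out rather than deriving it.

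Your steps 1, 3 and 4 are fine (the maximum principle, the Besicovitch density comparison, and the upgrade of interior rectifiability once $\|V\|(\partial\M)=0$ is known), so the only missing piece is a non-circular route to the density bound — or, more directly, to $\|V\|(\gamma)=0$. The paper's proof bypasses monotonicity entirely by a first-variation argument: split $V=V^r+V^s$ with $V^s$ the part of $V$ sitting over $\gamma$, show $\delta V^s(\chi)=0$ for all $\chi\in\mathfrak{X}^t_c(\M)$ vanishing on $\gamma$ (using $\delta V(\chi)=0$ for such fields and a cut-off argument to pass from $\mathfrak{X}_c(\M\setminus\gamma)$ to fields merely vanishing on $\gamma$), and then test $\delta V^s$ with a specific vector field $\chi$, tangent to $\partial\M$, vanishing on $\gamma$, whose covariant derivative at $q\in\gamma$ equals $-\mathrm{Id}$ on the two directions normal to $\gamma$ in $\M$ and $0$ along $\gamma$. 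For such a $\chi$, ${\rm div}_\pi\chi\le -1$ for every $n$-plane $\pi$ at every $q\in\gamma$, whence $0=\delta V^s(\chi)\le -\|V\|(\gamma)$. If you want to keep your outline you would need to replace step 2 with an argument of this type, or otherwise supply an a priori upper density bound on $\gamma$ that does not presuppose rectifiability.
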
}
{
\begin{proof}
We split the varifold $V$ into two parts: $V^r$ is the restriction of $V$ to $G (\M\setminus \gamma)$, and $V^s$ is the ``restriction of $V$ to $\gamma$'', namely $V^s = V - V^r$. We first claim that 
\begin{equation}\label{e:regular_vanishes}
\delta V^r (\chi) =0 \qquad \mbox{for all $\chi\in \mathfrak{X}^t_c (\M)$ which vanish on $\gamma$.}
\end{equation}
First of all recall that $\delta V (\chi) =0$ for any $\chi \in \mathfrak{X}^t_c (\M)$ which vanishes on $\gamma$, because in this case both $\chi$ and $-\chi$ belong to $\mathfrak{X}^- (\M, \gamma)$. Secondly, observe that from the formula for the first variation, namely
\begin{equation}\label{e:first_var} 
\delta V (\chi) = \int {\rm div}_\pi\, \chi (x)\, dV (x, \pi)\, ,
\end{equation}
we conclude easily that $\delta V (\chi) = \delta V^r (\chi)$ whenever $\chi \in \mathfrak{X}_c (\M\setminus \gamma)$. 
Fix therefore a $\chi \in \mathfrak{X}^t_c (\M)$ which vanishes on $\gamma$ and let $\varphi_\delta$ be a family of functions
with the following properties:
\begin{itemize}
\item $\varphi_\delta\in C^\infty_c (\M\setminus \gamma)$;
\item $\varphi_\delta$ is identically equal to $1$ outside the $2\delta$-tubular neighborhood of $\gamma$;
\item $\|\nabla \varphi_\delta\|_0 \leq C \delta^{-1}$, where the constant $C$ is independent of the parameter $\delta$.
\end{itemize}
Note that, since $\chi$ vanishes on $\gamma$, $\|\chi\|_0 \leq C \delta$ in the $2\delta$-tubular neighborhood of $\gamma$. Hence it is straightforward to check that $\|\nabla (\varphi_\delta \chi)\|\leq C$, where $C$ is a constant independent of $\delta$. Hence, the formula for the first variation and the dominated convergence theorem yield
\begin{align*}
\delta V^r (\chi) & = \int {\rm div}_\pi\, \chi (x)\, dV^r (x, \pi)\\
& = \lim_{\delta\downarrow 0} \int {\rm div}_\pi\, (\varphi_\delta \chi) (x)\, dV^r (x, \pi) = 
\lim_{\delta\downarrow 0} \delta V^r (\varphi_\delta \chi) = 0\, .
\end{align*}
This shows \eqref{e:regular_vanishes}, which in turn, recalling that $V^s = V - V^r$, yields
\begin{equation}\label{e:singular_vanishes}
\delta V^s (\chi) =0 \qquad \mbox{for all $\chi\in \mathfrak{X}^t_c (\M)$ which vanish on $\gamma$.}
\end{equation}
Let now $U$ be a sufficiently small tubular neighborhood of $\gamma$ and for each $p\in U\setminus \gamma$ consider the nearest point $q\in \gamma$ and the geodesic segment connecting $p$ and $q$ in $\tilde\M$. We then let $\chi (p)$ be the vector field tangent to such geodesic segment, pointing towards $q$ and with length equal to the geodesic distance of $q$ to $p$. Extend it then to $\gamma$ by setting it $0$ there. $\chi$ is then a smooth vector field on a tubular neighborhood of $\gamma$ inside $\tilde{M}$ and it also has the following property:
\begin{itemize}
\item[(N)] If $e_1, \ldots, e_{n-1}, e_n, e_{n+1}$ is a smooth orthonormal frame defined over $\gamma$ with the property that $e_1, \ldots, e_{n-1}$ are tangent to $\gamma$, then we have $\nabla_{e_1} \chi (q)= \ldots = \nabla_{e_{n-1}} \chi (q) =0$, $\nabla_{e_n} \chi (q) = -e_n$ and $\nabla_{e_{n+1}} \chi (q) = -e_{n+1}$ for any $q\in \gamma$. 
\end{itemize}
Clearly, $\chi$ is not tangent to $\partial \M$. It is however easy to see that if $q\in \partial \M$, then the projection of $\chi (q)$ onto $T_q \partial \M$ is bounded by $C ({\rm dist}\, (q, \gamma))^2$. For this reason $\chi$ can be modified so that:
\begin{itemize}
\item it is tangent to $\partial \M$;
\item it vanishes on $\gamma$;
\item it retains property (N) above. 
\end{itemize}
Moreover, multiplying it by a suitable cut-off function, it can be suitably extended outside a neighborhood of $\gamma$ to the whole manifold $\M$, in order to obtain a globally defined vector field $\chi \in \mathfrak{X}^t (\M)$ which vanishes on $\gamma$. For this reason it is an admissible test for \eqref{e:singular_vanishes}, namely we must have
\begin{equation}\label{e:ci_siamo}
0 = \delta V^s( \chi) = \int {\rm div}_\pi\, \chi (x)\, dV^s (x, \pi)\, .
\end{equation}
On the other hand, the integral on the left hand side takes place for $x\in \gamma$. For any such $x$, fix any 
$n$-dimensional plane $\pi \subset T_x \M$ and recall that, 
\[
{\rm div}_\pi \chi (x) = \sum_{i=1}^n g (\nabla_{f_i} \chi, f_i)\, , 
\]
where $f_1, \ldots , f_n$ is an orthonormal basis for $\pi$. Now, property (N) above ensures that ${\rm div}_\pi \chi (x) \leq -1$. Hence we find $\delta V^s (\chi) \leq - \|V^s\| (\M) = - \|V\| (\gamma)$. Thus \eqref{e:ci_siamo} implies $\|V\| (\gamma)=0$ and concludes our proof. 
\end{proof}
}

{ Lemma \ref{c:gamma_meas_0}} combined with Corollary \ref{c:allard_possible} and with the results in 
\cite{allard-bdry} gives the following 

\begin{propos}\label{p:All-monot}
Consider an open subset $U\subset \M$ and a varifold $V\in \va_s^c (U, \gamma)$ which is a.m. in annuli as in Proposition \ref{a.m. in annuli} in the constrained case (where $\gamma$ is a $C^{2,\alpha}$ submanifold $\gamma$ of $\partial \M$). 
Then, for every $x \in \gamma$ there exists a $\rho_0 > 0$ and a (smooth) function $\Phi(\rho)$ with $\Phi(\rho) \to 0$ as $\rho \to 0$, such that the quantity 
\begin{equation}\label{e:all-monot}
f (\rho) = e^{\Phi (\rho)} \dfrac{||V||(B_{\rho}(x))}{\omega_n\rho^{n}}
\end{equation}
is a monotone non-decreasing function of $\rho$ as long as $0 < \rho < \rho_0$.
\end{propos}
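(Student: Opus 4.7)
The strategy is to reduce to a classical monotonicity formula for integer rectifiable varifolds with bounded generalized mean curvature, applied at the point $x\in\gamma$. As suggested by the paper, the three ingredients are: Lemma \ref{c:gamma_meas_0}, Corollary \ref{c:allard_possible}, and the standard computations from \cite{allard-bdry}.

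First, I would embed $\M$ isometrically into $\mathbb R^N$ and view $V$ as an integer rectifiable varifold in an open set $U'\subset\mathbb R^N$ (using Remark \ref{extr vs. intr}; integer rectifiability follows from the almost-minimizing assumption via the last conclusion of Lemma \ref{c:gamma_meas_0}). By Lemma \ref{c:gamma_meas_0}, $\|V\|(\gamma)=0$, and by Corollary \ref{c:allard_possible}, the restriction of $V$ to $U'\setminus\gamma$ has bounded generalized mean curvature, with a bound depending only on $|A_\M|$ (coming from the embedding $\M\hookrightarrow\mathbb R^N$) and on $|A_{\partial\M}|$ (entering through Proposition \ref{p:White-max}).

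Second, I would upgrade the bounded mean curvature from $U'\setminus\gamma$ to all of $U'$. Concretely, for any $\chi\in\mathfrak{X}_c(U')$, pick a sequence of cutoff functions $\varphi_\delta\in C^\infty_c(U'\setminus\gamma)$ with $\varphi_\delta\uparrow 1$ on $U'\setminus\gamma$ and $\|\nabla\varphi_\delta\|_0\leq C\delta^{-1}$, analogous to the ones used in the proof of Lemma \ref{c:gamma_meas_0}. Decomposing $\chi$ into its component vanishing on $\gamma$ plus a remainder, the same dominated convergence argument as in Lemma \ref{c:gamma_meas_0} shows that $\delta V(\varphi_\delta\chi)\to\delta V(\chi)$, and in the limit one obtains the bound $|\delta V(\chi)|\leq C\int|\chi|\,d\|V\|$ on the whole of $U'$ since $\|V\|(\gamma)=0$ prevents concentration of the first variation on $\gamma$.

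Third, with bounded generalized mean curvature on $U'$ in hand, I would apply the classical derivation of the monotonicity formula. Testing the first variation against the radial vector field $\chi_\rho(y)=\varphi(|y-x|/\rho)(y-x)$ in $\mathbb R^N$ and using the standard ODE argument (cf.\ \cite[Ch.\ 17]{simon-gmt} or the boundary version in \cite{allard-bdry}) yields a differential inequality of the form
\[
\frac{d}{d\rho}\!\left(\frac{\|V\|(B_\rho(x))}{\omega_n\rho^n}\right)\geq -C\,\frac{\|V\|(B_\rho(x))}{\omega_n\rho^{n-1}}\,,
\]
where $C$ bounds the mean curvature. Integrating this ODE produces the smooth factor $\Phi(\rho)$ (which is $O(\rho)$ and thus vanishes as $\rho\downarrow 0$) such that $e^{\Phi(\rho)}\|V\|(B_\rho(x))/(\omega_n\rho^n)$ is monotone non-decreasing for $\rho<\rho_0$, with $\rho_0$ small enough that $B_{\rho_0}(x)\subset U'$ and that normal coordinates centered at $x$ are well defined.

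The main obstacle is the second step, namely passing from bounded mean curvature on $U'\setminus\gamma$ to bounded mean curvature on $U'$. The subtlety is that $\delta V$ might a priori carry a singular part supported on $\gamma$, which would spoil the computation $\frac{d}{d\rho}\|V\|(B_\rho(x))$ precisely where we need it. This is exactly the issue resolved by the vector-field construction with property (N) in the proof of Lemma \ref{c:gamma_meas_0}: once we know $\|V\|(\gamma)=0$, the cutoff-and-limit argument transfers the mean curvature bound across $\gamma$ without loss, and the remaining steps are routine.
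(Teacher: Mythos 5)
Your Step 1 is correct and matches the paper. The error is in Step 2: the claim that $\|V\|(\gamma)=0$ ``prevents concentration of the first variation on $\gamma$'' and hence that $V$ has bounded generalized mean curvature on all of $U'$ is false, and the conclusion $|\delta V(\chi)|\leq C\int|\chi|\,d\|V\|$ on $U'$ simply does not hold for a varifold with prescribed boundary $\gamma$. Consider the model case of the half-plane $\{x_1\geq 0,\,x_3=0\}\subset\mathbb R^3$ with $\gamma=\{x_1=0,\,x_3=0\}$: here $\|V\|(\gamma)=0$ (because $\gamma$ is one dimension lower than the surface), yet $\delta V=-e_1\,\mathcal{H}^1\res\gamma$ is a nonzero measure concentrated on $\gamma$, and $|\delta V(\chi)|$ cannot be controlled by $\int|\chi|\,d\|V\|$ (shrink the support of $\chi$ toward $\gamma$). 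In general the boundary conormal term $\int_\gamma\eta\cdot\chi\,d\mathcal{H}^{n-1}$ is present and is a feature, not an obstruction. Your dominated convergence argument fails for $\chi$ that does not vanish on $\gamma$: in that case $\nabla(\varphi_\delta\chi)$ contains $\nabla\varphi_\delta\otimes\chi$, which is of order $\delta^{-1}$ on the $\delta$-neighborhood of $\gamma$ (no gain from $\chi$), so the integrands are not dominated. The proof of Lemma \ref{c:gamma_meas_0} crucially used that its test field $\chi$ vanished on $\gamma$, exactly to obtain $\|\nabla(\varphi_\delta\chi)\|\leq C$; you have silently dropped that hypothesis.

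The fix, which is what the paper intends, is not to kill the boundary contribution but to invoke Allard's \emph{boundary} monotonicity formula (Theorem~3.4 in \cite{allard-bdry}), which is built exactly for varifolds whose first variation away from $\gamma$ has bounded generalized mean curvature and whose first variation on $\gamma$ is an additional measure. Its hypotheses are supplied by Corollary \ref{c:allard_possible} (bounded generalized mean curvature on $U'\setminus\gamma$) together with Lemma \ref{c:gamma_meas_0} ($\|V\|(\gamma)=0$, so the density ratio at $x\in\gamma$ is well defined and the $n$-rectifiable part sees no mass on $\gamma$), and by the assumed $C^{2,\alpha}$ regularity of $\gamma$. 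The regularity of $\gamma$ is essential: Allard uses it to modify the radial test field near $\gamma$ so that the conormal term contributes with a sign (or is cancelled to leading order), producing an error that can be absorbed into the exponential correction $\Phi(\rho)$. Your radial test field alone, with the boundary term pretended away, would not yield the claimed monotonicity.
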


In particular, we conclude that the limit $\dfrac{||V||(B_{\rho}(x))}{\omega_n\rho^{n}}$ exists and it is finite at any point $x\in \gamma$.

The case with free boundaries has been addressed by Gr\"uter and Jost in \cite{Gruter1,gru-jost-1,gru-jost}, who proved a suitable version of the monotonicity formula. The results in these papers were proved in the Euclidean space, but they are easily extendable to the case of stationary varifolds in compact Riemannian manifolds using the embedding trick of Remark \ref{extr vs. intr}.  We summarize the conclusion in the following

\begin{propos}\label{p:GJ-monot}
Assume $\M\subset \tilde\M\subset \mathbb R^N$, where $\tilde \M$ is a closed manifold, let $U\subset \M$ be an open set and $V$ a varifold in $\va_s^u (U)$. Then for each $x \in U$, there exists an $r < \text{dist}(x,\partial U)$, and a constant $c(x,r)$, with $c(x,r) \to 1$ as $r \to 0$, such that
\begin{equation}
\dfrac{||V||(B_{\sigma}(x)) + ||V||(\tilde{B}_{\sigma}(x)) }{\omega_k \sigma^k} \leq c(x,r)\dfrac{||V||(B_{\rho}(x)) + ||V||(\tilde{B}_{\rho}(x)) }{\omega_k \rho^k}
\end{equation}
for all $0< \sigma < \rho < r$. Here, $\tilde{B}_{\sigma}(x)$ denotes the reflection of the ball $B_{\sigma}(x)$ across the boundary $\partial \mathcal{M}$, as defined in \cite[Section 2]{gru-jost-1}.
\end{propos}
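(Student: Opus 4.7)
First I would separate the cases according to whether $x\in{\rm Int}\,(\M)$ or $x\in\partial\M$. In the interior case, choose $r<\dist(x,\partial\M)$ so that $B_r(x)\cap\partial\M=\emptyset$; then $\tilde B_\rho(x)$ is empty in the relevant sense, and the classical interior monotonicity formula \eqref{monotonicity} (with the standard exponential correction factor $e^{\Lambda\rho}$ accounting for the ambient curvature of $\tilde\M$) gives the conclusion with $c(x,r)=e^{\Lambda r}\to 1$ as $r\downarrow 0$.

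For $x\in\partial\M$ the strategy is the Gr\"uter--Jost reflection argument. Via the isometric embedding $\M\subset\tilde\M\subset\mathbb R^N$ of Remark \ref{extr vs. intr} we work in a Euclidean neighborhood of $x$; after shrinking this neighborhood we define a $C^{1,1}$ reflection $\sigma$ of a tubular neighborhood of $\partial\M$ (inside $\tilde\M$) across $\partial\M$, as in \cite[Section 2]{gru-jost-1}. Setting $\tilde V:=\sigma_\sharp V$ and $V^*:=V+\tilde V$, we have $\|V^*\|(B_\rho(x))=\|V\|(B_\rho(x))+\|V\|(\tilde B_\rho(x))$, because $\sigma$ sends $\M$ to the ``mirror'' side and $B_\rho(x)$ to $\tilde B_\rho(x)$ (modulo negligible terms arising from the fact that $\sigma$ differs from the Euclidean reflection at second order along $\partial\M$; these are harmless for the monotonicity).

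The crucial point is to turn the assumption $V\in\va^u_s$ into a first-variation bound for $V^*$ that holds for \emph{every} vector field $\chi\in\mathfrak{X}_c(\tilde\M)$, not just for tangential ones. The decomposition $\chi=\chi^t+\chi^\perp$ into the $\partial\M$-tangent and $\partial\M$-normal parts is respected by $\sigma$ up to lower order errors: for $\chi^t$ the contributions from $V$ and from $\tilde V$ add and both vanish (by the defining property \eqref{e:add_unconstr} applied to $V$ and to $\tilde V$), whereas for $\chi^\perp$ the two contributions cancel to leading order because $\sigma_\ast\chi^\perp=-\chi^\perp$ on $\partial\M$. Quantifying these errors in terms of the second fundamental form of $\partial\M$ and of $\tilde\M\subset\mathbb R^N$ gives
\[
|\delta V^*(\chi)|\le C\int |\chi|\,d\|V^*\|
\]
in a neighborhood of $x$, i.e.\ $V^*$ has uniformly bounded generalized mean curvature in the sense of Allard.

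Finally I would apply the standard monotonicity formula for varifolds with bounded mean curvature (see \cite[\S17]{simon-gmt}) to $V^*$ inside $\tilde\M$: there is a smooth function $\Phi(\rho)\to 0$ such that
\[
\rho\mapsto e^{\Phi(\rho)}\,\frac{\|V^*\|(B_\rho(x))}{\omega_n\rho^n}
\]
is nondecreasing for $\rho<r$. Rewriting $\|V^*\|(B_\rho(x))$ as above gives the desired inequality with $c(x,r)=e^{\Phi(r)}\to 1$. The main obstacle is the bounded-mean-curvature estimate for $V^*$: one must verify that the failure of $\sigma$ to be an isometry (since $\partial\M$ is not totally geodesic in general) and the fact that $\M$ itself sits curved inside $\mathbb R^N$ produce only an $O(1)$, and not a distributional, contribution to $\delta V^*$ along $\partial\M$. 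This is precisely the content of the reflection computation of Gr\"uter--Jost \cite{Gruter1,gru-jost-1,gru-jost}, and the proof reduces to invoking their Euclidean result after the embedding reduction described above.
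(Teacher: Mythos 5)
Your outline follows the same Gr\"uter--Jost reflection strategy that the paper invokes: split into the interior case (classical monotonicity) and the boundary case (reflect $V$ across $\partial\M$, show $V^{*}=V+\sigma_{\sharp}V$ has locally bounded first variation, then apply the monotonicity formula for varifolds of bounded generalized mean curvature, all after embedding $\M\subset\tilde\M\subset\mathbb R^N$ as in Remark~\ref{extr vs. intr}). The paper itself presents this proposition as a pointer to \cite{Gruter1,gru-jost-1,gru-jost} plus the embedding trick, so your sketch fills in more of the reflection computation than the paper does.

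There is, however, one genuine omission. You cite the bounded-mean-curvature monotonicity formula from \cite[\S 17]{simon-gmt}, and the Gr\"uter--Jost computation you are reproducing: both are proved under the hypothesis that the varifold is \emph{rectifiable}. At the point where Proposition~\ref{p:GJ-monot} is used, $V$ is only known to be a (general) varifold in $\va^{u}_{s}$, and one may not assume rectifiability --- indeed rectifiability of $V$ is \emph{deduced} from this monotonicity formula in Corollary~\ref{c:Gru-Jost-rect}, so quoting the rectifiable-case statement outright would be circular. This is precisely the ``crucial'' point singled out in Remark~\ref{r:Gru-Jost-valid}, which notes that the first-variation computation adapts to general varifolds as in \cite[Chapter 8]{simon-gmt}. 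Your proof needs that extra remark (or a reference to a general-varifold version of the bounded-first-variation monotonicity) to be complete; apart from this, the plan is correct and matches the paper's approach.
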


Note that, for points in ${\rm Int}\, (U)$ and $r< \text{dist} (x, \partial \M)$, the monotonicity formula of Gr\"uter and Jost reduces to  \eqref{monotonicity}. 
 
\begin{remark}\label{r:Gru-Jost-valid}
Proposition \ref{p:GJ-monot} is indeed proved in \cite[Section 3]{gru-jost-1} under the additional assumption that the varifold $V$ is rectifiable. In our case it is however crucial that their argument can be extended to general varifolds. Indeed, since the monotonicity is derived by testing the stationarity with a suitable vector field, the adaptation of the argument to general varifolds is straightforward and the reader may consult the lecture notes of Leon Simon, more precisely \cite[Chapter 8]{simon-gmt}, where he shows how to adapt to general varifolds the proof of the interior monotonicity formula presented in \cite[Chapter 4]{simon-gmt} under the rectifiability assumption. 
\end{remark}

An important consequence of the monotonicity in all of the above cases is the existence of the {\em density function} of the varifold under consideration:
\begin{equation}\label{e:density}
\Theta(V, x) = \lim_{r \to 0}\dfrac{||V||(B_r(x))}{\omega_n r^n}
\end{equation}
is well defined at all points $x \in U$. Moreover, in the case $V\in \va_s^u$, one can conclude that the function
\begin{equation*}
\tilde{\Theta}(V,x):= \left\{
        \begin{array}{ll}
            \Theta(V, x) & \quad x \in \text{Int}(\M)\cap U\\
            2\Theta(V, x) & \quad x \in \partial \M \cap U
        \end{array}
    \right.
\end{equation*}
is upper semicontinuous in $U$. In the constrained case we conclude instead that the density function is upper semicontinuous in ${\rm Int}\, (U)$ and in $\partial \M\cap U$ {\em separately}.  

\medskip

A direct corollary of Proposition \ref{p:GJ-monot} is then the rectifiability of any varifold $V\in \va_s^u (\M)$ obtained in Proposition \ref{a.m. in annuli}

\begin{corol}\label{c:Gru-Jost-rect}
Let $V\in \mathcal{V}_s^u$ be a varifold which is a.m. in annuli as in Proposition \ref{a.m. in annuli}. Then $V$ is a rectifiable varifold.
\end{corol}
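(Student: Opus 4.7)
The plan is to apply Allard's rectifiability theorem, whose two hypotheses for $V$ are (i) locally bounded first variation and (ii) positive upper $n$-density $\Theta^{*n}(\|V\|,x)>0$ for $\|V\|$-almost every $x$.

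For (i), since $V \in \va_s^u$ we have $\delta V(\chi)=0$ for every $\chi \in \mathfrak{X}^t_c(\M)$, and in particular for every $\chi \in \mathfrak{X}^0_c(\M)$, so $V$ is stationary in the classical sense in ${\rm Int}\,(\M)$. Near a point $p\in \partial\M$, we use the embedding $\M \subset \tilde\M$ from Remark \ref{extr vs. intr} and the Gr\"uter--Jost reflection construction of \cite{gru-jost-1}: after a local change of coordinates flattening $\partial \M$, one reflects $V$ across $\partial \M$ to produce a varifold $\hat V$ on a full neighborhood in $\tilde \M$. The tangential stationarity $\delta V|_{\mathfrak{X}^t_c}=0$ is precisely the condition that makes $\hat V$ inherit locally bounded first variation, as worked out in \cite{gru-jost-1}.

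For (ii), Proposition \ref{p:GJ-monot} guarantees that $\tilde\Theta(V,x)$, and hence $\Theta(V,x)$, exists as a pointwise limit at every $x\in\M$. A standard Vitali-type covering argument then shows that the set $E=\{x\in\supp \|V\|:\Theta(V,x)=0\}$ is $\|V\|$-negligible: for every $x\in E$ and every $\varepsilon>0$ one finds arbitrarily small $r$ with $\|V\|(B_r(x))<\varepsilon\,\omega_n r^n$, and combining this with the uniform upper bound on $\|V\|(B_{5r}(y))/(5r)^n$ provided by the monotonicity of Proposition \ref{p:GJ-monot}, then extracting a disjoint Vitali subfamily whose $5$-enlargements cover $E$, yields $\|V\|(E) \le C\varepsilon$; letting $\varepsilon\to 0$ gives $\|V\|(E)=0$.

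With (i) and (ii) verified, Allard's rectifiability criterion (see e.g.\ \cite[Chapter 8]{simon-gmt}) implies that $V$ is $n$-rectifiable on all of $\M$. The main technical obstacle is the reflection in (i), which must be handled with care in the curved ambient setting since the reflected metric is only continuous across $\partial \M$; however, this is exactly the content of the Gr\"uter--Jost analysis underpinning Proposition \ref{p:GJ-monot}, so the corollary is genuinely a direct consequence of the monotonicity once that monotonicity is granted.
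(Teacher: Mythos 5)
Your approach is genuinely different from the paper's: you want to reflect across $\partial\M$ and apply Allard's rectifiability theorem to the reflected varifold as a black box, whereas the paper adapts the \emph{proof} of that theorem (as in \cite[Chapter~8]{simon-gmt}) directly to the boundary setting. It shows that for $\haus^n$-a.e.\ $x\in\partial\M$ every tangent varifold of $V$ at $x$ must be supported on the single plane $T_x\partial\M$ --- by testing the tangent varifold $C$ (which inherits $\delta C(\chi)=0$ for $\chi$ tangent to $T_x\partial\M$) against $\chi=f\nabla f$ with $f$ vanishing on $T_x\partial\M$, as in the Constancy Theorem --- and then combines this with the interior integrality supplied by the a.m.\ property, with no reflection at all.

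There is a real gap in your step (ii). Monotonicity yields a \emph{finite} uniform upper bound on the ratios $\|V\|(B_{5r}(y))/(5r)^n$, not one that scales with $\varepsilon$. From a disjoint Vitali family $\{B_{r_i}(x_i)\}$ whose $5$-enlargements cover $E=\{\Theta(V,\cdot)=0\}$ you therefore only obtain $\|V\|(E)\le\sum_i\|V\|(B_{5r_i}(x_i))\le C\sum_i r_i^n$, and nothing controls $\sum_i r_i^n$; the condition $\|V\|(B_{r_i}(x_i))<\varepsilon\,\omega_n r_i^n$ is a \emph{lower} bound on $r_i^n$, not an upper one. Positive density $\|V\|$-a.e.\ does not follow from tangential stationarity plus monotonicity alone; what makes it true here is extra structure your sketch never invokes: in ${\rm Int}\,(\M)$ the a.m.\ property gives integrality, hence density $\ge 1$ a.e., while on $\partial\M$ finiteness of the density together with the $\sigma$-finiteness of $\haus^n\res\partial\M$ yields $\|V\|\res\partial\M=\Theta\,\haus^n\res\partial\M$, so $\{\Theta=0\}\cap\partial\M$ is $\|V\|$-null. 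Step (i) is also glossed over: the reflection produces a varifold in a metric that is only Lipschitz across $\partial\M$, so Allard's rectifiability criterion does not apply off the shelf, and \cite{gru-jost-1} derive their first-variation estimates under the standing hypothesis that the varifold is \emph{already} rectifiable --- exactly the conclusion you are after. The paper's Remark~\ref{r:Gru-Jost-valid} flags this issue for the monotonicity formula, and this corollary is precisely where the extension to non-rectifiable varifolds has to be earned, so it cannot be assumed.
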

 
\begin{proof} As already remarked, the a.m. property gives the integrality of the varifold in the interior. The monotonicity formula of Proposition \ref{p:GJ-monot} gives that the upper density $\Theta (V,x)$ is everywhere finite, and thus we have $\|V\|\res \partial \M = \Theta \mathcal{H}^n \res \M$ by standard measure theoretic arguments. It remains to show that for $\mathcal{H}^n$-a.e. $x\in \partial \M$ the varifold $V$ has $T_x \partial \M$ as approximate tangent. Arguing as in the proof of \cite[Theorem 8.5.5]{simon-gmt} we know that for $\mathcal{H}^n$-a.e. $x\in \partial \M$ any varifold tangent to $V$ at $x\in \partial M$ is of the form $C = \eta \Theta (V, x) \mathcal{H}^{n-1}\res T_x \partial \M$, where $\eta$ is a probability measure on the Grassmanian $G$ of $n$-dimensional planes of $T_x \M$. We just need to show that $\eta = \delta_{T_x \partial \M}$: as argued in the proof of \cite[Theorem 8.5.5]{simon-gmt} this would imply the rectifiability of 
$V$. 

First of all, by standard arguments, the fact that $V\in \mathcal{V}^u_c$ implies that $\delta C (\chi) =0$ whenever $\chi$ is a vector field which is tangent to $T_x \partial \M$. 
In particular we can conclude that $\eta = \delta_{T_x \partial \M}$ following the argument used in a similar situation in the proof of the Constancy Theorem in \cite[Chapter 8]{simon-gmt}. Notice that the proof in there is achieved by testing the first variation condition with a vector field of the form $\chi = f \nabla f$, where $f$ is a function vanishing on $T_x \partial \M$: in particular in our case $\delta C (\chi) =0$ because $\chi$ actually vanishes on $T_x \partial \M$. 
\end{proof}
 
Finally, we record here a simple consequence of the argument in \cite{gru-jost-1} proving Proposition \ref{p:GJ-monot}, which has a crucial role in a later section.

\begin{lemma}\label{l:GJ-again}
Let $S$ be an $n$-dimensional varifold in $\{x\in \mathbb R^{n+1} : x_1 \leq 0\}$ such that:
\begin{itemize}
\item $\delta S (\chi) =0$ for every vector field which is tangent to $\{x_1=0\}$;
\item $\rho^{-n} \|S\| (B_\rho (0)) = r^{-n} \|S\| (B_r (0))$ for two distinct radii $\rho < r$.
\end{itemize}
Then $s^{-n} \|S\| (B_s (0)) = \rho^{-n} \|S\| (B_\rho (0))$ for every $s\in [\rho, r]$. 
\end{lemma}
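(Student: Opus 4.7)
The plan is to reduce the lemma to the standard interior monotonicity formula for stationary varifolds in Euclidean space via a reflection trick, which is the same idea underlying the proof of Proposition \ref{p:GJ-monot} in \cite{gru-jost-1}. Let $R:\mathbb R^{n+1}\to\mathbb R^{n+1}$ denote reflection across the hyperplane $\{x_1=0\}$, and define the reflected varifold $\hat S := S + R_\# S$ on all of $\mathbb R^{n+1}$. The first step is to verify that $\hat S$ is stationary in $\mathbb R^{n+1}$, i.e.\ $\delta \hat S(\chi)=0$ for every $\chi\in\mathfrak{X}_c(\mathbb R^{n+1})$.

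To check stationarity, by linearity and the change-of-variables formula $\delta(R_\#S)(\chi)=\delta S(R^*\chi)$, one obtains
\[
\delta \hat S(\chi) \;=\; \delta S(\chi) + \delta S(R^*\chi) \;=\; \delta S(\chi + R^*\chi)\, ,
\]
where $R^*\chi$ is the pullback, restricted to the half-space $\{x_1\leq 0\}$. A direct calculation at a point $x\in\{x_1=0\}$ gives $(R^*\chi)(x)=R(\chi(x))$, so $(\chi+R^*\chi)(x) = 2\chi^T(x)$, the purely tangential part of $\chi$. Hence the vector field $\chi+R^*\chi$, although defined only on $\{x_1\leq 0\}$, is tangent to $\{x_1=0\}$ along the boundary of the half-space and is therefore admissible in the hypothesis of the lemma. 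This yields $\delta\hat S(\chi)=0$, proving stationarity of $\hat S$ on $\mathbb R^{n+1}$.

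Since $R$ is an isometry fixing $0$ and $B_\rho(0)$ is $R$-invariant, one has $\|R_\#S\|(B_\rho(0))=\|S\|(R(B_\rho(0)))=\|S\|(B_\rho(0))$, whence
\[
\frac{\|\hat S\|(B_\rho(0))}{\omega_n \rho^n} \;=\; 2\cdot\frac{\|S\|(B_\rho(0))}{\omega_n \rho^n}\, .
\]
Applying the classical Euclidean monotonicity formula (cf.\ \cite[Chapter 8]{simon-gmt}) to the stationary varifold $\hat S$, the function on the left is non-decreasing in $\rho$. By hypothesis the right-hand side takes equal values at $\rho$ and $r$, so the non-decreasing function on the left is constant on $[\rho,r]$, and hence so is the ratio $s^{-n}\|S\|(B_s(0))$, which is exactly the conclusion.

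The main potential obstacle is the verification that $\hat S$ is stationary as a general (not necessarily rectifiable) varifold; this requires the care illustrated above in the decomposition of $\chi$ into tangential and normal parts at the plane, together with the observation that the hypothesis on $S$ is used precisely with the symmetrized vector field $\chi + R^*\chi$, which is automatically tangential on $\{x_1=0\}$ regardless of the behavior of $\chi$ there. Once this is established, the rest is a direct invocation of the equality case in the standard monotonicity formula.
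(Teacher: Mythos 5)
Your proof is correct and fills in the details of precisely the reflection argument that the paper alludes to when it calls the lemma ``a simple consequence of the argument in \cite{gru-jost-1} proving Proposition \ref{p:GJ-monot}.'' The key computation that $\chi + R^*\chi$ is automatically tangential on $\{x_1=0\}$ (so that $\hat S$ is stationary for arbitrary test fields), followed by the rigidity of the classical Euclidean monotonicity formula for stationary varifolds, is exactly what the Gr\"uter--Jost reflection technique boils down to in the flat half-space case.
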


\subsection{Blow-up and tangent cones} In this section we recall the usual ``rescaling'' procedure which allows to blow-up minimal surfaces at a given point.
Following Remark \ref{extr vs. intr}, we adhere to the standard procedure of first embedding the Riemannian manifold $\M$ into $\RR^{N}$.  We will use the term $n+1$-dimensional wedge of opening angle $\theta\in ]0, \frac{\pi}{2}[$ for any closed subset $W$ of the form $R (W_0)$, where $R\in SO (n+1)$ is an orientation-preserving isometry of $\mathbb R^{n+1}$ and we recall that
that $W_0$ is the canonical wedge with opening angle $\theta$, namely the set 
\[
\{(x_1, \ldots , x_{n+1})\in \mathbb R^{n+1} : |x_{n+1}| \leq x_1 \tan \theta\}\, .
\]
The half-hyperplane $R (\{x_{n+1} = 0, x_1 > 0\})$ will be called the {\em axis} of the wedge and the $n-1$-dimensional plane $\ell := R (\{x_{n+1} = x_1 =0\})$ will be called the {\em tip of the wedge}. As stated above, when $W= W_0$, we call it the {\em canonical wedge with opening angle $\theta$.}

\begin{definition}\label{meeting with an angle}
Let $\M$ be a smooth $(n+1)$-dimensional manifold with boundary satisfying Assumption \ref{a:(C)} and $\gamma$ a $C^{2,\alpha}$ $(n-1)$-dimensional submanifold of $\partial\M$. We say that a closed set $K\subset \M$ meets $\partial \M$ in $\gamma$ with opening angle at most $\theta$ if the following holds:
\begin{itemize}
\item $\gamma = K \cap \partial \M$;
\item for any $x\in \gamma$, let $\tau \in T_x \partial \M$ be a unit vector orthogonal to $T_x \gamma$ and $\nu\in T_x \M$ be the unit vector orthogonal to $T_x \partial\M$ and pointing inwards; then for every $C^1$ curve $\sigma : [0,1]\to K$, with $\sigma (0)=x$ and parameterized by arc length, we have 
\begin{equation}
|\langle \dot\sigma (0), \tau \rangle|\leq  \langle \dot{\sigma} (0), \nu \rangle\tan \theta\, .
\end{equation}
\end{itemize}
\end{definition} 

\begin{figure}[htbp]
\begin{center}
\input{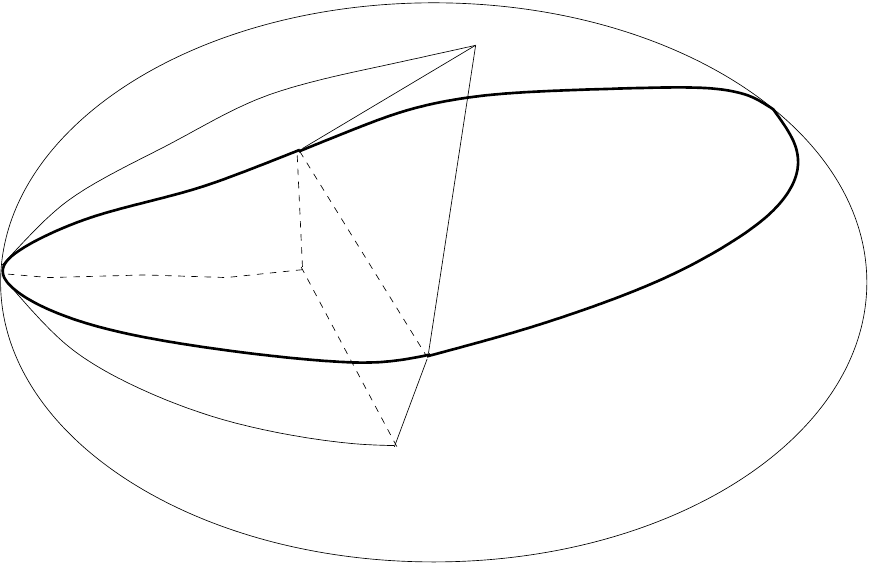_t}
\end{center}
\caption{A set $K$ meeting $\gamma$ at some angle at most $\theta< \frac{\pi}{2}$.}
\end{figure}

We are now ready to state the blow-up procedure which we will use in the rest of the note, especially at boundary points. Recall that, for $x\in \partial \M$, $\nu$ is the unit vector of $T_x \M$ orthogonal to $T_x \partial \M$ and pointing inwards. 

\begin{lemma}\label{l:bu}
Let $\M\subset \mathbb R^N$ be a smooth Riemannian manifold satisfying Assumption \ref{a:(C)}, $U\subset \M$ an open set and $V$ a rectifiable varifold which is stationary in ${\rm Int}\, (U)$. Given a point $x\in \spt (V)\subset \M$ we introduce the map $\iota_{x,r} : \RR^N \to \RR^N$ defined by $\iota_{x,r} (y) := (y-x)/r$ and let $\M_{x,r}:= \iota_{x,r} (\M)$ and $V_{x,r} := (\iota_{x,r})_\sharp V$.

\begin{itemize}
\item[(I)] If $x\in{\rm Int}\, (U)$, then $\M_{x,r}$ converges, as $r \to 0$, locally in the Hausdorff sense, to $T_x \M$ (which is identified with the corresponding linear subspace of $\RR^N$). If $V$ is integral, then up to subsequences $V_{x,r}$ converges, in the sense of varifolds, to a stationary varifold $S$ which is integral and is a cone.
\item[(B)] If $x\in \partial \M$, then $\M_{x,r}$ converges, locally in the Hausdorff sense, to $T^+_x \M := T_x\M \cap \{y: \nu \cdot y \geq 0\}$. If $V$ is integral and belongs to $\mathcal{V}^c_s (\gamma)$, then $V_{x,r}$ converges, in the sense of varifolds, to an integral varifold $S$ which  a cone, it is supported in $T^+_x \M$ and it is stationary in $T_x \M \cap \{y: y\cdot \nu >0\}$. 
\item[(W)] If $x\in \partial \M$, $V$ is as in (B) and $\spt (V)$ is contained in a closed $K$ which meets $\partial \M$ at a $C^{2,\alpha}$ submanifold $\gamma$ with opening angle at most $\theta< \frac{\pi}{2}$, then each such $S$ (as in statement (B)) is supported in the wedge $W\subset T_x \M$ of opening angle $\theta$ with tip $T_x \gamma$ and axis orthogonal to $T_x \partial \mathcal{M}$.
\end{itemize}
\end{lemma}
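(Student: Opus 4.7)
\emph{Plan.} The three parts share a common skeleton: rescale by $\iota_{x,r}$, extract subsequential varifold limits by Allard's compactness, and identify the limit via the relevant monotonicity formula. The Hausdorff convergences $\M_{x,r}\to T_x\M$ in (I) and $\M_{x,r}\to T_x^+\M$ in (B)--(W) are immediate from the smoothness of $\M$ and $\partial\M$: locally $\M$ is the graph of a function whose Hessian seen from scale $r$ decays like $O(r)$. The uniform mass bounds needed for compactness come from the interior monotonicity \eqref{monotonicity} in (I) and from Proposition \ref{p:All-monot} (with $\Phi(\rho)\to 0$) in (B)--(W). Integrality of $S$ is inherited from the integrality of $V$ (Corollary \ref{c:allard_possible} together with the a.m.\ hypothesis) via Allard's integral-varifold compactness theorem.

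Stationarity of $S$ is obtained by testing $\delta S(\chi)=0$ with $\chi$ compactly supported in $T_x\M\simeq\mathbb R^{n+1}$ in (I), resp.\ in $\{y\cdot\nu>0\}$ in (B): for $r$ small enough the pullback $\chi\circ\iota_{x,r}$ is supported in $\mathrm{Int}(\M)$, so the interior stationarity of $V$ (which holds because both $\chi$ and $-\chi$ lie in $\mathfrak{X}^-_c(\M)$ vanishing on $\gamma$, see Definition \ref{d:vairations}) gives $\delta V_{x,r}(\chi)=0$, and stationarity passes to the limit. In (B) the inclusion $\spt(S)\subset T_x^+\M$ follows by applying the maximum principle (Proposition \ref{p:White-max}) to $V$, which yields $\spt(V)\cap\partial\M\subset\gamma$ and hence $\spt(V_{x,r})\subset\M_{x,r}$. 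The cone property in (I) and (B) follows from the equality case of the relevant monotonicity formula: since, by the computation
\[
s^{-n}\|V_{x,r_k}\|(B_s(0))=(sr_k)^{-n}\|V\|(B_{sr_k}(x))\longrightarrow\Theta(V,x)
\]
(and its boundary analog through \eqref{e:all-monot} in (B)), the density ratio $s\mapsto s^{-n}\|S\|(B_s(0))$ is constant, the first-variation identity with radial test field $y\phi(|y|)$ (which is tangent to $\{y\cdot\nu=0\}$, hence admissible also in the half-space case, in the spirit of Lemma \ref{l:GJ-again}) forces the defect integrand $|y^\perp|^2$ to vanish $S$-a.e.

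\textbf{(W)} is the main new point and is the step I expect to be the principal obstacle. My plan is to show that the Hausdorff limit $K_\infty\subset T_x^+\M$ of $K_{x,r_k}:=(K-x)/r_k$ is contained in the wedge $W$; combined with $\spt(V_{x,r_k})\subset K_{x,r_k}$, this delivers $\spt(S)\subset W$. Arguing by contradiction, assume $v\in\spt(S)\setminus W$; by the conicity of $S$ established in (B), also $tv\in\spt(S)$ for every $t>0$, so there exist $p_k\in\spt(V)\subset K$ with $(p_k-x)/r_k\to v$, $r_k\to 0$. Let $x_k\in\gamma$ be a nearest point of $\gamma$ to $p_k$: then $|x_k-x|\leq|p_k-x|=O(r_k)$, so after extracting a subsequence $(x_k-x)/r_k\to w\in T_x\gamma$. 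The heart of the matter is then to join $x_k$ to $p_k$ by a $C^1$ arc $\sigma_k\subset K$ of length $O(r_k)$, arclength parameterized; after rescaling by $1/r_k$ and re-centering at $x_k$, the opening-angle bound of Definition \ref{meeting with an angle} applied at the base point $x_k\in\gamma$ forces the initial tangent of the rescaled arcs into the wedge based at $x_k$, and by continuity of $\tau,\nu$ along $\gamma$ the Arzel\`a--Ascoli limit is a $C^1$ curve in $K_\infty$ starting at $w\in T_x\gamma$ whose initial tangent lies in $W$. Applying this construction to the rays $\{tv:t>0\}$ of the cone $\spt(S)$ as $t\downarrow 0$ then gives $v\in W$, contradicting the hypothesis. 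The arc-connection input is the genuine technical difficulty; it is however available in the geometric setting of interest because $\spt(V)$ is rectifiable and, in a neighborhood of $\gamma$, admits Allard boundary regularity via Corollary \ref{c:allard_possible}.
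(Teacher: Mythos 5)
Your skeleton for parts (I) and (B) is the standard Allard blow-up argument that the paper itself invokes (without giving details), and the Hausdorff convergence of $\M_{x,r}$, the mass bounds via \eqref{monotonicity} and Proposition \ref{p:All-monot}, and the interior stationarity argument are all fine. One caveat: your justification of the cone property in (B) via the radial field $y\,\phi(|y|)$ is the argument for the \emph{unconstrained} (free-boundary) limit, as in Lemma \ref{l:GJ-again}. In the constrained case (B) the field $y\,\phi(|y|)$ is tangent to $T_x\partial\M$ but does \emph{not} vanish on $T_x\gamma$, so it is not an admissible test according to Definition \ref{d:vairations}; what you should cite instead is Allard's boundary monotonicity formula (the one underlying Proposition \ref{p:All-monot}): since the blown-up varifold $S$ has flat boundary $T_x\gamma$, the density ratio is monotone with no correction factor, and constancy of the ratio forces $S$ to be a cone. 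Both routes would give conicity, but only one is legitimate here.

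Part (W) is where there is a genuine gap, and it goes beyond the arc-existence issue you flagged. Even granting that $x_k$ and $p_k$ can be joined by a $C^1$ arc $\sigma_k\subset K$ of length $O(r_k)$, Definition \ref{meeting with an angle} only constrains the \emph{initial tangent} $\dot\sigma_k(0)$ at the base point $x_k\in\gamma$; it says nothing about the subsequent behaviour, let alone the endpoint. After rescaling, $\tilde\sigma_k$ joins $0$ to $(p_k-x_k)/r_k\to v-w$, and a bound on $\dot{\tilde\sigma}_k(0)$ simply does not localise $v-w$ (hence $v$) inside $W$. The closing clause about ``applying this construction to the rays $\{tv\}$ as $t\downarrow 0$'' does not repair this, because the rescaling factor $1/r_k$ is fixed independently of $t$, so the rescaled arc length does not shrink as $t\to 0$. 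Also, the appeal to Corollary \ref{c:allard_possible} to supply the arcs overstates what that corollary gives: it yields bounded generalized mean curvature, not $C^1$ boundary regularity at arbitrary points. The route the paper actually has in mind (see both the analogous claim (e) in the proof of Theorem \ref{t:boundary_est} and the construction in Lemma \ref{wedge property}) is direct and avoids arcs entirely: in all applications $K$ is, in a neighbourhood of $x\in\gamma$, contained in a wedge of opening angle at most $\theta$ with tip on $\gamma$ and axis normal to $\partial\M$. Consequently the rescaled sets $K_{x,r}:=(K-x)/r$ Hausdorff-converge on compacta to such a wedge $W\subset T_x\M$; since $\spt(V_{x,r})\subset K_{x,r}$ one has, for any $\epsilon>0$ and $r$ small, $\|V_{x,r}\|\big(B_R\setminus U_\epsilon(W)\big)=0$, and passing to the weak-$\ast$ limit gives $\|S\|\big(B_R\setminus \overline{U_\epsilon(W)}\big)=0$ for every $\epsilon>0$, i.e.\ $\spt(S)\subset W$.
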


The lemma is a straightforward consequence of the theory of varifolds developed in \cite{allard-bdry}. 

\begin{definition}\label{d:tan_cones}
At every point $x$ where $\Theta (V, x) < \infty$ we denote by $\Tan (x, V)$ the set of varifolds $W$ which are limits of subsequences (with $r_k\downarrow 0$) of $\{V_{x,r}\}_r$ and which will be called {\em tangent varifolds to $V$ at $x$}. 
If a tangent varifold is a cone, then it will be called {\em tangent cone}.
\end{definition}

\begin{remark}\label{r:coni_non_banale}
We observe moreover that, when $V$ is stationary and a $x$ a point where it satisfies the monotonicity formula, then $W$ is stationary and
\begin{equation}\label{e:density2}
\Theta (V, x) = \Theta (W,0) = \frac{\|W\| (B_r (0))}{\omega_n r^n} \qquad \forall W\in \Tan (x, V), \forall r>0\, .
\end{equation}
In order to conclude that $W$ is a cone one needs however some additional information. The rectifiability of the varifold is enough in the interior, cf. \cite[Chapter 8]{simon-gmt}.
\end{remark}

\subsection{White's curvature estimate at the boundary}\label{ss:white_est} We close this section by introducing the most
important tool in the boundary regularity theory which we will develop in the sequel. The tool is a suitable curvature estimate at the boundary, suggested to us by Brian White, which is valid for  stable smooth hypersurfaces constrained in a wedge. A varifold will be called stable (in an open set $U$) if the second variation $\delta^2 V$ is nonnegative when evaluated at every vector field compactly supported in ${\rm Int}\, (U)$. Strict stability will mean that the second variation is actually strictly positive, except for the trivial situation where the vector field vanishes everywhere on the support of the varifold.

\begin{theorem}\label{t:boundary_est}
Let $\M$ be an $(n+1)$-dimensional smooth Riemannian manifold satisfying Assumption \ref{a:(C)}, $\gamma \subset \partial \M$ a $C^{2,\alpha}$ submanifold of $\partial \M$ and $r\in ]0,1[$. Denote by $D$ the inverse of the distance between the closest pair of points in $\gamma$ which belong to distinct connected components; if there is a single connected component, set $D=0$. For every $M >0$ and $\theta \in [0, \pi[$ there are positive constants $C (D, M, \M, \gamma, n, \theta)$ and $\delta (D, M, \M, \gamma, n, \theta)$ with the following property: Assume that
\begin{itemize}
\item[(CE1)] $x_0\in \gamma$ and $\Sigma$ is a stable, minimal hypersurface in $B_{2r} (x_0)$ such that:
\begin{itemize}
\item $\haus^n (\Sigma) \leq M r^{n}$, $\partial \Sigma \subset \partial B_{2r} (x_0) \cup \partial \M$ and
$\partial \Sigma \cap\partial\M = \gamma$;
\item $\Sigma$ is $C^1$ apart from a closed set $\sing (\Sigma)$ with $\haus^{n-2} (\sing (\Sigma)) =0$ and \\ $\gamma \cap \sing (\Sigma) = \emptyset$;
\item $\Sigma$ is contained in a closed set $K$ meeting $\partial \M$ in $\gamma$ with opening angle at most
$\theta$.
\end{itemize}
\end{itemize}
Then $\Sigma$ is $C^{2,\alpha}$ in $B_{\delta r} (x_0)$ and 
\begin{equation}\label{e:bounded_curv_est}
|A|\leq C r^{-1} \qquad \mbox{in $B_{\delta r} (x_0)$.}
\end{equation}
Furthermore, $\Sigma\cap B_{\delta r} (x_0)$ consists of a single connected component. 
\end{theorem}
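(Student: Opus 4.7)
\emph{Proof plan.} The natural approach is a contradiction/blow-up argument in the spirit of Choi--Schoen, where the wedge hypothesis provides the boundary barrier needed for a rigidity step on the limiting tangent object.

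\emph{Setup and point-picking.} After rescaling so that $r=1$, suppose the estimate fails: there is a sequence of configurations $(\M_j,\gamma_j,\Sigma_j)$ satisfying all hypotheses with uniform constants $M,\theta$, but with $\sup_{B_{\delta_j}(x_0)}|A_{\Sigma_j}|\to\infty$ for some $\delta_j\downarrow 0$. A standard point-picking selects $q_j\in\Sigma_j$ and scales $\rho_j\downarrow 0$ with $|A|(q_j)=\rho_j^{-1}$ and $|A|\le 2\rho_j^{-1}$ on $\Sigma_j\cap B_{\rho_j}(q_j)$. Dilating by $\rho_j^{-1}$ at $q_j$ yields surfaces $\tilde\Sigma_j$ with $|\tilde A|\le 2$ on balls of radius $\to\infty$ and $|\tilde A|(0)=1$. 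If $\rho_j^{-1}\dist(q_j,\partial\M)\to\infty$, the blow-up is purely interior and we contradict the Schoen--Simon interior curvature estimate applied to the stable limit in $\mathbb{R}^{n+1}$ (the uniform area bound $\haus^n(\Sigma_j)\le M$ plus monotonicity give uniformly bounded area ratios). The genuinely new case is therefore $\rho_j^{-1}\dist(q_j,\partial\M)\to d<\infty$.

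\emph{Limit and rigidity.} In the boundary case, the rescaled manifolds converge locally to the half-space $T_0^+\M\cong\mathbb{R}^{n+1}_+$, the $\gamma_j$ to the flat $(n-1)$-plane $\gamma_\infty:=T_{x_0}\gamma\subset\partial\mathbb{R}^{n+1}_+$, and $\tilde\Sigma_j$ subconverges (by Schoen--Simon in the interior and by Allard's boundary regularity theorem of \cite{allard-bdry}, admissible through Corollary \ref{c:allard_possible}) to a stable minimal hypersurface $\Sigma_\infty\subset\mathbb{R}^{n+1}_+$ with $\partial\Sigma_\infty\subset\gamma_\infty$ and $|A_{\Sigma_\infty}|(0)=1$. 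The wedge condition of Definition \ref{meeting with an angle} passes to the limit, confining $\Sigma_\infty$ to a wedge $W$ with tip $\gamma_\infty$ and opening $\theta<\pi$. The claim is that $\Sigma_\infty$ must coincide with a half-hyperplane through $\gamma_\infty$, contradicting $|A|(0)=1$. To prove this I would sweep the pencil $\{H_\alpha\}$ of half-hyperplanes through $\gamma_\infty$, tilted by angle $\alpha$ from the wedge axis: for $|\alpha|$ sufficiently close to $\tfrac{\pi}{2}$ one has $H_\alpha\cap W=\gamma_\infty$, so $H_\alpha$ is disjoint from the interior of $\Sigma_\infty$. Decrease $|\alpha|$ until first contact; at an interior touching point the strong maximum principle forces $\Sigma_\infty=H_\alpha$ in a neighbourhood, and this extends globally by unique continuation. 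At a touching on $\gamma_\infty$, the Hopf boundary lemma applied to the graphs of $\Sigma_\infty$ and $H_\alpha$ over the tangent half-plane yields the same conclusion.

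\emph{Main obstacle and connectedness.} The delicate step, in my view, is guaranteeing enough regularity of $\Sigma_\infty$ on $\gamma_\infty$ to invoke the boundary maximum principle: this is where both hypotheses $\gamma\in C^{2,\alpha}$ and $\theta<\pi$ are essential, since the wedge confinement pins the boundary density to $\tfrac{1}{2}$, which is Allard's threshold for $C^{1,\alpha}$-boundary regularity; without $\theta<\pi$ no separating half-hyperplane exists and the sweep collapses. Once the curvature bound $|A|\le Cr^{-1}$ is established, $C^{2,\alpha}$-regularity of $\Sigma$ in $B_{\delta r}(x_0)$ follows by writing $\Sigma$ as a small $C^{1,\alpha}$-graph over its tangent half-plane at $x_0$ and bootstrapping the minimal surface equation with the Dirichlet datum $\gamma$. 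Connectedness is then immediate after possibly shrinking $\delta$: the surface is a single graph over a connected half-plane, so $\Sigma\cap B_{\delta r}(x_0)$ is connected.
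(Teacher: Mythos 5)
Your high-level outline — rescale, blow up, confine the limit to a wedge, prove rigidity of the wedge-confined limit and derive a contradiction with $|A|(0)=1$ — tracks the paper's strategy. However, you flag the key difficulty exactly where the proof is hardest and then assert a false step to get through it: you write that ``the wedge confinement pins the boundary density to $\tfrac12$.'' This is not true. Wedge confinement alone is compatible with any half-odd density at the tip: three (or five, or $2k+1$) half-hyperplanes through $\ell$, all lying inside a wedge of opening $\theta<\pi$, form a stationary integral varifold confined to the wedge with density $\tfrac{3}{2}$ (or larger) at $\ell$. Nothing in your proposed argument rules this out, and nothing in the max-principle sweep does either, since every competitor half-plane through $\gamma_\infty$ already touches $\Sigma_\infty$ along $\gamma_\infty$, so ``first contact'' is not well defined without already knowing the local graphical structure at the boundary — which is what the density $\tfrac12$ would give you.

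The paper closes this gap with three ingredients that your proposal lacks. First, the surfaces $\bar\Sigma_k$ carry the orientation of $\Sigma$ and have boundary $\bar\gamma_k$ with multiplicity $1$, so after passing to the varifold limit one has the crucial first-variation bound $|\delta V|\le\mathcal H^{n-1}\res\ell$ (plus a curvature term that vanishes under rescaling). Second, a degree argument — projecting the currents $\llbracket\bar\Sigma_k\rrbracket$ onto a fixed half-plane $\pi_1$ and using that the projected boundary has multiplicity $\pm1$ — shows that the number $m=2\Theta(V,x)$ of sheets is \emph{odd} at $\mathcal H^{n-1}$-a.e.\ $x\in\ell$; here White's stratification theorem is used to identify the translation-invariant tangent cones as finite sums of half-planes inside $W$, and Schoen--Simon is used to upgrade varifold convergence to smooth sheeted convergence away from $\ell$. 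Third, the elementary Lemma \ref{l:simple_geom_obs} shows that the sum of an odd number $2k+1\ge 3$ of unit vectors lying in an open half-plane has length strictly greater than $1$; combining this with Lemma \ref{l:tangent_cones_wedge} and the first-variation bound forces $m=1$, i.e.\ density $\tfrac12$, at a.e.\ boundary point. It is the \emph{conjunction} of the wedge hypothesis, the oriented multiplicity-one boundary, and the oddness obtained from the degree argument that pins the density, not the wedge alone. Without the first-variation bound and the oddness, the argument simply does not close.

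Two further remarks. Your point-picking produces a curvature bound only on balls of radius comparable to $\rho_j$ around $q_j$ (so of radius $O(1)$ after rescaling), not ``on balls of radius $\to\infty$''; and since $\Sigma$ may have an $\mathcal H^{n-2}$-null singular set, $q_j$ may itself be singular, which the Choi--Schoen picking does not accommodate directly. The paper sidesteps this by taking the blow-up limit only as a varifold and establishing its regularity from scratch, rather than assuming a smooth limit with bounded second fundamental form. Finally, the paper's rigidity step also differs from your sweeping: once a.e.\ boundary density is $\tfrac12$, the paper shows the tangent cone at infinity is a single multiplicity-one half-plane by \cite[Lemma 5.2]{allard-bdry}, deduces $\Theta(V,0)\ge\tfrac12$ by upper semicontinuity, and then uses Allard's monotonicity formula (with flat $\ell$) to conclude the mass ratio is constant, hence $V$ itself is a flat half-plane — a cleaner route than the moving-plane sweep, which would require care at every singular point in high dimensions.

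Lastly, note the paper's proof of the single-connected-component claim is a separate blow-up argument of a different flavor: if a second component existed near $x_0$, its tangent plane $\pi$ would have to simultaneously miss $\partial\M$, miss $T_{x_0}\Sigma$, and contain a point $\delta\eta$-close to $x_0$, which is geometrically impossible once $T_{x_0}\Sigma$ meets $T_{x_0}\partial\M$ at an angle bounded away from $\tfrac{\pi}{2}$; your ``the surface is a single graph over a connected half-plane'' shortcut only addresses the sheet through $x_0$ and does not exclude a second component.
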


The proof requires two elementary but important lemmas, which we state immediately.

\begin{lemma}\label{l:tangent_cones_wedge}
Let $V$ be an integer $n$-dimensional rectifiable varifold in $\RR^{n+1}$ such that
\begin{itemize}
\item[(a)] $V$ is stationary in a wedge $W_0$ of opening angle $\theta$;
\item[(b)] $\delta V = (w_1,w_2,0,\ldots,0) \haus^{n-1} \res \ell$ for some Borel vector field\\ $w = (w_1,w_2) \in L^1_{loc} (\haus^{n-1} \res \ell; \RR^2)$.
\end{itemize}
Then for $\haus^{n-1}$-a.e. $x\in \ell$ we have the representation
\[
w (x) = \sum_{i=1}^m v_i (x)
\]
where
\begin{itemize}
\item $m = 2 \Theta (x, V)$;
\item each $v_i$ is of the form $(- \cos \theta_i, -\sin \theta_i)$ for some $\theta_i\in [-\theta, \theta]$.
\end{itemize}
\end{lemma}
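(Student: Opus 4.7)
The plan is to blow up $V$ at a typical point $x\in\ell$ and classify the resulting tangent cone. Fix $x\in\ell$ such that: (i) $x$ is a Lebesgue point of $w\in L^1_{\mathrm{loc}}(\haus^{n-1}\res\ell;\RR^2)$; (ii) the density $\Theta(V,x)$ exists and is finite; and (iii) $x$ is an $\haus^{n-1}$-density-$1$ point of $\ell$. Properties (i) and (iii) hold $\haus^{n-1}$-a.e.\ by standard Lebesgue differentiation (and the linearity of $\ell$), while (ii) holds $\haus^{n-1}$-a.e.\ by an Allard-type boundary monotonicity formula: at any Lebesgue point of $w$ one has the codimension-two bound $|\delta V|(B_\rho(x))=O(\rho^{n-1})$, which combined with stationarity off $\ell$ yields a monotone modification of $\rho\mapsto\omega_n^{-1}\rho^{-n}\|V\|(B_\rho(x))$. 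Now consider $V_r:=(\iota_{x,r})_\sharp V$. Both $W_0$ and $\ell$ are invariant under $\iota_{x,r}$ (translation along $\ell$ combined with dilation from the origin), so each $V_r$ is stationary in $W_0$ with $\delta V_r$ supported on $\ell$. Varifold compactness combined with monotonicity extracts a subsequential weak-$\ast$ limit $C$ which is an integer rectifiable cone in $W_0$ with vertex at $0$ and $\Theta(C,0)=\Theta(V,x)$. A direct computation of the pushforward of $\delta V$ under $\iota_{x,r}$ shows that $\delta V_r$ is, on $\ell$, the vector measure with density $w(r(\,\cdot\,)+x)$, which at the Lebesgue point $x$ converges to
\[
\delta C = w(x)\,\haus^{n-1}\res\ell .
\]

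Next, I would exploit the structure of $C$. Because $\delta C$ is a constant vector density on the entire subspace $\ell$ and $C$ is a cone with vertex $0\in\ell$, the standard \emph{spine} argument (translating $C$ along $\ell$ and using equality in the monotonicity formula at every vertex along $\ell$, all of which have the same density) forces $C$ to be invariant under translations tangent to $\ell$. Hence $C$ splits as a product $C=\ell\times C_0$, where $C_0$ is a $1$-dimensional stationary integer rectifiable cone in the $2$-dimensional wedge $W_0\cap\{x_2=\ldots=x_n=0\}$, with $\delta C_0=w(x)\,\delta_0$. A $1$-dimensional integer rectifiable cone in $\RR^2$ with vertex at $0$ and finite mass on bounded sets is a finite sum $\sum_{i=1}^m\llbracket\gamma_i\rrbracket$ of rays $\gamma_i$ issuing from $0$ in unit directions $u_i\in S^1$, counted with integer multiplicity; containment in the wedge forces $u_i=(\cos\theta_i,\sin\theta_i)$ with $\theta_i\in[-\theta,\theta]$.

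Integration by parts along each ray gives $\delta\llbracket\gamma_i\rrbracket=-u_i\,\delta_0$, so
\[
w(x)\,\delta_0 \;=\; \delta C_0 \;=\; -\sum_{i=1}^{m} u_i\,\delta_0 ,
\]
i.e.\ $w(x)=\sum_{i=1}^{m}v_i(x)$ with $v_i(x):=-u_i=(-\cos\theta_i,-\sin\theta_i)$, which is the form predicted by the lemma. The multiplicity count $m=2\Theta(V,x)$ then follows from the product structure $\Theta(V,x)=\Theta(C,0)=\Theta(\ell,0)\cdot\Theta(C_0,0)=\Theta(C_0,0)$, together with the fact that each ray contributes $\tfrac{1}{2}$ to the density of $C_0$ at $0$, giving $\Theta(C_0,0)=m/2$. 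The main obstacle is the very first step: one must verify, for a general (non-smooth) varifold whose boundary first variation is a codimension-two measure supported on $\ell$ rather than a codimension-one measure supported on a hypersurface, that an Allard-type boundary monotonicity formula, tangent-cone compactness, and the spine/constancy argument all go through in this regime. Once these ingredients are in place, the remainder is the explicit classification of $1$-dimensional stationary integer rectifiable cones in a planar wedge, which is elementary.
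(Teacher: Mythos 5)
Your proposal follows the same overall blueprint as the paper --- blow up at $\haus^{n-1}$-a.e.\ $x\in\ell$, show the tangent cone splits off $\ell$, classify the one-dimensional factor as a finite sum of rays in the planar wedge, then match first variations --- but it replaces the paper's single decisive citation, White's stratification theorem (Theorem 5 of \cite{white_strata}), with a hand-crafted spine argument. The surrounding machinery you invoke is sound: Lebesgue-point differentiation of $w$, Allard boundary monotonicity for a flat $(n-1)$-plane carrying the first variation, tangent-cone compactness, and the elementary classification of $1$-dimensional stationary integer cones.

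The gap is in the spine step, which you assert rather than prove. The spine argument requires $\Theta(C,y)\geq\Theta(C,0)$ for every $y\in\ell$ (the reverse inequality is free, from upper semicontinuity and the cone structure); your parenthetical claims that all points of $\ell$ ``have the same density,'' but this is not a consequence of $\delta C$ being a constant vector multiple of $\haus^{n-1}\res\ell$. Indeed, the first variation does not pin down the cone: for any $y\in\ell$, the translate of $C$ by $y$ has exactly the same first variation $w(x)\,\haus^{n-1}\res\ell$ but vertex at $y$, so constant first-variation density cannot by itself force constant mass density along $\ell$. Establishing the required density equality at a.e.\ $x$ needs a separate measure-theoretic argument (approximate continuity of the Borel function $\Theta(V,\cdot)|_\ell$, passed through the blow-up, then upgraded from a.e.\ to everywhere along $\ell$ by upper semicontinuity), and this is precisely what the paper imports by citing \cite{white_strata}. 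You flag the ``spine/constancy argument'' as the main obstacle, but it is not a routine verification to be deferred: it is the crux of the lemma, and the proposal leaves it unresolved.
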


\begin{lemma}\label{l:simple_geom_obs}
Let $k\in \mathbb N \setminus \{0\}$ and $v_i=(\cos \theta_i, \sin \theta_i)$ $2k+1$ unit vectors in the plane with $-\frac{\pi}{2} <\theta_i < \frac{\pi}{2}$. Then the
sum $v_1 + \ldots + v_{2k+1}$ has length strictly larger than $1$. 
\end{lemma}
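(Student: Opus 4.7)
I would prove the lemma by induction on $k \geq 1$, after sorting the vectors by angle. The key combinatorial idea is to single out the two extreme vectors in the angular ordering and use them to build an inner product argument that decouples from the induction on the remaining $2k-1$ vectors.

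After relabeling, assume $\theta_1 \le \theta_2 \le \cdots \le \theta_{2k+1}$, all lying in $(-\pi/2, \pi/2)$. Set $W := v_1 + v_{2k+1}$ and $T := v_2 + \cdots + v_{2k}$, and write $S = W + T$. Two elementary observations drive the proof. First,
\[
|W|^2 = 2\bigl(1 + \cos(\theta_{2k+1} - \theta_1)\bigr) > 0
\]
since $\theta_{2k+1} - \theta_1 < \pi$; in particular $W$ has a well-defined direction angle $\beta = (\theta_1 + \theta_{2k+1})/2$. Second, for every inner index $2 \le j \le 2k$ the angle $\theta_j$ lies in $[\theta_1, \theta_{2k+1}]$, so $|\theta_j - \beta| \le (\theta_{2k+1} - \theta_1)/2 < \pi/2$; consequently $W \cdot v_j = |W|\cos(\theta_j - \beta) > 0$, and summing over $j$ yields $T \cdot W > 0$.

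With these in hand, the expansion
\[
|S|^2 = |T|^2 + 2\, T \cdot W + |W|^2
\]
immediately gives $|S|^2 > |T|^2$. In the base case $k = 1$ the inner sum reduces to $T = v_2$ with $|T|^2 = 1$, so $|S|^2 > 1$ at once. For $k \ge 2$, the vectors $v_2, \ldots, v_{2k}$ form a family of $2(k-1)+1$ unit vectors whose angles still lie in $(-\pi/2, \pi/2)$, so the inductive hypothesis yields $|T|^2 > 1$ and the same expansion concludes.

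The argument is purely planar trigonometry, and I do not anticipate any serious difficulty; the only geometric insight needed is that the bisector of the two extreme angles necessarily points into the same open half-plane as every inner vector, which is guaranteed by the strict bound $\theta_{2k+1} - \theta_1 < \pi$ forced by the hypothesis $\theta_i \in (-\pi/2, \pi/2)$.
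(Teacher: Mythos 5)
Your proof is correct. Sorting the angles, forming $W=v_1+v_{2k+1}$, showing $W\cdot v_j>0$ for every inner index via the bisector angle $\beta=(\theta_1+\theta_{2k+1})/2$, and then expanding $|S|^2=|T|^2+2T\cdot W+|W|^2>|T|^2$ to run an induction on $k$ all checks out, including the strict inequality $|\theta_j-\beta|\le(\theta_{2k+1}-\theta_1)/2<\pi/2$ and the nondegeneracy $|W|>0$.

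The paper's argument, however, is noninductive and organized around the median rather than the extremes. After sorting, it pairs $v_i$ with $v_{2k+2-i}$ for each $i\le k$, observes that each pairwise sum $w_i$ is a nonzero positive multiple of the bisector of $\theta_i$ and $\theta_{2k+2-i}$, and that since $\theta_i\le\theta_{k+1}\le\theta_{2k+2-i}$ this bisector makes an angle strictly less than $\pi/2$ with $v_{k+1}$. Projecting the whole sum onto the middle vector $v_{k+1}$ then gives
\[
|v_1+\cdots+v_{2k+1}|\ \ge\ \sum_j\langle v_j,v_{k+1}\rangle\ =\ 1+\sum_{i=1}^{k}\langle w_i,v_{k+1}\rangle\ >\ 1
\]
in a single step. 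Both proofs hinge on the same geometric fact that the sum of two unit vectors points along their bisector and remains within a right angle of any unit vector with angle between them; the difference is that you apply it to the two extreme vectors and recurse, whereas the paper applies it to all symmetric pairs simultaneously and projects onto the central vector, avoiding the induction entirely. The paper's version is a bit tighter as a standalone computation; yours cleanly isolates a two-vector lemma plus an induction, which some readers may find more transparent. Either is perfectly acceptable.
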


The simple proofs of the lemmata will be postponed to the end of the section, while we first deal with the proof of the main Theorem (given the two lemmata).

\begin{proof}[ \bf \emph{Proof of Theorem \ref{t:boundary_est}}]
We will in fact prove that the constants $\delta$ and $C$ depend on the $C^{2,\alpha}$ regularity of $\gamma$, $\M$ and
$\partial \M$. First of all we focus on the curvature estimate. 

Without loss of generality, we again assume that $\M$ is isometrically embedded in a euclidean space $\mathbb R^N$. Observe that the dimension $N$ can be estimated by $n$ and thus we can assume that $N$ is some
fixed number, depending only on $n$. Upon rescaling we can also assume that $r=1$: the rescaling would just lower the $C^{2,\alpha}$ norm of $\M$, $\partial\M$ and $\gamma$ and increase the distance $D^{-1}$ between different connected components of $\gamma$. 

Assuming by contradiction that the statement does not hold, we would find a sequence of manifolds $\M_k$, boundaries $\gamma_k$, minimal surfaces $\Sigma_k$ and points $p_k\in \Sigma_k$ with the properties that:
\begin{itemize} 
\item $|A_{\Sigma_k}| (p_k) \uparrow \infty$, or $p_k$ is a singular point, and the distance between $p_k$ and $\gamma_k$ converges to $0$
\item $\M_k, \Sigma_k$ and $\gamma_k$ satisfy the assumptions of the Theorem with $r=1$, with a uniform bound on
the $C^{2,\alpha}$ regularities of both $\gamma_k$ and $\M_k$ and with a uniform bound on $M$ and $\theta$.
\end{itemize}
We let $q_k\in \gamma_k$ be the closest point to $p_k$ and, w.l.o.g. we translate the surfaces so that $q_k =0$.
We next rescale them by a factor $\rho_k^{-1}$ where $\rho_k$ is the maximum between $|p_k|$ and $|A_{\Sigma_k} (p_k)|^{-1}$ (where
we understand the latter quantity to be $0$ if $p_k$ is a singular point).
We denote by $\bar\gamma_k$, $\bar\M_k$, $\bar \Sigma_k$ and $\bar p_k$ the corresponding rescaled objects.
It turns out that, up to subsequences,
\begin{itemize}
\item[(a)] the rescaled manifolds $\bar\M_k$ are converging, locally in $C^{2,\alpha}$, to a half $(n+1)$-dimensional plane, that w.l.o.g. we can assume to $\RR^{n+1}_+ = \{x: x_{n+2} = \ldots = x_N =0,\, x_1\geq 0\}$;
\item[(b)] the rescaled manifolds $\partial\bar\M_k$ are converging, locally in $C^{2,\alpha}$, to an $n$-dimensional plane, namely $\{x: x_1 = x_{n+2} = \ldots = x_N = 0\}$;
\item[(c)] the rescaled surfaces $\bar\gamma_k$ are converging, locally in $C^{2,\alpha}$, to an $n-1$-dimensional plane, that w.l.o.g. we can assume to be $\ell := \{x_1=x_{n+1} = x_{n+2} = \ldots = x_N=0\}$;
\item[(d)] the points $\bar p_k$ are converging to some point $\bar p$ and $\liminf_k |A_{\bar\Sigma_k}| > 0$;
\item[(e)] the surfaces $\bar{\Sigma}_k$ are converging, in the sense of varifolds, to an integral varifold $V$, which is supported in the standard wedge $W$ contained in $\RR^{n+1}_+$ with tip $\ell$, axis $\pi_+ = \{x_1 > 0, x_{n+1}=x_{n+2} = \ldots = x_N =0\}$;
\item[(f)] the integral varifold $V$ is stationary inside $W\setminus \ell$ and in fact
$|\delta V|\leq \haus^{n-1} \res \ell$.
\end{itemize}
All these statements are simple consequences of elementary considerations and of the theory of varifolds. For (f), observe that $|\delta \llbracket\bar\Sigma_k\rrbracket| \leq \haus^{n-1} \res \gamma_k + \|A_{\M_k}\|_{C^0} \haus^n \res \Sigma_k$ and use the semicontinuity of the total variation of the first variations under varifold convergence.

\medskip

We next show that the varifold $V$ is necessarily half of an $n$-dimensional plane $\tau$ bounded by $\ell$ and lying in $W$. This would imply, by Allard's regularity theorem, that the surfaces $\Sigma_k$ are in fact converging in $C^{2,\alpha}$ to 
$\tau$, contradicting (d).

\medskip

We first start to show that the density $2\Theta (V,x)$ is odd at $\haus^{n-1}$ a.e. $p\in \ell$. By White's stratification theorem, see Theorem 5 of White \cite{white_strata}, at $\haus^{n-1}$-a.e. point $x\in \ell$ there is a tangent cone
$V_\infty$ to $V$ which is invariant under translations along $\ell$. This implies that $V_\infty$ is necessarily given by
\[
\sum_{i=1}^m \llbracket \pi_i \cap W_0\rrbracket
\]
for some family of $n$-dimensional planes (possibly with repetitions) containing $\ell$, where 
\[
m = m(x) = 2 \Theta (V_\infty, 0) = 2 \Theta (V,x)\, .
\]
Observe that any such halfplane $\pi_i\cap W_0$ is contained in the wedge $W_0$. Without loss of generality we can assume that each $\pi_i\cap W_0$ makes an angle smaller than $\frac{\pi}{2}$ with $\pi_1\cap W_0$

Let $B\subset \pi_1\cap W_0$ be a compact connected set not intersecting $\ell$. By a simple diagonal argument, $V_\infty$ is also the limit of an appropriate sequence of rescalings of the surfaces $\bar\Sigma_k$, namely
$(\bar\Sigma_{k(j)})_{0, r_j}$. If $k(j)$ converges to infinity sufficiently fast, we keep the convergence conclusions in (a), (b), (c), (e) and (f) even when we replace $\bar\Sigma_k$, $\bar \M_k$, $\partial \bar\M_k$ and $\bar\gamma_k$ with the corresponding rescalings $(\bar\Sigma_{k(j)})_{0, r_j}$, $(\bar\M_{k(j)})_{0, r_j}$, $(\partial\bar\M_{k(j)})_{0, r_j}$
and $(\bar\gamma_{k(j)})_{0, r_j}$. For notational simplicity, let us keep the label $\bar\Sigma_k$ even for the rescaled surfaces.

Note that, since the support of the varifold $V_\infty$ is a finite number of affine graphs over the set $B$ (and $m$ is the sum of the multiplicities, including the one of $\pi_1$), the Schoen-Simon theorem implies smooth convergence of the $\bar\Sigma_k$. Thus the $\bar\Sigma_k$ will also be a union of $m$ graphs over $B$ (distinct, because the $\Sigma_k$ are surfaces with multiplicity $1$). Let $\varkappa = \pi_1^\perp$ and, after giving compatible orientations to $\pi_1$ and $\varkappa$, for every $x\in B$ where $\varkappa +x$ intersects $\bar{\Sigma}_k$ transversally, 
we define the degree
\[
d (x) := \sum_{y\in \varkappa \cap \bar{\Sigma}_k} \varepsilon (T_y \bar\Sigma_k, \varkappa),
\]
where $\varepsilon (T_y \bar\Sigma_k, \varkappa)$ takes, respectively, the value $1$ or $-1$ according to whether the two transversal planes have compatible or non-compatible orientation. For $k$ large enough $\gamma_k$ does not intersect $B+ \varkappa$ and thus $d$ is constant on $B$. Moreover, it turns out that $d$ is either $1$ or $-1$. To see this, one can for instance consider $\bar\Sigma_k$ as integral currents and project them onto $\pi_1$. Due to (c), for $k$ large enough (and inside some large ball around the origin), the projection of $\bar\gamma_k$'s will have multiplicity one, and since the projection and the boundary operator commute, the projection of $\bar\Sigma_k$'s onto $\pi_1$ inside $B$ will be simply $\pm\llbracket \pi_1 \rrbracket \res B$. Thus the number of intersections of $y+\varkappa$ with $\bar{\Sigma_k}$ must be odd for a.e. $y\in B$. This obviously implies that $m$ is odd.

\medskip

We next infer that $2\Theta (V,x)$ must be $1$ at $\haus^{n-1}$-a.e. $x\in \ell$. Apply indeed Lemma \ref{l:tangent_cones_wedge} and, using the Borel maps $w$ and $v_i$ defined in there, consider the Borel function
\[
f (x) := |w(x)| = \left|\sum v_i (x)\right|\, .
\] 
We then have $|\delta V |= f \haus^{n-1}\res \ell$ and from 
Lemma \ref{l:simple_geom_obs}, we conclude
that $f (x) > 1$ at every point $x$ where $2\Theta (V,x)$ is an odd number larger than $1$. Since by the previous step such number is odd a.e., we infer our claim by using item (f) from above.
By Allard's regularity theorem, any point $x$ as above (i.e. where there is at least one tangent cone invariant under translations along $\ell$) is then a regular point.

\medskip

Hence, it turns out that
\begin{itemize}
\item the set of interior singular points of $V$ has Hausdorff dimension at most $n-7$, by the Schoen-Simon compactness theorem;
\item the set of boundary singular points has Hausdorff dimension at most $n-2$.
\end{itemize}
Consequently, there is only one connected component of the regular set of $V$ whose closure contains $\ell$. Thus there cannot be any other connected component, because its closure would not touch $\ell$ and would give a stationary varifold contained in the wedge $W$, violating the maximum principle. Hence we infer that any interior regular point of $V$ can be connected with a curve of regular points to a regular boundary point. In turn this implies that the varifold $V$ has density $1$ at every regular point. So $V$ can be given the structure of a current and in particular we conclude that the $\Sigma_k$'s are converging to $V$ as a current.

\medskip

Consider next that, 
\[
\lim_{R\uparrow \infty} \frac{\|V\| (B_R (0))}{R^{n}}
\]
is bounded uniformly, depending only on the constant $M$. Thus, by the usual monotonicity formula, there is a sequence
$R_k\to \infty$ such that $V_{0, R_K}$ converges to a cone $V_\infty$ stationary in $W\setminus \ell$. Again, by a diagonal argument, $V_\infty$ is also the limit of a sequence of rescalings $(\bar\Sigma_{k(j)})_{0, R_j}$, and if $k(j)$ converges to infinity sufficiently fast, we retain the conclusions in (a), (b), (c), (e) and (f) when we replace $\bar\Sigma_k$, $\bar \M_k$, $\partial \bar\M_k$ and $\bar\gamma_k$ with the corresponding rescalings $(\bar\Sigma_{k(j)})_{0, R_j}$, $(\bar\M_{k(j)})_{0, R_j}$, $(\partial\bar\M_{k(j)})_{0, R_j}$
and $(\bar\gamma_{k(j)})_{0, R_j}$.

\medskip

All the conclusions inferred above for $V$ are then valid for $V_\infty$ as well, namely:
$V_\infty$ has multiplicity $1$ a.e., it can be given the structure of a current and the surfaces $(\bar\Sigma_{k(j)})_{0, R_j}$ are converging to it in the sense of currents. In particular the boundary of $V_\infty$ (as a current) is given by $\ell$ (with the appropriate orientation).
We can then argue as in \cite[Lemma 5.2]{allard-bdry} to conclude that the current $V_\infty$ is in fact the union of finitely many half-hyperplanes meeting at $\ell$. But since $\ell$ has many regular points, where the multiplicity must be $\frac{1}{2}$, we conclude that indeed $V_\infty$ consists of a single plane.
 
\medskip

 In particular we infer from the argument above that $\Theta (V_\infty, 0) = \frac{1}{2}$.  This in turn implies
\[
\lim_{R\uparrow \infty} \frac{\|V\| (B_R (0))}{R^{n}} = \frac{\omega_n}{2}\, .
\]
On the other hand
\[
\lim_{r\downarrow 0} \frac{\|V\| (B_r (0))}{r^n} = \omega_n \Theta (V, 0) \, .
\]
But the upper semicontinuity of the density and the fact that $\Theta (V,x) = \frac{1}{2}$ for $\mathcal{H}^{n-1}$-a.e. $x\in \ell$ implies implies that $\Theta (V, 0) \geq \frac{1}{2}$.

Since $\ell$ is flat, Allard's monotonicity formula implies that
\[
r\mapsto  \frac{\|V\| (B_r (0))}{r^n}
\]
is monotone and thus constant. Again the monotonicity formula implies that such function is constant if and only if $V$ is itself a cone. This means that $V$ coincides with $V_\infty$ and is half of a hyperplane, as desired.

\medskip

We now come to the claim that, choosing $\delta$ possibly smaller, the surface $\Sigma$ has a single connected component in $B_{\delta r} (x_0)$. Again this is achieved by a blow-up argument.
Given the estimate on the curvature, for every sufficiently small $\eta$ we have that $x_0$ belongs to a connected component of $\Sigma$ which is the graph of a function $f$ for some given system of coordinates in $B_{2\eta} (x_0)$. Let us denote by $\Gamma$ such a connected component. For $\eta$ small we can assume that the tangent to $\Gamma$ is as close to $T_{x_0} \Gamma$ as we desire and thus we can assume that the connected component is actually a graph of a function $f: T_{x_0} \Gamma \to T_{x_0} \Gamma^\perp$, with gradient smaller than some $\varepsilon>0$, whose choice we specify in a moment. From now on in all our discussion we assume to work in normal coordinates based at $x_0$. In fact it is convenient to consider a closed manifold $\tilde{\M}$ which contains $\partial \M$ and
from now on we let $\tilde{B}_r (x_0)$ be the corresponding geodesic balls.

Assume now, by contradiction, that $B_{\delta \eta} (x_0)$ contains another point $y_0\in \Sigma$ which does not belong to $\Gamma$, where $\delta$ is a small parameter, depending on the maximal opening angle $\theta$ with which the set $K$ can meet $\partial\M$. Thus $y_0$ belongs to a second connected component $\Gamma'$. By the curvature estimates we can assume that $\Gamma'$ as well is graphical and more precisely it is a graph over some plane $\pi$ of a function $g$ with gradient smaller than $\varepsilon$ and height smaller than $\varepsilon \eta$. Moreover, without loss of generality, we can assume that $\pi$ passes through the point $y_0$. 

\medskip

Observe that by assumption (CE1) $\Gamma'$ cannot intersect $\partial \M$, hence any point in $\partial \Gamma'$ is at distance $2\eta$ from $x_0$.
Since $\|g\|_0 \leq \varepsilon \eta$, it turns out that any point $z_0\in \pi \cap \tilde{B}_{(2-2\varepsilon) \eta} (x_0)$ must be in the domain of $g$,
which we denote by ${\rm Dom}\, (g)$. To see this observe first that, since $\pi \cap \tilde{B}_{(2-2\varepsilon) \eta} (x_0)$ is convex, we can join $y_0$ and $z_0$ with a path $\gamma$ lying
in $\pi \cap \tilde{B}_{(2-2\varepsilon)\eta} (x_0)$. Assume that $\gamma$ is parametrized over $[0,1]$ and that $\gamma (0) = y_0$. For a small $\varepsilon$ we know that $\gamma ([0, \varepsilon])\subset {\rm Dom}\, (g)$. If $\gamma (1)\in {\rm Dom}\, (g)$ we are finished. Otherwise we let $\tau$ be the infimum of $\{t: \gamma (t)\not \in {\rm Dom}\, (g)\}$. Obviously the point $p$ in the closure of the graph of $g$ lying over $\gamma (\tau)$ is a boundary point for $\Gamma'$. On the other hand, since $\gamma (\tau)\in \tilde{B}_{(2-2\varepsilon)\eta} (x_0)$ and
$\|g\|\leq \varepsilon \eta$, clearly $p$ cannot be at distance $2\eta$ from $x_0$. This is a contradiction and thus we have proved the 
conclusion
\[
\pi \cap \tilde{B}_{(2-2\varepsilon) \eta} (x_0) \subset {\rm Dom}\, (g)\, .
\]
In particular we conclude that $\pi \cap \tilde{B}_{(2-4\varepsilon) \eta} (x_0)$ cannot meet $\partial \M$: if the intersection were not empty, then there would be a point $q$ contained in $\pi \cap (\tilde{B}_{(2-2\varepsilon) \eta} (x_0)\setminus \M) $ which lies at distance at least $\frac{3}{2} \varepsilon \eta$ from $\partial \M$. In particular the point of the graph of $g$ lying on top of $q$ could not belong to $\M$, although it would be a point of $\Gamma'$. 

By a similar argument, we conclude that $\pi \cap \tilde{B}_{(2-6\varepsilon) \eta} (x_0)$ cannot intersect $T_{x_0} \Sigma$, otherwise we would have nonempty intersection between the graphs of $f$ and $g$, i.e. a point belonging to $\Gamma \cap \Gamma'$, which we know to be different connected components of $\Sigma \cap B_{2\eta} (x_0)$, hence disjoint. 

At this point we choose $\varepsilon = \frac{1}{12}$. Summarizing, the plane $\pi$ has the following properties:
\begin{itemize}
\item[(a)] $\pi$ contains a point $y_0 \in B_{\delta \eta} (x_0)$;
\item[(b)] $\pi$ does not intersect $\partial \M \cap \tilde{B}_{3 \eta/2}$;
\item[(c)] $\pi$ does not intersect $T_{x_0} \Sigma \cap \tilde{B}_{3\eta/2}$;
\item[(d)] $T_{x_0} \Sigma$ meets $T_{x_0} \partial \M$ at an opening angle at most $\theta$. 
\end{itemize}
It is now a simple geometric property that, if $\delta$ is chosen sufficiently small compared to $\theta$, then the plane $\pi$ cannot
exist, cf. Figure \ref{f:bad_plane}.

\begin{figure}[htbp]
\begin{center}
\input{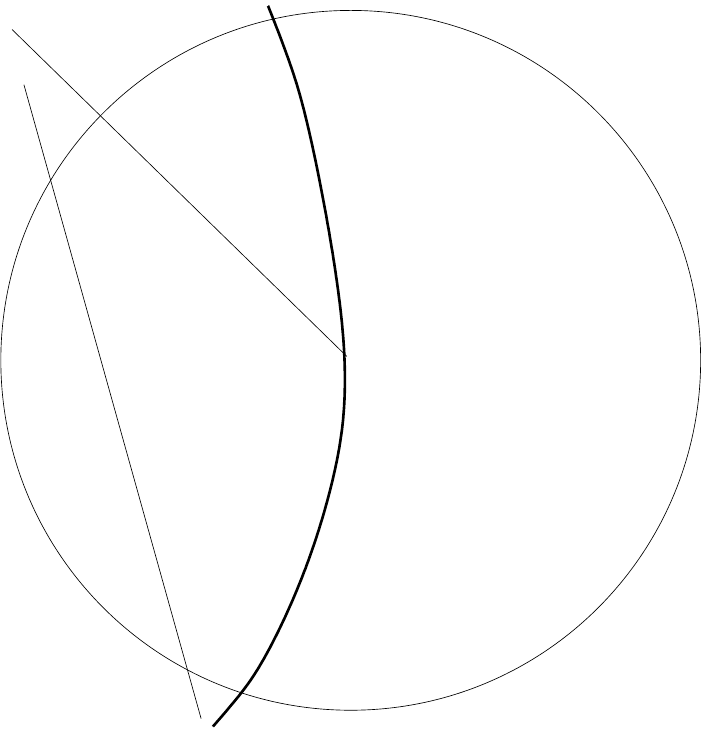_t}
\end{center}
\caption{If two planes $\pi$ and $T_{x_0} \Sigma$ satisfy the assumption (b), (c) and (d), then $\pi$ cannot contain a point which is $\delta \eta$ close to $x_0$.}\label{f:bad_plane}
\end{figure}

\end{proof}

\begin{proof}[\bf \emph{Proof of Lemma \ref{l:tangent_cones_wedge}}]
By White's stratification theorem, see Theorem 5 of White \cite{white_strata}, at $\haus^{n-1}$-a.e. point $x\in \ell$ there is a tangent cone
$V_\infty$ to $V$ which is invariant under translations along $\ell$. This implies that $V_\infty$ is necessarily given by
\[
\sum_{i=1}^m \llbracket \pi_i \cap W_0\rrbracket
\]
for some family of $n$-dimensional planes containing $\ell$, where $m = m(x) = 2 \Theta (V_\infty, 0) = 2 \Theta (V,x)$. 
It is therefore obvious that
\[
\delta V_\infty = \sum_{i=1}^m v_i \haus^{n-1} \res \ell
\]
where each $v_i = v_i (x)$ is the unit vector contained in $\pi_i\setminus W_0$ and orthogonal to $\ell$: therefore for each $i$ we have $v_i =(- \cos \theta_i, -\sin \theta_i)$ for some $\theta_i\in [-\theta, \theta]$. 

Let $r_k\downarrow 0$ be a sequence such that the rescaled varifolds $V_{x, r_k}$ converge weakly to $V_\infty$. Then
$\delta V_{x,r}$ converges to $\delta V_\infty$ in the sense that $\delta V_{x,r_k} (\varphi) \to \delta V_\infty (\varphi)$ for any smooth
compactly supported vector field on $\mathbb R^{n+1}$. On the other hand for a.e. $x$ we have 
$\delta V_{x,r} \weaks w(x) \haus^{n-1} \res \ell$. This completes the proof since for a.e. $x$ we must have $w (x) = \sum_i^{m (x)} v_i (x)$.
\end{proof}

\begin{proof}[\bf \emph{Proof of Lemma \ref{l:simple_geom_obs}}]
We order the vectors so that $\theta_1\leq \theta_2 \leq \ldots \leq \theta_{2k+1}$. For each $i\leq k$, the sum $w_i$ of
the pair $v_i + v_{2k+2-i}$ is a positive multiple of 
\[
\left(\cos \textstyle{\frac{\theta_i+\theta_{2k+2-i}}{2}}, \sin \textstyle{\frac{\theta_i+\theta_{2k+2-i}}{2}}\right)\, .
\]
Since $\theta_i \leq \theta_{k+1} \leq \theta_{2k+2-i}$, it is easy to see that the vectors $w_i$ and $v_{k+1}$ form an angle strictly smaller than $\frac{\pi}{2}$. We therefore have $\langle w_i, v_{k+1}\rangle >0$ and we can estimate
\begin{align*}
| v_1 + \ldots + v_{2k+1}| &\geq \sum_{j=1}^{2k+1} \langle v_j, v_{k+1}\rangle = 1 + \sum_{i=1}^k \langle w_i, v_{k+1}\rangle >1 \, .
\end{align*}
\end{proof}

\section{Stability and compactness}\label{s:stability}

 Since the ground-breaking works of Schoen \cite{schoen}, Schoen-Simon-Yau \cite{s-s-y} and Schoen-Simon \cite{SS}, it is known that, roughly speaking, all the smoothness and compactness results which are valid for hypersurfaces (resp. integer rectifiable hypercurrents) which minimize the area are also valid (in the form of suitable \textsl{a priori} estimates) for stable hypersurfaces.

\subsection{Interior compactness and regularity} We recall here the fundamental compactness/regularity theorem of Schoen and Simon (cf. \cite{SS}) for stable minimal surfaces.

\begin{theorem}\label{t:SS}
Let $\{\Sigma^k\}$ be a sequence of stable minimal hypersurfaces in some open subset $U\subset \M\setminus \partial \M$ and assume that
\begin{itemize}
\item[(i)] each $\Sigma^k$ is smooth except for a closed set of vanishing $\haus^{n-2}$-measure;
\item[(ii)] $\Sigma^k$ has no boundary in $U$;
\item[(iii)] $\sup_k \haus^n (\Sigma^k) <\infty$.
\end{itemize}
Then a subsequence of $\{\Sigma^k\}$ (not relabeled) converges, in the sense of varifolds, to an integer rectifiable varifold $V$ such that
\begin{itemize}
\item[(a)] $V$ is, up to multiplicity, a stable minimal hypersurface $\Gamma$ with ${\rm dim}\, ({\rm Sing}\, (\Gamma))\leq n-7$;
\item[(b)] at any point $p\not \in {\rm Sing}\, (\Gamma)$ the convergence is smooth, namely there is a neighborhood $U'$ of $p$ such that, for $k$ large enough, $\Sigma^k\cap U'$ can be written as the union of $N$ distinct smooth graphs over (the normal bundle of ) $\Gamma \cap U'$, converging smoothly (where the number $N$ is uniformly controlled by virtue of (iii)). 
\end{itemize}
\end{theorem}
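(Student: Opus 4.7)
The plan is to follow the classical Schoen-Simon strategy, combining three pillars: Allard's varifold compactness, the Schoen-Simon a priori curvature estimates for stable minimal hypersurfaces, and Federer's dimension reduction. First, I would extract a weak subsequential varifold limit: by the uniform mass bound (iii), the stationarity of each $\Sigma^k$ (since they are minimal and have no boundary in $U$), and Allard's compactness theorem, one obtains a subsequence converging in the sense of varifolds to an integer rectifiable $V$ which is itself stationary. The hypothesis $\haus^{n-2}({\rm Sing}\,(\Sigma^k))=0$ is precisely what is needed in order to justify testing the stability inequality $\int |A|^2 \varphi^2 \leq \int |\nabla \varphi|^2$ with smooth cutoffs supported away from ${\rm Sing}\,(\Sigma^k)$ and then passing to the limit by monotone approximation, so each $\Sigma^k$ is genuinely stable in the $H^1$ sense.

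The core step is to establish the Schoen-Simon a priori curvature estimate: on any compact $K\subset\subset U$ there is a closed set $Z^k\subset \Sigma^k$ with $\dim Z^k\leq n-7$ and a constant $C=C(K,\sup_k \mass(\Sigma^k),\M)$ such that $|A_{\Sigma^k}|\leq C$ on $K\setminus Z^k$. The proof combines the stability inequality with the Simons identity $\Delta|A|^2 \geq -c|A|^4 + \tfrac{2}{n}|\nabla|A||^2$ to produce integral bounds of the form $\int |A|^{2+\epsilon}\varphi^{2+\epsilon} \leq C$, which are iterated via a Moser-type scheme up to $L^p$ for any $p<n$; a covering/tilt-excess argument then converts the integral bound into a pointwise one away from a small concentration set. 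For $n\leq 6$ this set is empty and one gets a global pointwise bound on $K$.

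With the curvature estimate in hand, I would define $\Gamma$ to be the set of points in $\spt V$ admitting a neighborhood with a uniform bound on $|A_{\Sigma^k}|$; on this open set the $\Sigma^k$ subconverge locally smoothly to a smooth embedded stable minimal hypersurface. In a neighborhood of any such point, integrality of $V$ combined with Allard's regularity upgrades any flat point of $V$ to a $C^{1,\alpha}$ sheet, and minimal surface elliptic bootstrapping gives $C^\infty$; the sequence $\Sigma^k$ then decomposes as finitely many smooth graphs over $\Gamma$ (with locally constant multiplicity), yielding conclusion (b). Stability of the limit follows from passing the stability inequality to the limit on the regular part, using the smooth convergence.

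Finally, to control $\dim({\rm Sing}\,(\Gamma))$, I would run Federer's dimension reduction: tangent cones at singular points are, by scale invariance of the hypotheses and the previous compactness, themselves stable stationary integer rectifiable cones with the same smoothness properties; any singular stratum of dimension strictly greater than $n-7$ would blow down, after iterated rescalings, to a stable minimal hypercone in $\RR^{k+1}$ with $k\leq 6$ that is singular only at the origin, which is excluded by the Simons-type Bernstein theorem for stable cones in low dimension. The \emph{main obstacle} in the entire program is the curvature estimate: the interplay between the stability inequality, the Simons identity, and the need to excise a $\haus^{n-2}$-null singular set via carefully chosen test functions is exactly where Schoen and Simon's original argument carries essentially all of the technical weight; everything else is either soft compactness or standard blow-up analysis.
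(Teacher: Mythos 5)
The paper does not prove Theorem \ref{t:SS}: it is stated as a black-box citation to Schoen and Simon \cite{SS}, so your proposal is a reconstruction of an external argument rather than a comparison with the paper's own proof. The architecture you lay out — Allard compactness for the weak limit, an a priori curvature estimate as the load-bearing step, smooth graphical decomposition on the regular part, and Federer dimension reduction against the nonexistence of low-dimensional singular stable cones — is the correct one, and the role you assign to the hypothesis $\haus^{n-2}(\sing(\Sigma^k))=0$ (capacity excision when testing the stability inequality) is accurate.

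Your description of the central curvature estimate, however, conflates two genuinely different arguments. Iterating Simons' inequality against the stability inequality to get $L^p$ bounds on $|A|$ and then a pointwise bound is the Schoen--Simon--Yau mechanism; the exponent range it yields is $p < 4 + \sqrt{8/n}$ (not ``any $p<n$''), and since one needs $p>n$ for the Sobolev/Morrey step, this route only produces pointwise curvature estimates when $n\leq 5$. Schoen and Simon's argument — the one that actually gives the theorem as stated, including the $n-7$ dimension bound for all $n$ — does not run an $L^p$ iteration on $|A|$ at all: it establishes a Caccioppoli-type inequality for the tilt excess of nearly-graphical pieces, combines it with the monotonicity formula and a blow-up/compactness argument to obtain excess decay, and closes the induction by dimension reduction against the absence of singular stable minimal hypercones in $\RR^{k+1}$ for $k\leq 6$. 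So: right overall structure and correct identification of where the difficulty lies, but the hard step is sketched in a form (SSY-style Moser iteration) that does not cover the generality the theorem claims.
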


In fact the Theorem of Schoen and Simon gives a more quantitative version of the smooth convergence, since for every point $p\not\in {\rm Sing}\, (\Gamma)$ the second fundamental form of $\Sigma^k$ at $p$ can be bounded, for $k$ large enough, by 
$C \max \{\dist (p, {\rm Sing}\, (\Gamma))^{-1}, \dist (p, \partial U)^{-1}\}$, where the constant $C$ is independent of $k$.

\subsection{Boundary version for free boundary surfaces} In \cite{gru-jost} the fundamental result of Schoen and Simon has been extended to the case of free boundary minimal surfaces, under a suitable convexity assumption in the case of $n=2$ in the Euclidean case. However, it can be readily checked that the arguments presented in \cite{gru-jost} to adapt the proof of Schoen and Simon in \cite{SS} to the free boundary case are independent both of the dimensional assumption $n=2$ and of the assumption that $\M$ is a convex subset of the Euclidean space. We state the resulting theorem below, where we need the following stronger stability condition, which we will call {\em stability for the free boundary problem}. For a more general result, where the
convexity assumption on $\partial \M$ is removed, see the recent work of Li and Zhou, \cite{LiZhou1}. 

\begin{definition}
Let $\M$ be a smooth $(n+1)$-dimensional Riemannian manifold and $U\subset \M$ an open set. A varifold $V\in \va_s^u (U)$ is said to be stable for the free boundary problem if $\delta^2 V (\chi) \geq 0$ for every $\chi\in \mathfrak{X}_c^t (U)$. 
\end{definition}

\begin{theorem}\label{t:SS-GY}
Let $\M$ be a smooth $(n+1)$-dimensional Riemannian manifold which satisfies Assumption \ref{a:(C)}. 
Let $\Sigma^k$ be a sequence of stable minimal hypersurfaces in some open subset $U\subset \M$ and assume that
\begin{itemize}
\item[(i)] each $\Sigma^k$ is smooth except for a closed set of vanishing $\haus^{n-2}$-measure;
\item[(ii)] $\partial \Sigma^k\cap U$ is contained in $\partial \M$ and $\Sigma^k$ meets $\partial \M$ orthogonally (thus, $\Sigma^k$ is stationary for the free boundary problem);
\item[(iii)] $\Sigma^k$ is stable for the free boundary problem;
\item[(iv)] $\sup_k \haus^n (\Sigma^k) <\infty$.
\end{itemize}
Then a subsequence of $\Sigma^k$ (not relabeled) converges, in the sense of varifolds, to an integer rectifiable varifold $V$ such that
\begin{itemize}
\item[(a)] $V$ is, up to multiplicity, a stable minimal hypersurface $\Gamma$ with ${\rm dim}\, ({\rm Sing}\, (\Gamma))\leq n-7$;
\item[(b)] at any point $p\not \in {\rm Sing}\, (\Gamma)$ the convergence is smooth;
\item[(c)] $\Gamma$ meets $\partial \M$ orthogonally, thus $V\in \va_s^u (U)$;
\item[(d)] $V$ is stable for the free boundary problem.
\end{itemize}
\end{theorem}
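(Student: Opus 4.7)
The plan is to combine varifold compactness with the monotonicity formula from Proposition \ref{p:GJ-monot}, the interior Schoen-Simon Theorem \ref{t:SS}, and a reflection argument at $\partial \M$ in the spirit of Gr\"uter-Jost. First, using the uniform area bound (iv), I would extract a subsequence of $\llbracket \Sigma^k\rrbracket$ converging in the sense of varifolds to some $V$. The orthogonality condition (ii) forces $\delta \llbracket \Sigma^k\rrbracket (\chi) = 0$ for every $\chi \in \mathfrak{X}^t_c(U)$: indeed, if $\chi$ is tangent to $\partial \M$ and $\Sigma^k$ meets $\partial \M$ orthogonally, the boundary contribution to the first variation vanishes. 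Passing to the limit gives $V\in \va_s^u(U)$. Rectifiability of $V$ follows from the free-boundary monotonicity in Proposition \ref{p:GJ-monot} and Corollary \ref{c:Gru-Jost-rect}, using the fact that the upper density is locally bounded everywhere in $U$ (including on $\partial\M$).

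For the regularity claim (a) and the smooth convergence claim (b) at interior points $p\in U\cap {\rm Int}\,(\M)$, I would apply Theorem \ref{t:SS} directly on a neighborhood of $p$ contained in ${\rm Int}\,(\M)$: hypotheses (i), (ii), (iii) of that theorem are satisfied because $\Sigma^k$ has no interior boundary and has the assumed mild singular set. This already gives $\dim({\rm Sing}\,(\Gamma))\leq n-7$ and smooth graphical convergence on ${\rm Int}\,(\M)\cap U$ away from the singular set, together with stability of $\Gamma$ against variations supported in ${\rm Int}\,(\M)$.

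The core new step is the boundary regularity (b), (c), (d), for which I would follow the Gr\"uter-Jost reflection trick. Near any boundary point $p\in \partial \M \cap U$ I would introduce Fermi coordinates for $\partial \M$: these give a diffeomorphism between a half-neighborhood $U'\subset \M$ of $p$ and a half-ball in $\mathbb R^{n+1}_+$, in which $\partial\M$ corresponds to $\{x_{n+1}=0\}$. In these coordinates the metric $g$ can be conformally modified (or one can work with the double of $\M$ equipped with the natural $C^{1,\alpha}$ reflected metric $\tilde g$) so that $\partial\M$ becomes totally geodesic for $\tilde g$. Because $\Sigma^k$ meets $\partial \M$ orthogonally with respect to $g$, and the normal directions to $\partial \M$ coincide for $g$ and $\tilde g$, the reflected hypersurfaces $\tilde \Sigma^k:=\Sigma^k\cup \rho(\Sigma^k)$ are stable minimal hypersurfaces without boundary in a neighborhood of $p$ in the doubled ambient manifold $\tilde U$. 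The stability of $\tilde\Sigma^k$ follows from the free-boundary stability assumption (iii): every vector field supported in $\tilde U$ decomposes into its symmetric and antisymmetric parts under reflection, and the antisymmetric parts vanish on $\partial \M$ so pair with vector fields in $\mathfrak{X}^t_c$ on $\Sigma^k$.

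At this point Theorem \ref{t:SS}, applied to $\tilde{\Sigma}^k$ in the doubled manifold, produces a limit varifold $\tilde V$ that is, up to multiplicity, a stable minimal hypersurface $\tilde \Gamma$ with $\dim({\rm Sing}\,(\tilde\Gamma))\leq n-7$, with smooth convergence off the singular set. Restricting $\tilde\Gamma$ back to $\M$ gives the limit $\Gamma$, smoothness across $\partial\M$ of $\tilde\Gamma$ yields orthogonality of $\Gamma$ at $\partial\M$ (because the reflection symmetry is preserved in the limit), and the stability under $\mathfrak{X}^t_c$ passes to the limit by taking symmetric extensions of test vector fields to $\tilde U$. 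The main obstacle, and the reason this is not a completely black-box consequence of Schoen-Simon, is verifying that the Schoen-Simon a priori estimates go through for the reflected metric $\tilde g$, which has only $C^{1,\alpha}$ (not $C^\infty$) regularity across $\partial\M$; this is precisely the technical content of \cite{gru-jost}, and one checks that the curvature estimates and the varifold compactness/tangent-cone analysis used by Schoen-Simon only require a $C^{1,1}$ (in fact $C^{0,1}$) ambient metric.
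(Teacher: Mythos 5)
The paper does not actually prove Theorem \ref{t:SS-GY}: it observes that the Gr\"uter--Jost adaptation of Schoen--Simon to the free boundary case \cite{gru-jost,gru-jost-1} can be carried out in general dimensions and for general Riemannian ambients, and simply states the resulting theorem. Your route --- interior Schoen--Simon plus reflection across $\partial\M$ --- is exactly the argument the paper is invoking, so the overall approach is the same. However, two steps in your proposal need repair.

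First, a conformal change $g\mapsto e^{2u}g$ making $\partial\M$ totally geodesic does not help, because minimality is not conformally invariant: after the change the $\Sigma^k$ would no longer be minimal hypersurfaces. Only the doubling of $\M$ with the reflected metric $\tilde g$ works. Moreover $\tilde g$ is in general only Lipschitz across $\partial\M$ (it is $C^{1,1}$ exactly when $\partial\M$ is already totally geodesic for $g$), which is precisely why the Schoen--Simon a priori estimates need to be checked to survive the low metric regularity, as you correctly flag at the end. Second, your stability argument for $\tilde\Sigma^k$ is wrong as stated: the reflection-antisymmetric part $\chi^a$ of a vector field $\chi$ does \emph{not} vanish on $\partial\M$ --- on the fixed locus, $\chi^a$ equals the component of $\chi$ normal to $\partial\M$, so $\chi^a|_{\M}$ is not in $\mathfrak{X}^t_c$ and hypothesis (iii) cannot be applied to it. The decomposition must instead be applied to the scalar normal component $\phi=\langle\chi,N\rangle$ along $\tilde\Sigma^k$ (with $N$ a unit normal of $\tilde\Sigma^k$), which is all that enters $\delta^2$: writing $\phi=\phi^s+\phi^a$, the symmetric part yields $\phi^s N$, which is tangent to $\partial\M$ (because $N\in T\partial\M$ by orthogonality) and hence lies in $\mathfrak{X}^t_c$; the antisymmetric scalar part $\phi^a$ vanishes on the fixed locus, so $\phi^a N\in\mathfrak{X}_c^0\subset\mathfrak{X}^t_c$. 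The cross-terms in $\delta^2$ cancel by symmetry, and hypothesis (iii) applied to both pieces then gives $\delta^2\tilde\Sigma^k(\chi)\geq 0$.
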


\subsection{Boundary version for the constrained case} We close this section by combining Theorem \ref{t:boundary_est} with the interior estimates of Schoen and Simon to get a compactness theorem for stable minimal hypersurfaces which have a fixed given boundary $\gamma$ and meet $\partial \M$ transversally in a suitable quantified way. 

\begin{theorem}\label{t:SS-bdry}
Let $\M$ be an $(n+1)$-dimensional smooth Riemannian manifold which satisfies Assumption \ref{a:(C)}, $\gamma \subset \partial \M$ a $C^{2,\alpha}$ submanifold of $\partial \M$, $U$ an open subset of $\M$ and $K\subset U$ a set which meets $\partial\M$ in $\gamma$ at an opening angle smaller than $\frac{\pi}{2}$. 
Let $\Sigma^k$ be a sequence of stable minimal hypersurfaces in $U\subset \M$ and assume that
\begin{itemize}
\item[(i)] each $\Sigma^k$ is smooth except for a closed set of vanishing $\haus^{n-2}$ measure and $\gamma \cap \sing (\Sigma) = \emptyset$;
\item[(ii)] $\partial \Sigma^k\cap U = \gamma \cap U$;
\item[(iii)] $\sup_k \haus^n (\Sigma^k) <\infty$;
\item[(iv)] $\Sigma^k\subset K$. 
\end{itemize}
Then a subsequence of $\Sigma^k$, not relabeled, converges, in the sense of varifolds, to an integer rectifiable varifold $V$ such that
\begin{itemize}
\item[(a)] $V$ is, up to multiplicity, a stable minimal hypersurface $\Gamma$ with ${\rm dim}\, ({\rm Sing}\, (\Gamma))\leq n-7$;
\item[(b)] at any point $p\not \in {\rm Sing}\, (\Gamma)$ the convergence is smooth;
\item[(c)] ${\rm Sing}\, (\Gamma)\cap \partial \M = \emptyset$ and $\partial \Gamma = \gamma$ (in particular, the multiplicity of any connected component of $\Gamma$ which intersects $\partial \M$ must be $1$). 
\end{itemize} 
\end{theorem}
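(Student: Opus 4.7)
The plan is to combine the interior Schoen--Simon compactness theorem (Theorem \ref{t:SS}) with White's boundary curvature estimate (Theorem \ref{t:boundary_est}), the former handling the interior of $U$ and the latter providing uniform $C^{2,\alpha}$ control in a neighborhood of $\gamma$. First I would use the uniform mass bound (iii) and Allard's compactness theorem to extract a subsequence (not relabeled) such that $\llbracket \Sigma^k \rrbracket$ converges as varifolds to some integer rectifiable $V$ of finite mass; integer rectifiability follows since each $\Sigma^k$ is smooth with multiplicity one away from an $\haus^{n-2}$-null set and the limiting first variation is controlled by $\haus^{n-1}\res\gamma$. Restricting to the open set $U \setminus \partial \M$, each $\Sigma^k$ satisfies all the hypotheses of Theorem \ref{t:SS}: it is stable and minimal, smooth up to a closed set of vanishing $\haus^{n-2}$-measure, and has no boundary there because by (ii) $\partial \Sigma^k \cap U \subset \partial \M$. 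Theorem \ref{t:SS} then yields conclusions (a) and (b) on $U \setminus \partial \M$: the limit $V$ is, up to multiplicity, a stable minimal hypersurface $\Gamma$ with $\dim \sing(\Gamma) \leq n-7$, and the convergence is smooth away from $\sing(\Gamma)$.

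Next I would address the behavior near $\gamma$. Fix $x_0 \in \gamma$ and a small $r_0$ so that $B_{2r_0}(x_0) \subset U$. Because $\Sigma^k \subset K$ and $K$ meets $\partial \M$ in $\gamma$ at opening angle bounded by some $\theta < \pi/2$, each $\Sigma^k$ is contained in a closed set meeting $\partial \M$ in $\gamma$ with opening angle $\leq \theta$; (i) ensures $\gamma \cap \sing(\Sigma^k) = \emptyset$; by (ii) the boundary of $\Sigma^k$ in $B_{2r_0}(x_0)$ lies in $\partial B_{2r_0}(x_0) \cup \partial \M$ and equals $\gamma$ on $\partial \M$; and by (iii) and (iv) we have $\haus^n(\Sigma^k) \leq M r_0^n$ with $M := r_0^{-n} \sup_k \haus^n(\Sigma^k)$ independent of $k$. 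Thus all the hypotheses (CE1) of Theorem \ref{t:boundary_est} hold uniformly in $k$, yielding constants $C = C(M, \M, \gamma, n, \theta)$ and $\delta = \delta(M, \M, \gamma, n, \theta)$ such that $|A_{\Sigma^k}| \leq C r_0^{-1}$ in $B_{\delta r_0}(x_0)$ and, moreover, $\Sigma^k \cap B_{\delta r_0}(x_0)$ is a single connected $C^{2,\alpha}$ piece with boundary $\gamma \cap B_{\delta r_0}(x_0)$.

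With uniform $C^{2}$ bounds near $\gamma$ and the prescribed $C^{2,\alpha}$ boundary datum, an Arzel\`a--Ascoli argument combined with Schauder estimates for the minimal surface equation (with Dirichlet datum $\gamma$) upgrades the varifold convergence to $C^{2,\beta}$-convergence in a neighborhood of each $x_0 \in \gamma$, for any $\beta < \alpha$. The limit there is a single smooth, multiplicity one, $C^{2,\alpha}$ minimal hypersurface meeting $\partial \M$ along $\gamma$; by uniqueness of the varifold limit it coincides with $\Gamma$ in a tubular neighborhood of $\gamma$. This immediately gives $\partial \Gamma = \gamma$ and $\sing(\Gamma) \cap \partial \M = \emptyset$, establishing (c) for points near $\gamma$. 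For multiplicity: any connected component $\Gamma_j$ of $\Gamma$ that intersects $\partial \M$ must meet it exactly along a portion of $\gamma$, where it has multiplicity one by the previous step; since the multiplicity function is locally constant on the regular part of $\Gamma$ and $\Gamma_j$ is connected with regular set connected modulo a set of dimension $\leq n-7$, the multiplicity on all of $\Gamma_j$ is one, completing (c).

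The main obstacle is ensuring that White's estimate applies with uniform constants: once the opening-angle hypothesis is transferred from $K$ to each $\Sigma^k$ and the mass bound is localized via the global bound (iii), the theorem provides the needed $k$-independent curvature estimate, after which the remainder of the argument follows the well-trodden pattern of combining curvature bounds with varifold convergence and elliptic regularity. A secondary subtlety is the last step extending multiplicity one from the boundary to the whole component, which relies on $\dim \sing(\Gamma) \leq n-7$ to keep the regular part of each connected component path-connected.
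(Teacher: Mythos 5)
Your proposal is correct and follows essentially the same route as the paper: extract a varifold limit, obtain interior compactness/regularity from Theorem~\ref{t:SS} in $U\setminus\partial\M$, invoke White's boundary curvature estimate (Theorem~\ref{t:boundary_est}) to get uniform $C^2$ control and single-sheetedness near $\gamma$, and then pass to smooth convergence via Schauder. The only cosmetic difference is in how integer rectifiability of $V$ is obtained: you apply Allard's compactness theorem for integral varifolds globally in $U$ (which is legitimate, since $\|\delta\llbracket\Sigma^k\rrbracket\|$ is uniformly bounded by the minimality of the $\Sigma^k$ and the fixed boundary term $\haus^{n-1}\res\gamma$), whereas the paper applies it only in ${\rm Int}\,(U)$ and then rules out mass on $\partial\M$ via the maximum principle (Proposition~\ref{p:White-max}) together with Lemma~\ref{c:gamma_meas_0}; both routes lead to the same conclusion, and in fact hypothesis (iv) already forces $\supp V\cap\partial\M\subset\gamma$. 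Your closing argument that constancy of the multiplicity on each connected component of the regular set (together with ${\rm dim}\,\sing(\Gamma)\leq n-7$) propagates multiplicity one from the boundary to the whole component is the right way to justify the parenthetical in (c), which the paper leaves implicit.
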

\begin{proof}
First of all, after extraction of a subsequence we can assume that $\Sigma^k$ converges to a varifold $V$. Observe that $V$ is stationary in ${\rm Int}\, (U)$ and thus it is integer rectifiable in there, by Allard's compactness theorem. Note also that each $\Sigma^k$ belongs to $\va_s^c (U, \gamma)$ and thus, by continuity of the first variations, $V$ belongs as well to $\va_s^c (U, \gamma)$. Thus, by the maximum principle of Proposition \ref{p:White-max} we conclude that $\|V\| (\partial \M) = \|V\| (\gamma)$. In particular, { as argued for Lemma \ref{c:gamma_meas_0}} $\|V\| (\partial \M)= 0$ and that $V$ is integer rectifiable in $U$.

Next observe that in $U\setminus \partial \M$ we can apply the Schoen-Simon compactness theorem: thus, except
for a set $K'$ in $U\setminus \partial \M$, the smooth convergence holds at every point $x_0\in U\setminus (\gamma \cup K')$
and ${\rm dim}\, (K') \leq n-7$. 
As for the points $x \in \gamma$, consider first an open subset $U'$ which has positive distance from $\partial U \setminus \partial \M$. By the boundary curvature estimates of Theorem \ref{t:boundary_est}, there is an $r_0>0$ and a constant $C_0$, both independent of $k$, such that
$|A_{\Sigma_k}|\leq C_0$ in any ball $B_{r_0} (x)$ with center $x\in \gamma \cap U'$. This implies that, in a fixed neighborhood $U''$ of $\gamma$, $\Sigma^k$ consists of a single smooth component which is a graph at a fixed scale, independent of $k$. The estimate on the curvature in Theorem \ref{t:boundary_est} gives then the convergence of these graphs in $C^{1,\alpha}$ for every $\alpha <1$. Since the limit turns out to be (locally) graphical and a solution of an elliptic PDE, classical Schauder estimates imply its smoothness and the smooth convergence. 
\end{proof}

\subsection{Varying the ambient manifolds}\label{s:changing_manifolds} In all the situations above, we can allow also for the manifolds $\M$ to vary in a controlled way, namely to change as $\M_k$ along the sequence. One version which is particularly useful is when the 
$\M_k$ are embedded in a given, fixed, Euclidean space and they are converging smoothly to a $\M$. All the compactness statements above still hold in this case and in particular the corresponding obvious modifications (left to the reader) will be used at one occasion in the very simple situation where the $\M_k$ are rescalings of the same $\M$ at a given point, thus converging to the tangent space at that point, cf. Section \ref{s:vartan} and Section \ref{s:remove} below.

\section{Wedge property}\label{s:wedge}
In this section we use the maximum principle to prove that, given a smooth $\gamma$ any stationary varifold $V\in 
\va^c_s (U, \gamma)$ meets $\gamma$ ``transversally'' in a quantified way, namely it lies in suitable wedges that have a controlled angle.  This property is necessary to apply to the compactness Theorem \ref{t:SS-bdry}. The precise formulation is the following

\begin{lemma}\label{wedge property}
Let $\M$ be a smooth $(n+1)$-dimensional submanifold satisfying Assumption \ref{a:(C)}, $\gamma$ be a $C^2$ $(n-1)$-dimensional submanifold of $\M$ and $U'\subset\subset U$ two open subsets of $\M$. Then there is a constant $\theta_0 (U, U', \gamma)< \frac{\pi}{2}$ and a compact set $K\subset \overline{U'}$ with the following properties:
\begin{itemize}
\item[(a)] $K$ meets $\gamma$ at an opening angle at most $\theta_0$;
\item[(b)] $\supp (V)\cap U' \subset K$ for every varifold $V\in \va_s^c (U, \gamma)$.
\end{itemize}
\end{lemma}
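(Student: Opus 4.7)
The strategy is to build smooth mean-convex barrier hypersurfaces near each point of $\gamma$ bounding a wedge of opening angle $\theta_0<\pi/2$, and then confine $\supp V$ inside this wedge by a sliding/maximum-principle argument based on White's maximum principle \cite{white-max}.

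\emph{Local barriers.} I would fix $x_0\in\gamma\cap\overline{U'}$ and introduce local coordinates $(y_1,\hat y,y_{n+1})$ with $\hat y=(y_2,\ldots,y_n)$, centered at $x_0$, so that $\partial\M=\{y_1=\psi(\hat y,y_{n+1})\}$, $\nabla\psi(0)=0$, $\mathrm{Hess}\,\psi(0)\succeq\xi I$ (by Assumption \ref{a:(C)}), $\M=\{y_1\geq\psi\}$, and $\gamma=\{y_1=\psi_0(\hat y),\,y_{n+1}=0\}$, where $\psi_0(\hat y):=\psi(\hat y,0)$; here $\nu=e_1$ and $\tau=e_{n+1}$. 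For a suitable $\theta\in(0,\pi/2)$, I would consider the two smooth graph hypersurfaces
\[
H_\theta^\pm=\bigl\{y_{n+1}=\pm\tan\theta\cdot(y_1-\psi_0(\hat y))\bigr\},
\]
which meet along $\gamma$ and bound the wedge-shaped region $W_\theta:=\{|y_{n+1}|<\tan\theta\cdot(y_1-\psi_0)\}\cap\M$. A direct graph-formula calculation, exploiting the positivity $\Delta_{\hat y}\psi_0(0)\geq(n-1)\xi>0$, shows that in a sufficiently small ball $B_r(x_0)$ (with $r$ depending only on $\theta$, $\xi$ and the $C^2$-norm of $\psi$) the surface $H_\theta^\pm$ is strictly mean-convex with mean curvature vector pointing into $W_\theta$. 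Moreover, the intersection equation $H_\theta^\pm\cap\partial\M$ reduces, via the expansion $\psi-\psi_0=\tfrac12\psi_{n+1,n+1}(0)y_{n+1}^2+O(|y|^3)$ with $\psi_{n+1,n+1}(0)\geq\xi$, to $y_{n+1}=0$; thus $H_\theta^\pm\cap\partial\M\cap B_r(x_0)=\gamma\cap B_r(x_0)$.

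\emph{Sliding and maximum principle.} I would set $f(x):=y_{n+1}(x)-\tan\theta(y_1(x)-\psi_0(\hat y(x)))$, so that $\{f=0\}=H_\theta^+$, $f\equiv 0$ on $\gamma$, and $W_\theta\cap B_r$ equals the sublevel $\{f<0\}$. My aim is to show $f\leq 0$ on $\supp V\cap B_{r/2}(x_0)$. Letting $M=\sup f$ over $\supp V\cap\overline{B_{r/2}(x_0)}$, if $M>0$ is attained at some point $p_0$, I distinguish cases: if $p_0\in\partial\M$, Proposition \ref{p:White-max} forces $p_0\in\gamma$ and $f(p_0)=0$, a contradiction; if $p_0$ lies in the interior of $\M\cap B_{r/2}$, the level set $\{f=M\}$ is a translate of $H_\theta^+$ and hence strictly mean-convex with mean curvature vector pointing into $\{f<M\}$, so $\supp V\subset\{f\leq M\}$ touches this barrier at $p_0$ from the mean-convex side, and White's maximum principle \cite{white-max} forces $\supp V\subset\{f=M\}$ locally, contradicting the nonzero mean curvature of the level set. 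The residual possibility $p_0\in\partial B_{r/2}$ is resolved by re-running the argument on a slightly larger concentric ball $B_{r'}$ with $r/2<r'<r$ in which $p_0$ becomes an interior point; iterating if necessary across the relatively open chart and using the finite ambient mass, one reaches an interior attainment. Running the mirror argument with $H_\theta^-$ yields $\supp V\cap B_{r/2}(x_0)\subset\overline{W_\theta}$, a wedge of opening angle $\theta$ at $x_0$.

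\emph{Globalization.} By the uniformity of the bound in Assumption \ref{a:(C)} and compactness of $\gamma\cap\overline{U'}$, I would fix a single $\theta_0\in(0,\pi/2)$ and a uniform radius $r>0$ so that the above applies at every $x_0\in\gamma\cap\overline{U'}$, and cover $\gamma\cap\overline{U'}$ by finitely many balls $B_{r/2}(x_i)$. On the compact set $\overline{U'}\setminus\bigcup B_{r/4}(x_i)$, the inclusion $\supp V\cap\partial\M\subset\gamma$ from Proposition \ref{p:White-max} together with compactness produces some $\delta>0$ with $\mathrm{dist}(\supp V\cap(\overline{U'}\setminus\bigcup B_{r/4}(x_i)),\partial\M)\geq\delta$. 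I would then define $K$ as the union of the local wedges (intersected with $\overline{U'}$) and a closed $(\delta/2)$-tubular thickening of $\supp V\cap\overline{U'}$ outside the tubular neighborhood of $\gamma$; this yields a compact subset of $\overline{U'}$ containing $\supp V\cap U'$, with $K\cap\partial\M=\gamma\cap\overline{U'}$ and meeting $\gamma$ at opening angle at most $\theta_0$.

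\emph{Main obstacle.} The principal technical difficulty lies in the sliding step: ensuring that the supremum of $f$ on $\supp V\cap\overline{B_{r/2}(x_0)}$ is realized at a point where the strong maximum principle is applicable---namely, at an interior point of $\M\cap B_{r/2}(x_0)$. Handling the two barriers $H_\theta^+$ and $H_\theta^-$ separately circumvents the corner along $\gamma$, while enlarging the working ball (or, alternatively, working with a small cutoff parameter $\epsilon>0$ and letting $\epsilon\to 0$) resolves the boundary-attainment case.
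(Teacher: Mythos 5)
Your overall strategy---trap $\supp (V)$ near $\gamma$ between barrier hypersurfaces by combining the convexity of $\partial \M$ with White's maximum principle and a sliding argument, then globalize over $\gamma\cap \overline{U'}$ by compactness---is the same as the paper's. The implementations differ in two ways. First, the paper does not slide translates of a tilted mean-convex graph: it foliates a region $R$ by the graphs of $f_\lambda=(1-\lambda)f+\lambda\, y_{n+1}/\tan\theta$, each of which bounds a \emph{uniformly convex} domain $\M_\lambda$ for small $\lambda$, so that Proposition \ref{p:White-max} applies verbatim to every leaf. Your barriers $H_\theta^\pm$ and their vertical translates $\{f=M\}$ are only mean-convex (with mean curvature vector pointing into the wedge, which is indeed the correct sign for the touching argument), so you need the general mean-convex form of White's theorem from \cite{white-max} rather than the convex-boundary statement recorded in the paper; this is available but should be said, and the mean-convexity must be verified in the metric $g$, uniformly over the translates, not just in the coordinate Euclidean metric. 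Second, you straighten $\gamma$ to $\{y_{n+1}=0\}$ inside the graph coordinates, which is legitimate for a $C^2$ submanifold but interferes with the normal-coordinate normalization used to transfer Assumption \ref{a:(C)} into the Euclidean Hessian bound $\mathrm{Hess}\,\psi(0)\succeq \xi I$; the paper keeps normal coordinates and absorbs the curvature of $\gamma$ through the osculating-ball argument of Lemma \ref{l:c_hull}, producing a wedge with flat tip $T_0\gamma$ that merely \emph{contains} $\gamma$.

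The step that does not work as written is your resolution of the case $p_0\in\partial B_{r/2}(x_0)$. Passing to a larger concentric ball $B_{r'}$ does not force an interior maximum: the supremum of $f$ over $\supp(V)\cap \overline{B_{r'}}$ is at least $M$ and may again be attained on $\partial B_{r'}$, and iterating only pushes the touching point out to the boundary of the chart, where neither the interior barrier principle nor Proposition \ref{p:White-max} applies. You must restructure the sliding so that the first contact of the barrier family with $\supp(V)$ is forced to occur either on $\partial\M$ (where Proposition \ref{p:White-max} disposes of it) or at an interior point of the chart; this is exactly what the paper's choice of foliation is designed for, since its leaves interpolate between $\partial\M$ itself and a fixed tilted hyperplane and the swept region $R$ is pinched against $\partial\M$, where $\supp(V)$ is already confined to $\gamma$. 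A second, more minor, defect: the compact set $K$ in the statement must work simultaneously for \emph{every} $V\in\va_s^c(U,\gamma)$, whereas your construction uses a $\delta$ defined as the distance of $\supp(V)$ to $\partial\M$ and a tubular thickening of $\supp(V)$, both of which depend on $V$. Since the wedges themselves depend only on $\theta_0$, $r$ and the geometry of $(\M,\gamma)$, you should assemble $K$ from these $V$-independent objects alone, as the paper does.
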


Note that in the special case of $U=\M$, we are allowed to choose $U'=\M$ and thus we conclude a uniform transversality property
for any varifold in $\va_s^c (\M, \gamma)$, in particular for the varifold $V$ of Proposition \ref{a.m. in annuli}. On the other hand we do
need the local version above for several considerations leading to the regularity of $V$ at the boundary. When $\M$ is a subset of the Euclidean case, the lemma above follows easily from the following two considerations:
\begin{itemize}
\item[(i)] By the classical maximum principle, $\supp (V)$ is contained in the compact subset $K$ which is the convex hull of
$(\gamma \cap \overline{U}) \cup (\partial U\setminus \partial \M)$, see for instance the reference \cite{simon-gmt};
\item[(ii)] Such convex hull $K$ meets $\gamma$ at an opening angle which is strictly less than $\frac{\pi}{2}$ at every point $x\in \gamma \cap U$ (here the $C^2$ regularity of $\gamma$ is crucially used, cf. the elementary Lemma \ref{l:c_hull} below). 
\end{itemize}
The uniform (upper) bound on the angle is then obtained in $U'\subset\subset U$ simply by compactness. 

Unfortunately, although the extension of (i) above to general Riemannian manifolds is folklore among the experts, we do not know of
a reference that we could invoke for Lemma \ref{wedge property} without some additional technical work. This essentially amounts to reducing to the Euclidean situation by a suitable choice of coordinates.

\subsection{Wedge property and convex hull} We start by recording the following elementary fact, which in particular proves claim (ii) above. 

\begin{lemma}\label{l:c_hull}
Consider a bounded, open, smooth, uniformly convex set $\M\subset \mathbb R^{n+1}$ and a $C^2$ $(n-1)$-dimensional connected submanifold $\gamma\subset B_r (0) \cap \partial \M$ passing through the origin. 
Then there is a wedge $W$ containing $\gamma$ such that:
\begin{itemize}
\item[(a)] The axis of $W$ is orthogonal to $T_0 \partial \M$;
\item[(b)] The tip of $W$ is $T_0 \gamma$;
\item[(c)] The opening angle is bounded away from $\frac{\pi}{2}$ in terms of the principal curvatures of $\partial \M$ and of those of $\gamma$.
\end{itemize}
\end{lemma}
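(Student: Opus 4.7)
The strategy is to build the wedge $W$ explicitly in coordinates adapted to the point $0$. I will choose an orthonormal frame so that $e_1$ is the inward unit normal to $\partial\M$ at $0$ and $T_0\gamma=\mathrm{span}(e_2,\ldots,e_n)$, so that $T_0\partial\M=\{x_1=0\}$, the axis candidate is the positive $x_1$-axis and the tip candidate is $\{x_1=x_{n+1}=0\}$. The target wedge is then $W_\theta=\{|x_{n+1}|\le x_1\tan\theta\}$, and it suffices to produce a single $\theta<\pi/2$, controlled by the principal curvatures of $\partial\M$ and $\gamma$, such that $|y_{n+1}|/y_1\le \tan\theta$ for every $y\in\gamma$.

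Both numerator and denominator will be compared with the same quadratic quantity $|\bar x|^2$, where $\bar x=(x_2,\ldots,x_n)$ will parametrize $\gamma$ through a double graph representation. First I write $\partial\M$ near $0$ as the graph $x_1=f(x')$ with $x'=(x_2,\ldots,x_{n+1})$, $f(0)=0$, $Df(0)=0$, and eigenvalues of $D^2 f(0)$ equal to the principal curvatures of $\partial\M$ at $0$; uniform convexity yields $D^2 f(0)\ge \kappa_M I$ for some $\kappa_M>0$, and continuity of $D^2 f$ gives a radius $r_1>0$ (depending on the $C^2$ data of $\partial\M$) on which $D^2 f\ge (\kappa_M/2)I$. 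Integrating twice from $0$, this produces
\[
f(x')\ge \frac{\kappa_M}{4}|x'|^2\qquad\text{whenever }|x'|\le r_1.
\]
Inside the same chart, $\gamma$ is a codimension-one $C^2$ submanifold of $\partial\M$ passing through $0$ with $T_0\gamma=\{x_1=x_{n+1}=0\}$, so after possibly shrinking $r_1$ it is the graph $x_{n+1}=g(\bar x)$ with $g(0)=0$, $Dg(0)=0$ and $\|D^2g\|_\infty\le K_\gamma$; the constant $K_\gamma$ is controlled by the second fundamental form of $\gamma$ inside $\partial\M$, which in turn is controlled by the principal curvatures of $\gamma$ (and of $\partial\M$). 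Taylor's theorem then yields $|g(\bar x)|\le (K_\gamma/2)|\bar x|^2$.

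Combining the two bounds, for a point $y=(f(\bar x,g(\bar x)),\,\bar x,\,g(\bar x))\in\gamma$ sufficiently close to the origin,
\[
\frac{|y_{n+1}|}{y_1}=\frac{|g(\bar x)|}{f(\bar x,g(\bar x))}\le \frac{(K_\gamma/2)|\bar x|^2}{(\kappa_M/4)|\bar x|^2}=\frac{2K_\gamma}{\kappa_M},
\]
so $\theta:=\arctan(2K_\gamma/\kappa_M)<\pi/2$ settles the lemma in a fixed neighborhood of $0$.

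The main obstacle I foresee is globalizing this estimate from the single Monge chart to all of $\gamma\subset B_r(0)\cap\partial\M$. I plan to address it in one of two equivalent ways: either I require $r$ to be small enough that the quadratic Taylor bounds above remain valid on all of $\gamma$—in which case $r$ is an additional parameter controlled by the $C^2$ data of $\partial\M$ and $\gamma$—or, alternatively, on the compact piece $\gamma\setminus B_\rho(0)$ I invoke strict convexity of $\M$ (which forces $y_1>0$) together with the fact that $\gamma\setminus B_\rho(0)$ is bounded away from $T_0\gamma\cap T_0\partial\M$ to obtain a crude compactness bound $|y_{n+1}|/y_1\le C'$, and then take $\theta_0:=\max\{\arctan(2K_\gamma/\kappa_M),\arctan C'\}<\pi/2$, which still depends only on the declared curvature quantities (and on the radius $r$).
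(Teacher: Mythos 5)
Your proposal is correct but takes a genuinely different route from the paper. The paper's argument is global: it considers $\M_\theta := \M \cap \{x_{n+1} > x_1\tan\theta\}$, notes that the smallest ball $B_\rho((0,\ldots,0,\rho))$ tangent to $T_0\partial\M$ at $0$ and containing $\M_\theta$ has radius $\rho(\theta) \to 0$ as $\theta\uparrow\pi/2$ by uniform convexity, and then appeals to the osculating-ball principle: once $\rho(\theta) < \|A_\gamma\|_\infty^{-1}$, that ball is an osculating ball for $\gamma$ at $0$ and cannot contain any point of $\gamma$, so $\gamma$ misses $\M_\theta$ entirely; reflecting across $\{x_{n+1}=0\}$ yields the wedge. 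This treats all of $\gamma\subset B_r(0)\cap\partial\M$ in one stroke, with no near/far case split. Your argument instead is local and computational: you represent $\partial\M$ and $\gamma$ as nested Monge graphs and compare the two quadratic Taylor bounds $f(x')\ge(\kappa_M/4)|x'|^2$ and $|g(\bar x)|\le(K_\gamma/2)|\bar x|^2$. The price is exactly the globalization issue you flag: those Taylor bounds are valid only on a small ball $B_{r_1}(0)$, and for the rest of $\gamma$ you must supplement with a separate estimate. Of your two remedies, only the second (the compactness bound on $\gamma\setminus B_\rho(0)$, using strict convexity to bound $y_1$ from below there) proves the lemma as stated; the first, shrinking $r$, would strengthen the hypothesis. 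Both proofs ultimately produce a constant controlled by the same data---the convexity modulus of $\partial\M$, the $C^2$ norm of $\gamma$ over all of $B_r$, and $r$---so they give the same quantitative conclusion. The osculating-ball argument buys uniformity and a single clean step; your Monge-chart argument buys elementarity at the cost of one extra case.
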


\begin{proof}
For simplicity fix coordinates so that $T_0 \gamma = \{x_1=x_{n+1} = 0\}$, $T_0 \partial \M = \{x_1=0\}$ and $\M\setminus \{0\}$ is lying in 
$\{x_1 > 0\}$. For every $\theta< \frac{\pi}{2}$ let $\M_\theta$ be the portion of $\M$ lying in $\{x_{n+1} > x_1 \tan \theta\}$, and consider $r>0$ such that the open ball
\[
B_r ((0,0, \dots, 0, r))
\]
contains $\M_\theta$. Let $\rho (\theta)$ be the smallest such radius. $\rho(\theta)$ is a non-increasing function of $\theta$ and by the uniform convexity of $\M$, $\rho (\theta)\to 0$ as 
$\theta \uparrow \frac{\pi}{2}$. On the other hand we know that if $\rho (\theta) < \|A_\gamma \|_{\infty}^{-1}$, then $B_\rho ((0,0, \ldots, 0, \rho))$ is an osculating ball for $\gamma$ at $0$ and cannot contain any point of $\gamma$. This shows that for all $\theta$ sufficiently close to $\frac{\pi}{2}$, $\gamma$ is contained in $\{x_{n+1}\leq x_1 \tan \theta\}$. By a simple reflection argument we obtain the same property with $\{- x_{n+1} \leq x_1 \tan \theta\}$, which completes the proof of the lemma. 
\end{proof}

\subsection{Proof of Lemma \ref{wedge property}} First of all, we observe that by a simple covering argument it suffices to show the lemma in a sufficiently small neighborhood $U$ of any point $p\in \gamma$, since we already know by the maximum principle in Proposition \ref{p:White-max} that $\supp (V)\cap \partial M \subset \gamma$. 

Recall that we can assume that $\M$ is a subset of a closed Riemannian manifold $\tilde{\M}$, cf. Remark \ref{extr vs. intr}.
Let $p \in \gamma$, and $\tilde U$ a normal neighborhood of $p$ in $\tilde \M$. We then consider normal coordinates on $\tilde\M$ centered at $p$, given by the chart $\varphi:=E \circ \expo_p^{-1} : U \to \RR^{n+1}$, where the isomorphism $E:T_p\tilde \M \to \RR^{n+1}$ is chosen so that $E(T_p\partial\M)=\{x \in \RR^{n+1} : \, x_1=0\}$, and $E(T_p\gamma) = \{x \in \RR^{n+1}: \,x_1=x_{n+1}=0\}$. 

Now, if we let $A$ denote the second fundamental form of $\partial \M$ in $\M$ with respect to the unit normal $\nu$ pointing inside $\M$, $B$ the second fundamental form of $\varphi(\partial\M)$ in $\RR^{n+1}$ with respect to the unit normal $n$ pointing inside $\varphi(\M)$, and $\nabla$, $\bar{\nabla}$ the ambient Riemannian and Euclidean connection respectively, we immediately see that
\begin{equation*}
A(X,Y)\big|_{p} = - g( \bar{\nabla}_{X}\nu,Y)\big|_{p} = g(\nu,\bar{\nabla}_XY)\big|_{p} = \langle n,\nabla_XY\rangle\big|_{0} = -\langle\nabla_Xn,Y\rangle\big|_{0} = B(X,Y)\big|_{0},
\end{equation*}
since $\nu(p)=n(0)$, $g(.,.)\big|_{p}=\langle.,.\rangle$ and $\nabla\big|_{p}=\bar{\nabla}\big|_{0}$ by the properties of the exponential map.
Hence, it follows from Assumption \ref{a:(C)} that  $B \succeq \xi \, \text{Id}$ at $0$. Thus, if we represent $\varphi(\partial\M)$ as a graph of a function $f$ over its tangent plane $\{x \in \RR^{n+1} |\, x_1=0\}$ at $0$, the Hessian of $f$ is equal to $B$ at $0$, and hence there are some Cartesian coordinates $(y_2,\ldots,y_{n+1})$ on this plane such that $f$ has the form
\begin{equation}\label{paraboloid}
f(y_2,\ldots,y_{n+1}) =\frac{1}{2}(\kappa_2y_2^2+\ldots+\kappa_{n+1}y_{n+1}^2) + O(|y|^3),
\end{equation}
where $\kappa_2,\ldots,\kappa_{n+1} > \xi>0$ are principal curvatures (w.r.t. inward pointing normal at $0$).

\medskip

In particular we can assume that $U$ is chosen so small that $f$ is uniformly convex in the Euclidean sense, namely that $D^2 f > 0$ everywhere on $\varphi (\tilde U)$. By abuse of notation we keep using $\tilde U$ for $\varphi (\tilde U)$, $\M$ for $\varphi (\M)$ and thus $V$ for the varifold $\varphi_\sharp V$.
Since we can now regard $\M$ as a convex subset of the euclidean space, we can apply Lemma \ref{l:c_hull} and conclude that
$\gamma$ is contained in a wedge $W$ of the form $\{|x_{n+1}|\leq \tan \theta x_1\}$. However we cannot apply the maximum principle to conclude that $\supp (V) \subset W$ because $V$ is not {\em stationary in the euclidean metric}. Our aim is however to show that, if we enlarge slightly $\theta$, but still keep it smaller than $\frac{\pi}{2}$, then $\supp (V) \subset W$. The resulting $\theta$ will depend on the manifold $\M$, the submanifold $\gamma$ and the size of $\tilde U$, but not on the point $p$. Thus this argument completes the proof, since the set $K$ can be taken to be, in a neighborhood of $\gamma\cap U'$, the union of the corresponding wedges for $p$
(intersected with the corresponding neighborhoods $\tilde{U}$) as $p$ varies in $\gamma \cap U'$. 

\medskip

Recall that, in our notation, $\M$ is in fact the set $\{y_1 \geq f (y_2, \ldots , y_{n+1})\}$. 
For each $\lambda\geq 0$ consider now the function 
\[
f_{\lambda} (y_2, \ldots, y_{n+1}) = (1-\lambda) f (y_2, \ldots, y_{n+1}) + \lambda \frac{y_{n+1}}{\tan \theta}\, .
\]
For $\lambda \downarrow 0$, the function $f_{\lambda}$ converges in $C^2$ to the function $f$. Thus the set
$\M_{\lambda} = \{y_1 \geq f_{\lambda} (y_2, \ldots , y_{n+1})\}$ is uniformly convex in the Riemannian manifold $\tilde \M$
as soon as $\lambda \leq \varepsilon$. 

Observe next that all the graphs of all the functions $f_\lambda$ intersect in an $n-1$-dimensional submanifold, which is indeed
the intersection of the graphs of $f_1$ and $f_0 = f$. Consider now the region 
\[
R = \{ f_0 (y_2, \ldots , y_{n+1}) \leq  y_1 \leq f_\varepsilon (y_2, \ldots, y_{n+1})\}\, ,
\]
cf. Fig. \ref{f:fol}. Since the graph of $f_0$ is in fact $\partial \M$, we know from Proposition \ref{p:White-max} that $\supp (V) \cap \partial \M \cap R \subset \gamma$ and from the choice of the wedge $W$ we thus know that $\supp (V) \cap \partial \M \cap R = \{0\}$. Assume now by contradiction that $R$ contains another point $p\in \supp (V)$. Then this point does not belong to $\gamma$. On the other hand there must be a minimum $\delta$ such that the graph of $f_\delta$ contains this point. But then, by the fact that $\M_\delta$ is uniformly convex in $\tilde{\M}$, this would be a contradiction to the maximum principle of Proposition \ref{p:White-max}. 

We thus conclude that the region $R$ intersects the support of $V$ only in the origin. On the other hand recall that $f_\delta$ is convex also in the Euclidean sense. Thus its graph lies above its tangent at $0$, which is given by 
$\{y_{n+1} = y_1 \delta^{-1} \tan \theta\}$. This implies that the support of $V$ intersected with $\tilde{U}$ is in fact contained in 
\[
\{y_{n+1}\leq y_1 \delta^{-1} \tan \theta\}\, .
\]
Symmetrizing the argument we find the new desired wedge in which the support of our varifold is contained. \qed

\begin{figure}[htbp]
\begin{center}
\input{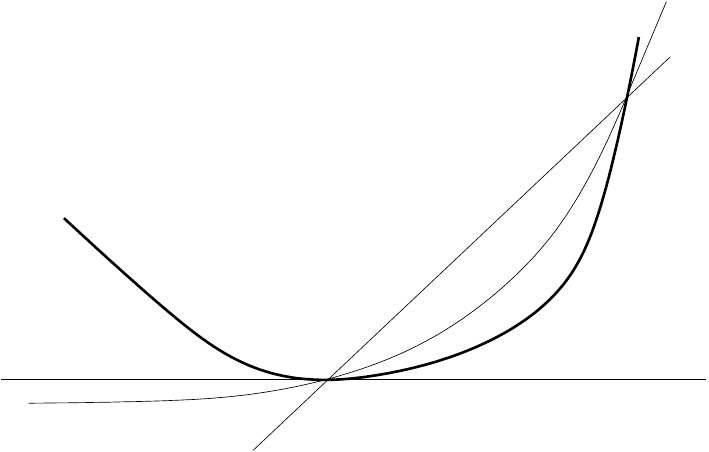_t}
\end{center}
\caption{The region $R$ foliated by the graps of $f_\lambda$.}
\label{f:fol}
\end{figure}

\section{Replacements at the boundary}\label{s:replacements}

We have now all the tools for proving the boundary regularity of the varifold $V$ in Proposition \ref{a.m. in annuli} and we can start with the argument leading to

\begin{theorem}\label{t:Bdry_regularity}
The varifold $V$ of Proposition \ref{a.m. in annuli} has all the properties claimed in Theorem \ref{t:main}.
\end{theorem}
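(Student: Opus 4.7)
The plan is to reduce everything to boundary regularity, since interior regularity of $V$ is already furnished by Proposition \ref{interior}. Case (a) of Theorem \ref{t:main}, the unconstrained problem, is the easier of the two: here I would run a free-boundary replacement scheme, using Corollary \ref{c:Gru-Jost-rect} to know that $V$ is rectifiable up to $\partial\M$ and the Gr\"uter--Jost monotonicity formula (Proposition \ref{p:GJ-monot}) as the boundary analogue of the usual monotonicity, and conclude via the stable compactness theorem \ref{t:SS-GY} that $V$ is a stable minimal hypersurface meeting $\partial\M$ orthogonally and regular up to a set of Hausdorff dimension $\le n-7$. I therefore focus on case (b), which is the main thrust of the paper.

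For case (b), the first step is to construct, for each $j\in\NN$ and each annulus $\text{An}\in \mathcal{AN}_{r(x)}(x)$ with $x\in\gamma$, a replacement $V^\star_j$ of $V$ by solving the $(2^{m+2}j)^{-1}$-homotopic Plateau problem anticipated in the outline: minimize $\haus^n$ among integer rectifiable varifolds $\Gamma$ with $\partial\Gamma\cap \text{An} = \gamma\cap \text{An}$ that are reachable from $V\res\text{An}$ through smooth admissible deformations whose total area increase stays below $(2^{m+2}j)^{-1}$. Existence of such minimizers, their stationarity inside $\text{An}$, and their convergence back to $V$ outside $\text{An}$ should follow by adapting the construction of \cite{dl-t} with the modification described in Remark \ref{r:differente_2}, namely by working with integer rectifiable varifolds (allowing multiplicity $>1$) rather than with Caccioppoli sets.

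The second step is to show that in sufficiently small balls $B_\rho(x)$ centered at $x\in\gamma$ the $(2^{m+2}j)^{-1}$-threshold becomes inactive, so that $V^\star_j$ is an honest area minimizer with prescribed boundary $\gamma\cap B_\rho(x)$. The wedge property of Lemma \ref{wedge property} guarantees the quantified transversal intersection of $V^\star_j$ with $\partial\M$ needed to apply the boundary theory, and Corollary \ref{c:allard_possible} together with Allard's boundary regularity theorem \cite{allard-bdry} then implies that $V^\star_j$ is a smooth, embedded, multiplicity-one minimal submanifold in $B_\rho(x)$, with no singularities on $\partial\M$ and $\partial V^\star_j = \llbracket\gamma\cap B_\rho(x)\rrbracket$. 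Iterating the replacement procedure over a covering by overlapping annuli, as in \cite{pitts,dl-t}, produces a sequence of stable minimal hypersurfaces converging to $V$. The interior limit is controlled by Schoen--Simon (Theorem \ref{t:SS}) and the boundary limit by Theorem \ref{t:SS-bdry}, whose transversality hypothesis is again provided by Lemma \ref{wedge property} and whose key ingredient is White's curvature estimate (Theorem \ref{t:boundary_est}). The conclusion is that $V=\sum c_i\llbracket\Gamma_i\rrbracket$ with each $\Gamma_i$ smooth except on a singular set of dimension $\le n-7$ disjoint from $\partial\M$, and $c_i=1$ whenever $\partial\Gamma_i\ne\emptyset$ because Allard's boundary regularity forces the density at $\gamma$ to equal $\tfrac12$. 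Finally, $\sum\partial\Gamma_i=\gamma$ follows from the constancy theorem applied to the boundary current $\partial V$, combined with the transversality already established.

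The hard part, as in the analogous arguments of \cite{pitts,dl-t}, will be the first two steps: constructing the $(2^{m+2}j)^{-1}$-homotopic Plateau replacements in a way compatible with the boundary constraint $\partial\Gamma=\gamma$. One must produce admissible area-decreasing deformations near $\gamma$ that neither detach the surface from $\gamma$ nor push it through $\partial\M$, and one must verify that the resulting minimizers remain in $\va^c_s(\M,\gamma)$ so that the boundary condition survives in the limit. White's curvature estimate (Theorem \ref{t:boundary_est}) is indispensable at this stage, as without it the stable approximants produced by the replacement procedure could not be upgraded to a regular limit along $\gamma$.
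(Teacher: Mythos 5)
Your plan follows the same overall strategy as the paper: replacements via the $(2^{m+2}j)^{-1}$-homotopic Plateau problem, the wedge property of Lemma \ref{wedge property}, White's curvature estimate (Theorem \ref{t:boundary_est}), Allard's boundary regularity, and the compactness theorems of Section \ref{s:stability}. Two corrections and one substantial gap are worth flagging. First, your description of the replacement construction is not quite right: you speak of minimizing among competitors ``reachable from $V\res \text{An}$ through smooth admissible deformations,'' but $V$ is a varifold, not a smooth surface, so one cannot deform it through families satisfying (s1)--(s3). The actual construction (Section \ref{s:replacements}) starts from the smooth approximants $\Gamma^j$ of Proposition \ref{a.m. in annuli}, builds minimizers $V^j$ in the class $\haus_c(\Gamma^j, \text{An})$, and obtains the replacement $\tilde V$ of $V$ only as a diagonal limit as $j\to\infty$. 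This distinction matters because the admissibility of deformations and the inheritance of the boundary constraint $\partial\Gamma=\gamma$ are statements about the smooth $\Gamma^{j,k}$, not about $V$.

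Second, you are missing the step that ties replacements to regularity: one needs to show that tangent varifolds of $V$ at boundary points are actually \emph{cones}. The paper does this (Section \ref{s:vartan}) by replacing the rescaled varifolds $V_{p,r_k}$ in fixed annuli, passing the replacements to a limit $\bar W$, using the monotonicity identity together with Lemma \ref{l:GJ-again} to conclude $\bar W$ is a cone, and then transferring this back to $W$ via rectifiability; this also yields $\|V\|(\partial\M)=0$ and the uniform density lower bound needed for rectifiability in the unconstrained case. Without this, the appeal to Allard in case (b) and to the reflection/Simons argument in case (a) has nothing to bite on. Third, and related, your treatment of case (a) is far too brief: even with Theorem \ref{t:SS-GY} in hand, the unconstrained boundary regularity requires showing regularity in \emph{punctured} balls (Section \ref{s:bdry_reg}, via replacements at two nested radii, transversal slicing, unique continuation and the maximum principle to prevent sheets from touching), and then a separate argument to remove the puncture for $n\le 6$ (iterated dyadic blow-up plus the Gr\"uter--Jost Allard-type theorem). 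In the paper it is case (a), not case (b), that consumes most of Section \ref{s:bdry_reg}; once Lemma \ref{tangent cones} identifies multiplicity-one half-hyperplane tangent cones, case (b) follows in two lines from Allard's theorem.
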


The argument is indeed split into two main steps. In the first one we employ another important concept first developed by Pitts, called a \emph{replacement}.
\begin{definition}\label{d:replacement}
Let $V \in \va(\M)$ be a stationary varifold in $\M$, belonging to one of the classes $\mathcal{V}^u_s$ and $\mathcal{V}^c_s (\mathcal{M}, \gamma)$, and $U \subset \M$ an open set. A stationary varifold $V' \in \va(\M)$ (belonging to one of the two corresponding classes) is called a replacement for $V$ in $U$ if $V = V'$ on $G(\M \setminus U)$, $\|V\|(\M) = \|V'\|(\M)$, and $V' \res U$ is a stable minimal hypersurface $\Gamma$. In the constrained case we require that $\partial \Gamma \cap U = \gamma \cap U$ (in particular the connected components of $\Gamma$ that intersect $\gamma$ will arise with multiplicity $1$ in the varifold $V$). In the unconstrained case the surface $\Gamma \cap U$ meets $\partial \M$ orthogonally.
\end{definition}
Our goal now is to show that the almost minimizing property of the sequence $\{\Gamma^j\}$ from Proposition \ref{a.m. in annuli} is sufficient to prove the existence of a replacement for the varifold $V$. More precisely, we prove:

\begin{propos}\label{existence of replacements}
Let $\{\Gamma^j\},V \text{ and } r$ be as in Proposition \ref{a.m. in annuli}. Fix $x \in \M$ and consider an annulus An $ \in \mathcal{AN}_{r(x)}(x)$. Then there exist a varifold $\tilde{V}$, a sequence $\{\tilde{\Gamma}^j\}$ and a function $r':\M \to \mathbb{R}^{+}$ such that
\begin{itemize}
\item $\tilde{V}$ is a replacement for $V$ in An and $\tilde{\Gamma}^j$ converges to $\tilde{V}$ in the sense of varifolds;\\
\item $\tilde{\Gamma}^j$ is almost minimizing in every $An' \in \mathcal{AN}_{r'(y)}(y)$ with $y \in \M$;\\
\item $r(x) = r'(x)$.
\end{itemize}
\end{propos}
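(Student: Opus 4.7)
The plan is to adapt the interior replacement construction of \cite{dl-t} to the boundary setting. Fix the annulus $\text{An} \in \mathcal{AN}_{r(x)}(x)$, and let $\epsilon_j\to 0$ be the sequence with respect to which each $\Gamma^j$ is $\epsilon_j$-a.m.~in $\text{An}$. For each $j$, I define the class $\mathcal{H}(\Gamma^j)$ of smooth surfaces $\Sigma$ reachable from $\Gamma^j$ by a homotopy $\{\Sigma_s\}_{s\in [0,1]}$ satisfying (s1)--(s3) with $\Sigma_0=\Gamma^j$, $\Sigma_1=\Sigma$, $\Sigma_s\setminus \text{An} = \Gamma^j\setminus \text{An}$, and $\haus^n(\Sigma_s)\le \haus^n(\Gamma^j) + \epsilon_j/2^{m+2}$; in the constrained case I also require $\partial\Sigma_s = \gamma\setminus S_s$ throughout the deformation. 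Let $\tilde\Gamma^j$ be the varifold limit of an area-minimizing sequence in $\mathcal{H}(\Gamma^j)$.

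\textbf{Regularity of $\tilde \Gamma^j$.} The essential point is that, thanks to the $\epsilon_j$-a.m.~property of $\Gamma^j$, in any sufficiently small ball $B_\rho(y)\subset\subset \text{An}$ the varifold $\tilde\Gamma^j$ actually minimizes mass among currents with the same boundary trace on $\partial B_\rho(y)$: any area-reducing local replacement could be inserted into the homotopy without ever exceeding the threshold $\haus^n(\Gamma^j)+\epsilon_j/2^{m+2}$, contradicting the infimum property of $\tilde\Gamma^j$. An analogous statement holds in balls centered at $y\in \gamma\cap \text{An}$, where $\tilde\Gamma^j$ solves the Plateau problem with boundary $(\gamma\cap B_\rho(y))\cup(\tilde\Gamma^j\cap \partial B_\rho(y))$. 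Hence Allard's interior regularity gives smoothness of $\tilde\Gamma^j\res \text{An}$ away from a set of dimension at most $n-7$, and in the constrained case Allard's boundary regularity theorem (applicable thanks to Corollary \ref{c:allard_possible}) yields smoothness at $\gamma$ with multiplicity one on the components touching $\gamma$. In the unconstrained case $\tilde \Gamma^j$ meets $\partial\M$ orthogonally.

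\textbf{Limit $j\to \infty$ and the a.m.~property.} Extract a varifold-convergent subsequence $\tilde \Gamma^j \weaks \tilde V$. Outside $\text{An}$ we have $\tilde V = V$ since $\tilde\Gamma^j = \Gamma^j$ there and $\Gamma^j\weaks V$; the mass equality $\|\tilde V\|(\M) = \|V\|(\M)$ follows from $\haus^n(\tilde\Gamma^j)\le \haus^n(\Gamma^j)$ combined with $\haus^n(\Gamma^j)\to m_0(X) = \|V\|(\M)$; stationarity of $\tilde V$ in $\va_s^u$ or $\va_s^c(\gamma)$ passes to the limit; stability of $\tilde V\res \text{An}$ as a minimal hypersurface follows from the compactness theorems \ref{t:SS}, \ref{t:SS-GY} and \ref{t:SS-bdry} applied to the stable $\tilde\Gamma^j$, together with Lemma \ref{wedge property} in the constrained case to verify the transversality hypothesis required for Theorem \ref{t:SS-bdry}. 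For the a.m.~property of $\{\tilde\Gamma^j\}$, I take $r'(y)\le \min\{r(y),\operatorname{dist}(y,\overline{\text{An}})\}$ when $y$ has positive distance from $\overline{\text{An}}$, and $r'(x)=r(x)$: any alleged area-reducing homotopy of $\tilde\Gamma^j$ in an annulus $\text{An}'\in \mathcal{AN}_{r(x)}(x)$ can be concatenated with the homotopy from $\Gamma^j$ to $\tilde\Gamma^j$ to produce a competitor contradicting the $\epsilon_j$-a.m.~property of $\Gamma^j$ in $\text{An}'$, the intermediate area bound being maintained at each stage of the concatenation.

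\textbf{Main obstacle.} The principal difficulty is in the constrained case: keeping $\gamma$ fixed throughout the homotopies and the minimization, and verifying that $\gamma$ remains precisely the boundary of $\tilde \Gamma^j$ in the current-theoretic sense without ``folding back''. This is where the boundary tools developed in Sections \ref{s:boundary}--\ref{s:wedge} enter crucially: the wedge/transversality property and the maximum principle of Proposition \ref{p:White-max} constrain the geometry of $\tilde \Gamma^j$ near $\gamma$ so that its multiplicity there is one and Allard's boundary regularity theorem can be invoked on small balls meeting $\gamma$. A secondary technicality is the construction of concatenated homotopies that respect the intermediate area bound while fixing $\gamma$, which parallels the arguments of \cite{dl-t} but requires additional care near the boundary.
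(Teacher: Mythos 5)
Your overall architecture mirrors the paper's: introduce the $(2^{m+2}j)^{-1}$-homotopic Plateau problem, take a minimizing sequence, show the limit is an honest (unconstrained) local area minimizer at small scales, invoke regularity theory, and pass to the limit $j\to\infty$ via the compactness theorems of Section \ref{s:stability}. However, there is a genuine gap at the heart of the argument, precisely where you write that ``any area-reducing local replacement could be inserted into the homotopy without ever exceeding the threshold $\haus^n(\Gamma^j)+\epsilon_j/2^{m+2}$.'' This is exactly the content of the paper's Lemma~\ref{replacements first step lemma}, and it is \emph{not} automatic: an arbitrary competitor $\Xi$ inside a ball that reduces area gives no canonical homotopy from $\Gamma^{j,k}$ to $\Xi$, and a naive interpolation can easily overshoot the area threshold. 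The paper's proof is a nontrivial ``blow-down/blow-up'' construction: deform $\Gamma^{j,k}\cap B$ to the cone over $\Gamma^{j,k}\cap\partial B$, then deform the cone to $\Xi$; the intermediate area is controlled only because the radius of the ball is chosen \emph{small depending on $j$}, via the monotonicity formula, so that both the cone and the annular transition slab contribute at most $(2^{m+2}j)^{-1}$. You present this as an immediate consequence of the a.m.~property, which it is not.

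Two further steps that your sketch elides but that carry real content: (1) Lemma~\ref{replacements first step lemma} only produces competitors that are smooth surfaces; the passage from a current competitor $S$ to a smooth surface in the class $\haus(\Gamma^j,An)$ (needed to run the contradiction in the proof of Corollary~\ref{c:regularity}) requires the BV/level-set approximation of Federer/Giusti (decomposing $S$ via $\llbracket U_i\rrbracket$, mollifying, choosing regular level sets by Sard, and patching in an annulus). (2) At points $y\in\gamma$, the cone construction of Lemma~\ref{replacements first step lemma} requires coordinates that simultaneously flatten $\partial\M$ and $\gamma$ (so that rescaling around $y$ keeps $\gamma$ fixed as the boundary), and the boundary case of Corollary~\ref{c:regularity} additionally requires ``pushing'' the competitor current slightly into the interior of $\M$ and attaching a thin collar to re-attach the boundary $\gamma$, before the smoothing argument can be applied. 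You flag the constrained boundary case as a ``secondary technicality,'' but in the paper's development it is, together with the cone construction itself, the principal difficulty that makes this proposition the technical centerpiece of the boundary regularity. Finally, a small notational slip: you define $\tilde\Gamma^j$ as a varifold limit, but the statement requires $\tilde\Gamma^j$ to be a sequence of \emph{surfaces} that is a.m.~in annuli; the paper keeps $V^j$ for the varifold limit of $\{\Gamma^{j,k}\}_k$ and sets $\tilde\Gamma^j=\Gamma^{j,k(j)}$ as a diagonal sequence of smooth slices.
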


\subsection{Homotopic Plateau's problem}
Let us fix a point $x \in \M$ and $An \in \mathcal{AN}_{r(x)}(x)$ from now on. If $x\in {\rm Int}\, \M$, then the statement above is indeed proved in \cite{dl-t}, except for a small technical adjustment which we explain in Section \ref{s:differente_3} below. We fix therefore $x\in \partial \mathcal{M}$. The strategy of the proof will be analogous to the one in \cite{dl-t} and 
follows anyway the pioneering ideas of Pitts: in $An$ we will indeed replace the a.m. sequence $\Gamma^j$ with a suitable $\tilde{\Gamma}^j$, which is a minimizing sequence for a suitable (homotopic) variational problem. 

As a starting point for the proof we consider for each $j \in \NN$ the following class and the corresponding variational problem:

\begin{definition}
Let $U\subset \M$ be an open set and for each $j\in \mathbb N$ consider the class 
$\haus_c (\Gamma^j,U)$ (resp. $\haus_u (\Gamma^j, U)$) of surfaces $\Xi$ such that there is a constrained (resp. unconstrained) family of surfaces $\{\Gamma_t\}$  satisfying $\Gamma_0 = \Gamma^j$, $\Gamma_1 = \Xi$, (\ref{amm prop1}), (\ref{amm prop2}), and (\ref{amm prop3}) for $\epsilon=1/j$ (recall that $m$ is fixed by Remark \ref{fixing m}). The subscript $c$ (resp. $u$) will be dropped when clear from the context. A minimizing sequence in $\haus (\Gamma^j, U)$ is a sequence $\Gamma^{j,k}$ for which the volume of $\Gamma^{j,k}$ converges towards the infimum. 
\end{definition}

We will call the variational problem above the $(2^{m+2}j)^{-1}$ - \emph{homotopic Plateau} problem.
Next, we take a minimizing sequence $\{\Gamma^{j,k}\}_{k \in \mathbb{N}}\subset \haus (\Gamma^j, An)$. Up to subsequences, we have that
\begin{itemize}
\item as integral currents, $\llbracket \Gamma^{j,k} \rrbracket$ converge weakly to an integral current $Z^j$ (in the constrained case the current is integral in $\M$, including the boundary $\partial \M$, because $\partial Z^j \res \partial \M = \llbracket \gamma \rrbracket$; in the unconstrained case the current $Z^j$ is a-priori only integral {\em in the interior}; however the regularity proved later in Corollary \ref{c:regularity} will actually imply that it is integral even when including the boundary $\partial \M$);
\item as varifolds, $\Gamma^{j,k}$ converge to a varifold $V^j$;
\item $V^j$, along with a suitable diagonal sequence $\tilde{\Gamma}^j = \Gamma^{j,k(j)}$ converges to a varifold $\tilde{V}$.
\end{itemize}
The rest of the section will then be devoted to prove that the varifold $\tilde{V}$ is in fact the replacement of Proposition \ref{existence of replacements} and that the sequence $\tilde{\Gamma}^j$ satisfies the requirements of the same proposition.

\begin{remark}\label{r:integrality_again}
Note that $V^j\in \mathcal{V}^c_s (An, \gamma)$ and that $V^j$ is a.m. in annuli (in fact it has a much stronger minimizing property!). For this reason we can apply Lemma \ref{c:gamma_meas_0} and conclude that $V^j$ is an integer rectifiable varifold. 
\end{remark}

The proof is split into two steps. In the first one we will show that, at all sufficiently small scales, the current $Z^j$ is indeed a minimizer of the area in the corresponding variational problem (constrained and unconstrained) without {\em any restriction} on the competitors.
More precisely we show that 

\begin{lemma}\label{replacements first step lemma}
Let $j \in \NN$ and $y \in An$. Then there are a ball $B = B_{\rho}(y) \subset An$ and a $k_0 \in \NN$ such that every set $\Xi$ with the following properties (satisfied for some $k\geq k_0$) belongs to the class $\haus(\Gamma^j,An)$:
\begin{itemize}
\item $\Xi$ is a smooth hypersurface away from a finite set;
\item $\partial \Xi \cap B = \gamma \cap B$ in the constrained problem, whereas $\partial \Xi \cap {\rm Int}\, (B) = \emptyset$ in the unconstrained problem;
\item $\Xi \setminus B = \Gamma^{j,k} \setminus B$;
\item $\haus^n(\Xi) < \haus^n(\Gamma^{j,k})$.
\end{itemize} 
\end{lemma}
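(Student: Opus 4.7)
The plan is to concatenate the homotopy witnessing $\Gamma^{j,k}\in\haus(\Gamma^j, An)$ with a new intermediate homotopy from $\Gamma^{j,k}$ to $\Xi$, supported entirely inside $B=B_\rho(y)$, and to show that if $\rho$ is sufficiently small and $k$ is sufficiently large the concatenation still lies in the class $\haus(\Gamma^j, An)$. The key observation is that the minimality of $\{\Gamma^{j,k}\}$ provides a uniform positive slack: letting $m^j:=\inf\{\haus^n(\Sigma):\Sigma\in \haus(\Gamma^j, An)\}$, the constant family $t\mapsto \Gamma^j$ gives $m^j\leq\haus^n(\Gamma^j)$, so setting $M:=\haus^n(\Gamma^j)+(2^{m+2}j)^{-1}$ we have $M-\haus^n(\Gamma^{j,k})\geq (2^{m+2}j)^{-1}/2$ for all $k$ large. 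Any intermediate homotopy whose maximal $n$-volume in $B$ exceeds $\haus^n(\Gamma^{j,k}\cap B)$ by less than this slack will therefore produce an admissible competitor.

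Combining the uniform mass bound $\haus^n(\Gamma^{j,k})\leq M$ with a coarea slicing, for each large $k$ we would obtain a radius $\rho_k\in[\rho,2\rho]$ such that $\haus^{n-1}(\Gamma^{j,k}\cap \partial B_{\rho_k}(y))$ is controlled and $\Gamma^{j,k}$ meets $\partial B_{\rho_k}(y)$ transversally. A standard almost-monotonicity argument based on the minimality of $\{\Gamma^{j,k}\}$ and a cone comparison (as in the interior case of \cite{dl-t}) then yields $\haus^n(\Gamma^{j,k}\cap B_{2\rho}(y))\leq C\rho^n$ and $\haus^{n-1}(\Gamma^{j,k}\cap\partial B_{\rho_k}(y))\leq C\rho^{n-1}$, with $C=C(\M,\gamma,M)$. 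Since $\Xi$ coincides with $\Gamma^{j,k}$ outside $B$ and has strictly smaller area, the corresponding bounds hold also for $\Xi$.

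The intermediate family would be built as a three-step homotopy inside $B_{\rho_k}(y)$: first deform $\Gamma^{j,k}\cap B_{\rho_k}(y)$ onto the cone $\mathbf{C}^{j,k}$ erected over $\Gamma^{j,k}\cap \partial B_{\rho_k}(y)$, then run the analogous deformation in reverse from $\mathbf{C}^{j,k}$ to $\Xi\cap B_{\rho_k}(y)$, the two cones coinciding because the traces on $\partial B_{\rho_k}(y)$ agree. When $y\in{\rm Int}\,(\M)$ and $\rho$ is smaller than $\dist(y,\partial \M)$ this is exactly the Euclidean cone construction of \cite{dl-t}, each slice having area at most $C'\rho\cdot \haus^{n-1}(\Gamma^{j,k}\cap \partial B_{\rho_k})\leq C''\rho^n$. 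The main obstacle, and the novelty relative to \cite{dl-t}, is the boundary case $y\in \partial \M$: here the cone must respect the constraint that $\gamma\cap B_{\rho_k}$ remain the boundary of every slice. To handle this I would work in coordinates adapted to $\partial\M$ and to $\gamma$, in which $\partial\M$ becomes flat and, if $y\in\gamma$, $\gamma$ becomes an affine $(n-1)$-plane through the origin. The cone is then taken with vertex on $\gamma$ (or with vertex a suitable shift of $y$ along $\nu$ when $y\in\partial\M\setminus\gamma$), and the radial segments emanating from points of $\gamma\cap \partial B_{\rho_k}(y)$ are forced to lie along $\gamma$. This fixes $\gamma\cap B$ pointwise, so the boundary constraint $\partial\Gamma_t=\gamma\cap B_{\rho_k}$ is automatically preserved throughout, while the usual $O(\rho^n)$ area estimate survives up to an $O(\rho^{n+1})$ error coming from the curvatures of $\partial\M$ and $\gamma$.

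With these $C\rho^n$ bounds in hand the budget closes: choose $\rho$ so small that the maximal area increase in $B$ along the intermediate homotopy is strictly less than $(2^{m+2}j)^{-1}/4$, and then pick $k_0$ so large that $M-\haus^n(\Gamma^{j,k})>(2^{m+2}j)^{-1}/2$ for every $k\geq k_0$. The concatenated family from $\Gamma^j$ through $\Gamma^{j,k}$ to $\Xi$ is then an admissible competitor in $\haus(\Gamma^j, An)$, which is what had to be shown. I expect the boundary cone construction described in the previous paragraph to be the most delicate step; the rest is a careful but essentially routine adaptation of the interior argument of \cite{dl-t}.
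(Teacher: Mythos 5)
Your proposal follows essentially the same route as the paper's proof: the blow-down/blow-up cone construction inside a small ball around $y$, a coarea slicing to select a good sphere where $\Gamma^{j,k}$ is transversal, an $O(\rho^n)$ estimate for the cone area, and adapted coordinates (normal-exponential off $\partial\M$ composed with a flattening of $\gamma$ inside $\partial\M$) in which $\partial\M$ and $\gamma$ both become linear, so that $\gamma$ is radial about $y$ and is preserved slice by slice by the conical contraction. Two points in your sketch deserve more care, though neither is a gap in substance. First, the $O(\rho^n)$ bound cannot be extracted from an ``almost-monotonicity'' of the individual $\Gamma^{j,k}$ — these are just smooth surfaces, not stationary, and their minimizing defect $(2^{m+2}j)^{-1}$ is a fixed constant that does not shrink with $\rho$, so a cone comparison for $\Gamma^{j,k}$ alone yields nothing useful at small scale; the paper instead applies Allard's boundary monotonicity formula to the stationary limit varifold $V^j\in\mathcal{V}_s^c(An,\gamma)$, obtaining $\|V^j\|(B_{\bar c\rho}(y))\leq C_{\M}\|V^j\|(\M)\rho^n$, and then transfers this to $\Gamma^{j,k}$ by varifold convergence, which also fixes the order of quantifiers: $\rho$ is chosen first, $k_0$ afterward. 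Second, before contracting radially one must ``stretch'' $\Gamma^{j,k}$ and $\Xi$ to coincide with the cone in an annular neighborhood of the slicing sphere; otherwise the slices $\Omega_t$ carry an $(n-1)$-dimensional kink along the moving sphere $\partial B_{|1-2t|r}$, which is not a finite set and so violates (s1). This is exactly the stretching device from \cite{dl-t}, which you invoke in passing, but it is the step that actually makes the homotopy admissible and should be spelled out. For $y\in\partial\M\setminus\gamma$ the paper simply shrinks $\rho$ so that $B_\rho(y)\subset\subset\tilde\M\setminus\gamma$ and runs the interior argument (using convexity to keep the cone inside $\M$), which is a little simpler than shifting the vertex inward along $\nu$ as you suggest.
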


As a simple corollary, whose proof will be given later, using the regularity theory for area minimizing currents for a given prescribed boundary (and the corresponding regularity theory for the minimizers in the free boundary case, as developed by Gr\"uter in \cite{Gruter1}) we then get the following 

\begin{corol}\label{c:regularity}
Let $\tilde B$ be the ball concentric to the ball $B$ in Lemma \ref{replacements first step lemma} with half the radius.
In the constrained case the current $Z^j$ has boundary $\gamma$ in $B$ and any competitor $Z^j + \partial S$, where $S$ is an integer rectifiable current supported in $\tilde B$, cannot have mass smaller than that of $Z^j$. In the unconstrained case $Z^j$ is a minimizer with respect to free boundary perturbations, namely any current $Z^j + T$ with $\supp (\partial T)\subset B\cap \partial \M$ and $\supp (T) \subset \tilde B$, cannot have mass smaller than that of $Z^j$.

Thus, $Z^j\res An = V^j\res An = \bar{\Gamma}^j$ is a regular, minimal, embedded hypersurface except for a closed set ${\rm Sing}\, (\bar\Gamma^j)$ of dimension at most $n-7$. In the unconstrained case it meets the boundary $\partial \M$ orthogonally and it is stable for the free  boundary problem. In the constrained case ${\rm Sing}\, (\bar \Gamma^j)$ does not intersect $\partial \M$ and $\partial \bar \Gamma^j = \gamma$ (in $An$; in particular any connected component of $\Gamma^j$ that intersects $\partial \M$ must have multiplicity $1$). 
\end{corol}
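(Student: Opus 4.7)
The plan is to bootstrap Lemma \ref{replacements first step lemma} from the smooth setting of the homotopic Plateau problem to the setting of integer rectifiable currents, and then to invoke the classical regularity theory for area-minimizing currents with prescribed (or free) boundary.

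\textbf{Step 1 (from Lemma \ref{replacements first step lemma} to minimality).} Take an integer rectifiable current $S$ supported in $\tilde B$ and set $W:=Z^j+\partial S$. The idea is to produce, for every $\eps>0$, a smooth hypersurface $\Xi^k$ (for $k$ large) satisfying:
\begin{itemize}
\item $\Xi^k\setminus B=\Gamma^{j,k}\setminus B$;
\item $\partial \Xi^k\cap B=\gamma\cap B$ in the constrained case, and $\partial \Xi^k\cap {\rm Int}\,(B)=\emptyset$ in the unconstrained case;
\item $\haus^n(\Xi^k)\le \mass(W)+\eps+\mass(\Gamma^{j,k}\setminus B)$.
\end{itemize}
This is built by starting from $(Z^j\res(\M\setminus B))+W\res B$, approximating it by smooth polyhedral currents via the standard approximation theorems for integer rectifiable currents (cf.\ \cite{simon-gmt}), and pasting this interior piece to $\Gamma^{j,k}\setminus B$ using the flat convergence $\llbracket\Gamma^{j,k}\rrbracket\to Z^j$ in a thin shell inside $B\setminus \tilde B$ (the standard ``deformation + isoperimetric patching'' trick, using that $\mass(Z^j-\llbracket\Gamma^{j,k}\rrbracket)\to 0$ in the flat sense). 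Lemma \ref{replacements first step lemma} then places $\Xi^k$ in $\haus(\Gamma^j,An)$, so the minimizing property of $\{\Gamma^{j,k}\}$ gives $\haus^n(\Xi^k)\ge \haus^n(\Gamma^{j,k})-o(1)$. Letting $k\uparrow\infty$ and $\eps\downarrow 0$ yields $\mass(W)\ge\mass(Z^j)$, which is the claimed minimality. At the same time, comparing with the competitor $Z^j\res(\M\setminus B)+\llbracket\gamma\rrbracket$-filling in the constrained case shows that $\partial Z^j\res B=\llbracket\gamma\rrbracket\res B$ (with the correct multiplicity $1$).

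\textbf{Step 2 (identifying $V^j$ with $Z^j$).} Lower semicontinuity gives $\mass(Z^j\res An)\le \|V^j\|(An)$; on the other hand, the minimality just proved together with the area bound $\haus^n(\Gamma^{j,k})\to\|V^j\|(An)$ yields $\|V^j\|(An)\le\mass(Z^j\res An)$. Hence $V^j\res An$ is the multiplicity-one varifold associated to the integer rectifiable current $Z^j\res An$ (Remark \ref{r:integrality_again} gives integrality).

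\textbf{Step 3 (interior regularity).} In ${\rm Int}\,(\M)\cap An$, $Z^j$ is a locally area-minimizing integral $n$-current in an $(n{+}1)$-dimensional Riemannian manifold; by the classical De~Giorgi--Federer--Almgren theorem its support is a smooth embedded minimal hypersurface outside a closed singular set of Hausdorff dimension $\le n-7$, and the underlying varifold is stable (minimizers being stable under compactly supported deformations).

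\textbf{Step 4 (boundary regularity).} In the constrained case, $Z^j$ minimizes mass among currents with boundary $\gamma$ in $\tilde B$. Corollary \ref{c:allard_possible} makes Allard's boundary regularity theorem \cite{allard-bdry} applicable: at every point of $\gamma\cap An$ the current is $C^{1,\alpha}$ up to the boundary with multiplicity $1$. The wedge property of Lemma \ref{wedge property}, combined with Theorem \ref{t:boundary_est} applied to the smooth portion, promotes this to full $C^{2,\alpha}$ regularity at $\gamma$, shows ${\rm Sing}\,(\bar\Gamma^j)\cap\partial\M=\emptyset$, and forces connected components meeting $\gamma$ to carry multiplicity $1$. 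In the unconstrained case, Gr\"uter's free boundary regularity theorem \cite{Gruter1} yields that $\bar\Gamma^j$ meets $\partial\M$ orthogonally and is stable for the free boundary problem.

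\textbf{Main obstacle.} The delicate point is Step 1: assembling a smooth competitor $\Xi^k$ whose mass is close to $\mass(W)$ while simultaneously prescribing its boundary ($\gamma\cap B$ or empty) and matching $\Gamma^{j,k}$ outside $B$. This requires careful polyhedral/graphical approximation of the piece in $\tilde B$ and an isoperimetric-type patching in the annular shell $B\setminus\tilde B$, controlled by the flat convergence $\llbracket\Gamma^{j,k}\rrbracket\to Z^j$; the half-radius $\tilde B$ is precisely what gives the room for this patching without affecting the comparison.
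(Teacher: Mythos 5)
Your proposal follows the same overall scheme as the paper's proof (use Lemma~\ref{replacements first step lemma} to turn a hypothetical better competitor current into a smooth competitor in $\haus(\Gamma^j,An)$, derive a contradiction with the minimality of $\{\Gamma^{j,k}\}$, identify $Z^j\res An$ with $V^j\res An$, and then invoke standard regularity theory), but there are two concrete gaps in Step~1.

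First, the competitors in Lemma~\ref{replacements first step lemma} must be smooth hypersurfaces away from a finite set (property (s1)), and a polyhedral approximation of $Z^j+\partial S$ does not satisfy this: it has an $(n{-}1)$-dimensional singular skeleton, not finitely many singular points. The paper avoids this by a different mechanism: it writes the competitor current as a sum of boundaries of nested sets via Federer 4.5.17, encodes these as a $\ZZ$-valued BV function $f$, mollifies $f$, and then, via the coarea formula and Sard's lemma, selects level values whose level sets are genuinely smooth hypersurfaces with total mass converging to the mass of the competitor. If you want to use polyhedral chains, you would still need to smooth them, and the smoothing step is exactly where the real work is.

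Second, the constrained case at a boundary point $y\in\gamma$ needs a separate argument which your proposal does not address. Near $\gamma$, a mollified or approximated competitor will generically neither stay inside $\M$ nor have boundary exactly $\gamma$; ``polyhedral approximation plus isoperimetric patching in $B\setminus\tilde B$'' does not control this. The paper handles it by first deforming $S^{j,k}$ so that near $\gamma$ it coincides with the surface $N=\gamma\times[0,\delta)$ meeting $\partial\M$ orthogonally, then pushing the whole current slightly into the interior via a bi-Lipschitz map of controlled distortion, and finally attaching a thin smooth collar back to $\gamma$; only then does the interior smoothing argument apply. Without this step the construction of a valid smooth competitor fails precisely at the points where boundary regularity is the issue.

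Two smaller remarks: in your Step~2 you assert that $V^j\res An$ is the \emph{multiplicity-one} varifold associated to $Z^j\res An$; this is not claimed and is not true in general (the corollary only asserts multiplicity one for components meeting $\partial\M$) — the correct statement, via Lemma~\ref{currents-varifolds}, is that the varifold equals the one induced by the current $Z^j$, multiplicities and all. And in Step~4 you invoke Theorem~\ref{t:boundary_est} to upgrade $C^{1,\alpha}$ to $C^{2,\alpha}$, but this is unnecessary here: Allard's boundary regularity already yields smoothness for minimizers with a $C^{2,\alpha}$ boundary in a uniformly convex $\partial\M$; Theorem~\ref{t:boundary_est} is used later in the paper for the compactness of \emph{stable} (not necessarily minimizing) surfaces.
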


The second step in the proof of Proposition \ref{existence of replacements} takes advantage of the compactness theorems in Section \ref{s:stability} to pass into the limit in $j$ and conclude that $\tilde{V}$ has the desired regularity properties.

\subsection{Proof of Lemma \ref{replacements first step lemma}} We focus on the constrained case, since the proof in the unconstrained case follows the same line and it is indeed easier.

We will exhibit a suitable homotopy between $\Gamma^{j,k}$ and $\Xi$ by first deforming $\Gamma^{j,k}$ inside $B$ to a cone with vertex $y$ and base $\Gamma^{j,k} \cap \partial B$, and then deforming this cone back to $\Xi$, without increasing the area by more than $(2^{m+2}j)^{-1}$, which will prove the claim. To this end, we borrow the "blow down -blow up" procedure from \cite{dl-t}, which in turn is borrowed from Smith \cite{smith} (see also Section 7 of \cite{c-dl}) and we only need to modify the idea because $x\in\partial \M$.

Fix $y \in An \cap \partial \M$, and $j \in \NN$. If $y \notin \gamma$, by considering $\M$ as a subset of $\tilde{\M}$ as in Remark \ref{extr vs. intr}, and simply making sure to choose $\rho$ small enough that $B_{\rho}(y)\subset \subset \tilde{\M} \setminus \gamma$, we can reduce to the interior case. Note that we also make use of the convexity assumption on $\M$ to make sure all the surfaces in the homotopy stay inside $\M$. Therefore, we are left to prove the case $y \in An \cap \partial \M \cap \gamma$.\\
First, in a small neighborhood around $y$, we can find (smooth) diffeomorphisms 
\begin{align*}
&\Psi_1: \RR^{n-1} \times \RR \to \partial \M , \quad  \Psi_1^{-1}(\gamma) \subset \RR^{n-1}\times \{0\}, \quad \Psi_1(0) = \iota(y); \\
&\Psi_2: \partial \M \times \RR^+ \to  \M , \quad  \Psi_2 (x,t) = \text{exp}_x(t\nu(x)), 
\end{align*}
with $\iota: \partial \M \to \M$ the inclusion map, and $\nu(x)$ unit normal to $\partial \M$. By taking $\Psi_2(\Psi_1(x),t)$ and composing it with a linear map if necessary, we get a (smooth) local coordinate chart $\Psi: U \subseteq \RR^{n+1} \to V$ in a neighborhood $V \subset An \subset \M$ of $y$, with $\Psi(0) = y,\, \partial \M \cong \RR^n \times \{0\}, \gamma \cong \RR^{n-1} \times \{0\} \times \{0\}$ , and $D\Psi_0 =\text{Id}$. In the following, $B^e_r(0)$ and $\haus^{n,e}$ are used to denote the ball of radius $r$ and the Hausdorff measure w.r.t the euclidean metric in the given coordinates. We will choose $\tau>0$ small enough, that $B^e_{2\tau}(0) \subseteq U$. The required radius $\rho$ of the geodesic ball $B=B_\rho(y)$ will be fixed later, but chosen small enough that $\Psi^{-1}(B_{\rho}(y)) \subset \subset B^e_{\tau}(0)$ (and, of course, smaller than the injectivity radius). Furthermore, by choosing $U$ (and consequently $\tau$) small enough, we can ensure for any surface $\Sigma \subset B_{2\tau}^e(0)$ that
\begin{equation}\label{metric comparison}
\frac{1}{c}\haus^{n,e}(\Sigma) \leq \haus^n(\Sigma)\leq c\haus^{n,e}(\Sigma),
\end{equation}
where c depends on the metric, and $c\to 1$ for $\tau \to 0$. From now on, we will use the same symbols to denote sets and their representations in the coordinates given by $\Psi$.

\medskip

\noindent {\bf Step 1: Stretching $\Gamma^{j,k} \cap \partial B^e_r(0)$.} First of all , we will choose $r \in (\tau, 2\tau)$ such that, for every $k$,
\begin{equation}\label{transversal slice}
\begin{split}\Gamma^{j,k} \text{ is regular in a neighborhood of } \partial B^e_r(0)\\ \text{ and intersects it transversally } \end{split}
\end{equation}
This is implied by Sard's lemma, since each $\Gamma^{j,k}$ has only finitely many singularities. We let $K$ be the cone
\begin{equation*}
K = \{\lambda z \,| \,0 \leq \lambda < 1, z \in \partial B_r^e(0) \cap \Gamma^{j,k}\}
\end{equation*}
We now show that $\Gamma^{j,k}$ can be homotopized through a family $\tilde{\Omega}_t$ to a surface $\tilde{\Omega}_1$ in such a way that
\begin{itemize}
\item $\max_t \haus^{n,e}(\tilde{\Omega}_t) - \haus^{n,e}(\Gamma^{j,k})$ can be made arbitrarily small; 
\item $\tilde{\Omega}_1$ coincides with $K$ in a neighborhood of $\partial B_r^e(0)$
\end{itemize}
To this end, we consider a smooth function $\varphi:[0,2\tau] \to [0,2\tau]$ with
\begin{itemize}
\item $|\varphi(s) - s| \leq \epsilon$ and $0 \leq \varphi' \leq 2$;\\
\item $\varphi(s) = s$ if $|s - r| > \epsilon$ and $\varphi \equiv r$ in a neighborhood of $r$.
\end{itemize}
Set $\Phi(t,s):=(1-t)s + t\varphi(s)$. If $A$ is any set, we use $\lambda A$ as usual to denote the set $\{\lambda x \, | \, x \in A\}$. We can now define $\tilde{\Omega}_t$ in the following way:
\begin{itemize}
\item $\tilde{\Omega}_t \setminus An^e(0,r-\epsilon, r+\epsilon) = \Gamma^{j,k} \setminus An^e(0,r-\epsilon,r+\epsilon)$;\\
\item $\tilde{\Omega}_t \cap \partial B^e_s(0) = \frac{s}{\Phi(t,s)}\big(\Gamma^{j,k} \cap \partial B^e_{\Phi(t,s)}\big)$ for every $s \in (r-\epsilon,r+\epsilon)$,
\end{itemize}
where the annuli (with the superscript $e$) are with respect to the euclidean metric. Note that our choice of coordinates ensures that $\gamma$ is preserved as the boundary. Furthermore, the surfaces are smooth (with the exception of a finite number of singularities), since $\Gamma^{j,k}$ is regular in a neighborhood of $\partial B^e_r(0)$ Moreover, owing to \eqref{metric comparison} and \eqref{transversal slice}, and for $\epsilon$ sufficiently small, $\tilde{\Omega}_t$ will have the desired properties. Finally, since $\Xi$ coincides with $\Gamma^{j,k}$ on $\M \setminus B_{\rho}(y)$ (and in particular, outside $B^e_{\tau}(0)$), the same argument can be applied to $\Xi$. This shows that
\begin{equation}\label{loc. stretching to a cone}
\begin{split}
\text{w.l.o.g. we can assume } K = \Xi = \Gamma^{j,k}\\
\text{ in a neighborhood of } \partial B^e_r(0)
\end{split}
\end{equation}

\medskip

\noindent {\bf Step 2: The homotopy.} We now construct the required homotopy mentioned in the beginning of the proof, as the family $\{\Omega_t\}_{t \in [0,1]}$ of hypersurfaces which satisfy:
\begin{itemize}
\item $\Omega_t \setminus \bar{B}_r^e(y) = \Gamma^{j,k} \setminus \bar{B}_r^e(y)$ for every $t$;
\item $\Omega_t \cap An^e(0, |1-2t|r,r) = K \cap An(y,|1-2t|r,r)$ for every $t$;
\item $\Omega_t \cap  \bar{B}^e_{(1-2t)r}(0) = (1-2t)(\Gamma^{j,k} \cap \bar{B}^e_r(0))$ for $t \in [0,\frac{1}{2}]$;
\item $\Omega_t \cap  \bar{B}^e_{(2t-1)r}(0) = (2t-1)(\Xi \cap \bar{B}^e_r(0))$ for $t \in [\frac{1}{2},1]$.
\end{itemize}
Note again that, because of the way we chose our coordinates and deformations, and consequently \eqref{loc. stretching to a cone}, this satisfies the properties of a smooth constrained family. The only property left to check is that
\begin{equation}
\max_t \haus^n(\Omega_t) \leq \haus^n(\Gamma^{j,k}) + \frac{1}{2^{m+2}j} \quad \forall k \geq k_0
\end{equation}
holds for a suitable choice $\rho, r$ and $k_0$.\\
First we observe the following standard facts, for every $r <2\tau$ and $\lambda \in [0,1]$:
\begin{align}
& \haus^{n,e} (K) = \frac{r}{n} \haus^{n-1,e}\big(\Gamma^{j,k} \cap \partial B^e_r(0)\big);\label{euclid.area est 1}\\
& \haus^{n,e} \big(\lambda(\Gamma^{j,k} \cap \bar{B}^e_r(0))\big) = \haus^{n,e} \big(\lambda(\Gamma^{j,k} \cap B^e_r(0))\big) \leq \haus^{n,e}\big(\Gamma^{j,k} \cap B^e_r(0)\big);\label{euclid.area est 2}\\
& \haus^{n,e} \big(\lambda(\Xi \cap \bar{B}^e_r(0))\big) = \haus^{n,e} \big(\lambda(\Xi \cap B^e_r(0))\big) \leq \haus^{n,e}\big(\Xi \cap B^e_r(0)\big);\label{euclid.area est 3}\\
& \int_{0}^{2\tau} \haus^{n-1,e}\big(\Gamma^{j,k}\cap\partial B^e_s(0)\big) \, ds \leq \haus^{n,e} \big( \Gamma^{j,k} \cap B^e_{2\tau}(0)\big),\label{euclid.area est 4}
\end{align}
where the equalities in \eqref{euclid.area est 2} and \eqref{euclid.area est 3} are due to \eqref{transversal slice}. From \eqref{metric comparison} and the assumption on $\Xi$ we conclude $\haus^{n,e}(\Xi \cap B^e_{2\tau}(0)) \leq c^2 \haus^{n,e}(\Gamma^{j,k}\cap B^e_{2\tau}(0))$, which together with \eqref{euclid.area est 1}, \eqref{euclid.area est 2} and \eqref{euclid.area est 3} gives us the estimate
\begin{align}
\max_t\haus^n(\Omega_t) - \haus^n(\Gamma^{j,k}) &\leq c\haus^{n,e}\big(\Omega_t \cap B^e_{2\tau}(0)\big) \nonumber\\
&\leq c^3\haus^{n,e}\big(\Gamma^{j,k} \cap B^e_{2\tau}(0)\big) + r \haus^{n-1,e}\big(\Gamma^{j,k} \cap \partial B^e_r(0)\big)
\end{align}
By \eqref{euclid.area est 4} we can find $r \in (\tau,2\tau)$ which, in addition to \eqref{transversal slice} (and consequently \eqref{loc. stretching to a cone}), satisfies
\begin{equation}\label{slice from coarea}
\haus^{n-1,e} \big(\Gamma^{j,k} \cap \partial B^e_r(0)\big) \leq \frac{2}{\tau} \haus^{n,e} \big( \Gamma^{j,k} \cap B^e_{2\tau}(0) \big).
\end{equation}
Hence,
\begin{equation}\label{final est 6.3(1)}
\max_t\haus^n(\Omega_t) \leq \haus^n(\Gamma^{j,k}) + (4+c^2)\haus^{n,e}\big(\Gamma^{j,k} \cap B^e_{2\tau}(0)\big).
\end{equation}
By a metric comparison argument similar to \eqref{metric comparison} relating the lenghts of curves inside $B^e_{2\tau}(0)$, we can obtain the inclusions $B_{\rho}(y) \subset \subset B^e_{\tau}(0) \subset B^e_{2\tau}(0) \subset \subset B_{\bar{c}\rho}(y)$, where the constant $\bar{c}$ depends on the metric, assuming of course that $\tau$ is initially chosen small enough. Next, by the convergence of $\Gamma^{j,k}$ to the stationary varifold $V^j$, we can choose $k_0$ such that
\begin{equation}\label{final est 6.3(2)}
\haus^{n,e}\big(\Gamma^{j,k}\cap B^e_{2\tau}(0)\big) \leq 2 ||V^j||(B_{\bar{c}\rho}(y)) \quad \text{for } k \geq k_0.
\end{equation}
Finally, by the monotonicity formula (see Theorem 3.4.(2) in \cite{allard-bdry}),
\begin{equation}\label{final est 6.3(3)}
||V^j||(B_{\bar{c}\rho}(y) \leq C_{\M}||V^j||(\M)\rho^n.
\end{equation}
By gathering the estimates \eqref{final est 6.3(1)}, \eqref{final est 6.3(2)}, and \eqref{final est 6.3(3)} (and having chosen $\tau$ small enough as instructed, depending only on $\M$), we deduce that if 
\begin{itemize}
\item $\rho$ is chosen small enough that
\begin{equation*}
2(4 + c^2)C_{\M}||V^j||(\M)\rho^n < \frac{1}{2^{m+2}j}
\end{equation*}
holds, 
\item $k_0$ is chosen large enough that \eqref{final est 6.3(2)} holds
\item and $r \in (\tau,2\tau)$ is fixed so that it satisfies \eqref{transversal slice} and \eqref{slice from coarea}, 
\end{itemize}
then we can construct $\{\Omega_t\}$ as above, concluding the proof. \qed

\subsection{Proof of Corollary \ref{c:regularity}}\label{s:differente_3} {\rm \bf Step 1. Minimality in the interior.} Again, we focus on the constrained problem, since the unconstrained problem is exactly the same.  
Strictly speaking, the conclusion of the corollary is new even in the interior, because in \cite{dl-t} the homotopic Plateau's problem was stated in the framework of Caccioppoli sets, i.e. not allowing multiplicities for our currents. We thus first show how to remove this technical assumption in the interior; in turn, the following argument also gives the needed technical adjustment to the arguments in \cite{dl-t} in order to show the interior regularity, cf. Remark \ref{r:differente_2}. 

\medskip

Fix $j \in \NN, y \in {\rm Int}\, (An)$, and let $B_{\rho}(y) \subset An$ be the ball given by Lemma \ref{replacements first step lemma} where we assume in addition $\rho < \dist(y,\partial \M)$. We will prove, by contradiction, that the integral current $Z^j$ (obtained as the weak limit of currents $\llbracket \Gamma^{j,k} \rrbracket$) is area minimizing in $B_{\rho/2}(y)$. Assume, therefore, it is not, and there exists an integral current $S$, with $\partial S = \gamma$, $S = Z^j$ on $\M \setminus B_{\rho/2}(y)$ and
\begin{equation}\label{competitor current}
\mathbf{M}(S) < \mass(Z^j) - \eta
\end{equation}
Since $\sup_k (\mass(\Gamma^{j,k}) + \mass(\gamma)) < \infty$, and therefore the weak and flat convergence are equivalent, we have the existence of currents integral $A_{j,k}$ and $B_{j,k}$ with
\begin{equation}
\Gamma^{j,k} - Z^j = \partial A_{j,k} + B_{j,k} \quad\mbox{and}\quad \mass(A_{j,k}) + \mass(B_{j,k}) \to 0
\end{equation}
In fact, considering that $\partial (\Gamma^{j,k} - Z^j)=0$, we can assume w.l.o.g. that $B_{j,k}=0$. By slicing theory, we can choose $\rho/2 < \tau < \rho$ and a subsequence (not relabeled) such that
\begin{equation}\label{slicing}
\partial (A_{j,k} \res B_{\tau}(y)) = (\partial A_{j,k}) \res B_{\tau}(y) + R_{j,k}, \quad \mass(R_{j,k}) \to 0
\end{equation}
where $\spt R_{j,k} \subset \partial B_{\tau}(y)$, and $R_{j,k}$ is integer multiplicity (cf. Fig. \ref{f:paste}).
Now, define the integer $n$-rectifiable current 
\begin{equation*}
S^{j,k} := S \res B_{\tau}(y) - R_{j,k} +  \Gamma^{j,k} \res (\M \setminus B_{\tau}(y)).
\end{equation*}
It is easy to check from the above that $\partial S^{j,k} = \gamma$. Moreover, from the weak convergence $\Gamma^{j,k} \rightharpoonup Z^j$ we get $\mass(Z^j\res B_{\tau}) \leq \liminf_{k \to \infty} \mass (\Gamma^{j,k}\res B_{\tau})$, and together with \eqref{competitor current}, \eqref{slicing}, this implies
\begin{equation}\label{mass gap}
\limsup_{j \to \infty} \big( \mass (S^{j,k}) - \mass(\Gamma^{j,k}) \big) \leq - \eta.
\end{equation}
We now proceed to approximate $S^{j,k}$ with smooth surfaces, which would by construction exhibit a similar gap in area (mass) with respect to $\Gamma^{j,k}$. The idea is to then apply Lemma \ref{replacements first step lemma}, thereby showing that these smooth surfaces belong to the class $\haus(\Gamma^j,An)$, and thus contradicting the minimality of the original sequence $\Gamma^{j,k}$.\\
\begin{figure}[htbp]
\begin{center}
\scalebox{0.5}{\input{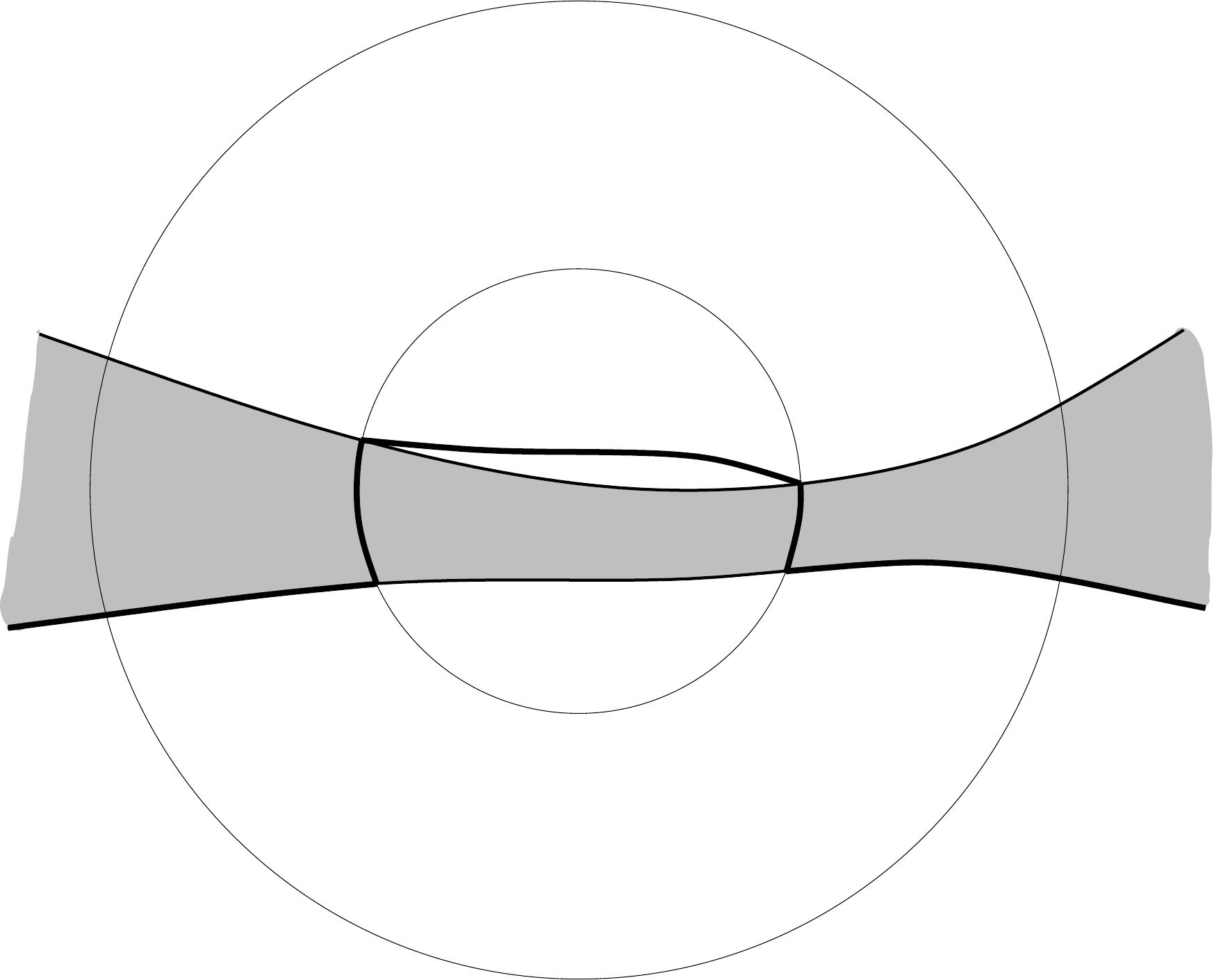_t}}
\end{center}
\caption{The cut-and-paste procedure to produce a suitable competitor.}\label{f:paste}
\end{figure}

Let us first fix $(a,b) \subset \subset (\tau,\rho)$ with the property that $\Gamma^{j,k} \cap An(y,a,b)$ is a smooth surface. Since $\partial [S^{j,k} \res B_b(y)] \subset \partial B_b(y)$, we can find an $n$-rectifiable current $\Xi$ with $\spt (\Xi) \subset \partial B_b(y)$ and $\partial \Xi = \partial [S^{j,k} \res B_b(y)]$. Taking $R=S^{j,k}\res B_b(y) - \Xi$ we apply 4.5.17 of \cite{federer} to find a decreasing sequence of $\haus^{n+1}$-measurable sets $\{U_i\}_{i=-\infty}^{\infty}$ (of finite perimeter in $B_b$) and use them to construct rectifiable currents
\begin{align}\label{current decomposition}\begin{split}
S_i^{j,k} = \partial \llbracket U_i \rrbracket \res B_b(y) \quad \text{with} \quad \spt \partial S_i^{j,k} \subset \partial B_b(y), \quad \text{and}\\
S^{j,k} \res B_b(y) = \sum_{i \in \ZZ}S_i^{j,k}, \qquad \mass(S^{j,k} \res B_b(y)) = \sum_{i \in \ZZ} \mass (S_i^{j,k}).
\end{split}
\end{align}
In fact, $R = \partial T$ where $T = \sum_{i=1}^{\infty}\llbracket U_i \rrbracket - \sum_{i = -\infty}^{0}\llbracket B_b(y) \setminus U_i \rrbracket$. Let us therefore define the integer valued function $f:B_b(y) \to \ZZ$ by
\begin{equation*}
 f := \sum_{i=1}^{\infty} \chi_{U_i} - \sum_{i=-\infty}^{0} \chi_{B_b\setminus U_i},
 \end{equation*}
where $\chi_{A}$ denotes the characteristic function of a set $A$. Because the sequence $\{ U_i \}_{i=-\infty}^{\infty}$ is decreasing, we see immediately that $U_i=\{x: \, f(x) \geq i\}$. In fact, $f$ is of bounded variation inside $B_b(y)$, which follows from \eqref{current decomposition} and the fact that (see Remark 27.7 in \cite{simon-gmt})
\begin{equation}\label{perimeter-mass}
\mass(\partial\llbracket U_i \rrbracket \res B_b(y)) = \int_{B_b(y)} | D \chi_{U_i}|.
\end{equation}

By recalling the standard way of approximating functions of bounded variation by smooth functions, we take a compactly supported convolution kernel $\varphi$ and consider the functions $f_{\epsilon} = f * \varphi_{\epsilon}$, for $\epsilon < \rho - b$ (hence $\spt f_{\epsilon} \subset B_{\rho}(y)$). Of course, $\int_{B_{\rho}} |Df_{\epsilon}| \to \int_{B_{\rho}} |Df| \text{ for } \epsilon \to 0$. If we define $U_{t,\epsilon}:=\{x: \, f_{\epsilon}(x) \geq t\}$, then by coarea formula
\begin{equation*}
\int |Df_{\epsilon}| = \int_{-\infty}^{\infty} \, dt \int |D \chi_{U_{t,\epsilon}}|.
\end{equation*}
By a simple argument, which essentially follows from Chebyshev's inequality applied to the function $f_{\epsilon}(x) - f(x)$ (see Lemma 1.25 in \cite{giusti}), we get $\chi_{U_{t,\epsilon}} \to \chi_{U_i}$ in $L^1$ for every $t \in (i-1,i), \,\, i \in \ZZ$. Taking a sequence $\epsilon_l \to 0$, and using the lower semicontinuity of the perimeter w.r.t $L^1$ convergence, we deduce
\begin{align}
\int_{B_b}|Df| &= \lim_{j \to \infty} \int_{B_b}|Df_{\epsilon_l}| \geq \int_{-\infty}^{\infty}dt \, \liminf_{l \to \infty} \int_{B_b} |D \chi_{U_{t,\epsilon_l}}| \nonumber\\
& \geq \sum_{i = -\infty}^{\infty}\int_{i-1}^i dt \, \int_{B_b} |D \chi_{U_i}| = \int_{B_b}|Df|.
\end{align}
Hence, for all $i \in \ZZ$ and almost all $t \in (i-1,i)$, $\liminf_{l \to \infty} \int_{B_b} |D \chi_{U_{t,\epsilon_l}}| \to \int_{B_b} |D \chi_{U_i}|$. Moreover, since almost all level sets are smooth by Sard's lemma, for all $i \in \ZZ$ we may choose a $t_i \in (i-1,i)$ such that:
\begin{itemize}
\item $\partial U_{t_i,\epsilon_l}$ is smooth;\\
\item $\liminf_{l \to \infty} \int_{B_b} |D \chi_{U_{t_i,\epsilon_l}}| \to \int_{B_b} |D \chi_{U_i}|$.
\end{itemize}
By choosing a diagonal subsequence (without relabeling), we can ensure that the $\liminf_{l \to \infty}$ is replaced by a $\lim_{l \to \infty}$. We now define a current 
\begin{equation*}\Delta^{j,k,l} = \sum_{i=-\infty}^{\infty} \partial \llbracket U_{t_i,\epsilon_l}\rrbracket \res B_b(y),
\end{equation*}
and note that it is induced by a smooth surface (for each $l \in \NN$), since it is composed of smooth level sets of a smooth function. Furthermore, the properties above together with \eqref{current decomposition} and \eqref{perimeter-mass} imply that  $\mass(\Delta^{j,k,l}) \to \mass(S^{j,k}\res B_b(y))$ as $l \to \infty$. 

We would now like to patch $\Delta^{j,k,l}$  with $\Gamma^{j,k}$ outside $B_b(y)$. For this, recall that $S^{j,k} \cap An(y,a,b) = \Gamma^{j,k} \cap An(y,a,b)$ is also a smooth surface. Therefore, fixing a regular tubular neighborhood $T$ of $S^{j,k}$ inside $An(y,a,b)$ and the corresponding normal coordinates $(\xi,\sigma)$ on it, we conclude that for $l$ sufficiently large (consequently $\epsilon_l$ sufficiently small), $T \cap \Delta^{j,k,l}$ is the set $\{\sigma = g_{\epsilon_l}(\xi)\}$ for some function $g_{\epsilon_l}$. Moreover, $g_{\epsilon_l} \to 0$ smoothly, as $l \to \infty$. Now, using a patching argument entirely analogous to the one of the freezing construction in Lemma \ref{freezing lemma} (one dimensional version) allows us to modify $\Delta^{j,k,l}$ to coincide with $S^{j,k}$ (and therefore $\Gamma^{j,k}$) in some smaller annulus $An(y,b',b) \subset An(y,a,b)$, without increasing the area too much.
Thus, observing the definition of $S^{j,k}$ and \eqref{mass gap}, we are able to construct currents $\Delta^{j,k}$ with the following properties:
\begin{itemize}
\item $\Delta^{j,k}$ is smooth outside of a finite set;
\item $\Delta^{j,k} \res (\M \setminus B_{\rho}(y)) = \Gamma^{j,k} \res (\M \setminus B_{\rho}(y))$ ;
\item $\limsup_k \big(\mass(\Delta^{j,k}) - \mass(\Gamma^{j,k})\big) \leq -\eta <0$.
\end{itemize}
For $k$ large enough, Lemma \ref{replacements first step lemma} tells us that $\Delta^{j,k} \in \haus(\Gamma^j , An)$, which would in turn imply that $\Gamma^{j,k}$ is not a minimizing sequence, thus closing the contradiction argument.\\

{\bf Step 2. Minimality at the boundary.} We are still left with proving the statement in case $y \in \gamma \subset \partial \M$. As before, we start with a competitor current $S$ and the assumption \eqref{competitor current}. As a matter of fact, we will reduce this to the previous case by constructing the current $S^{j,k}$, "pushing" it slightly towards the interior of $\M$, and then "attaching" to it a smooth layer which connects it to $\gamma$. If the mass of the resulting current is very close to the mass of $S^{j,k}$, we retain \eqref{competitor current} with a smaller constant, and proceed with smoothing as before. First, analogously to the above, we obtain the currents $S^{j,k}$ and \eqref{mass gap}. Choose $(a,b)\subset\subset(\tau,\rho)$ such that $\Gamma^{j,k}\cap An(y,a,b)$ (and hence also $S^{j,k}$) is a smooth surface with boundary $\gamma \cap An(y,a,b)$. Parametrize a tubular neighborhood $U_{\delta}(\partial \M) = \{x \in \M : \, |\text{dist}(x,\partial M)|<\delta\}$ of $\partial \M$ with the usual smooth diffeomorphism
\begin{equation*}
\Phi: \partial \M \times [0,\delta) \to U_{\delta}(\partial \M), \quad (t,s) \mapsto \Phi(t,s) = \expo_{t}(s \nu(t)),
\end{equation*}
where $\nu(t)$ is the inward pointing normal of $\partial \M$ at $t$. Let us denote by $N:= \gamma \times [0,\delta)$ the smooth hypersurface which meets $\partial \M$ orthogonally in $\gamma$. Next, we pick $a<a'<b'<b$ and slightly deform $S^{j,k}$ to make it coincide with $N$ in $An(y,a',b')\cap U_{\xi}(\partial \M)$ for some $\xi$ small enough. To do this, note for example that near $\gamma$, $S^{j,k}\cap An(y,a',b')$ is a graph of a function $g$ over $N$, due to the convexity assumption on $\M$. By considering $g(1-\psi)$, where $\psi$ is a suitable cutoff function supported in $An(y,a,b)\cap U_{2\xi}(\partial \M)$ and equal to $1$ in $An(y,a',b')\cap U_{\xi}(\partial \M)$, we obtain the desired surface. Furthermore, its area will be arbitrarily close to the area of $S^{j,k}$, provided $\xi$ is chosen small enough. Thus, w.l.o.g. we can assume
\begin{equation}\label{orthogonal surface N}
S^{j,k}=N \text{ in } An(y,a',b')\cap U_{\xi}(\partial \M), \, \text{ for some } \xi \text{ small enough.}
\end{equation}

\noindent We fix: 
\begin{itemize}
\item a smooth function $\varphi : [0,\infty) \to [0,\epsilon]$ \, such that $\varphi(0) = \epsilon, \quad \varphi(x) =0$ for $x \geq \sqrt{\epsilon}$, and $|\varphi'(x)| \leq C \sqrt{\epsilon}$ \,(where $\epsilon$ will be fixed later);
\item a smooth function $\eta: \partial \M \to [0,1]$ \, such that $\eta(t)=1$ for $t \in \partial \M \cap B_{a'}(y)$ and $\eta(t)=0$ for $t \in \partial \M \setminus B_{b'}(y)$.
\end{itemize}
Consider now the map
\begin{equation}
\Psi(x):=\left\{ \begin{array}{ll}
(t,s) \mapsto (t, s + \varphi(s)\eta(t)) & \mbox{for $x=(t,s) \in U_{\sqrt{\epsilon}}(\partial \M)$};\\
\text{Id} & \mbox{for $x \in \M \setminus U_{\sqrt{\epsilon}}(\partial \M)$}.
\end{array} \right.
\end{equation}
If $\epsilon<\delta^2$ is small enough that $|\varphi'(x)|<1$, we ensure that $s \mapsto s + \varphi(s)\eta(t)$ is monotone increasing, and $\Psi: \M \to \M$ is a well defined, smooth, proper map, with a Lipschitz constant $1 + O(\sqrt{\epsilon})$. This means that we can push forward the current $S^{j,k}$ to obtain $\Psi_{\#}(S^{j,k})$ with a (possibly) small gain in mass, and with $\partial (\Psi_{\#}(S^{j,k})) =\Psi_{\#}(\partial S^{j,k}) = \Psi_{\#}(\gamma)$ being a smooth submanifold of $N$. It is now obvious that, by attaching to it a smooth surface $\gamma |_{\spt(\eta)} \times [0,\epsilon \eta(t))$ with mass $O(\epsilon)$ (and the proper orientation assigned), we are able to construct a current $\tilde{S}^{j,k}$ with $\partial \tilde{S}^{j,k} = \gamma$, $\tilde{S}^{j,k}\setminus B_{\rho}(y) = \Gamma^{j,k}\setminus B_{\rho}(y)$ and with $\mass(\tilde{S}^{j,k})$ arbitrarily close to $\mass(S^{j,k})$. Moreover, it follows from the construction and \eqref{orthogonal surface N} that $\tilde{S}^{j,k}$ is smooth in $U_{\epsilon}(\partial \M)\cap B_b(y)$ (in fact, it coincides with $N$ in $U_{\epsilon}(\partial \M)\cap B_{b'}(y)$).

\noindent We can now repeat the smoothing procedure from the previous case, centered around the point $y' = \Psi (y) \in \tilde{S}^{j,k}$, with one modification; we may not be able to actually choose (metric) balls around $y'$ with some radii $\tilde{a},\tilde{b}$, contained in Int($\M$) such that $\tilde{S}^{j,k}\cap An(y',\tilde{a},\tilde{b})$ is smooth, as before. Nevertheless, it follows from the above that we may choose some open neighborhoods $V_a(y') \subset \subset V_b(y') \subset \subset B_b(y)$ diffeomorphic to balls, such that this is true. All the arguments can be easily modified for this case, and we reach a contradiction once again.\\

{\bf Step 3. $\bm{Z^j = V^j}$.} We first show that $\mass(\Gamma^{j,k})$ converges to  $\mass(Z^j)$. Indeed, if this were not the case, we would have
\begin{equation*}
\mass(Z^j\cap B_{\rho/2})(y) < \limsup_{k \to \infty} \mass(\Gamma^{j,k} \cap B_{\rho/2})(y)
\end{equation*}
for some $y \in An$ and some $\rho$ to which we can apply the conclusion of Lemma \ref{replacements first step lemma}. We can then use $Z^j$  instead of $S$ in the beginning of this proof to once again contradict the minimality of the sequence $\{\Gamma^{j,k}\}_{k \in \NN}$. The convergence of the mass is then a simple consequence of the following well known fact

\begin{lemma}\label{currents-varifolds}
Let $ V^j $ be a sequence of rectifiable currents in $\M$ such that
\begin{enumerate}[(i)]
\item $V^j \rightharpoonup V$ in the flat norm;
\item $\mass(V^j) \to \mass (V)$.
\end{enumerate}
Let the rectifiable varifolds $W^j$ associated to $V^j$ converge to a (rectifiable) varifold $W$. Then $W$ is the varifold associated to the rectifiable current $V$.
\end{lemma}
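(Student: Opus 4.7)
The plan is to first establish that the weight measures $\|V^j\|$ converge to $\|V\|$ in the sense of Radon measures, then identify the limit varifold $W$ with the canonical varifold associated to $V$ via a rectifiability argument. I would begin by recording that for a rectifiable current $V^j = \tau(M^j,\theta^j,\xi^j)$ the weight of the associated varifold $W^j = \mathrm{v}(M^j,|\theta^j|)$ coincides with the weight of the current, namely $\|W^j\| = \|V^j\| = |\theta^j|\haus^n\res M^j$. Hence the assumed varifold convergence $W^j \to W$ immediately gives $\|V^j\| \to \|W\|$ as Radon measures, since varifold convergence (tested against functions on $G^n(\M)$ depending only on position) already implies convergence of the weights.

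Next I would invoke the standard lower semicontinuity of mass under flat convergence to obtain, for any open $U \subset \M$,
\[
\|V\|(U) \;\leq\; \liminf_j \|V^j\|(U) \;=\; \|W\|(U).
\]
Combined with the global equality $\|V\|(\M) = \lim_j \mass(V^j) = \|W\|(\M)$ provided by hypothesis (ii), a routine outer-regularity argument upgrades this to $\|V\|(E) \leq \|W\|(E)$ on every Borel set $E$, and then, using the equal total masses, to $\|V\| = \|W\|$ as Radon measures on $\M$.

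At this point both $W$ and the varifold $|V|$ associated to the rectifiable current $V = \tau(M,\theta,\xi)$ are rectifiable $n$-varifolds with the identical weight $|\theta|\haus^n\res M$. The structure theorem for rectifiable varifolds then forces the underlying $n$-rectifiable carriers to coincide $\haus^n$-a.e., the multiplicities to agree $\haus^n$-a.e., and the tangent planes to be, at $\haus^n$-a.e. point, the unique approximate tangent plane of the common carrier. Consequently $W = |V|$ as varifolds, which is the desired conclusion.

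The step I expect to be the main subtlety is ensuring that $\|V^j\|$ really converges to $\|V\|$ with no loss of mass in the limit: flat convergence by itself only provides lower semicontinuity on open sets, so the mass hypothesis (ii) is essential and must be combined with outer regularity of Radon measures to promote the inequality on open sets to equality on Borel sets. Once this weight-measure identification is in hand, the conclusion that $W$ is the varifold associated to $V$ is a direct consequence of the fact that a rectifiable $n$-varifold is determined by its weight measure.
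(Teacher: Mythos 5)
The paper states this lemma without proof, as a ``well known fact,'' so there is no paper argument to compare against; I judge your proof on its own merits. The overall strategy is the right one: reduce to showing $\|V\|=\|W\|$ as Radon measures and then invoke that a rectifiable $n$-varifold is uniquely determined by its weight. However, the specific chain
\[
\|V\|(U)\;\leq\;\liminf_j\|V^j\|(U)\;=\;\|W\|(U)
\]
contains a genuine error: weak-$*$ convergence $\|V^j\|\to\|W\|$ only gives $\liminf_j\|V^j\|(U)\geq\|W\|(U)$ for open $U$ (portmanteau); the reverse inequality, and hence the claimed equality, can fail. For instance, with $\M=[0,1]^2$, $V^j=\llbracket\{1/j\}\times[0,1]\rrbracket$, $V=\llbracket\{0\}\times[0,1]\rrbracket$ and $U=(0,1]\times[0,1]$, one has $\liminf_j\|V^j\|(U)=1$ while $\|W\|(U)=0$. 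So, as written, the step does not deliver $\|V\|\leq\|W\|$ on open sets.

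The fix is short and keeps your strategy intact. From lower semicontinuity of mass on open sets ($\|V\|(U)\leq\liminf_j\|V^j\|(U)$ for open $U$) together with hypothesis~(ii), one also gets upper semicontinuity on compacta: for compact $K\subset\M$,
\[
\limsup_j\|V^j\|(K)=\lim_j\|V^j\|(\M)-\liminf_j\|V^j\|(\M\setminus K)\leq\|V\|(\M)-\|V\|(\M\setminus K)=\|V\|(K).
\]
These two estimates are exactly the portmanteau characterization of $\|V^j\|\to\|V\|$ weakly-$*$ as Radon measures. Since $\|V^j\|=\|W^j\|\to\|W\|$ weakly-$*$ by the varifold convergence, uniqueness of weak-$*$ limits gives $\|V\|=\|W\|$. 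Then your final step (a rectifiable $n$-varifold is determined by its weight measure, because the carrier, multiplicity and approximate tangent planes are all recoverable from it) correctly concludes that $W$ is the varifold induced by $V$. In short: you want to prove $\|V^j\|\to\|V\|$ directly and then identify the two weak limits, rather than trying to compare $\liminf\|V^j\|(U)$ with $\|W\|(U)$ directly on each open set.
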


{\bf Step 4. Regularity and stability.} In the constrained case the regularity in the interior follows from the standard theory for area-minimizing currents, see for instance \cite{simon-gmt}. The regularity at the boundary follows instead from \cite{allard-bdry} because $\partial \M$ is uniformly convex (actually \cite{allard-bdry} deals with the case where $\M$ is a subset of the Euclidean space, but the modifications to handle the case of a general Riemannian manifold are just routine ones). In the unconstrained case, the regularity is proved in Gr\"uter's work \cite{Gruter1} (here again, the arguments, given in the euclidean setting, can be easily adapted to deal with the general Riemannian one).

The minimality and stability of the surfaces $\bar\Gamma^j$ (together with the condition that they meet orthogonally $\partial \M$ in the unconstrained case) are obvious consequences of the minimality property. \qed

\subsection{Proof of Proposition \ref{existence of replacements}} We can now use the compactness theorems in Section \ref{s:stability} to show that the varifold $\tilde{V}$ has all the regularity properties required by Definition \ref{d:replacement}. In the unconstrained case we use Theorem \ref{t:SS-GY}, whereas in the constrained case we use Theorem \ref{t:SS-bdry}. Note that we can apply the latter theorem thanks to Lemma \ref{wedge property}. As for the remaining claims, the arguments are the same as in \cite{dl-t}. \qed

\section{Proof of Theorem \ref{t:Bdry_regularity} and Theorem \ref{t:main}}\label{s:bdry_reg}

Clearly Theorem \ref{t:main} is a direct consequence of Theorem \ref{t:Bdry_regularity} and Proposition \ref{a.m. in annuli}. Thus from now on we focus on Theorem \ref{t:Bdry_regularity}: we fix a varifold $V$ as in there and we want to prove that it is regular. In particular, we already know that $V$ is regular in the interior. Moreover,
\begin{itemize}
\item[(a)] In the constrained case we know that $\supp (V)\cap \partial \M \subset \gamma$. We thus need to show the regularity of $V$ at {\em any} point $p\in \gamma$, and more precisely that for every $p\in \gamma$ there is a neighborhood $U$ such that $V$ is a regular minimal surface $\Gamma$ in $U$ counted with multiplicity $1$, such that $\partial \Gamma = \gamma$ (in $U$).
\item[(b)] In the unconstrained case we need to show that, with the exception of a closed set of dimension at most $n-7$, for any $p\in \partial \M$ there is a neighborhood $U$ such that $V$ is a regular minimal surface $\Gamma$ in $U$ (counted with integer multiplicity, not necessarily $1$) which meets $\partial \M$ orthogonally. 
\end{itemize}

\subsection{Tangent varifolds and integrality}\label{s:vartan} We already know, in the constrained case, that $V$ is an integer rectifiable varifold and that $\|V\| (\partial \M) = 0$. In the unconstrained case we know the integrality of $V$ in ${\rm Int}\, (\M)$. We now wish to show that $\|V\| (\partial \M) =0$ even in this case. 

Fix a point $p\in \partial \M$ where $\Theta (V, p) > 0$ and consider the standard blow-up procedure of Lemma \ref{l:bu}. The upper bound on the density provided by Proposition \ref{p:GJ-monot} ensures that any sequence of rescaled varifolds $V_{p, r_k}$ have locally uniformly bounded mass and thus converges, up to subsequences, to some tangent varifold $W$: recall that ${\rm Tan}\, (V, p)$ denotes the set of such tangent varifolds. We fix one of them, say $W$, together with a converging sequence of rescalings $V_{p, r_k}$. $W$ is supported in $T_p \M$. Indeed the hyperplane $T_p \partial \M$ divides $T_p \M$ in two connected components $\pi^+$ and $\pi^-$ and $W$ is supported in the closure of one of them, say $\pi^+$.

\medskip

By Proposition \ref{p:GJ-monot} and standard arguments we conclude that
\begin{itemize}
\item[i)] $\delta W (\chi) =0$ for every smooth compactly supported vector field on $T_p \M$ tangent to $T_p \M$;
\item[(ii)] $\rho^{-n} \|W\| (B_\rho (0)) = \sigma^{-n} \|W\| (B_\sigma (0)) = \omega_m \Theta (V, p)$ for every $\sigma, \rho > 0$. 
\end{itemize}
Thanks to Proposition \ref{existence of replacements}, there exists a varifold $\tilde{V}_k$ which is a replacement for $V$ in the annulus $An (p, r_k, 2r_k)$. Rescaling such a replacement suitably we get a second varifold $\bar V_k$ which is a replacement for $V_{p, r_k}$ in $\iota_{p,r_k} (An (p, r_k, 2r_k))$. In particular, by the compactness Theorem \ref{t:SS-GY} (in the appropriately modified version discussed in Section \ref{s:changing_manifolds}) we obtain the convergence of $\bar V_k$ to a replacement $\bar W$ for $W$. Now, the latter replacement has the property that it is regular in $B_2 (0)\setminus \overline{B}_1 (0) \subset T_p \M$ and meets $T_p \partial \M$ orthogonally. Moreover, by the property of the replacement we must have
\[
\|\bar W\| (B_{1/2} (0)) =\|W\| (B_{1/2} (0)) \qquad \mbox{and} \qquad \|\bar W\| (B_{5/2} (0)) = \|W\| (B_{5/2} (0))\, .
\]
Using (ii) above and Lemma \ref{l:GJ-again} we conclude that
\begin{equation}\label{e:monot_const}
\sigma^{-n} \|\bar W\| (B_\sigma (0)) = \rho^{-n} \|\bar W\| (B_\rho (0)) = \omega_m \Theta (V, p) \qquad \forall \sigma, \rho > 0\, .
\end{equation}
In particular we must have that $\supp (\|\bar W\|) \cap \partial B_r (0) \neq \emptyset$ for every $r>0$: otherwise we would find an annulus $B_{r+\varepsilon} (0) \setminus B_{r-\varepsilon} (0)$ which does not intersect $\supp (\|\bar W\|)$, implying in turn that $\|\bar W\| (B_{r+\varepsilon} (0)) = \|\bar W\| B_{r-\varepsilon} (0)$, which would contradict \eqref{e:monot_const}. 

Fix now a point $q\in B_{3/2} (0) \cap \bar W$. Since $\bar W$ is regular in $B_2 (0) \setminus \overline{B}_1 (0)$, clearly $\Theta (\bar W, q) \geq \frac{1}{2}$. Using the monotonicity formula in $B_{1/2} (q)$ we then conclude that 
\[
\|\bar W\| (B_{1/2} (q)) \geq c (n) > 0\, ,
\]
where $c(n)$ is a geometric constant. In particular we have obtained a uniform lower bound for the density $\Theta (V, p)$, namely we have
\begin{equation}\label{e:uniform_LB}
\Theta (V, p) \geq c (n) > 0 \qquad \forall V\in \supp (\|V\|)\, .
\end{equation}
In turn, by standard arguments (cf. \cite[Chapter 8]{simon-gmt}: in place of the usual monotonicity formula we can use Proposition \ref{p:GJ-monot}), we conclude that, if $p\in \supp (\|V\|\cap \partial \M)$, any tangent varifold $W$ has the same uniform lower bound on the density. The classical Allard's rectifiability theorem implies that $V$ is then rectifiable ``in the interior'', namely in $\pi^+$ (cf. again \cite[Chapter 8]{simon-gmt}). On the other hand we also have that $\|V\|\res T_p \partial \M
= \Theta \mathcal{H}^n \res T_p \partial \M$ and we can use the same argument as in the proof of Corollary \ref{c:Gru-Jost-rect} in order to show that indeed $W$ is rectifiable {\em everywhere}. Finally, using the Gr\"uter-Jost monotonicity formula and arguing as in \cite[Chapter 8]{simon-gmt}, the property (ii) together with the rectifiability imply that $W$ is indeed a cone.

\medskip

Going back to the replacement $\bar W$, for the same reason we can argue that $\bar W$ is a cone and thus conclude
that $W$ itself is regular in the punctured plane $T_p \M \setminus \{0\}$. Moreover, by the considerations in \cite{Gruter1} and \cite{gru-jost}, the reflection of $W$ along $T_p \partial \M$ gives a stable minimal hypercone in $T_p \M \setminus \{0\}$, regular up to a set of codimension at least $7$. Finally, see for instance \cite{SS}, since the origin has zero $2$-capacity, such a cone turns out to be stable on the whole $T_p \M$. In particular, by the classical result of Simons, the cone is in fact a hyperplane if $n\leq 6$. 

Before going on, we observe that the argument above applies literally in the same way to the constrained case as well. We conclude that $W$ is a cone $C$ in $T_p \M \setminus \{0\}$ with the property that $\partial C = T_p \gamma$. In particular we conclude that $C$ is a multiplicity $1$ half-hyperplane and indeed, by the Wedge property of Lemma \ref{wedge property}, $C$ meets $T_p \partial \M$ transversally. 

We summarize our conclusions in the following

\begin{lemma}\label{tangent cones}
Let $V$ be as in Theorem \ref{t:Bdry_regularity}, $p$ a point in $\partial \M$ and $W\in {\rm Tan}\, (V, p)$ a tangent varifold.
\begin{itemize}
\item[(i)] In the constrained case $W=0$ unless $p\in \gamma$ and if $p\in \gamma$ then $W$ is a half hyperplane of $T_p\M$, counted with multiplicity $1$, which meets $T_p \partial \M$ transversally at $T_p \gamma$.
\item[(ii)] In the unconstrained case $W$ is a minimal hypersurface $\Xi$ meeting $T_p \partial \M$ orthogonally, which is half of a stable minimal cone in $T_p \M$ (counted with multiplicity), regular up to a set of dimension at most $n-7$. When $n\leq 6$, $\Xi$ is half of a hyperplane meeting $T_p \partial \M$ orthogonally. 
\end{itemize}
\end{lemma}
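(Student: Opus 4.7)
The strategy is essentially to combine the blow-up procedure of Lemma \ref{l:bu} with the existence of replacements in Proposition \ref{existence of replacements}, exploiting the compactness theorems of Section \ref{s:stability} to pass to the limit. First, I would dispose of the easy case: in the constrained setting, if $p\in \partial\M\setminus \gamma$, then the maximum principle (Proposition \ref{p:White-max}) forces $\supp(V)\cap \partial \M\subset \gamma$, so $p\notin \supp(V)$ and the tangent varifold $W$ is trivially $0$. Thus we may assume $p\in \gamma$ in the constrained case. In both remaining cases, Proposition \ref{p:GJ-monot} (respectively Proposition \ref{p:All-monot}) gives us that any sequence of rescalings $V_{p,r_k}$ has locally uniformly bounded mass, hence extracts a subsequential limit $W\in \Tan(V,p)$ supported in the tangent half-space $T_p^+\M$.

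Next, I would apply Proposition \ref{existence of replacements} to get, for each $r_k$, a replacement $\tilde V_k$ of $V$ in the annulus $\An(p,r_k,2r_k)$. Rescaling via $\iota_{p,r_k}$ produces varifolds $\bar V_k$ which are replacements of $V_{p,r_k}$ in $\iota_{p,r_k}(\An(p,r_k,2r_k))$, and each $\bar V_k\res \iota_{p,r_k}(\An(p,r_k,2r_k))$ is a stable smooth minimal hypersurface (away from a singular set of dimension $\leq n-7$). The compactness results of Section \ref{s:stability} apply: in the unconstrained case via Theorem \ref{t:SS-GY}, and in the constrained case via Theorem \ref{t:SS-bdry}, the latter being applicable thanks to the wedge property of Lemma \ref{wedge property}, together with the obvious variant for varying ambient manifolds described in Section \ref{s:changing_manifolds} (since $\M_{p,r_k}$ smoothly approaches $T_p\M$). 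This gives a varifold $\bar W$ which is a replacement for $W$ in the annulus $B_2(0)\setminus \overline{B}_1(0)\subset T_p\M$, regular there, meeting $T_p\partial \M$ orthogonally (unconstrained) or with boundary $T_p\gamma$ (constrained).

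The main technical step is now to show that $W$ is a cone. By the monotonicity formula at the vertex, the ratio $r\mapsto \|W\|(B_r(0))/r^n$ (possibly modified by the reflection trick of Gr\"uter--Jost in the unconstrained case) is constant, since the varifold tangent to $V$ at $p$ realizes the limit of this monotone quantity at two different scales. Moreover, by the replacement property, $\|\bar W\|(B_{1/2}(0)) = \|W\|(B_{1/2}(0))$ and $\|\bar W\|(B_{5/2}(0)) = \|W\|(B_{5/2}(0))$; Lemma \ref{l:GJ-again} then propagates this constancy to give $\sigma^{-n}\|\bar W\|(B_\sigma(0)) = \omega_n\Theta(V,p)$ for all $\sigma > 0$. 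Since $\bar W$ is regular in the annulus, standard density estimates at a regular point combined with the monotonicity formula yield the uniform lower bound $\Theta(V,q)\geq c(n)>0$ at every $q\in \supp\|V\|\cap \partial\M$. This uniform lower density bound triggers Allard's rectifiability criterion (see \cite[Chapter 8]{simon-gmt}) and the argument of Corollary \ref{c:Gru-Jost-rect} to conclude that $W$ is rectifiable everywhere, and then the equality case in the monotonicity formula upgrades $W$ from rectifiable to a cone. The hard part is really to keep track simultaneously of the density normalization and of the fact that $\bar W$ retains the same mass as $W$ on balls straddling the annulus, which is precisely what Lemma \ref{l:GJ-again} was set up for.

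Finally, I would extract the structural conclusions. In the constrained case, the cone $W$ is a stable minimal cone in $T_p\M$ with boundary $T_p\gamma$ contained in a wedge of opening angle $<\pi/2$ (by Lemma \ref{wedge property}); the classical analysis of minimal cones bounded by a hyperplane in a wedge, or more directly Allard's boundary regularity \cite{allard-bdry} applied to the cross-section, forces $W$ to be a multiplicity-one half-hyperplane transversal to $T_p\partial\M$. In the unconstrained case, I reflect $W$ across $T_p\partial \M$: the orthogonality of the contact guarantees that the reflected varifold is stationary and stable for variations in the whole $T_p\M$, and regular outside a set of dimension $\leq n-7$ thanks to Schoen--Simon. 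Since the origin has zero $2$-capacity, stability extends across $0$, and Simons' classification of stable minimal hypercones in $\mathbb R^{n+1}$ for $n\leq 6$ yields that the cone is a hyperplane, whence $\Xi$ is half of a hyperplane meeting $T_p\partial\M$ orthogonally.
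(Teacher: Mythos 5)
Your proposal follows essentially the same route as the paper: blow up via Lemma \ref{l:bu}, use the replacement machinery of Proposition \ref{existence of replacements} together with the compactness theorems of Section \ref{s:stability} (with varying ambient manifolds) to get a replacement $\bar W$ for the tangent varifold $W$, deduce the constancy of the density ratio via Lemma \ref{l:GJ-again}, use it to extract a uniform lower density bound and rectifiability, upgrade to a cone, and finally classify the cone via reflection and Schoen--Simon/Simons in the unconstrained case, and via the wedge property and Allard's boundary regularity in the constrained case. The one step you leave implicit and should make explicit is the identification $W = \bar W$: once both $W$ and $\bar W$ are shown to be cones with the same density ratio at the origin and they agree outside the open annulus $\an(0,1,2)$, the dilation invariance forces them to coincide everywhere. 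This is what transports the regularity of $\bar W$ on the annulus to regularity of $W$ on the whole punctured half-space, and without it the application of the Schoen--Simon reflection argument (which presupposes $W$ is a surface) and of Allard boundary regularity to the cross-section would be circular. A second small point: in the constrained case the multiplicity-one conclusion is really inherited from the replacement (Corollary \ref{c:regularity} guarantees multiplicity one for components of $\bar W$ meeting $\gamma$), and it is the cone structure that propagates this to the vertex; citing Allard's cross-sectional regularity as the sole reason for multiplicity one slightly obscures where the input actually comes from. Apart from these two points of bookkeeping, the argument is the paper's.
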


Next, in the unconstrained case the lemma above implies that $\|W\| (T_p \partial \M) =0$. In particular, since $\partial \M$ is a closed subset of $\M$, we easily conclude that
\[
\lim_{k\to\infty} r_k^{-n} \|V\| (\partial \M \cap \overline{B}_{r_k} (p))
= \lim_{r\downarrow 0} \|V_{p, r_k} \| (\iota_{p,r_k} (\partial \M \cap \overline{B}_{r_k} (p)) \leq
\|W\| (T_p \partial \M \cap \overline{B}_1) =0\, . 
\]
Therefore,
\[
\lim_{r\downarrow 0} r^{-n} \|V\| (\partial \M \cap B_r (p)) = 0\qquad \mbox{for every $p\in \partial M$,}
\]
which in turn implies easily $\|V\| (\partial \M) = 0$. 

In particular we have concluded that $V$ is integral even in the unconstrained case. 

\subsection{Regularity in the constrained case} In the constrained case, Lemma \ref{tangent cones} implies that we fall under the 
assumptions of Allard's boundary regularity theorem for stationary varifolds: $V$ is therefore regular at every point $p\in \gamma$, which completes the proof.

\subsection{Unconstrained case: regularity in the punctured ball} Our first goal is to show that, if $p\in \partial \M$, then there is a radius $r$ such that $V$ is regular {\em up to the boundary} in the punctured ball $B_\rho (p)\setminus \{p\}$ (except for a singular set of dimension at most $n-7$). 

\medskip

We first fix $p\in \partial\M$ and we then choose $r>0$ so that $V$ is a.m. in any annulus centered at $p$ and with outer radius smaller than $r$. Fix next a $\rho>0$ with $4\rho <r$ and recall that we do know that $V$ is regular in the interior
of $An (p, \rho, 4\rho)$, namely in $An (p, \rho, 4\rho)\setminus \partial \M$. Enumerate next the connected components 
$\Gamma_1, \ldots, \Gamma_i\, \ldots$ of $(An (p, \rho, 4\rho) \setminus \partial \M) \cap \supp (\|V\|)$ (which might be infinitely many). Fix any point $q\in \Gamma_i$ and consider a small ball $B_{\tilde{\sigma}} (q)\subset An (p, \rho, 4\rho)\setminus \partial \M$ so that $B_\sigma (q) \cap \Gamma_i$ is (diffeomorphic to) an $n$-dimensional ball for every $\sigma < \tilde\sigma$. Let $s$ be the distance between $p$ and $q$ and observe that, by the classical maximum principle, there is a positive $\sigma$ such that $\partial B_t (p)$ must intersect $\Gamma_i \cap B_\sigma (q)$ for every $t\in ]s, s+\sigma[$. Moreover, for a.e. $t$ such intersection must be transversal by Sard's Lemma. Let $t$ be any such radius, let $\tilde{q}\in \partial B_t \cap \Gamma_i \cap B_\sigma (q)$ and let $\gamma$ be a curve connecting $\tilde{q}$ and $q$ in $\Gamma_i \cap B_\sigma (q)$. Without loss of generality, by possibly changing the point $\tilde{q}$, we can assume that $\gamma$ is contained in $B_t (p)\setminus \overline{B}_{s-\sigma} (p)$ (except for the endpoint $\tilde{q}$). 

Consider now a replacement $V'$ for $V$ in the annulus $An (p, \rho, t)$. It turns out that $\tilde{q}\in \supp (\|V\|)$ necessarily. On the other hand $V'$ is a.m. in annuli and thus it is regular in the interior, namely in $\M \setminus  \partial \M$: more precisely it can have only singularities of codimension at most $7$ and at a singular point any tangent cone must be singular. This is certainly not the case for $\tilde{q}$ because ``outside of $B_t (q)$'' $V'$ is regular in a neighborhood of $\tilde{q}$ and meets $\partial B_t (q)$ transversally: in particular we know that any tangent cone of $V'$ at $\tilde{q}$ must contain half of an hyperplane. Since such tangent cone is stable and regular, except for a singular set of dimension $n-7$, we conclude that any tangent cone to $V'$ at $\tilde{q}$ is indeed the (same!) hyperplane. 

Hence $\tilde{q}$ is a regular point for $V'$ as well. Let now $\tilde{\Gamma}$ be the connected component of $\supp (\|V'\|)\cap (An (p, \rho, 4\rho)\setminus \partial \M)$ which contains $\tilde{q}$. By unique continuation, $\tilde{\Gamma}$ must in fact contain $\Gamma_i \cap B_\sigma (q)$. Again by unique continuation we conclude that the connected component $\tilde{\Gamma}$ and $\Gamma_i$ must coincide. On the other hand, because of the properties of the replacement, $\tilde{\Gamma}$ is regular up to the boundary in $An (p, \rho, t)$. 

\medskip

Since $q$ can be chosen arbitrarily close to $\sup \{d (p,p'): p' \in \Gamma_i$, we conclude that in fact $\Gamma_i$ is regular up to the boundary on the whole annulus $An (p, \rho, 4\rho)$. Now, by the monotonicity formula, we conclude immediately that, if $\Gamma_i$ contains a point in $An (p, 2\rho, 3\rho)$ (no matter whether such point is in the interior or in the boundary), then its $n$-dimensional volume is bounded from below by $c(n) \rho^n$: in particular there are only finitely many $\Gamma_i$'s which intersect $An (p, 2\rho, 3\rho)$.
For simplicity we will assume that they are the first $N_0$ in the chosen enumeration.

Recall that the singular sets $S_i := {\rm Sing}\, (\overline{\Gamma}_i\cap An (p, 2\rho, 3\rho))$ have dimension at most $n-7$. Consider a boundary point $q\in \partial \M \cap \overline{\Gamma}_i \cap An (p, 2\rho, 3\rho)$ which is regular for $\Gamma_i$ and at the same time does not belong to any other $S_j$. If $q$ were in the closure of some other $\Gamma_j$, then it would be a regular point for $\Gamma_j$ as well. Recall that $\Gamma_i$ and $\Gamma_j$ cannot cross in the interior and that they meet $\partial \M$ orthogonally. In particular we would necessarily have that $\Gamma_i$ and $\Gamma_j$ have the same tangent at $q$. However this would violate the maximum principle. 

\medskip

Consider now a point $p\in \supp (\|V\|)\cap An (p, 2\rho, 3\rho)\cap \partial \M$. Since $\|V\| (\partial \M)=0$ and since the $\Gamma_i$'s intersecting $An (p, 2\rho, 3\rho)$ are finitely many, we must necessarily have that $p\in \overline{\Gamma}_i$ for some $i$.

Summarizing we have concluded so far that
\[
\supp (\|V\|)\cap An (p, 2\rho, 3\rho) \subset \bigcup_{i=1}^{N_0} \overline{\Gamma}_i\, 
\]
and 
\[
\mbox{any point } q\in (An (p, 2\rho, 3\rho) \cap \overline{\Gamma}_i)\setminus \bigcup_{i=1}^{N_0} S_i 
\]
is a regular point for $V$.

Clearly, these two properties together imply that $V$ is regular in $An (p, 2\rho, 3\rho)$ (except for the usual closed set of dimension at most $n-7$). Since the argument is valid for any $\rho < \frac{r}{4}$, we easily conclude the regularity of $V$ in a punctured ball. 

\subsection{Unconstrained case: removing singular points for $n\leq 6$}\label{s:remove} From the previous step and by a simple covering argument,
we conclude that the set of singular points at the boundary is at most finite when $n\leq 6$. We now wish to remove said points. Again the argument is a suitable variant of the argument which deals with the same issue in the interior (cf. \cite{dl-t}). Consider the smooth surface $\Gamma$ (counted with multiplicity) which gives the varifold $V$ in $B_r (p)\setminus \{p\}$. If we choose $r$ sufficiently small, by Lemma \ref{tangent cones}, for every $\rho<r$ we know that the rescalings $\iota_{p, \rho} (\Gamma)$ are $\varepsilon$ close, in the varifold sense and in the annulus $\iota_{p,\rho} (An (p, \rho/8, 4\rho))$, to a varifold of the form $\Theta (V, p) \pi (\rho)$ where $\pi (\rho) \subset T_p \M$ is a half-hyperplane meeting $T_p \partial \M$ orthogonally. We can also assume that the tilt between $\pi (\rho)$ and $\pi (2\rho)$ is smaller than $\varepsilon$, provided $r$ is chosen even smaller. 

By the compactness Theorem \ref{t:SS-GY} (again, in the more general version where the ambient manifolds can change, cf. Section \ref{s:changing_manifolds}), if $r$ is sufficiently small and $\rho<r$, then $V_{p, \rho} \res \iota_{p, \rho} (An (p, \rho/4, 2\rho))$ consists of finitely many Lipschitz graphs $\Gamma_1 (\rho), \ldots , \Gamma_k (\rho)$ over $\pi (\rho)$, with controlled Lipschitz constant (say, at most $1$), each counted with multiplicity $m_i$. The same then holds for $V \res An (p, \rho/4, 2\rho)$. Moreover since the tilt between $\pi (\rho)$ and $\pi (\rho/2)$ is small, we easily conclude that the numbers of connected components in $An (p, \rho/8, \rho)$ is the same, that they can be ordered so that $\Gamma_i (\rho)$ and $\Gamma_i (\rho/2)$ overlap smoothly and that the corresponding multiplicities are the same.

We can repeat the above argument over dyadic radii $\rho 2^{-j}$ and we conclude that $V \res (B_\rho (p)\setminus \{p\}$ consists of finitely many connected components $\Gamma_i$ counted with multiplicity $m_i$, which are topologically punctured $n$-dimensional balls, smooth up to $\partial \M$. Taking one such connected component and removing the multiplicity, we get a multiplicity $1$ varifold in $B_\rho (p)$ which is stationary for the free boundary problem and has flat tangent cones at $p$, with multiplicity $1$. This falls therefore under the assumptions of the Allard's type theorem proved by Gr\"uter and Jost in the paper \cite{gru-jost-1}, from which we conclude that $p$ is a regular point. Hence each $\Gamma_i$ continues smoothly across $p$. The classical maximum principle now implies that the $\Gamma_i$ cannot actually touch at the point $p$, implying in fact that the number of connected components of $\Gamma$ in any ball $B_\rho$ is $1$.

\section{Competitors: proofs of Corollary \ref{c:main1} and \ref{c:main2}}\label{s:final}

We start with Corollary \ref{c:main1}.

\begin{proof}[Proof of Corollary \ref{c:main1}]
Without loss of generality we can assume that $\M$ is connected.

First of all we show that there is a generalized family $\{\Sigma_t\}_{t\in [0,1]}$ where $\Sigma_0$ and $\Sigma_1$ are trivial (namely as closed sets which consist of a collection of finitely many points). Indeed it suffices to take the level sets of a Morse function $f$ whose range is $[0,1]$, with the additional requirement that the restriction of $f$ to $\partial \M$ is also a Morse function. Since Morse functions are generic on smooth manifolds, the existence of such an $f$ is guaranteed. We then construct a homotopically closed family $X$ by taking the smallest such family which contains $\Sigma_t$. 

Take now any $\{\Sigma'_t\}_t \in X$. Away from the singularities $S_t$ the family $\{\Sigma'_t\}$ can be given locally and for $t$ in an interval $[a,b]$ as the image of a smooth map $\Phi: U \times [a,b]$. Thus the family $\{\Sigma'_\tau\}_{\tau\in [0,1]}$ induces canonically a current $\Omega'_t$ such that $\partial \Omega'_t = \Sigma'_t$. If $\{\Gamma_{t,s}\}_{(t,s)\in [0,1]^2}$ is a homotopy between $\{\Sigma_t\}_{t\in [0,1]}$ and $\{\Sigma'_t\}_{t\in [0,1]}$, it is easy to check that the corresponding currents $\Omega_{t,s}$ such that
$\partial \Omega_{t,s} = \Gamma_{t,s}$ also vary continuously. Observe however that:
\begin{itemize}
\item $\Omega_{t,0} = \llbracket \{f< t\} \rrbracket$ and thus $\Omega_{1,0} = \llbracket \M\rrbracket$, whereas $\Omega_{0,0} = 0$;
\item Since $\Gamma_{1,s}$, resp. $\Gamma_{0,s}$ are all trivial currents, each $\Omega_{1,s}$, resp. $\Omega_{0,s}$, is either $0$ or $\M$
(because we are assuming that $\M$ is connected);
\item The continuity of $\Omega_{1,t}$ and $\Omega_{0,t}$ ensures then that $\Omega'_1 = \Omega_{1,1} = \llbracket \M \rrbracket$ and
$\Omega'_0 = \Omega'_{0,1} = 0$. 
\end{itemize}
We thus conclude that there must be one $\Omega'_t$ such that $\mass (\Omega'_t) = \frac{1}{2} {\rm Vol} (\M)$. Now the isoperimetric inequality implies that $\haus^n (\Sigma'_t) \geq c_0 (\M) >0$, where the constant $c_0$ depends only upon the ambient manifold. 

The above argument shows that $m_0 (X) > bM_0 (X) = 0$ and thus we can apply Theorem \ref{t:main} to find a free boundary minimal hypersurface with total area equal to $m_0 (X)$. This completes the proof. 
\end{proof}

Similarly, Corollary \ref{c:main2} will be an immediate consequence of Theorem \ref{t:main} applied to constrained families, once we are able to show the existence of two strictly stable minimal surfaces gives a homotopically closed set $X$ of constrained families parametrized by $\P = [0,1]$ which satisfies the condition \eqref{mountain pass family}. The proof will be divided into two lemmas. In the first one we show the existence of a particular smooth family of hypersurfaces $\{\Sigma\}_{t \in [0,1]}$, starting from $\Sigma_0$ and ending in $\Sigma_1$. In the second lemma we show that any integer rectifiable current with sufficiently small flat distance to $\Sigma_0$ or $\Sigma_1$ must have mass which is strictly greater, with a uniform lower bound depending on the distance.
More precisely our two lemmas are

\begin{lemma}\label{sweepout}
Assume $\Sigma_0$ and $\Sigma_1$ are as in Corollary \ref{c:main2}. Then there exists a smooth family of hypersurfaces $\{\Sigma_t\}$ parametrized by $[0,1]$ which is constrained by $\gamma$.
\end{lemma}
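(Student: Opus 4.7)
The plan is to realize $\{\Sigma_t\}$ as the level sets of a function $f:\bar A\to [0,1]$ which is smooth on $\bar A\setminus \gamma$, equals $0$ on $\Sigma_0$, equals $1$ on $\Sigma_1$, and has only finitely many interior critical points. The construction splits into a local wedge model near $\gamma$ (to guarantee $\partial\Sigma_t=\gamma$ for every $t$) and a Morse-theoretic extension to the complement.

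\medskip

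First I would set up the wedge model. Fix a small tubular neighborhood $U$ of $\gamma$ in $\M$ and choose Fermi coordinates along $\gamma$ adapted to $\partial\M$, identifying $U$ with a neighborhood of $\gamma\times\{0\}$ in $\gamma\times\mathbb{R}_{\ge 0}\times\mathbb{R}$ with $\partial\M\leftrightarrow\{x_1=0\}$. In these coordinates, each $\Sigma_i\cap U$ ($i=0,1$) is (the graph over $\gamma$ of) a smooth curve $L_i(y,\cdot)$ in the normal half-plane starting at the origin. Since $\Sigma_0\cap\Sigma_1=\gamma$ and $A$ is a nonempty open set, the curves $L_0(y,\cdot)$ and $L_1(y,\cdot)$ are distinct for $s>0$ and $A\cap U$ corresponds (after shrinking $U$) to the family of two-dimensional wedges $W(y)$ bounded by them. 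Within each wedge one then introduces a smooth ``angular'' function $g_y:\overline{W(y)}\setminus\{0\}\to[0,1]$, depending smoothly on $y$, without critical points, equal to $0$ on $L_0(y,\cdot)$ and $1$ on $L_1(y,\cdot)$, and whose level curves all emanate from the origin. Define $f$ on $U\cap\bar A$ by $f(y,x_1,x_2):=g_y(x_1,x_2)$.

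\medskip

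Next I would extend $f$ to all of $\bar A$. Pick a smaller tubular neighborhood $U'\subset\subset U$; the complement $\bar A\setminus U'$ is a compact smooth manifold with boundary on which $f$ is already prescribed on a part of the boundary. By the standard density of Morse functions among smooth functions with prescribed boundary values, extend $f$ to a smooth function on $\bar A\setminus U'$ equal to $0$ on $\Sigma_0\setminus U'$ and $1$ on $\Sigma_1\setminus U'$, with no critical points on $\partial(\bar A\setminus U')$ and only finitely many critical points in the interior (generically at most one per critical value). A partition-of-unity gluing on the overlap $U\setminus U'$ produces a single function $f:\bar A\to[0,1]$, smooth on $\bar A\setminus\gamma$. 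Setting $\Sigma_t$ equal to the closure in $\bar A$ of $f^{-1}(t)$, the wedge construction forces $\partial \Sigma_t=\gamma$ for every $t\in[0,1]$, while Morse theory ensures that $\Sigma_t$ is a smooth oriented hypersurface away from the finite set $S_t$ of critical points of $f$ lying on it, and gives the continuity properties (s2)--(s3) of Definition~0.1.

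\medskip

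The main obstacle is Step~1: establishing the wedge structure and producing the family $g_y$ smoothly in $y$. In the generic situation where $\Sigma_0$ and $\Sigma_1$ meet transversally along $\gamma$, each $W(y)$ is a genuine nondegenerate wedge and $g_y$ can be taken to be (a smoothing of) the angular coordinate. The degenerate case, in which the tangent planes to $\Sigma_0$ and $\Sigma_1$ coincide at some point of $\gamma$, is more delicate: the wedge degenerates to a cusp and one has to resort to a curvilinear parametrization. Pathological infinite-order agreement is however ruled out by strong unique continuation for the difference of the two minimal graphs (which would force $\Sigma_0=\Sigma_1$), so the degeneracy is of finite order and can be handled in explicit normal forms. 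A secondary technical point is that the partition-of-unity gluing on $U\setminus U'$ must not introduce spurious critical points; this is arranged by choosing the Morse extension on $\bar A\setminus U'$ to agree with $f$ from the wedge construction in a full neighborhood of $\partial U'$.
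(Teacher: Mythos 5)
Your overall strategy is the same as the paper's: build a function on $\bar A$ whose level sets furnish the family, treating a tubular neighborhood of $\gamma$ separately (where one must ensure $\partial\Sigma_t=\gamma$), and then perturbing to a Morse function in the interior and gluing. Where you diverge, and where the proposal has a real gap, is in the construction near $\gamma$.

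You propose to use an ``angular'' function $g_y$ on the two-dimensional wedge $W(y)$, and you correctly flag that this is delicate when $\Sigma_0$ and $\Sigma_1$ are tangent along $\gamma$: the wedge degenerates to a cusp, and an angular coordinate (in the literal $\arctan$ sense) develops a vanishing denominator, so its normalization blows up as one approaches $\gamma$. Your proposed fix --- invoking strong unique continuation to bound the order of tangency, then ``explicit normal forms'' --- is both heavier than needed and does not actually close the gap: the order of tangency can jump as $y$ moves along $\gamma$, so a normal-form construction keyed to that order would not produce a family $g_y$ depending smoothly on $y$, which is exactly what you need. The paper avoids the issue entirely by choosing a different, explicit parametrization. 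By Lemma~\ref{wedge property} (the wedge property derived from strict convexity of $\partial\M$), both $\Sigma_0\cap U_x$ and $\Sigma_1\cap U_x$ are \emph{graphs} over the inward normal direction $e_1(x)$ in each normal fiber $U_x$; writing them as $\phi_x^0,\phi_x^1$ over $W_x=e_1(x)\cap U_x$, one sets $u_x(y)=t$ where $\iota^{-1}(y)=t\,\phi_x^1(\bar y)+(1-t)\,\phi_x^0(\bar y)$. This linear interpolation coefficient is manifestly smooth in $(x,y)$, takes the values $0$ on $\Sigma_0$ and $1$ on $\Sigma_1$, has level sets issuing from the origin, and has a nonvanishing derivative in the direction orthogonal to $e_1$ as long as $\phi_x^0\neq\phi_x^1$ away from $\bar y=0$ --- which is exactly the hypothesis that $\Sigma_0$ and $\Sigma_1$ meet only along $\gamma$. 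No unique continuation, no case analysis on the tangency order. Your remaining steps (covering $\bar A$ away from the wedge, gluing via a partition of unity, approximating by a Morse function while keeping it unchanged near $\gamma$ and $\Sigma_0\cup\Sigma_1$) are in line with the paper's Steps~2--3, so once you replace the angular function with the graph/linear-interpolation formula the argument goes through.
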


\begin{lemma}\label{increase}
Let $\Sigma_0, \Sigma_1$ be as above. There exists an $\epsilon_0>0$ and $f:(0,\epsilon_0] \to \RR^+$ such that
\begin{align*}
 (S) \quad \begin{split}&\text{If }  \> \Gamma\> \text{ is an integer rectifiable current with } \> \fl(\llbracket \Sigma_i \rrbracket - \Gamma)=\epsilon, \>\>i \in \{0,1\}\>, \text{ and} \\
 &\partial \Gamma=\partial \llbracket \Sigma_i \rrbracket = \gamma, \>\text{ then } \> \mass(\Gamma) \geq \mass(\llbracket\Sigma_i\rrbracket) + f(\epsilon) \end{split}
\end{align*}
\end{lemma}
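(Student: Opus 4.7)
The plan is to prove the stronger quantitative bound $f(\epsilon) \geq c\,\epsilon^2$, for some $c > 0$ and all sufficiently small $\epsilon$, by contradiction; this is White's strategy from \cite{white-strong} adapted to the boundary setting. Suppose the conclusion fails: a standard diagonal argument produces a sequence of integer rectifiable currents $\Gamma_k$ with $\partial \Gamma_k = \gamma$, $\fl(\llbracket \Sigma_i \rrbracket - \Gamma_k) = \epsilon_k \downarrow 0$, and $\mass(\Gamma_k) - \mass(\llbracket \Sigma_i \rrbracket) = o(\epsilon_k^2)$. In particular $\Gamma_k \to \llbracket \Sigma_i \rrbracket$ in the flat norm with convergent mass, so by Lemma \ref{currents-varifolds} the associated varifolds converge to the multiplicity-one varifold of $\Sigma_i$.

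The next step is a regularization by replacement. I would fix a small tubular neighborhood $U$ of $\Sigma_i$ in $\M$ (well-defined since $\Sigma_i$ is smooth up to its boundary $\gamma \subset \partial \M$), and let $\tilde\Gamma_k$ be a mass-minimizer among integer rectifiable currents with $\tilde\Gamma_k = \Gamma_k$ on $\M \setminus U$ and $\partial \tilde\Gamma_k = \gamma$. Then $\mass(\tilde\Gamma_k) \leq \mass(\Gamma_k) \to \mass(\llbracket \Sigma_i \rrbracket)$, and the varifold of $\tilde\Gamma_k$ in $U$ converges to the multiplicity-one varifold of $\Sigma_i \cap U$. The convexity Assumption \ref{a:(C)} together with the maximum principle of Proposition \ref{p:White-max} prevents $\tilde\Gamma_k$ from touching $\partial \M$ away from $\gamma$. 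The interior regularity of area-minimizing integer currents and Allard's boundary regularity theorem \cite{allard-bdry} then give smooth $C^{2,\alpha}$ convergence $\tilde\Gamma_k \to \Sigma_i$ inside $U$. For $k$ large, $\tilde\Gamma_k$ is therefore the normal graph over $\Sigma_i$ of a smooth function $u_k$ with $u_k|_\gamma = 0$ and $\|u_k\|_{C^{2,\alpha}} \to 0$.

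The Taylor expansion of the area functional yields
\[
\mass(\tilde\Gamma_k) - \mass(\llbracket \Sigma_i \rrbracket) = \tfrac{1}{2} Q(u_k) + R(u_k),
\]
where $Q(u) = \int_{\Sigma_i} (|\nabla u|^2 - (|A_{\Sigma_i}|^2 + \Ric_\M(\nu,\nu)) u^2)\, d\haus^n$ is the second-variation quadratic form and $|R(u_k)| \leq C \|u_k\|_{C^1} \|u_k\|_{H^1}^2$. Strict stability of $\Sigma_i$ provides $\lambda > 0$ with $Q(u) \geq \lambda \|u\|_{H^1}^2$ for all $u$ vanishing on $\gamma$, so for $k$ large $\mass(\tilde\Gamma_k) - \mass(\llbracket \Sigma_i \rrbracket) \geq \tfrac{\lambda}{4} \|u_k\|_{H^1}^2$. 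The flat distance between a normal graph and its base is bounded by the swept-out volume, giving $\fl(\tilde\Gamma_k - \llbracket \Sigma_i \rrbracket) \leq \int_{\Sigma_i} |u_k|\, d\haus^n \leq C \|u_k\|_{H^1}$, whence
\[
\mass(\tilde\Gamma_k) - \mass(\llbracket \Sigma_i \rrbracket) \geq c_0\, \fl(\tilde\Gamma_k - \llbracket \Sigma_i \rrbracket)^2.
\]

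The main obstacle is upgrading the last inequality to one involving $\fl(\Gamma_k - \llbracket \Sigma_i \rrbracket) = \epsilon_k$ rather than $\fl(\tilde\Gamma_k - \llbracket \Sigma_i \rrbracket)$: specifically, one must show that $\fl(\Gamma_k - \tilde\Gamma_k) = o(\epsilon_k)$, so that most of the flat distance $\epsilon_k$ is captured by $\tilde\Gamma_k$ and the quadratic bound yields $\mass(\tilde\Gamma_k) - \mass(\llbracket\Sigma_i\rrbracket) \geq c\, \epsilon_k^2$, contradicting $\mass(\Gamma_k) - \mass(\llbracket\Sigma_i\rrbracket) = o(\epsilon_k^2)$. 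Since $\Gamma_k - \tilde\Gamma_k$ is supported in $\overline U$, has vanishing boundary, and $H_n(U) = 0$ ($U$ retracts onto the manifold-with-boundary $\Sigma_i$), one can write $\Gamma_k - \tilde\Gamma_k = \partial A_k$ for an integer rectifiable current $A_k$ with $\fl(\Gamma_k - \tilde\Gamma_k) \leq \mass(A_k)$. Bounding $\mass(A_k)$ appropriately is the delicate point; one likely needs a coarea-type slicing of $U$ by the distance function to $\Sigma_i$, combined with the varifold convergence $\Gamma_k \to \llbracket \Sigma_i \rrbracket$, and possibly a careful choice of a shrinking neighborhood $U = U_k$ along the sequence so that this mass is truly small relative to $\epsilon_k$.
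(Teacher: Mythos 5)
Your approach is genuinely different from the paper's, and you have correctly put your finger on the step that does not close. Let me record the comparison and then the gap.

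The paper does not attempt the quantitative bound $f(\epsilon)\geq c\,\epsilon^2$. It works with the exact minimizer $\Gamma_\epsilon$ of the mass among currents at flat distance exactly $\epsilon$ from $\llbracket\Sigma_i\rrbracket$, and aims only to show that $\mass(\Gamma_\epsilon)>\mass(\llbracket\Sigma_i\rrbracket)$. The tool is qualitative: by White's theorem there is a fixed neighborhood $U$ of $\Sigma_i$ in which $\Sigma_i$ is the strict minimizer among currents with boundary $\gamma$; the whole proof consists of producing, for all small $\epsilon$, a competitor $Z$ supported in $U$ with $\partial Z=\gamma$ and $\mass(Z)\leq\mass(\Gamma_\epsilon)$. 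This is done via a foliation of a tubular shell $U_\tau$, the coarea inequality $L(\tau)\leq -A'(\tau)$ (with $A(\tau)$ the mass of $\Gamma_\epsilon$ outside $U_\tau$ and $L(\tau)$ the mass of the slice), an isoperimetric filling of the slice, and a dichotomy: either a good slice $\tau$ produces the competitor $Z$ directly, or one has the differential inequality $A(\tau)\leq C(-A'(\tau))^{n/(n-1)}$ on a whole interval. Integrating the latter forces $A(3\delta/2)$ to be bounded below by a positive universal constant, which is incompatible with $A(3\delta/2)\to0$ as $\epsilon\to0$. The second-variation input is entirely contained in White's theorem, invoked as a black box.

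The gap in your argument is the one you name, and it is serious rather than merely technical. To run the contradiction you need $\fl(\Gamma_k-\tilde\Gamma_k)=o(\epsilon_k)$. But all that the minimization gives you is $\mass(\tilde\Gamma_k)\leq\mass(\Gamma_k)$ and $\mass(\Gamma_k)-\mass(\tilde\Gamma_k)=o(\epsilon_k^2)$; there is no mechanism converting a small mass drop into a small flat-norm change. The difference $\Gamma_k-\tilde\Gamma_k=\partial A_k$ is a cycle supported in $\overline U$, and bounding $\mass(A_k)$ requires information about how $\Gamma_k$ is organized inside $U$ at scale $\epsilon_k$ — precisely the information that is lost when $\Gamma_k$ is replaced by the regularized $\tilde\Gamma_k$. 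A priori, $\Gamma_k$ can carry almost all of its flat distance to $\Sigma_i$ in a wild, non-graphical configuration that the minimization collapses, so that $\fl(\Gamma_k-\tilde\Gamma_k)\approx\epsilon_k$ while $\fl(\tilde\Gamma_k-\llbracket\Sigma_i\rrbracket)=o(\epsilon_k)$, and then the quadratic estimate on $\tilde\Gamma_k$ tells you nothing about $\Gamma_k$. This is exactly the difficulty the paper avoids by never comparing flat distances: it needs only a competitor supported in $U$ with controlled mass, not a quantitative comparison of where the flat distance lives. If you want the sharper quadratic $f(\epsilon)\gtrsim\epsilon^2$, the right route is probably to run the paper's localization argument first (to reduce to $\spt(\Gamma)\subset U$) and then apply the quantitative form of White's theorem to that localized current, rather than to introduce the intermediate minimizer $\tilde\Gamma_k$.
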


The two lemmas above easily imply our corollary.

\begin{proof}[{\bf \emph{Proof of Corollary \ref{c:main2}}}.] Obviously, by taking the homotopy class of the family in Lemma \ref{sweepout} we construct a homotopically closed set $X$. The second lemma then clearly implies that any smooth family $\{\Gamma_t\}$ with $\Gamma_0 = \Sigma_0$ and $\Gamma_1=\Sigma_1$ must satisfy \eqref{mountain pass family}, since $\fl (\Gamma_t, \Gamma_s)$ is a continuous function of $t$ and $s$ and $\fl (\Gamma_0, \Gamma_1) > 0$. In particular there is a smooth minimal surface $\Gamma$ with volume equal to $m_0 (X) > \max \{\haus^n (\Sigma_0), \haus^n (\Sigma_1)\}$ which bounds $\gamma$. 

Now, by Assumption \ref{a:stronger} the surface $\Gamma$ cannot be given by $\Sigma_0$ (or $\Sigma_1$) plus a closed minimal hypersurface, since the latter cannot exist. Recall moreover that the volume of $\Gamma$ must be strictly larger than $\Sigma_0$ (resp. $\Sigma_1$) and the multiplicity of $\Gamma$ must be everywhere $1$ thanks to part (b) of Theorem \ref{t:main}, we conclude that $\Gamma$ is distinct from $\Sigma_0$ (resp. $\Sigma_1$). In particular, 
if $\gamma$ is connected, then all the $\Sigma_i$'s must be connected and thus $\Sigma_2$ would give a third distinct minimal surface. 

In general, such argument would still be correct if one between $\Sigma_0$ and $\Sigma_1$ were connected. Otherwise, $\Sigma_2$ might be the union of some connected components of $\Sigma_0$ and of some connected components of $\Sigma_1$, arranged in such a way that $\partial \Sigma_2 = \gamma$ and that $\haus^n (\Sigma_2)
> \max \{\haus^n (\Sigma_0), \haus^n (\Sigma_1)\}$. In order to avoid such situation, we consider the family
$\mathcal{F}$ of minimal surfaces which bound $\gamma$ and which can be described as union of some connected components of $\Sigma_0$ and of some connected components of $\Sigma_1$. Since $\mathcal{F}$ consists of finitely many elements, we can pick one of maximal volume, which we denote by $\Gamma_0$. The remaining connected components of $\Sigma_0$ and of $\Sigma_1$ (namely those which are not connected components of $\Gamma_0$) form a second stable minimal surface $\Gamma_1$ which also bounds $\gamma$. If we now run the previous argument with $\Gamma_0$ and $\Gamma_1$ replacing $\Sigma_0$ and $\Sigma_1$, we achieve yet another minimal surface $\Gamma_2$: since $\Gamma_2$ must have volume strictly larger than that of $\Gamma_0$, the maximality of the latler in the class  $\mathcal{F}$ guarantees that $\Gamma_2$ has at least one connected component which is neither contained in $\Sigma_0$, nor in $\Sigma_1$. \qed

\subsection{Proof of Lemma \ref{sweepout}} {\bf{Step 1}}: Let us first extend $\M$ slightly across $\partial \M$, in order to make the following arguments more elegant (as per Remark \ref{extr vs. intr} we can even do this so that  $\M \subset \tilde{\M}$, for some closed manifold $\tilde{\M}$, if necessary). Consider the normal tubular neighborhood of $\gamma$ in $\M$, which is realized by an embedding $\iota : U \to \M$, where $U \subset N\gamma$ is a neighborhood of the zero section of the normal bundle $N\gamma$, such that $\iota|_{\gamma} = 1_{\gamma}$ and $\iota (U)$ is open in $\M$.
Take a (smooth) vector field $e_{1}(x)$ along $\gamma$, which is the normal to $\partial \M$ pointing inwards.

For each point $x$ of $\gamma$, consider the sets $\iota^{-1}(\Sigma_i\cap \iota(U))\cap U_x, \>\> i\in\{1,2\}$, where $U_x \cong \RR^2$ is the fiber of the normal bundle at $x$. Since $\Sigma_0$ and $\Sigma_1$ are smooth and minimal, we can use the same arguments as in the proof of Lemma \ref{wedge property} to conclude that, if $U$ is small enough, these are smooth, non-intersecting curves (starting at the origin) which are contained inside a $2$-dimensional wedge of opening angle at most $\theta < \frac{\pi}{2}$, with $e_1(x)$ lying on its axis. Hence, choosing $U$ even smaller if necessary, we can make sure that they are graphs over $e_1(x)$. That is, for each $U_x$ there exist (smooth) functions $\phi_{x}^0, \phi_{x}^1$ such that:
\begin{equation*}
\phi_{x}^0, \phi_{x}^1:W_{x} \rightarrow \RR, \qquad \phi_{x}^i (W_{x}) = \iota^{-1}(\Sigma_i\cap \iota(U))\cap U_x  \text{ for } i=0,1
\end{equation*}
where $W_{x}:= e_1(x) \cap U_{x}$.\vspace{2mm}

\noindent Recall that, by our assumption, $\Sigma_0\cup\Sigma_1$ bounds an open set $A$. Consider a point $y \in \bar{A}\cap\iota(U_{x})$, for some $x \in \gamma$. Note that the orthogonal projection of $\iota^{-1}(y)$ on $e_1(x)$, which we denote by $\bar{y}$, lies on the line segment $W_{x}$. We define:
\begin{equation}\label{eq1}
u_{x}(y):=t, \qquad \text{where}\quad \iota^{-1}(y)=t\phi_{x}^1(\bar{y}) + (1-t)\phi_{x}^0(\bar{y}), \>\> t \in[0,1].
\end{equation}
Now, by the properties of the tubular neighborhood, to each $y\in A\cap\iota(U)$ is associated a unique fiber $U_x$, hence there exists an $\eta >0$, such that when we are at most $\eta$-away from $\partial \M$, i.e. on some open set $E_0 = A \cap\iota(U) \cap (\M\backslash \M_{\eta})$ with $\M_{\eta} := \{x \in \M: \dist(x,\partial\M)\geq \eta\}$, these fiber-wise constructions yield a well defined function $f_0:E_0 \to \mathbb{R}$ such that,
\begin{equation*}
f_0(y):=u_{x}(y), \quad \text{where} \quad y \in \iota(U_{x}).
\end{equation*}

\noindent Furthermore, this function is smooth (by smoothness of $\Sigma_0, \Sigma_1$ and $\iota$), and it has no critical points, provided we choose $\eta$ small enough, since obviously the derivative in the direction orthogonal to $e_1$ (and $\gamma$) will be different from $0$. 
\vspace{2mm} \\
\indent Now, we construct a covering of $\bar{A}\setminus E_0$ with balls, satisfying the following two properties:
\begin{itemize}
\item [a)]Each ball has a radius less or equal than $\frac{\eta}{2}$;
\item [b)]Each ball can only contain points from one of the surfaces $\Sigma_0$ and $\Sigma_1$, and if it does, its center must lie on the surface.
\end{itemize}
Through compactness, we obtain a finite subcover, consisting of balls centered at the points $x_1,x_2,\ldots,x_N$. We will denote these balls by $E_{1},\ldots,E_{N}$.\\
Around each of these points $x_k$ lying on one of the $\Sigma_i$-s, we can characterize the submanifold through a local trivialization, i.e. there exists a neighborhood $W \subset \M$ of the point (which we can w.l.o.g. assume to be bigger than the ball $E_k$), an open set $W' \subset \mathbb{R}^{n+1} \cong \mathbb{R}^{n} \times \mathbb{R}^1$, and a diffeomorphism
\begin{equation*}
\Psi_{x_k}:W \to W', \quad \Psi_{x_k}(\Sigma_i \cap U) = W' \cap (\mathbb{R}^{n} \times \{0\}).
\end{equation*}
We assume in these cases that the points lying inside the set $A$ are mapped into the positive half-space $\mathbb{R}^{n+1}_+:= \{(y_1,\ldots,y_{n+1}), \, y_{n+1}>0\}$. We now define functions $f_i$ on the balls $E_i, \, i \in \{1,\ldots, N\}$ in the following way:
\begin{equation}\label{eq2}
f_i(y):= \left\{  \begin{array}{ll}
         \frac{1}{2} & \mbox{if $x_i$ lies in the interior of }A;\\
         (y_{n+1}\circ \Psi_{x_i})(y) & \mbox{if $x_i$ lies on $\Sigma_0$};
        \\ 1-(y_{n+1}\circ \Psi_{x_i})(y) & \mbox{if $x_i$ lies on $\Sigma_1$}. \end{array} \right.
\end{equation}
Here, $y_{n+1}:\mathbb{R}^{n+1} \to \mathbb{R}$ is just a function which evaluates the corresponding coordinate. Functions $f_i$ defined in this way are obviously smooth.

Finally, take a partition of unity $\{\varphi_j\}_{0 \leq j \leq N}$ of $\bar{A}$, subordinate to the covering $E_0,\ldots, E_N$. This allows us to define a function
$h:\bar{A} \to \mathbb{R}$ via:
\begin{equation}\label{eq3}
h(x):=\sum_{i=0}^{N} \varphi_i(x) f_i(x),
\end{equation}
which is smooth up to the boundary of $A$, excluding $\gamma$ of course.\\

{\bf Step 2}: {\it The function $h$ defined in \eqref{eq3} has no critical points near $\Sigma_0$ and $\Sigma_1$, as well as in a small neighborhood of $\gamma$. Moreover $h(\Sigma_0\setminus \gamma)=0$ and $h(\Sigma_1 \setminus \gamma)=1$.}\\

It is obvious from \eqref{eq1}, \eqref{eq2} and \eqref{eq3} that $h(\Sigma_0 \setminus \gamma)=0$ and $h(\Sigma_1 \setminus \gamma)=1$. Note also that, by the definitions of $E_i$, when we are at most $\frac{\eta}{2}$ away from $\partial \M$, only $\varphi_0$ is supported in this region, hence here it must hold $h(x) = f_0(x)$, and we already know that $f_0$ has no critical points in it.
In points $q \in \Sigma_0$, we have
\begin{equation*}
\left. \frac{\partial h}{\partial y_{n+1}} \right|_q = \sum_{i} \frac{\partial \varphi_i}{\partial y_{n+1}} \cdot f_i + \sum_i \varphi_i \cdot \frac{\partial f_i}{\partial y_{n+1}}
\end{equation*}
where $y_{n+1}$ again denotes the "height" with respect to some fixed local chart $E_j$. We have $f_i(q)=0 \>\> \forall i$ according to \eqref{eq1} and \eqref{eq2}, so the first sum vanishes. We also see that $\frac{\partial f_j}{\partial y_{n+1}}=1$ and $\frac{\partial f_i}{\partial y_{n+1}}>0$ for $i \neq j$, $i>0$ due to the compatibility of charts. so since $\varphi_i$-s are nonnegative and $\sum_i \varphi_i = 1$, it follows that the second sum is positive. Hence $q$ cannot be a critical point of $h$. With similar arguments, we deduce this also for points lying on $\Sigma_1$. 

It can be seen from the construction, however, that the function $h$ will be mostly constant inside the open set $A$ away from $\Sigma_0$ and $\Sigma_1$. So in this region we will use the fact that Morse functions form a dense, open subset in the $C^2$ topology, and define one such function $g$, say on the open set $B:=A \cap \text{Int}(\M_{\eta / 4})$ (recall the definition above), such that
\begin{equation}\label{morse} \|h-g\|_{C^2(B)} < \epsilon \end{equation}
for some small $\epsilon >0$, which will be fixed later. Next, we define a cut-off function $\psi:\M\to \RR$, such that $\psi = 0$ on $(\M \setminus \M_{\eta/4})\cup W$ and $\psi=1$ on $\M_{\eta /2}\setminus W'$, where $W\subset \subset W'$ are sufficiently small neighborhoods of $\Sigma_0 \cup \Sigma_1$. We finally define:
\begin{equation}\label{eq4}
f:\bar{A} \to \RR, \qquad f(x):=\psi(x) g(x) + (1-\psi(x))h(x)
\end{equation}\\

{\bf Step 3}: {\it For $\epsilon$ small enough, the function $f$ is Morse inside $A$, and its level sets provide a smooth family parametrized by $[0,1]$, where $f^{-1}(1) =\Sigma_1$ and $f^{-1}(0)=\Sigma_0$.}\\

It follows from the construction that $f$ does not have any degenerate critical point in the regions where $\psi=0$ or $\psi=1$. In the intermediate region, due to \eqref{morse} we have:
\begin{equation*}
Df = Dh + D\psi(g-h) + \psi(Dg-Dh)
\end{equation*}
Due to the previous steps, we know $h = f_0$ when at most $\frac{\eta}{2}$ away from $\partial \M$, and thus we have no critical points for $h$ close to $\partial \M$. Thus $|Dh| > \delta$ for some $\delta > 0$ on $\mathcal{M}\setminus \M_{\frac{\eta}{2}}$. Hence we have
\begin{equation*}
|Df| \geq |Dh| - (|D\psi| + |\psi|) \|h-g\|_{C^2} \geq \delta - C\epsilon,
\end{equation*}
for some constant $C$ depending on $\psi$ (which in turn depends only on $\eta$) on $\mathcal{M}\setminus \mathcal{M}_{\frac{\eta}{2}} \cup W'$. Now, we can fix $\epsilon$ small enough so that $|Df| >0$ on $\mathcal{M}\setminus \mathcal{M}_{\frac{\eta}{2}} \cup W'$. On the other hand, since $f=g$ on $\mathcal{M}_{\frac{\eta}{2}}$, we conclude that $f$ is a Morse function. It is clear from the construction that the level sets of $f$ will be smooth hypersurfaces near $\gamma$ and will in fact have $\gamma$ as boundary. \qed

\subsection{Proof of Lemma \ref{increase}}
The proof uses heavily Brian White's similar result in \cite{white-strong} where he proves that $\llbracket\Sigma_i\rrbracket$ is the unique minimizer among all currents in the same homology class whose support is contained in a sufficiently small neighborhood of $\Sigma_i$ (note that, actually, it follows from standard arguments that, if the neighborhood is sufficiently small, any current with the same boundary as $\Sigma_i$ must be in its holomogy class).
In this lemma we just need to replace the assumption of being close in the $L^\infty$ sense to the one of being close in the flat norm. Thus our lemma is
indeed very close to \cite[Lemma 4.1]{AFM}. 

W.l.o.g. we assume $i=0$. By Theorem 2 of White \cite{white-strong}, there exists an open set $U$ containing $\Sigma_0$ such that
\begin{equation}\label{strong minimax thm}
\mass(\Gamma) > \mass( \llbracket \Sigma_0\rrbracket) \quad \forall \Gamma \text{ with } \partial \Gamma = \partial \llbracket \Sigma_0 \rrbracket, \text{ and }\spt(\Gamma) \subset U.
\end{equation}

We define
\begin{equation}\label{minimizing in flat distance}
m_0(\epsilon): = \inf \{ \mass(\Gamma) : \> \partial \Gamma = \partial \llbracket \Sigma_0 \rrbracket \text{ and } \fl(\Gamma - \llbracket \Sigma_0 \rrbracket) = \epsilon \}.
\end{equation}
Our aim is to show that $m_0(\epsilon) > \mass(\llbracket \Sigma_0 \rrbracket) \>\> \forall \epsilon \in (0,\epsilon_0]$, which clearly implies the statement (S), by setting $f(\epsilon)=m_0(\epsilon) - \mass(\llbracket \Sigma_0 \rrbracket)$. Note that
the infimum in \eqref{minimizing in flat distance} is actually a minimum. We would like to show that, if $\varepsilon$ sufficiently small, a minimizer $\Gamma_\varepsilon$ must be contained in the tubular neighborhood $U$ of $\Sigma_0$: this would then conclude the proof
because by \eqref{strong minimax thm} the mass of $\Gamma_\varepsilon$ would be strictly larger than that of $\llbracket \Sigma_0\rrbracket$. 
In fact, what we will really show is that there is certainly a $Z$ which has at most the same mass as $\Gamma_\varepsilon$, has boundary $\gamma$ and it is contained in $U$, which still suffices to reach the desired conclusion.

\medskip

{\bf{Step 1}}: Let us denote first extend $\Sigma_0$ slightly outside $\partial \M$ to a $\Sigma'_0$ (remember that we can embed $\M$ in a smooth closed manifold $\tilde{\mathcal{M}}$) and denote by $U_{\delta}$ the $\delta$-tubular neighborhood of $\Sigma'_0$ intersected with $\M$. We will choose $\delta$ small enough so that $U_{2\delta}\subset U$. Note that $\partial U_{\tau}$ is smooth for all $\tau \in (\delta,2\delta)$ and diffeomorphic to two copies of $\Sigma_0$, with diffeomorphisms whose smoothness can be bounded independently of $\tau$. Hence, by the isoperimetric inequality, we can choose some constant $C>0$ (independent of $\tau$) such that for every $(n-1)$-dimensional integer rectifiable current $\alpha$ homologous to $0$ in $\partial U_{\tau}$, there exists an $n$-dimensional integer rectifiable current $S$ in $\partial U_{\tau}$ with
\begin{equation}\label{isoperimetric}
\partial S = \alpha \quad \text{and} \quad \mass(S) \leq C \mass(\alpha)^{\frac{n}{n-1}}.
\end{equation}
Take $\Gamma_{\epsilon}$ to be the minimizer in \eqref{minimizing in flat distance}. For every $\tau \in (\delta,2\delta)$ we define:
\begin{itemize}
\item $A(\tau):= \mass\big(\Gamma_{\epsilon} \res (U_{\tau})^c\big);$\\
\item $L(\tau):=\mass\big(\partial\big(\Gamma_{\epsilon} \res (U_{\tau})^c \big)\big)=\mass\big(\partial(\Gamma_{\epsilon}\res U_{\tau})- \gamma\big)$.
\end{itemize}
A standard inequality using coarea formula yields
\begin{equation}\label{coarea-step1}
L(\tau) \leq - A'(\tau) \qquad \mbox{for a.e. $\tau$.}
\end{equation}
Let us now fix $\tau \in (\delta,2\delta)$. One of the following alternatives must hold:
\begin{itemize}
\item[(A1)] $L(\tau)=0$. This means that $\partial \big( \Gamma_{\epsilon} \res U_{\tau}\big)=\gamma$, and hence $ \Gamma_{\epsilon} \res U_{\tau}$ is homologous to $\Sigma_0$ in $U$. Consequently, by \eqref{strong minimax thm},
\begin{equation*}
m_0(\epsilon) = \mass(\Gamma_{\epsilon}) \geq \mass(\Gamma_{\epsilon}\res U_{\tau}) > \mass (\llbracket \Sigma_0 \rrbracket),
\end{equation*}
hence we are finished.
\item[(A2)]$L(\tau) > 0$. Since $\fl(\Gamma_{\epsilon} - \llbracket \Sigma_0 \rrbracket)$ is sufficiently small, then $\fl(\Gamma_{\epsilon} - \llbracket \Sigma_0 \rrbracket)=\mass(T)$ with $\partial T = \Gamma_{\epsilon} - \llbracket \Sigma_0 \rrbracket$. Note that the slice of the $(n+1)$-current $T$, which is supported in $\partial U_{\tau}$, bounds the slice of the $n$-current $\Gamma_{\epsilon} - \llbracket \Sigma_0\rrbracket$, which in fact coincides with the slice of $\Gamma_\epsilon$ because $\Sigma_0\cap \partial U_\tau =0$. Let us denote the slice of $T$ by $S$. This means that $S$ lies in $\partial U_{\tau}(\Sigma_0)$ with $\partial S = \gamma - \partial ( \Gamma_{\epsilon} \res U_{\tau})$, and by \eqref{isoperimetric},
\begin{equation*}
\mass(S) \leq C L(\tau)^{\frac{n}{n-1}}
\end{equation*}
Let us set $Z=\Gamma_{\epsilon} \res U_{\tau} + S$. At this point, we make a further distinction between two cases:
\begin{itemize}
\item[(A2.1)] $\mass(Z) \leq \mass(\Gamma_{\epsilon})$. By construction, $Z$ is homologous to $\Sigma_0$ in $U$; thus by \eqref{strong minimax thm},
\begin{equation*}
m_0(\epsilon) = \mass(\Gamma_{\epsilon}) \geq \mass(Z) > \mass(\llbracket \Sigma_0 \rrbracket),
\end{equation*} 
and the claim follows.
\item[(A2.2)] $\mass(Z) \geq \mass(\Gamma_{\epsilon})$. By the above, this implies
\begin{equation*}
\mass \big(\Gamma_{\epsilon} \res (U_{\tau})^c\big) \leq \mass(S) \leq C L(\tau)^{\frac{n}{n-1}}.
\end{equation*}
\end{itemize}
\end{itemize}
In summary, it follows from the considerations above that for the rest of the proof we may assume w.l.o.g. the following properties for a.e. $\tau \in (\delta,2\delta)$:
\begin{itemize}
\item $L(\tau)>0$  
\item $A(\tau) \leq C L(\tau)^{\frac{n}{n-1}}$.\\
\end{itemize}

{\bf{Step 2}}: We claim that the minimizers $\Gamma_{\epsilon}$ satisfy
\begin{equation}\label{step2claim}
\mass(\Gamma_{\epsilon}) \to \mass(\llbracket \Sigma_0 \rrbracket) \quad \text{as } \epsilon \to 0.
\end{equation}
By the lower semicontinuity of mass with respect to flat convergence, we immediately get
\begin{equation*}
\liminf_{\epsilon \to 0} \mass(\Gamma_{\epsilon}) \geq \mass (\llbracket \Sigma_0 \rrbracket).
\end{equation*}
The other inequality needed to prove the claim follows by constructing suitable competitors. Consider the currents $\tilde{\Gamma}_r:= \llbracket \Sigma_0 \rrbracket + \partial \llbracket B_r(p) \rrbracket$, where $B_r(p) \subset U$, $B_r(p) \cap \Sigma_0 = \emptyset$. Clearly, $\fl(\tilde{\Gamma}_r - \llbracket \Sigma_0 \rrbracket) \to 0$ as $r \to 0$, hence (for $\epsilon$ small enough) there exists some $r(\epsilon)$ such that $\fl(\tilde{\Gamma}_{r(\epsilon)} - \llbracket \Sigma_0 \rrbracket) = \epsilon$. Moreover, $\mass(\tilde{\Gamma}_{r(\epsilon)}) \to \mass(\llbracket \Sigma_0 \rrbracket)$ as $\epsilon \to 0$. This shows that
\begin{equation*}
\limsup_{\epsilon \to 0} \mass(\Gamma_{\epsilon}) \leq \mass (\llbracket \Sigma_0 \rrbracket),
\end{equation*}
and the claim follows.\\

{\bf{Step 3}}: We next prove that
\begin{equation}\label{step3claim}
\lim_{\epsilon \to 0} \mass\big(\Gamma_{\epsilon} \res (U_{3\delta/2})^c \big)=0.
\end{equation}
As before, we can assume $\Gamma_{\epsilon} - \llbracket \Sigma_0 \rrbracket = \partial T_{\epsilon}$, with $\mass(T_{\epsilon}) \to 0$ as $\epsilon \to 0$. If \eqref{step3claim} were wrong, there would exist a sequence $\epsilon_k \downarrow 0$ and an $\alpha > 0$ such that
\begin{equation}\label{step3contradiction}
\mass\big(\Gamma_{\epsilon_k} \res (U_{3\delta/2})^c \big) \geq \alpha.
\end{equation}
If we let $\langle T_{\epsilon_k}, \tau \rangle = \partial (T_{\epsilon_k} \res U_{\tau}) - (\partial T_{\epsilon_k})\res U_{\tau}$ denote the slices of $T_{\epsilon_k}$ (w.r.t the distance from $\Sigma_0$), then by coarea formula
\begin{equation*}
\int_{\delta}^{\frac{3}{2}\delta} \mass ( \langle T_{\epsilon_k}, \tau \rangle) \, d\tau \leq \mass(T_{\epsilon_k}) \to 0 
\end{equation*}
as $k \to 0$. Since $L^1$ convergence implies a.e. pointwise convergence, we are able to extract a subsequence (not relabeled) and a $\tau \in (\delta, \frac{3}{2}\delta)$ such that $\mass ( \langle T_{\epsilon_k}, \tau \rangle) \to 0$. On the other hand, we can apply \eqref{strong minimax thm} to the current $ \langle T_{\epsilon_k}, \tau \rangle + \Gamma_{\epsilon_k} \res U_{\tau}$ as we did in Step 1, which gives us
\begin{equation*}
\mass \big( \langle T_{\epsilon_k}, \tau \rangle + \Gamma_{\epsilon_k} \res U_{\tau}\big)> \mass(\llbracket \Sigma_0 \rrbracket).
\end{equation*}
Using these two facts together with \eqref{step2claim}, one easily concludes $\displaystyle\lim_{k \to \infty}\mass\big(\Gamma_{\epsilon_k} \res (U_{\tau})^c\big) \to 0$, which is a contradiction to \eqref{step3contradiction}.\\

{\bf{Step 4}}: Note that the previous step tells us that $A\left(\frac{3}{2}\delta\right) \to 0$ as $\epsilon \to 0$. Recall that we assume $L(\tau)>0 \>\> \forall \tau \in (\delta,2\delta)$ since Step 1, which immediately implies that also $A(\tau)>0$. However, from \eqref{coarea-step1} and the other assumption of Step 1 we deduce
\begin{equation*}
A(\tau) \leq C L(\tau)^{\frac{n}{n-1}} \leq C (- A'(\tau))^{\frac{n}{n-1}},
\end{equation*}
giving (by a slight abuse of notation regarding the constants involved)
\begin{equation*}
-\frac{A'(\tau)}{A(\tau)^{\frac{n-1}{n}}} \geq \frac{1}{C} \quad \forall \tau \in (\delta,2\delta).
\end{equation*}
Integrating the above inequality between $\frac{3}{2}\delta$ and $2\delta$, we get
\begin{equation*}
A\left(\tfrac{3}{2}\delta\right)^{\frac{1}{n}} \geq A\left(\tfrac{3}{2}\delta\right)^{\frac{1}{n}} - A(2\delta)^{\frac{1}{n}} \geq \frac{\delta}{2nC}
\end{equation*}
which gives a contradiction for $\epsilon$ small enough.
\end{proof}

\

\end{document}